\documentclass[10pt]{amsart} 

\usepackage[psamsfonts]{amssymb} 
\usepackage{amsfonts,amsmath} 
\usepackage{graphicx, color}
\usepackage{enumerate}
\usepackage{url}

\usepackage[all]{xy}


\newtheorem{thmA}{Theorem}

\newtheorem{theorem}{Theorem}[section]

\newtheorem{proposition}[theorem]{Proposition}
\newtheorem{lemma}[theorem]{Lemma}
\newtheorem{corollary}[theorem] {Corollary}

\newtheorem*{claim*}{Claim}

\theoremstyle{remark}
\newtheorem{remark}[theorem]{Remark}
\newtheorem{example}[theorem]{Example}

\theoremstyle{definition}
\newtheorem{definition}[theorem]{Definition}

\def\calp{\mathcal{P}}
\def\calg{\mathcal{G}}
\def\calh{\mathcal{H}}
\def\calk{\mathcal{K}}

\def\calx{\mathcal{X}}

\def\Z{\mathbb Z}

\def\G{\Gamma}

\def\aut{{\rm{Aut}}}
\def\out{{\rm{Out}}}
\def\outgh{{\out^0(A_\Gamma;\calg,\calh^t)}}
\def\autgh{{\aut^0(A_\Gamma;\calg,\calh^t)}}
\def\outghp{{\out^{[l]}(A_\Gamma;\calg,\calh^t)}}
\def\ia{{\rm{IA}_\G}}

\def\gln{{\rm{GL}}(n, \Z)}

\def\glnp{{\rm{GL}}(n, \Z/l\Z)}
\def\glm{{\rm{GL}}(m, \Z)}

\def\iap{\aut^{[l]}(A_\G)}
\def\auto{\aut^0(A_\G)}
\def\ion{{\overline{\rm{IA}}_\Gamma}}

\def\F{\mathbb{F}}
\def\gl{{\rm{GL}}}

\def\spine{{\mathcal{X}(\calg)}}
\def\vcd{{\rm{vcd}}}
\def\cd{{\rm{cd}}}
\newcommand\genby[1]{\langle#1\rangle}
\newcommand\ngenby[1]{\langle\langle#1\rangle\rangle}
\newcommand\abs[1]{\lvert#1\rvert}
\newcommand\setcomp[2]{\{\,#1\mid #2\,\}}
\def\<{\langle}
\def\>{\rangle}

\newcommand{\st}{\mathrm{st}}
\newcommand{\lk}{\mathrm{lk}}
\newcommand{\supp}{\mathrm{supp}}
\newcommand{\crsupp}{\mathrm{crsupp}}

\newcommand{\leqgd}{\leq_{\mathcal{G}_\Delta}}

\DeclareMathOperator{\im}{im}

\title{Relative automorphism groups of right-angled Artin groups}
\author{Matthew B. Day and Richard D. Wade}   
\begin{document}

\begin{abstract} 

We study the outer automorphism group of a right-angled Artin group $A_\G$ with finite defining graph $\G$. We construct a subnormal series for $\out(A_\G)$ such that each consecutive quotient is either finite, free-abelian, $\gln$, or a Fouxe-Rabinovitch group. The last two types act respectively on a symmetric space or a deformation space of trees, so that there is a geometric way of studying each piece. As a consequence we prove that the group $\out(A_\G)$ is type VF (it has a finite index subgroup with a finite classifying space).

The main technical work is a study of relative outer automorphism groups of RAAGs and their restriction homomorphisms, refining work of Charney, Crisp, and Vogtmann. We show that the images and kernels of restriction homomorphisms are always simpler examples of relative outer automorphism groups of RAAGs. We also give generators for relative automorphism groups of RAAGs, in the style of Laurence's theorem. 
\end{abstract}

\address{}
\email{}

\maketitle

\section{Introduction}

A right-angled Artin group (RAAG) $A_\G$ is a group $A_\G$ generated by vertices of a graph $\G$ with a commutator relation $[v,w]=1$ whenever $v$ and $w$ are connected by an edge in $\G$. Special cases include free groups (when the graph has no edges) and free abelian groups (when the graph is complete). Such groups are ubiquitous in geometric group theory, and there is a blossoming study of their automorphism groups. 

There is a popular mantra that as right-angled Artin groups interpolate between free and free-abelian groups, their outer automorphism groups should interpolate between $\out(\F_n)$ and $\gln$. Putting this idea into practice is harder: for example, in many cases $\out(A_\G)$ is a finite group \cite{CF, Day} and there are examples where $\out(A_\G)$ is infinite but virtually abelian \cite{BF}. However, there are common properties shared by $\out(A_\G)$ as $\G$ varies over all graphs. For instance, $\out(A_\G)$ is always virtually torsion free with finite virtual cohomological dimension \cite{CVFiniteness} and always satisfies the Tits alternative \cite{2014arXiv1408.0546H}. The purpose of this article is to show that these groups always have a common algebraic decomposition (Theorem~\ref{th:decomposition}) and relate this decomposition to the structure of their classifying spaces (Theorem~\ref{th:vf}).

Recall that a group is of \emph{type F} if it has a classifying space that is a CW complex with finitely many cells, and it is of \emph{type VF} if it has a finite index subgroup of type F. Geometrically, $\gln$ acts on a deformation space of marked tori (symmetric space) and $\out(\F_n)$ acts on a deformation space of marked metric graphs (Culler and Vogtmann's Outer space). These actions are neither free nor cocompact, however become free after passing to a finite index subgroup and can be made cocompact (by passing to the Borel--Serre bordification in the case of symmetric space, or the spine in the case of Outer space). As these spaces are contractible, they imply that $\gln$ and $\out(\F_n)$ are of type VF. 

More generally, when a group $G$ has a free product decomposition \[ G= G_1 \ast G_2 \ast \cdots G_k \ast \mathbb{F}_m ,\]  the group of outer automorphisms acting by conjugation on each $G_i$ is called the \emph{Fouxe-Rabinovitch group} of the decomposition and acts on a \emph{relative Outer space} introduced by Guirardel and Levitt \cite{GL2}.  Each relative outer space is contractible (it is a deformation space of trees in the sense of \cite{GLDeformation})  and retracts onto a cocompact spine. The action of the Fouxe-Rabinovitch group on relative Outer space is not proper in general, however simplex stabilizers are well-understood. As a consequence, if each $G_i$ and its center $Z(G_i)$ has a finite classifying space, Guirardel and Levitt show that the Fouxe-Rabinovitch group is of type VF. Key examples of Fouxe-Rabinovitch groups include $\out(\mathbb{F}_n)$ itself and the subgroup of basis conjugating automorphisms. Our main result shows that, up to finite index, $\out(A_\G)$ can be built out of copies of Fouxe-Rabinovitch groups, $\gln$, and free abelian groups.

\begin{thmA}\label{th:decomposition}
Let $\G$ be a finite graph. Then there is a finite index subgroup $\out^0(A_\G)$ of $\out(A_\G)$ with a finite subnormal series \[1=N_0\leq N_1 \leq \dotsm\leq N_k=\out^0(A_\G),\] such that each consecutive quotient $N_{i+1} / N_{i}$ is isomorphic to \begin{itemize}
                                                                                                                                                                                                                       \item a finitely generated free abelian group,
                                                                                                                                                                                                                       \item $\gln$ for some $n$, or
                                                                                                                                                                                                                       \item a Fouxe-Rabinovitch group given by a free factor decomposition of a special subgroup $A_\Delta$ of $A_\G$.
                                                                                                                                                                                                                      \end{itemize}

\end{thmA}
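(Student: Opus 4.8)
The plan is to prove, by induction, a more flexible statement about the relative groups $\outgh$, with the theorem being the special case where the relative data $\calg,\calh$ is trivial. First I would pass to a finite-index subgroup $\out^0(A_\G)$ by killing the finite quotient through which $\out(A_\G)$ acts on the poset of domination equivalence classes (the graph symmetries permuting the classes and reversing the domination order). Crucially, this does \emph{not} shrink the $\gln$-blocks: the full $GL(n,\Z)$ acting on a clique equivalence class is generated by transvection-type and inversion-type Laurence generators internal to the class, none of which are graph symmetries, so the entire block survives. The whole argument should then be carried out at this finite-index, ``symmetry-free'' level, uniformly enough that the same reduction applies to every relative group produced along the way.

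The inductive engine is the restriction homomorphism. Working with the domination order (where $v\leq w$ when $\lk(v)\subseteq\st(w)$) and its induced poset of equivalence classes, I would select a class that is extremal for this order; this isolates a special subgroup $A_\Delta$ that is invariant up to conjugacy under $\outgh$ and yields a restriction homomorphism
\[ R:\outgh\longrightarrow \out^0(A_\Delta;\calg',\calh'^t), \]
fitting into a short exact sequence $1\to\ker R\to\outgh\to\im R\to 1$. Here I invoke the two main technical results promised in the abstract: both $\im R$ and $\ker R$ are again relative outer automorphism groups of RAAGs, but of strictly smaller complexity. By the inductive hypothesis each carries a subnormal series with free abelian, $\gln$, or Fouxe-Rabinovitch quotients, and splicing them — the series of $\ker R$ at the bottom, the pullback of the series of $\im R$ on top — gives a subnormal series of $\outgh$ whose quotients are of exactly the three allowed types.

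The base cases are the terminal situations in which no restriction reduces complexity, so that the poset has collapsed to a single equivalence class $[v]$. If $[v]$ spans a clique, its vertices generate a free abelian special subgroup and the surviving automorphisms are precisely the symmetry-free automorphisms of $\Z^n$, that is $\gln$. If $[v]$ spans an anticlique, its vertices form a free factor and the surviving automorphisms acting by conjugation on the factors prescribed by $\calg$ assemble into the Fouxe-Rabinovitch group of a free-factor decomposition of $A_\Delta$. The residual transvections between distinct comparable classes, once the nonabelian interactions have been stripped off by the restriction maps, commute and contribute finitely generated free abelian quotients. Thus every base case is one of the three listed types.

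The hard part will be organizing the induction so that it is simultaneously well-founded and exhaustive. I must fix a complexity measure on the triple $(\G,\calg,\calh)$ that strictly drops under every restriction homomorphism, confirm that the images and kernels genuinely lie in the class of relative automorphism groups to which the hypothesis applies, and — most delicately — check that the passage to the finite-index $\out^0$ is compatible with every restriction map, so that the resulting filtration is an honest subnormal series inside a \emph{single} finite-index subgroup rather than a chain that is only subnormal up to finite index. Controlling the centers $Z(A_\Delta)$ of the free-product factors (so that the Fouxe-Rabinovitch quotients are the intended groups) and tracking the directions of transvections consistently across the poset are where the bookkeeping is most demanding.
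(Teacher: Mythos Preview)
Your overall architecture matches the paper's: generalize to relative groups $\outgh$, induct on a complexity for the triple $(\G,\calg,\calh)$, and at each step use the restriction exact sequence (Theorem~\ref{th:restriction}) to break $\outgh$ into a kernel and an image of strictly smaller complexity, splicing their subnormal series. That part is fine and is exactly what the paper does in Theorem~\ref{pr:dismantleoutAGamma}.

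The genuine gap is in your base case analysis. You assert that when ``no restriction reduces complexity,'' the poset has collapsed to a single equivalence class, so the group is either $\gln$ (clique) or Fouxe--Rabinovitch (anticlique). This implication is false. Having all restriction maps trivial means $\outgh$ acts trivially on every invariant proper special subgroup; it does \emph{not} force $\G$ to be a single $\sim_\calg$-class. The paper's base case (Section~\ref{s:decomposition}) splits into five subcases according to connectivity of $\G$, $\calg$-connectivity, and the center $Z(A_\G)$. Two of these yield free abelian groups for reasons unrelated to your ``residual transvections'' picture: when $\G$ is disconnected but $\calg$-connected the group is generated by commuting partial conjugations (Proposition~\ref{co:disconnected_G_disconnected_type_F}), and when $\G$ is connected with trivial center one invokes Charney--Vogtmann's amalgamated restriction kernel (Proposition~\ref{co:connected_trivial_centre_type_f}).

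More seriously, one base case is not terminal at all: when $\G$ is connected and $Z(A_\G)$ is a proper nontrivial subgroup, restriction maps are exhausted but the group is not yet of one of the three types. Here the paper introduces a different tool, the \emph{projection} homomorphism $P_\Delta$ to $\out^0(A_{\G - Z})$ (Proposition~\ref{pr:es1}), whose kernel is free abelian and whose image has strictly fewer vertices, allowing the induction to continue. Your proposal never mentions projection maps, and without them the induction stalls in this case. Your sketch of the $\gln$ case is also slightly off: even when $A_\G=\Z^n$, triviality of restrictions does not give $\gln$ on the nose but rather an extension of some $\gl(m,\Z)$ by a free abelian group (Proposition~\ref{pr:no_restriction_gl}).
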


Following the discussion above, each of these pieces has a well-understood action on a Euclidean space, a symmetric space, or a deformation space of trees, respectively. 

 The \emph{principal congruence subgroup of level $l$} in $\out(A_\G)$ is the kernel of the induced action of $\out(A_\G)$ on $H_1(A_\G;\mathbb{Z}/l\mathbb{Z})$. This is a torsion-free subgroup when $l \geq 3$. 

\begin{thmA}\label{th:vf}
For $l \geq 3$, the principal congruence subgroup of level $l$ in $\out(A_\G)$ is of type F.
\end{thmA}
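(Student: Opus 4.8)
The plan is to bootstrap from Theorem~\ref{th:decomposition}. I would show that the principal congruence subgroup $\Gamma_l$ of level $l\ge 3$ inherits a subnormal series from the one on $\out^0(A_\G)$, that each consecutive quotient of the inherited series is of type F, and then conclude by the fact that type F is closed under extensions. Concretely, if $1\to A\to B\to C\to 1$ is exact with $A$ and $C$ of type F, then the fibration $BA\to BB\to BC$ exhibits $BB$ as homotopy equivalent to a finite complex, so $B$ is of type F; inducting this up the series will give the result.

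First I would verify the containment $\Gamma_l\le\out^0(A_\G)$: the subgroup $\out^0(A_\G)$ is cut out by discarding finite-order symmetries (permutations and inversions of the standard generators, all visible on $H_1(A_\G;\Z)$), and for $l\ge 3$ no such nontrivial symmetry can lie in the kernel of the action on $H_1(A_\G;\Z/l\Z)$. Intersecting $1=N_0\le\dots\le N_k=\out^0(A_\G)$ with $\Gamma_l$ then produces a subnormal series
\[ 1=N_0\cap\Gamma_l\le N_1\cap\Gamma_l\le\dots\le N_k\cap\Gamma_l=\Gamma_l, \]
and the standard isomorphism $(N_{i+1}\cap\Gamma_l)/(N_i\cap\Gamma_l)\cong (N_{i+1}\cap\Gamma_l)N_i/N_i$ embeds each consecutive quotient as a finite-index subgroup of $N_{i+1}/N_i$. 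The essential point, resting on compatibility of the homological congruence condition with the restriction homomorphisms used to build the series, is that this image lands in the level-$l$ congruence subgroup of $N_{i+1}/N_i$; in particular each inherited quotient is torsion-free.

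Next I would check type F for each of the three kinds of inherited quotient. A torsion-free finitely generated abelian group is free abelian and so has a torus as a finite classifying space. For the $\gln$ pieces the level-$l$ congruence subgroup is torsion-free by Minkowski's lemma and acts freely and cocompactly on the contractible Borel--Serre bordification of the associated symmetric space, hence is of type F. For the Fouxe-Rabinovitch pieces I would appeal to Guirardel--Levitt \cite{GL2,GLDeformation}: the free factors are special subgroups $A_\Delta$, which are themselves RAAGs and so have finite Salvetti complexes, while their centers are finitely generated free abelian; as both the factors and their centers are of type F, their argument shows that any torsion-free finite-index subgroup acts cocompactly on the contractible spine of relative Outer space with finitely generated free abelian (twist) stabilizers, and is therefore of type F.

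Feeding these facts into the extension-closure of type F and inducting up the inherited series shows $\Gamma_l=N_k\cap\Gamma_l$ is of type F. I expect the main obstacle to be the middle step: proving that intersecting $\Gamma_l$ with the series yields exactly the level-$l$ congruence subgroups of the quotient pieces, and hence torsion-free quotients. This requires tracking the homological congruence condition through the restriction homomorphisms and their kernels that constitute the series of Theorem~\ref{th:decomposition} — precisely the technical content of the restriction-homomorphism analysis — whereas the finiteness inputs for the individual pieces are either classical or already supplied by the cited work.
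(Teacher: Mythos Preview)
Your approach is essentially the paper's, and you correctly identify the crux: the compatibility of the congruence condition with the restriction (and projection) maps used to build the series. The paper proves exactly this compatibility as Theorem~\ref{th:pcongrestriction} and Proposition~\ref{pr:connected_nontrivial_centre_exact_sequence}, and then---rather than intersecting a fixed series with $\Gamma_l$---simply reruns the induction of Theorem~\ref{pr:dismantleoutAGamma} directly at the congruence level, proving the more general statement that $\out^{[l]}(A_\G;\calg,\calh^t)$ is of type F for all triples $(\G,\calg,\calh)$.

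There is one genuine gap in your treatment of the Fouxe--Rabinovitch pieces. Guirardel--Levitt's finiteness result (stated in the paper as Proposition~\ref{pr:GLtypef}) requires not only that each factor $G_i$ and its center be of type F, but also that the image of the finite-index subgroup under the projection $\out(G;\calg^t)\to\out(\mathbb{F}_m)$ be torsion-free. This second condition does not follow from the subgroup itself being torsion-free: an infinite-order element can certainly map to torsion. The paper verifies it by observing that the projection sends $\out^{[l]}(A_\G;\calg^t)$ into $\out^{[l]}(\mathbb{F}_m)$, which is torsion-free for $l\geq 3$---yet another instance of the compatibility you flagged, this time for the projection rather than restriction. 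So your phrase ``any torsion-free finite-index subgroup'' is too strong; you really do need the inherited quotient to be the congruence subgroup, not merely torsion-free.

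A minor correction: inversions lie in $\out^0(A_\G)$, so your description of $\out^0(A_\G)$ as discarding them is not right; the cosets are represented by graph symmetries permuting vertex equivalence classes (Proposition~\ref{pr:out0classification}), and the containment $\Gamma_l\leq\out^0(A_\G)$ is Proposition~\ref{pr:iap}.
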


This is not quite a direct consequence of Theorem~\ref{th:decomposition}, but the classifying spaces for the principal congruence subgroups are built up out of the classifying spaces for (finite index subgroups of) the pieces. As each principal congruence subgroup is finite index in $\out(A_\G)$, this implies:

\begin{thmA}\label{th:f}
For every defining graph $\G$, the group $\out(A_\G)$ is of type VF.
\end{thmA}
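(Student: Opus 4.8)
The plan is to obtain Theorem~\ref{th:f} as a formal consequence of Theorem~\ref{th:vf} together with the elementary fact that the principal congruence subgroups are of finite index in $\out(A_\G)$. Recall that a group is of type VF exactly when it contains a finite-index subgroup of type F, so it suffices to produce one such subgroup inside $\out(A_\G)$.

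First I would fix some $l \geq 3$ and consider the principal congruence subgroup of level $l$, that is, the kernel $\Gamma_l$ of the action of $\out(A_\G)$ on $H_1(A_\G;\Z/l\Z)$. Theorem~\ref{th:vf} tells us that $\Gamma_l$ is of type F, so the only remaining point is to verify that $[\out(A_\G):\Gamma_l]$ is finite. For this, observe that $H_1(A_\G;\Z/l\Z)\cong(\Z/l\Z)^n$, where $n$ is the number of vertices of $\G$, so its automorphism group is the finite group $\glnp$. The action defining the congruence subgroup factors through a homomorphism $\out(A_\G)\to\glnp$ whose kernel is precisely $\Gamma_l$; since the target is finite, $\Gamma_l$ has finite index. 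A finite-index subgroup of type F certifies that $\out(A_\G)$ is of type VF, completing the argument.

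I do not anticipate any genuine obstacle at this stage: the entire difficulty of the result is absorbed into Theorem~\ref{th:vf}, and the present deduction is purely formal. The single point deserving (minimal) care is confirming that the principal congruence subgroup is the full kernel of a map to a \emph{finite} group, which is immediate once $H_1(A_\G;\Z/l\Z)$ is identified as a finite abelian group. Thus the proof amounts to combining the type F statement with this finite-index observation.
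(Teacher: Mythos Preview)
Your proposal is correct and matches the paper's own argument exactly: the paper deduces Theorem~\ref{th:f} immediately from Theorem~\ref{th:vf} by noting that each principal congruence subgroup has finite index in $\out(A_\G)$, which is precisely the observation you spell out.
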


Charney, Stambaugh, and Vogtmann have shown that the \emph{untwisted subgroup} $U(A_\G)$ of $\out(A_\G)$ acts properly and cocompactly on a contractible simplicial complex $K_\G$, which plays the same role as the spine of Outer space \cite{MR3626599}. This implies Theorem~\ref{th:vf} and Theorem~\ref{th:f} when $U(A_\G)$ is finite index in $\out(A_\G)$. This occurs when there is no pair of distinct vertices $v$,$w$ in $\G$ such that $\st(v) \subset \st(w)$. We do not make any assumptions on the defining graph.

\subsection{Relative automorphism groups}

The key to proving the above results is to work in the setting of \emph{relative automorphism groups}. These are natural generalizations of parabolic subgroups in $\gln$ and stabilizers of free factors in $\out(\F_n)$.

For any group $G$, if $\Phi$ is an element of $\out(G)$ and $H$ is a subgroup of $G$, we say that $\Phi$ \emph{preserves} $H$ (or $H$ is \emph{invariant} under $\Phi$) if there exists a representative automorphism $\phi \in \Phi$ such that $\phi(H)=H$, and we say that $\Phi$ \emph{acts trivially on} $H$ if there is a representative of $\Phi$ restricting to the identity on $H$. 
If $\calg$ and $\calh$ are families of subgroups of $G$, then the \emph{relative outer automorphism group} $\out(G;\calg,\calh^t)$ is the group of outer automorphisms which preserve each element of $\calg$ and act trivially on each element of $\calh$. As mentioned above, a classic example is the group of matrices in $\gln$ preserving a given flag in $\mathbb{Z}^n$. However, such automorphism groups have also appeared in the context of hyperbolic groups (where the peripheral structure comes from vertices of a JSJ decomposition \cite{MR2174093}) and in the study of automorphism groups of free groups (where $\calg$ can be a free factor system \cite{2013arXiv1302.2681H} or a set of conjugacy classes in the group \cite{GL}).  Importantly for us, Fouxe-Rabinvotich groups are relative automorphism groups, where each element $G_i$ in the free factor decomposition of $G$ is added to $\mathcal{H}$. A more detailed survey on  relative automorphism groups of RAAGs in the literature is given in Section~\ref{ss:previously}.

In this paper we study relative automorphism groups where $G=A_\G$ is a RAAG and the sets $\calg$ and $\calh$ consist of \emph{special subgroups} of $A_\G$. Each special subgroup is a RAAG $A_\Delta$ given by a full subgraph $\Delta$ of $\Gamma$. If $A_\Delta$ is a special subgroup that is invariant under $\out(G;\calg,\calh^t)$ then there is a restriction homomorphism 
\[R_\Delta \colon \out(A_\Gamma; \calg,\calh^t) \to \out(A_\Delta),\] 
which allows for inductive arguments based on the number of vertices in the graph. 
Charney, Crisp and Vogtmann introduced restriction homomorphisms in \cite{MR2372847}, where they studied automorphisms of triangle-free RAAGs. Going back to the results known for an arbitrary graph $\G$ mentioned at the beginning of the introduction, these methods were extended by Charney and Vogtmann to show that for any graph $\G$ the group $\out(A_\G)$ has finite virtual cohomological dimension \cite{CVFiniteness} and is residually finite \cite{CVSubQuot}. Restriction maps also form an important part of the proof of the Tits alternative for $\out(A_\G)$ (which was completed by Horbez \cite{2014arXiv1408.0546H} using the work in \cite{CVSubQuot}). Other recent results about automorphisms of RAAGs use invariance of special subgroups or restriction maps in an essential way (see, for example, \cite{HK,GS,Kielak,W1}).

In the remainder of the introduction, we will state our main results about relative automorphism groups, and sketch the key ideas which surround them and the proof of Theorem~\ref{th:decomposition}. In the sequel we mostly work in the finite index subgroup $\out^0(A_\G)$, which is generated by elements called \emph{inversions}, \emph{transvections}, and \emph{extended partial conjugations} (for the experts: an extended partial conjugation by $x$ is a product of partial conjugations on distinct components of $\G - \st(x)$).  We use $\outgh$ to denote the intersection of $\out^0(A_\G)$ with $\out(A_\G;\calg,\calh^t)$.  We first show that $\outgh$ is generated by the same types of elements as $\out^0(A_\G)$:

\begin{thmA}\label{th:generators}
If $\calg$ and $\calh$ are sets of special subgroups in $A_\G$, then the relative automorphism group $\outgh$ has a finite generating set consisting of the inversions, transvections, and extended partial conjugations which it contains.
\end{thmA}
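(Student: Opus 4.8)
The plan is to argue by induction on the number of vertices of $\G$, using the restriction homomorphisms $R_\Delta$ as the inductive engine. The induction bottoms out at the classical cases in which $A_\G$ is free or free abelian. When $\G$ is discrete, $A_\G=\F_n$ and $\outgh$ is a McCool/Fouxe--Rabinovitch-type subgroup of $\outn$, whose generation by the relevant transvections and partial conjugations is known; when $\G$ is complete, $A_\G=\Z^n$, $\out(A_\G)=\gln$, and $\outgh$ is a parabolic-type subgroup cut out by the flag conditions from $\calg$ and the fixed-sublattice conditions from $\calh$, generated by the elementary transvections it contains by standard linear algebra. These, together with the irreducible cases that reduce to them, anchor the induction.

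For the inductive step, I would first locate a proper special subgroup $A_\Delta$ that is invariant under all of $\outgh$. Such subgroups arise from the combinatorics of $\G$ --- from the domination preorder $v\preceq w\iff\lk(v)\subseteq\st(w)$ and from the component structure of the stars --- and I would choose $\Delta$ proper and as large as possible. The restriction homomorphism then fits into a short exact sequence
\[
1\longrightarrow \ker R_\Delta \longrightarrow \outgh \xrightarrow{\ R_\Delta\ } \im R_\Delta \longrightarrow 1,
\]
so it suffices to generate both the image and the kernel by inversions, transvections, and extended partial conjugations, and then to lift a generating set of the image back into $\outgh$ by nice elements.

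The engine is to identify $\im R_\Delta$ with a relative outer automorphism group $\out^0(A_\Delta;\calg',{\calh'}^t)$ for families $\calg',\calh'$ of special subgroups of $A_\Delta$ built from $\calg,\calh$ by intersecting with $\Delta$ and recording the traces forced by the conditions on $\G$. Granting this, induction (fewer vertices) produces a generating set of the image by nice elements of $A_\Delta$. The kernel consists of classes acting trivially on $A_\Delta$; a representative fixes every vertex of $\Delta$, so the kernel is generated by the inversions, transvections $v\mapsto vw$, and partial conjugations whose moving vertices lie in $V(\G)\ssm\Delta$. Since the kernel has the \emph{same} number of vertices, it is not handled by the primary induction; I would treat it either directly as a relative Fouxe--Rabinovitch computation on the complementary vertices, or via a secondary induction on a lexicographic complexity in which adjoining $A_\Delta$ to $\calh$ strictly decreases the number of unconstrained vertices.

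The main obstacle is twofold, and it is precisely the technical heart flagged in the abstract. First, one must show that $\im R_\Delta$ is \emph{genuinely} a relative automorphism group on fewer vertices, not merely a subgroup of one: without the reverse inclusion, induction only bounds the image from outside and yields no generators. Second, and more delicate, is lifting. A transvection of $A_\Delta$ need not extend to a transvection of $A_\G$, because $\lk_\Delta(v)\subseteq\st_\Delta(w)$ is strictly weaker than $\lk_\G(v)\subseteq\st_\G(w)$, the links in $\G$ being larger. I therefore expect the real work to lie in choosing $\Delta$ so that dominations within $\Delta$ persist in $\G$ (equivalently, so that every nice generator produced by induction is the restriction of a nice generator of $A_\G$ lying in $\outgh$), and in verifying that the defining conditions of $\outgh$ propagate correctly under $R_\Delta$ so that the two relative automorphism groups match on the nose.
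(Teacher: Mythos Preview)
Your approach is genuinely different from the paper's, and while it is not obviously doomed, it has a structural problem that you have not addressed.

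The paper does \emph{not} argue by induction on $|V(\G)|$ via restriction maps.  Instead it goes \emph{upward}: it builds an extension $\widehat\G\supset\G$ (the \emph{relative cone graph}) by coning off each $\Delta$ with $A_\Delta\in\calg$, and coning off $\G$ itself twice.  In $\widehat\G$, each $A_\Delta\in\calg\cup\{A_\G\}$ is automatically invariant under $\out^0(A_{\widehat\G})$, and every $\phi\in\aut^0(A_\G;\calg)$ extends to $\tilde\phi\in\aut^0(A_{\widehat\G})$.  Laurence's theorem on $A_{\widehat\G}$ then gives generators, and restricting these back to $A_\G$ proves the result for $\out^0(A_\G;\calg)$ in one shot, with no induction.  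The passage from $\calg$ to $(\calg,\calh)$ is then a short reshuffling argument: the only extended Laurence generators in $\out^0(A_\G;\calg)$ not in $\outgh$ are certain inversions, and these can be pushed to the front and shown to cancel.

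Your downward scheme runs into a circularity that you have not disentangled.  You need $\im R_\Delta$ to equal $\out^0(A_\Delta;\calg',(\calh')^t)$ \emph{on the nose}, not merely to be contained in it; otherwise induction gives you nothing.  But the paper's proof that $\out^0(A_\Delta;\calg_\Delta,\calh_\Delta^t)\subset\im R_\Delta$ (Theorem~E, Step~3) proceeds by lifting each \emph{generator} of the target --- and knowing what those generators are is precisely Theorem~D on the smaller graph.  So you are really proposing a simultaneous induction proving Theorems~D and~E together, which you have not set up.  Worse, the kernel $\out^0(A_\G;\calg,(\calh\cup\{A_\Delta\})^t)$ lives on the same vertex set, so your secondary induction must terminate in cases where every restriction is trivial; the paper handles those cases in Section~5 (free abelian, Fouxe--Rabinovitch, etc.), but that analysis again \emph{uses} Theorem~D.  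Finally, the lifting problem you flag --- that $u\leq_\Delta v$ need not imply $u\leq_\G v$ --- is resolved in the paper only under a \emph{saturation} hypothesis on $\calg$ (Proposition~4.1), which itself depends on the characterization of invariant subgroups (Proposition~3.8), which in turn depends on Theorem~D.  The cone-graph trick cuts this entire knot by appealing directly to Laurence on a single larger graph.
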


The proof is given in Section~\ref{se:generators} and goes via Laurence's theorem \cite{Laurence}, which gives a generating set for the whole group $\out(A_\G)$. The key trick is to extend $A_\G$ to a larger RAAG $A_{\widehat \G}$ in such a way that $A_\G$ and every special subgroup of $\calg$ and $\calh$ is invariant under $\out^0(A_{\widehat\G})$.  
One property of this construction is that every automorphism in $\outgh$ extends to an element of $\out^0(A_{\widehat\G})$, and we can then deduce Theorem~\ref{th:generators} by restricting Laurence's generators of $\out^0(A_{\widehat\G})$ to $A_\Gamma$.

The following two observations form the backbone of the remainder of the paper:

\begin{itemize}
\item 
Given a RAAG $A_\G$, unless the group is free abelian or a free group, there is a nonempty set of proper special subgroups preserved by $\out^0(A_\G)$.
This means that, up to finite index, $\out(A_\G)$ already has an interesting peripheral structure and associated restriction homomorphisms. \item An arbitrary restriction homomorphism \[ R_\Delta \colon \out^0(A_\G;\calg,\calh^t) \to \out(A_\Delta) \] is not surjective in general, however we show that its image can be described as a relative outer automorphism group.
\end{itemize}

Let us describe the image of a restriction map $R_\Delta$ in more detail. Given sets of special subgroups $\calg$ and $\calh$, we say that $\calg$ is \emph{saturated} with respect to the pair $(\calg,\calh)$ if $\calg$ contains every proper special subgroup invariant under $\out^0(A_\G;\calg,\calh^t)$. Given a special subgroup $A_\Delta$, we use $\calg_\Delta$ to denote 
\[\calg_\Delta=\{A_{\Delta\cap\Theta} | A_\Theta\in\calg\}-\{A_\Delta\}.\] 
In words, $\calg_\Delta$ consists of the proper intersections of elements of $\calg$ with $A_\Delta$. We define $\calh_\Delta$ in the same fashion. 

\begin{thmA}\label{th:restriction}
Let $\mathcal{G}$ and $\mathcal{H}$ be sets of special subgroups in $A_\G$ such that $A_\Delta \in \calg$ and suppose that $\calg$ is saturated with respect to the pair $(\calg,\calh)$.
Then the restriction homomorphism \[ R_{\Delta} \colon \out^0(A_\G; \mathcal{G}, \mathcal{H}^t) \to \out(A_\Delta) \] has image equal to the relative automorphism group $$\im R_{\Delta} = \out^0(A_{\Delta};\calg_{\Delta}, \calh_{\Delta}^t)$$ and kernel equal to $$\ker {R_\Delta} = \out^0(A_\G; \calg, (\calh \cup \{A_{\Delta}\})^t ). $$
\end{thmA}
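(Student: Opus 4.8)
The plan is to prove the two equalities separately, handling the kernel first and then the image, and to lean throughout on the generating set from Theorem~\ref{th:generators} together with two elementary facts about special subgroups of a RAAG: that $A_\Delta\cap A_\Theta=A_{\Delta\cap\Theta}$, and that conjugation by an element of $A_\Delta$ is an inner automorphism of $A_\Gamma$ restricting to the corresponding inner automorphism of $A_\Delta$.

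For the kernel I would argue directly. If $\Phi$ acts trivially on $A_\Delta$ then some representative restricts to the identity there, so $R_\Delta(\Phi)=1$; this gives the inclusion $\out^0(A_\Gamma;\calg,(\calh\cup\{A_\Delta\})^t)\subseteq\ker R_\Delta$. Conversely, if $\Phi\in\ker R_\Delta$ then a representative $\phi$ preserving $A_\Delta$ restricts to an inner automorphism $\mathrm{ad}_g$ of $A_\Delta$ with $g\in A_\Delta$; precomposing $\phi$ with conjugation by $g^{-1}$, an inner automorphism of $A_\Gamma$, yields another representative of $\Phi$ that is the identity on $A_\Delta$, so $\Phi$ acts trivially on $A_\Delta$. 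Since passing between representatives does not change the outer class, the defining conditions for membership in $\outgh$ are unaffected, and the two inclusions give the claimed equality.

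For the image I would establish the two inclusions. For $\im R_\Delta\subseteq\out^0(A_\Delta;\calg_\Delta,\calh_\Delta^t)$, I would write a given $\Phi\in\outgh$ as a product of the inversions, transvections, and extended partial conjugations it contains, using Theorem~\ref{th:generators}, and restrict factor by factor. Each factor restricts to a generator of $A_\Delta$ of the same type or to the identity, placing $R_\Delta(\Phi)$ in $\out^0(A_\Delta)$; and a direct combinatorial check — for instance, that a transvection $v\mapsto vw$ preserving $A_\Theta$ and $A_\Delta$ has $w\in\Delta\cap\Theta$ whenever $v\in\Delta\cap\Theta$ — shows that each restricted generator preserves every $A_{\Delta\cap\Theta}\in\calg_\Delta$ and acts trivially on each element of $\calh_\Delta$. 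Carrying this out uniformly over the three generator types and over $\calg_\Delta$ and $\calh_\Delta$ is routine but is the bookkeeping core of this inclusion.

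The substantive inclusion, and the step I expect to be the main obstacle, is surjectivity: $\out^0(A_\Delta;\calg_\Delta,\calh_\Delta^t)\subseteq\im R_\Delta$. Applying Theorem~\ref{th:generators} to $A_\Delta$, it suffices to lift each inversion, transvection, and extended partial conjugation lying in the target group to an element of $\outgh$ restricting to it. Inversions are the most straightforward case, since an inversion preserves every special subgroup; the only thing to verify is that its trivial-action constraints are inherited correctly through restriction. The delicate cases are transvections and extended partial conjugations, where the naive lift can fail to be a valid automorphism of $A_\Gamma$: a domination $\lk_\Delta(v)\subseteq\st_\Delta(w)$ holding inside $\Delta$ need not promote to $\lk_\Gamma(v)\subseteq\st_\Gamma(w)$, and the components of $\Gamma-\st(x)$ refine those of $\Delta-\st(x)$. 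The engine for resolving this is that $A_\Delta\in\calg$ is invariant: any vertex $u\in\lk_\Gamma(v)\smallsetminus\st_\Gamma(w)$ lying outside $\Delta$ would, via a partial conjugation by $u$, move $w$ out of $A_\Delta$, contradicting invariance and thereby forcing $\lk_\Gamma(v)\subseteq\st_\Gamma(w)$; an analogous analysis organizes the components for extended partial conjugations. The genuine difficulty is that invariance is only guaranteed relative to $\outgh$ rather than all of $\out^0(A_\Gamma)$, so one must know that the obstructing partial conjugations themselves lie in $\outgh$; this is exactly the point at which the saturation hypothesis on $\calg$ is indispensable, since it guarantees that the proper invariant special subgroups needed to certify these automorphisms are already present in $\calg$. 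Making this interplay precise — translating the failure of a lift into an invariant special subgroup that saturation places in $\calg$, or into a violation of the $\calg_\Delta$-conditions that removes the offending generator from the target — is the heart of the argument, and I would structure it as a case analysis over the three generator types.
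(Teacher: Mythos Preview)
Your overall architecture matches the paper's: kernel by direct argument, forward inclusion of the image, then surjectivity by lifting generators via Theorem~\ref{th:generators}. The kernel and the forward inclusion are fine; in fact your generator-by-generator restriction for the forward inclusion is arguably cleaner than the paper's abstract route (which separately invokes preservation of intersections and invariance of the subgroups $A^\Delta_{\geq v}$ to land in $\out^0(A_\Delta)$).

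The gap is in your surjectivity sketch for transvections. You propose: if $u\in\lk_\Gamma(v)\smallsetminus\st_\Gamma(w)$ with $u\notin\Delta$, a partial conjugation by $u$ would move $w$ out of $A_\Delta$, contradicting invariance. But this does not yield a contradiction. Invariance of $A_\Delta$ is only guaranteed under elements of $\outgh$, and there is no reason your partial conjugation $\pi^u_C$ lies in $\outgh$; indeed your own argument (that it fails to preserve $A_\Delta\in\calg$) shows it does not. So the implication runs the wrong way: you conclude $\pi^u_C\notin\outgh$, which is no obstruction to anything. Saturation does not put automorphisms into $\outgh$; it puts invariant special subgroups into $\calg$.

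The mechanism the paper uses --- and which you correctly name at the end as the alternative, ``translating the failure of a lift into an invariant special subgroup that saturation places in $\calg$'' --- is the one that actually works. Concretely: for any $x\notin\Delta$ the subgroup $A_{\lk(x)\cap\Delta}$ is invariant under $\out^0(A_\Gamma;\calg)$ (Corollary~\ref{co:peripheralinvariant2}), hence by saturation lies in $\calg$ and therefore in $\calg_\Delta$. If now $u\in\lk_\Gamma(v)\smallsetminus\st_\Gamma(w)$ with $u\notin\Delta$, then $v\in\lk(u)\cap\Delta$ but $w\notin\lk(u)\cap\Delta$, so $A_{\lk(u)\cap\Delta}\in\calg_\Delta$ witnesses $v\not\leq_{\calg_\Delta}w$, contradicting the assumption that $\rho^w_v$ lies in the target group. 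This is Proposition~\ref{pr:inducedorder}. The analogous device for extended partial conjugations uses the invariant subgroups $A_{N_{\calg^x}(\Theta)\cap\Delta}$ from Corollary~\ref{co:peripheralinvariant} to show that $\calg^x_\Delta$-components of $\Delta-\st_\Delta(x)$ are exactly the intersections with $\Delta$ of $\calg^x$-components of $\Gamma-\st_\Gamma(x)$ (Proposition~\ref{pr:inducedcomponents}). You should replace the partial-conjugation contradiction with this invariant-subgroup argument; the rest of your plan then goes through.
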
 

Theorem~\ref{th:restriction} can be summarized as an exact sequence:
\[
\begin{split}
1\to \out^0(A_\G;\calg,(\calh\cup\{A_{\Delta}\})^t) \to  &\out^0(A_\G;\calg,\calh^t) \\
& \stackrel{R_{\Delta}}{\longrightarrow} \out^0(A_{\Delta};\calg_{\Delta},\calh_{\Delta}^t)\to 1.
\end{split}
\]
The description of the kernel of $R_\Delta$ is fairly straightforward, as is showing that the image of $R_\Delta$ is contained in $\out^0(A_{\Delta};\calg_{\Delta},\calh_{\Delta}^t)$. The hard work goes into showing that every element of $\out^0(A_{\Delta};\calg_{\Delta},\calh_{\Delta}^t)$ is contained in the image of $R_\Delta$. This is achieved by carefully lifting each generator of $\out^0(A_{\Delta};\calg_{\Delta},\calh_{\Delta}^t)$ to an element of $\out^0(A_\G;\calg,\calh^t)$.
We develop a clear picture of what the generators in each of these groups look like along the way (Proposition~\ref{pr:gensrelgh}).

To work examples, we also need to find images of restriction maps when $\calg$ is not saturated.
In Proposition~\ref{pr:easier_way_to_find_peripheral_structure}, we explain how to extend $\calg$ to a collection that is just big enough for the image in Theorem~\ref{th:restriction} to be correct.
This means that we do not need to compute saturations of sets of special subgroups by exhaustion, and it makes it possible to perform computations by hand on medium-sized examples.
We give examples in Section~\ref{s:examples} which the reader may find helpful to refer to for illustrations of key definitions and results. 

As a companion to Theorem~\ref{th:restriction}, in Section~\ref{s:decomposition} we determine what happens when a relative automorphism group has no nontrivial restriction map. In this case, it is either isomorphic to one of the three classes of groups listed in Theorem~\ref{th:decomposition} or there is a simplifying \emph{projection homomorphism}. This allows us to prove Theorem~\ref{pr:dismantleoutAGamma}, which states that every relative automorphism group has a subnormal series of the type given in Theorem~\ref{th:decomposition}, deducing the theorem as a special case. The proof follows an induction argument using an appropriate notion of complexity and the exact sequence given by Theorem~\ref{th:restriction}.

The proof of Theorem~\ref{th:vf} (each principal congruence subgroup has a finite classifying space) also proceeds in the relative setting and is again an induction argument. Here we use a `principal congruence subgroup' version of the exact sequence for restriction maps (see Theorem~\ref{th:pcongrestriction}), and couple this with the fact that type F is preserved under extensions by groups that are also of type F. Salvetti complexes for RAAGs, Guirardel and Levitt's Outer space for free products \cite{GL2} (with Culler--Vogtmann Outer space as a special case \cite{MR830040}), and the Borel--Serre bordification of symmetric space \cite{MR0387495} are used to handle the cases where all restriction maps are trivial.

We hope that our inductive scheme will be used to prove other results about automorphism groups of RAAGs.
For example, our approach seems to work well for computing virtual cohomological dimension of $\out(A_\G)$, in special cases.
We give two example computations in Section~\ref{ss:vcdeg}.


\thanks{The first author was supported by the National Science Foundation under
Grant No. DMS-1206981. Both authors were supported by the National Science
Foundation under Grant No. DMS-1440140 while in residence at the
Mathematical Sciences Research Institute in Berkeley, California, during
the Fall 2016 semester. Both authors would like to thank Benjamin Br\"uck and Andrew Sale for helpful comments on earlier versions of the paper.}

\section{Background}
Let $A_\G$ be a right-angled Artin group (RAAG). The presentation of this group is determined by the graph $\G$. There is a generator for each vertex $v$ in the vertex set $V(\G)$, and a relation $[v,w]=vwv^{-1}w^{-1}=1$ whenever vertices $v$ and $w$ are connected by an edge.  We will often blur the distinction between a vertex of the graph and the element of the group it represents.

For a subset $S$ of a group $G$, we write $C(S)$ and $N(S)$ for the centralizer and normalizer of $S$, respectively.
(When $S=\{g\}$ we write $C(g)$.) We use $\genby{S}$ and $\ngenby{S}$ to denote the subgroup generated by $S$ and the normal subgroup generated by $S$, respectively. For a subgroup $H$ of $G$, we use $Z(H)$ for the center of $H$.

\subsection{Special subgroups, links, and stars.}

When we talk about a subgraph $\Delta$ of $\G$ we will require that $\Delta$ is \emph{full}; any edge in $\G$ connecting two vertices in $\Delta$ is also an edge in $\Delta$. 

Given a vertex $v \in \G$, the \emph{link} $\lk(v)$ is the full subgraph of $\G$ spanned by vertices adjacent to $v$ in $\G$. The \emph{star} $\st(v)$ of $v$ is the full subgraph of $\G$ spanned by $\lk(v) \cup \{v\}$.  The \emph{link} of a subgraph $\Delta$ is the (possibly empty) intersection of the links of its vertices, like so: \[\lk(\Delta)=\bigcap_{v \in\Delta} \lk(v).\]
 The \emph{star} $\st(\Delta)$ of $\Delta$ is then the subgraph spanned by the vertices in $\lk(\Delta) \cup \Delta$.

Suppose $\Delta$ is a full subgraph of $\G$.
Sometimes we think of $A_\Delta$ as a RAAG in its own right, but more often, we identify $A_\Delta$ with the subgroup $\genby{\Delta}$ of $A_\Gamma$ generated by the vertices of $\Delta$.
This makes sense because the inclusion map $\Delta\to\Gamma$ defines an injective group homomorphism $A_\Delta\to A_\Gamma$ with image $\genby{\Delta}$.
The \emph{special subgroups} of $A_\Gamma$ are exactly the subgroups $A_\Delta$ as $\Delta$ varies over the subgraphs of $\Gamma$.

If $v$  is a vertex of $\Delta$, we can consider the link and star of $v$ both in $\Delta$ and in $\G$.
These can differ, and we use subscripts to distinguish these when necessary.
We write $\lk_\Delta(v)$, $\lk_\G(v)$, $\st_\Delta(v)$, and $\st_\G(v)$ for these.

For a subgraph $\Delta$ of $\Gamma$, let $Z(\Delta)$ denote the subgraph of $\Delta$ consisting of vertices that are adjacent to all other vertices of $\Delta$.
The following fact is a consequence of Servatius' Centralizer Theorem \cite{Servatius}, and has been noted often before (our phrasing follows Hensel--Kielak~\cite{HK}).

\begin{proposition}\label{pr:ZCN}
For a special subgroup $A_\Delta$ of $A_G$, we have
\begin{itemize}
\item $Z(A_\Delta)=A_{Z(\Delta)}$,
\item $C(A_\Delta)=A_{\lk(\Delta)\cup Z(\Delta)}$, and
\item $N(A_\Delta)=A_{\st(\Delta)}$.
\end{itemize}
\end{proposition}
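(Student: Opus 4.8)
The plan is to derive all three statements from two standard inputs: Servatius' Centralizer Theorem in the special case that the centralized element is a single vertex, which gives $C(v)=A_{\st(v)}$ for every vertex $v$; and the \emph{support} invariant for RAAGs. Recall that every $g\in A_\G$ has a well-defined support $\supp(g)\subseteq V(\G)$, the set of vertices occurring in any reduced word for $g$, and that $g\in A_\Theta$ if and only if $\supp(g)\subseteq\Theta$ for a full subgraph $\Theta$. From this I extract two consequences I will use repeatedly: the lattice identity $A_{\Theta_1}\cap A_{\Theta_2}=A_{\Theta_1\cap\Theta_2}$ for all full subgraphs, and the existence, for every full $\Theta$, of a retraction $\pi_\Theta\colon A_\G\to A_\Theta$ sending each vertex outside $\Theta$ to the identity (this respects the defining relations because killing a vertex cannot destroy a commutation).

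For the centralizer I would observe that $g$ commutes with $A_\Delta=\genby{\Delta}$ exactly when it commutes with each generating vertex, so $C(A_\Delta)=\bigcap_{v\in\Delta}C(v)=\bigcap_{v\in\Delta}A_{\st(v)}$. Applying the lattice identity, this equals $A_{\Theta}$ with $\Theta=\bigcap_{v\in\Delta}\st(v)$. A short vertex-by-vertex check identifies $\Theta$: a vertex $w$ lies in every $\st(v)$ iff for each $v\in\Delta$ it is equal or adjacent to $v$; splitting on whether $w\in\Delta$ yields $\Theta=Z(\Delta)\cup\lk(\Delta)$, which is the second bullet. The first bullet then follows formally, since $Z(A_\Delta)=A_\Delta\cap C(A_\Delta)=A_\Delta\cap A_{Z(\Delta)\cup\lk(\Delta)}=A_{\Delta\cap(Z(\Delta)\cup\lk(\Delta))}$ by the lattice identity. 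Because no vertex is adjacent to itself we have $\Delta\cap\lk(\Delta)=\varnothing$, so the intersection collapses to $Z(\Delta)$ and $Z(A_\Delta)=A_{Z(\Delta)}$.

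The normalizer is where the real work lies. The inclusion $A_{\st(\Delta)}\subseteq N(A_\Delta)$ is easy: the vertices of $\Delta$ normalize $A_\Delta$ from within, while the vertices of $\lk(\Delta)$ even centralize it. For the reverse inclusion, which I expect to be the main obstacle, I would use the retraction $\pi=\pi_{\st(\Delta)}$. Given $g\in N(A_\Delta)$, set $g'=\pi(g)\in A_{\st(\Delta)}$. Since $\pi$ fixes $A_\Delta$ pointwise (as $\Delta\subseteq\st(\Delta)$) and $gvg^{-1}\in A_\Delta$ for every $v\in\Delta$, applying $\pi$ to $gvg^{-1}$ gives $g'vg'^{-1}=gvg^{-1}$. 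Hence $c:=g'^{-1}g$ commutes with every $v\in\Delta$, so $c\in C(A_\Delta)=A_{Z(\Delta)\cup\lk(\Delta)}\subseteq A_{\st(\Delta)}$ by the second bullet. Therefore $g=g'c\in A_{\st(\Delta)}$, which closes the argument.

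The only genuinely delicate points, and the places where one actually leans on normal-form theory rather than formal manipulation, are the legitimacy of the support description of special subgroups (used for the lattice identity) and the verification that $\pi$ is a well-defined homomorphism restricting to the identity on $A_{\st(\Delta)}$; once these are granted, all three bullets drop out of Servatius' single-vertex centralizer computation and the retraction trick above.
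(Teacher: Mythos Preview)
Your proof is correct. Note, however, that the paper does not actually prove this proposition: it simply states that the result is a consequence of Servatius' Centralizer Theorem and refers to Hensel--Kielak for the phrasing. So there is no detailed argument in the paper to compare against; you have supplied one where the authors chose to cite.

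That said, your approach is exactly the kind of argument one would expect to lie behind the citation. The centralizer and center computations are immediate from the single-vertex case $C(v)=A_{\st(v)}$ together with the lattice identity $A_{\Theta_1}\cap A_{\Theta_2}=A_{\Theta_1\cap\Theta_2}$. The only part with genuine content is the normalizer, and your retraction trick handles it cleanly: projecting $g\in N(A_\Delta)$ to $g'=\pi_{\st(\Delta)}(g)$ and observing that $g'^{-1}g$ centralizes $A_\Delta$ (hence lies in $A_{\st(\Delta)}$ by the already-established second bullet) is an efficient way to avoid invoking the full strength of Servatius' theorem for arbitrary elements. One could alternatively argue via cyclically reduced forms and the description of centralizers of arbitrary elements, which is closer to how Servatius' paper is usually quoted, but your route is more self-contained once the single-vertex centralizer is granted.
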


\subsection{Words and elements}

We will look at words and word length in $A_\G$ with respect to the standard generating set $V(\G)$. A word $w$ representing an element of $g$ is \emph{reduced} if there exists no subword of the form $v^{\pm 1}w'v^{\mp 1}$, where $w'$ is a subword consisting of letters in $\st(v)$. The length of an element $g$ in the word metric is the length of any of its reduced representatives and one can pass between two reduced representatives by a sequences of swaps of consecutive commuting letters.  For $g\in A_\G$, the \emph{support} of $g$ is the smallest full subgraph $\supp(g)$ of $\G$ with $g\in\genby{\supp(g)}$.
An element $g$ is \emph{cyclically reduced} if it is of minimal word length among its conjugates. We will find it useful to have a notation for the support of the cyclic reduction of an element; we write $\crsupp(g)$ for this.

\subsection{The standard ordering on $V(\G)$}
\label{se:standardordering}

 The \emph{standard ordering} on the vertex set $V(\G)$ of $\G$ is the binary relation given by $u \leq v$ if $\lk(u) \subset \st(v)$. This relation is sometimes called \emph{domination}.
 This is a partial preorder, and induces an equivalence relation on the vertices where $u \sim v$ if $v \leq u$ and $u \leq v$. 
There is an induced partial ordering of equivalence classes in $V(\G)$, where we say that $[u] \leq [v]$ if for some (equivalently, any) representatives $v' \in [v]$ and $u' \in [u]$ of the equivalence classes we have $u' \leq v'$. A vertex $v$ in $\G$ is \emph{maximal} if the only vertices $w$ with $v\leq w$ are those with $v\sim w$. The vertices in each equivalence class $[v]$ generate either a free or a free abelian subgroup of $A_\G$. For proofs and more information about this partial order, see Section~2 of \cite{CVFiniteness}.

Suppose that $\Delta$ is full subgraph of $\G$. The standard ordering in $\Delta$ may not be the same thing as the induced ordering from $\G$.
We sometimes use subscripts to discriminate, writing $v\leq_\G w$ and $v\leq_\Delta w$. If $v\leq_\G w$ and $v$ and $w$ lie in $\Delta$ then $v\leq_\Delta w$, however the converse statement need not to be true.

\subsection{Generating sets and invariant subgroups}

 We use $\aut(A_\G)$ and $\out(A_\G)$ to denote the respective automorphism and outer automorphism groups of $A_\G$. We use $[\phi]$ to denote the image of an automorphism $\phi$ in $\out(A_\G)$, or a capital Greek letter (usually $\Phi$) to denote an arbitrary element of $\out(A_\G)$. A theorem of Laurence~\cite{Laurence} tells us that $\aut(A_\G)$ is generated by the following automorphisms:
\begin{itemize}
\item
\textbf{Graph symmetries.} 
Any automorphism of the graph induces an automorphism of the group via the corresponding permutation of the generating set. These elements of $\aut(A_\G)$ are called \emph{graph symmetries}.
\item
\textbf{Inversions.} If $v$ is a vertex of $\G$, then there is an \emph{inversion} $\iota_v$ that sends $v$ to $v^{-1}$ and fixes all other generators of $A_\G$.
\item
\textbf{Transvections.} Suppose $v$ and $w$ are vertices of $\G$ with $w\leq v$. 
There is a \emph{transvection} $\rho^{v}_w$ which takes $w$ to $wv$ and fixes all other generators of $A_\G$.
\item
\textbf{Partial conjugations.} Let $v$ be a vertex of $\G$ and let $C$ be a component of $\G - \st(v)$. There is a \emph{partial conjugation} $\pi^v_C$ which sends $w$ to $vwv^{-1}$ if $w$ is a vertex of $C$, and fixes each generator which is not a vertex of $C$.
\end{itemize}
For the examples of transvections and partial conjugations given above, we say that $v$ is the \emph{acting letter} of the automorphism. 
If $X$ is the union of connected components $C_1, C_2, \dotsc, C_k$  of $\G - \st(v)$, we refer to the product  $\pi^v_X=\pi^v_{C_1}\pi^v_{C_2} \cdots \pi^v_{C_k}$ of partial conjugations as an \emph{extended partial conjugation}, so that \emph{partial conjugation} is a special case where we conjugate by $v$ along a single connected component.  The \emph{extended Laurence generators} consist of the graph symmetries, inversions, transvections, and extended partial conjugations.

As is standard, we define $\aut^0(A_\G)$ to be the subgroup of $\aut(A_\G)$ generated by inversions, transvections and (extended) partial conjugations.
We use $\out^0(A_\G)$ to denote the image of $\aut^0(A_\G)$ in $\out(A_\G)$. 

We can read off the special subgroups invariant under a generator by the following lemma. Recall that $\phi$ \emph{acts trivially} on $A_\Delta$ if there exists $g \in A_\G$ such that $\phi(h)=ghg^{-1}$ for all $h \in A_\Delta$, and $\phi$ \emph{preserves} $A_\Delta$ if there exists $g \in A_\G$ such that $\phi(A_\Delta)=gA_\Delta g^{-1}$.

\begin{lemma}\label{le:genexercise}
Let $A_\Delta$ be a special subgroup of $A_\G$.
\begin{enumerate}
 \item The inversion $\iota_v$ acts trivially on $A_\Delta$ if and only if $v \not \in \Delta$; $\iota_v$ always preserves $A_\Delta$.
 \item The transvection $\rho^w_v$ acts trivially on $A_\Delta$ if and only if $v \not \in \Delta$; $\rho^w_v$ preserves $A_\Delta$ if and only if $v \not \in \Delta$ or both $v,w \in \Delta$.
 \item The extended partial conjugation $\pi^x_K$ acts trivially on $\Delta$ if and only if \begin{equation} \label{eq:star} \tag{$\ast$}  K \cap \Delta = \emptyset \text{ or } \Delta - \st(x) \subset K.\end{equation} The subgroup $A_\Delta$ is preserved if and only if $\Delta$ satisfies \eqref{eq:star} or $x \in \Delta$.
\end{enumerate}

\end{lemma}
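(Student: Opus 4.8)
The plan is to treat \emph{acting trivially} (the restriction $\phi|_{A_\Delta}$ equals conjugation by a single $g\in A_\G$) and \emph{preserving} (the image $\phi(A_\Delta)$ is a conjugate $gA_\Delta g^{-1}$) using two conjugation-compatible invariants of a parabolic subgroup: its image under the abelianization $\alpha\colon A_\G\to H_1(A_\G;\Z)=\Z^{V(\G)}$, and its images under the retractions $r_Y\colon A_\G\to A_Y$ that kill every generator outside a full subgraph $Y$. Indeed $\alpha$ is conjugation-invariant, so $\alpha(\phi(h))=\alpha(h)$ whenever $\phi$ acts trivially on $A_\Delta$, and $r_Y(gA_\Delta g^{-1})=r_Y(g)\,r_Y(A_\Delta)\,r_Y(g)^{-1}$. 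I also use throughout that acting trivially implies preserving, and that $\alpha(A_\Delta)$ is the coordinate subgroup spanned by $V(\Delta)$.

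The forward (``if'') directions are direct generator computations. If $v\notin\Delta$ then $\iota_v$ and $\rho^w_v$ fix every generator of $A_\Delta$, so they restrict to the identity; this settles the trivial-action claims in (1) and (2) and half of the preservation claims. When $v\in\Delta$, the inversion $\iota_v$, and (when also $w\in\Delta$) the transvection $\rho^w_v$, send the generating set of $A_\Delta$ into $A_\Delta$ and surject onto it, so they fix $A_\Delta$ setwise. For (3), if $K\cap\Delta=\emptyset$ then $\pi^x_K$ fixes $A_\Delta$ pointwise, while if $\Delta-\st(x)\subset K$ then every vertex of $\Delta$ in $K$ is conjugated by $x$ and every other vertex of $\Delta$ lies in $\st(x)$ and so is fixed by both $\pi^x_K$ and conjugation by $x$; hence $\pi^x_K|_{A_\Delta}$ is conjugation by $x$. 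This proves that \eqref{eq:star} implies trivial action, and since trivial action implies preservation it also yields one preservation case; the case $x\in\Delta$ is handled as before, as then $\pi^x_K$ maps the generators of $A_\Delta$ into $A_\Delta$ and surjects.

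For the failure (``only if'') directions in (1) and (2) I would use the abelianization, which detects inversions and transvections precisely because they act nontrivially on $H_1$. If $v\in\Delta$ then $\alpha(\iota_v(v))=-\alpha(v)\neq\alpha(v)$ and $\alpha(\rho^w_v(v))=\alpha(v)+\alpha(w)\neq\alpha(v)$, contradicting conjugation-invariance; this rules out trivial action. For preservation of $\rho^w_v$ with $v\in\Delta$ but $w\notin\Delta$, the image $\rho^w_v(A_\Delta)$ has abelianized image containing $\alpha(v)+\alpha(w)$, which is not supported on $V(\Delta)$ because $w\notin\Delta$; as conjugation cannot change the abelianized image, $\rho^w_v(A_\Delta)$ is not conjugate to $A_\Delta$. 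The trivial-action direction of (3) is similar but needs centralizers: if $\pi^x_K|_{A_\Delta}$ were conjugation by $g$ while \eqref{eq:star} failed, pick $a\in K\cap\Delta$ and $b\in(\Delta-\st(x))\setminus K$; comparing with $\pi^x_K$ on $a$ and $b$ and applying Servatius' centralizer theorem (Proposition~\ref{pr:ZCN}) gives $g\in A_{\st(b)}$ and $x^{-1}g\in A_{\st(a)}$, whence $\alpha(x)\in\vspan(\st(a)\cup\st(b))$; but $a,b\notin\st(x)$ forces $x\notin\st(a)\cup\st(b)$, a contradiction.

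The main obstacle is the remaining preservation claim in (3): if $x\notin\Delta$ and \eqref{eq:star} fails, then $\pi^x_K(A_\Delta)$ is not conjugate to $A_\Delta$. Here the abelianization is useless, since partial conjugations act trivially on $H_1$, so I would instead retract onto a free group. With $a\in K\cap\Delta$ and $b\in(\Delta-\st(x))\setminus K$ as above, the vertices $a,b,x$ are distinct and pairwise non-adjacent, so they span a full subgraph with no edges and $r=r_{\{a,b,x\}}$ retracts $A_\G$ onto the free group $\langle a,b,x\rangle\cong\F_3$. One computes $r(A_\Delta)=\langle a,b\rangle$ and $r(\pi^x_K(A_\Delta))=\langle xax^{-1},b\rangle$, so a conjugacy $\pi^x_K(A_\Delta)=gA_\Delta g^{-1}$ would force $\langle xax^{-1},b\rangle$ to be conjugate to $\langle a,b\rangle$ in $\F_3$. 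This is false: applying the further retractions killing $b$ and killing $a$ and tracking the $x$-exponent-sum homomorphism $\F_3\to\Z$ forces the conjugating element to have $x$-exponent both $1$ and $0$, a contradiction (equivalently, the Stallings core graphs of the two subgroups are non-isomorphic). This settles the final case and completes the lemma.
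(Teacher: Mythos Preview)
Your proof is correct but takes a genuinely different route from the paper's. The paper argues only case~(3) explicitly and does so with a single test element: when \eqref{eq:star} fails it exhibits the commutator $[u,v]$ (your $[a,b]$) and observes that its image $[xux^{-1},v]$ is cyclically reduced as written, hence not conjugate to $[u,v]$; when moreover $x\notin\Delta$, the same element witnesses non-preservation because any cyclically reduced element of a conjugate $gA_\Delta g^{-1}$ already lies in $A_\Delta$. By contrast you use three separate structural tools: the abelianization to rule out trivial action and preservation for inversions and transvections, Servatius' centralizer description together with an exponent-sum argument to rule out trivial action for $\pi^x_K$, and the retraction onto the free group $\langle a,b,x\rangle\cong\mathbb{F}_3$ to rule out preservation. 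The paper's approach is shorter and more uniform (one element handles both conclusions in~(3)), while yours cleanly separates the phenomena and reduces the delicate non-preservation step to elementary free-group facts (normalizers of cyclic subgroups, or equivalently Stallings cores), avoiding any appeal to normal forms or cyclic reduction in $A_\Gamma$.
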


\begin{proof}
We give a proof of (3) here, as parts (1) and (2) are similar. If $\Delta$ satisfies \eqref{eq:star} then either $\pi_K^x$ fixes every element of $A_\Delta$ or conjugates every element of $A_\Delta$ by $x$, so $\pi^x_K$ acts trivially on $A_\Delta$. If $\Delta$ does not satisfy \eqref{eq:star}, then there exist $u$ and $v$ in $\G - \st(x)$ such that $u \in K$ and $v \not \in K$. As $K$ is a union of connected components of $\G -\st(x)$, the vertices $u$ and $v$ are non-adjacent in $\G$, so the commutator $[u,v]$ is nontrivial in $A_\Delta$ and sent to \[[xux^{-1},v]=xux^{-1}vxu^{-1}x^{-1}v^{-1} \] under $\pi^x_K$. This element is cyclically reduced and therefore not conjugate to $[u,v]$, so the action of $\pi^x_K$ on $A_\Delta$ is nontrivial. Furthermore, if $x \not \in \Delta$ then $[xux^{-1},v]$ is not in any conjugate of $A_\Delta$ (as any cyclically reduced element of $gA_\Delta g^{-1}$ is contained in $A_\Delta$ itself). Hence if $x \not \in \Delta$ and $K$ does not satisfy \eqref{eq:star}, then $A_\Delta$ is not preserved by $\pi^x_K$. If $x \in \Delta$ then $A_\Delta$ is clearly preserved by $\pi^x_K$.
\end{proof}

If $\Delta$ is a subgraph of $\G$ then we say that $\Delta$ is \emph{upwards closed} if for every vertex $v \in \Delta$, every vertex $w$ with $v\leq w$ in the standard order is also contained in $\Delta$. We say that $\Delta$ is \emph{not star-separated by an outside vertex} if for every vertex $x \not \in \Delta$ there is at most one connected component $K$ of $\G - \st(x)$ such that $\Delta\cap K \neq \emptyset$. As was first observed in \cite{GS}, these two conditions describe the special subgroups invariant under $\out^0(A_\G)$ (one can also prove this via Lemma~\ref{le:genexercise}).

\begin{proposition}[\cite{GS}, Section 2.1] \label{pr:invspecial}
A special subgroup $A_\Delta$ of $A_\G$ is preserved by $\out^0(A_\G)$ if and only if $\Delta$ is upwards closed and not star-separated by an outside vertex.
\end{proposition}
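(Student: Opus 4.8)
The plan is to prove both directions by reducing to the behavior of the individual generators of $\out^0(A_\G)$, using the characterizations of triviality and preservation supplied by Lemma~\ref{le:genexercise}. Since $\out^0(A_\G)$ is generated by inversions, transvections, and extended partial conjugations, a special subgroup $A_\Delta$ is preserved by all of $\out^0(A_\G)$ if and only if it is preserved by each of these generators. So the strategy is to translate the two combinatorial conditions — upwards closed, and not star-separated by an outside vertex — into the generator-by-generator preservation conditions recorded in the lemma.

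First I would prove the forward direction (invariance implies the two conditions) by contraposition. If $\Delta$ is not upwards closed, choose $v \in \Delta$ and $w \notin \Delta$ with $v \leq w$; then the transvection $\rho^w_v$ (which lies in $\out^0(A_\G)$ since $v \leq w$) has $v \in \Delta$ but $w \notin \Delta$, so by part (2) of Lemma~\ref{le:genexercise} it fails to preserve $A_\Delta$. Similarly, if $\Delta$ is star-separated by an outside vertex $x \notin \Delta$, there are at least two components $K_1, K_2$ of $\G - \st(x)$ meeting $\Delta$; taking $K$ to be a union of components that separates $\Delta$, the condition \eqref{eq:star} fails and $x \notin \Delta$, so by part (3) the extended partial conjugation $\pi^x_K$ does not preserve $A_\Delta$. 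Hence invariance forces both conditions.

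For the converse, assume $\Delta$ is upwards closed and not star-separated by an outside vertex; I would check that every generator of $\out^0(A_\G)$ preserves $A_\Delta$. Inversions always preserve $A_\Delta$ by part (1). For a transvection $\rho^w_v \in \out^0(A_\G)$ with $v \leq w$: if $v \notin \Delta$ it preserves $A_\Delta$ by part (2), while if $v \in \Delta$ then upward closure gives $w \in \Delta$, so again part (2) applies. For an extended partial conjugation $\pi^x_K$ with $x \notin \Delta$, the ``not star-separated'' hypothesis says at most one component of $\G - \st(x)$ meets $\Delta$; if none meets $K$ then $K \cap \Delta = \emptyset$, and otherwise the unique component meeting $\Delta$ must be contained in $K$, giving $\Delta - \st(x) \subset K$, so \eqref{eq:star} holds and part (3) applies; and if $x \in \Delta$ then part (3) preserves $A_\Delta$ directly. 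Thus each generator preserves $A_\Delta$, and therefore so does $\out^0(A_\G)$.

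The one point requiring genuine care — and the step I expect to be the main obstacle — is the extended partial conjugation case of the converse. The ``not star-separated'' condition only controls whether $\Delta$ meets a single \emph{component} of $\G - \st(x)$, whereas $K$ is allowed to be a union of components; I must verify that the single component of $\G - \st(x)$ meeting $\Delta$ is entirely swallowed by $K$ (equivalently, that $\Delta \cap \st(x)$ plays no role) in order to conclude $\Delta - \st(x) \subset K$. This is where the precise form of \eqref{eq:star}, with its two alternatives, has to be matched exactly against the two alternatives produced by the hypothesis, and it is worth spelling out rather than leaving implicit.
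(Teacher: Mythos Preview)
Your proposal is correct and follows exactly the route the paper indicates: the paper does not give its own proof of this proposition (it is cited from \cite{GS}), but remarks parenthetically that ``one can also prove this via Lemma~\ref{le:genexercise},'' which is precisely what you do. One small comment on the step you flag as delicate: since $K$ is a \emph{union} of components of $\G-\st(x)$ and the unique component $C$ meeting $\Delta$ is itself a component, you automatically have either $C\subset K$ or $C\cap K=\varnothing$, so there is no subtlety about $C$ being ``partially swallowed''---the dichotomy in \eqref{eq:star} lines up cleanly with whether $C$ is or is not one of the components comprising $K$.
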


\subsection{Restriction and projection homomorphisms}
 
Let $G$ be a subgroup of $\out(A_\G)$. If $A_\Delta$ is preserved by $G$ then there is a \emph{restriction homomorphism} \[ R_\Delta\colon G \to \out(A_\Delta), \] where $R_\Delta(\Phi)$ is defined by taking any representative $\phi$ of $\Phi$ that sends $A_\Delta$ to itself and restricting $\phi$ to $A_\Delta$. If the normal subgroup generated by $A_\Delta$ is fixed under $G$, then there is a projection map: \[ P_{\Gamma - \Delta} \colon G \to \out(A_{\G - \Delta})\] which is induced by the quotient map  \[ A_\G \to A_\G / \ngenby{A_\Delta} \cong A_{\G - \Delta}. \] An important feature of these maps is that each extended Laurence generator is taken either to the identity element or an extended Laurence generator of the same type under $R_\Delta$ and $P_{\Gamma - \Delta}$. It is not immediate that $R_\Delta$ is well-defined: this is a consequence of that fact that any element of the normalizer $N(A_\Delta)$ acts by conjugation as an inner automorphism of $A_\Delta$ (see Section 2.6 of \cite{W1} for more details).

\section{Relative automorphism groups.}\label{se:generators}

Let $\calg$ and $\calh$ be sets of proper special subgroups of the RAAG $A_\G$. 
In this section, we show Theorem~\ref{th:generators}:
 $\outgh$ is finitely generated by the inversions, transvections, and extended partial conjugations it contains.
We give a precise description of these generators in Proposition~\ref{pr:gensrelgh}, and in Proposition~\ref{pr:chariss} give criteria for when an arbitrary special subgroup $A_\Delta$ is preserved by $\out^0(A_\G;\calg,\calh^t)$.

\subsection{Passing from $\out(A_\G)$ to $\out^0(A_\G)$}

We first take a short detour to describe the cosets of $\out^0(A_\G)$ in $\out(A_\G)$ in detail, focusing on the action of $\out(A_\G)$ on conjugacy classes of special subgroups. Many of the results in this section are due to Duncan--Remeslennikov (see \cite{MR2999373}, particularly Theorem 4.4). The groups $A_{\geq v}$ (defined below) appear in \cite{MR2999373} as \emph{admissible subgroups}, and the relative automorphism group preserving all admissible subgroups appears as ${\rm{St}}^{\text{conj}}(\mathcal{K})$. We provide alternative proofs below as the ideas in this section are useful in the sequel. A description of how $\out^0(A_\G)$ sits inside $\out(A_\G)$ also appears in \cite{MR2372847}. 

For a given vertex $v$, we define associated special subgroups:
\begin{align*}
A_{\geq v} &= \genby{\setcomp{w}{v\leq w}} \\
A_{> v} &= \genby{\setcomp{w}{v \leq w \text{ and } v \not\sim w}}
\end{align*}
generated by the vertices in $\G$ that dominate $v$, and strictly dominate $v$, respectively. The group $A_{\geq v}$ is equal to $A_{\geq w}$ if and only if $v$ and $w$ are in the same equivalence class, and the quotient $A_{\geq v}/\ngenby{A_{>v}}$ is naturally isomorphic to the group $A_{[v]}$ generated by vertices equivalent to $v$. The conjugacy classes of the groups $A_{\geq v}$ are permuted by $\out(A_\G)$:

\begin{proposition}\label{p:gauto}
Let $\phi \in \aut(A_\G)$. There exists a graph automorphism $\sigma \in \aut(\G)$ such that $\phi(A_{\geq v})$ is conjugate to $A_{\geq \sigma(v)}$ for all $v$, and the normal subgroup $\ngenby{A_{> v}}$ generated by $A_{> v}$ is taken to $\ngenby{A_{> \sigma(v)}}$ under $\phi$.
\end{proposition}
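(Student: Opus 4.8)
The plan is to reduce the statement to Laurence's generating set for $\aut(A_\G)$ and verify it generator-by-generator. The first observation is that the desired property is closed under composition: if $\phi$ sends each $A_{\geq v}$ to a conjugate of $A_{\geq \tau(v)}$ and $\psi$ sends each $A_{\geq v}$ to a conjugate of $A_{\geq \rho(v)}$, then since applying an automorphism to a conjugate of a subgroup yields a conjugate of its image, $\phi\psi$ sends $A_{\geq v}$ to a conjugate of $A_{\geq \tau\rho(v)}$. Thus the assignment $\phi\mapsto\sigma$ is multiplicative along any word in the generators, and for \emph{existence} of a single $\sigma$ it is enough to exhibit a suitable graph automorphism for each of Laurence's four families and take the corresponding product (word-dependence of $\sigma$ is harmless here).

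Before treating the generators, I would dispatch the clause about $\ngenby{A_{>v}}$ by deriving it from the clause about $A_{\geq v}$. A short check using transitivity of domination gives $A_{>v}=\genby{A_{\geq w} : v<w}$, hence $\ngenby{A_{>v}}=\ngenby{A_{\geq w} : v<w}$, and the normal closure of a subgroup depends only on its conjugacy class. Since a graph automorphism $\sigma$ preserves adjacency and therefore the relations $\lk(\cdot)\subset\st(\cdot)$ defining domination, $\sigma$ respects the strict order $<$; so once $\phi(A_{\geq w})$ is conjugate to $A_{\geq\sigma(w)}$ for all $w$, the normal closures match and $\phi(\ngenby{A_{>v}})=\ngenby{A_{>\sigma(v)}}$. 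This reduces everything to tracking the action on the subgroups $A_{\geq v}$.

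For the generators: a graph symmetry $\tau$ carries each $A_{\geq v}$ exactly onto $A_{\geq\tau(v)}$, so I take $\sigma=\tau$. An inversion fixes every special subgroup generated by a vertex set, so $\sigma=\mathrm{id}$. A transvection $\rho^v_w$ (with $w\leq v$) fixes each $A_{\geq u}$ on the nose: if $u\not\leq w$ it fixes every generator of $A_{\geq u}$, while if $u\leq w$ then transitivity gives $u\leq v$, so both $v$ and $w$ lie in the generating vertex set and replacing $w$ by $wv$ leaves $\genby{\{z: u\leq z\}}$ unchanged; again $\sigma=\mathrm{id}$.

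The main obstacle is the partial conjugation $\pi^v_C$, where I would invoke Lemma~\ref{le:genexercise}(3): $\pi^v_C$ sends $A_\Delta$ to a conjugate of itself exactly when $\Delta$ satisfies \eqref{eq:star} or $v\in\Delta$. With $\Delta=\{z: u\leq z\}$, it remains to show that whenever $v\notin\Delta$ (that is, $u\not\leq v$) and $C\cap\Delta\neq\emptyset$, one has $\Delta-\st(v)\subset C$. The key point is that $u\not\leq v$ means $\lk(u)\not\subset\st(v)$, so there is a vertex $a\in\lk(u)$ with $a\notin\st(v)$; since every $z\in\Delta$ satisfies $\lk(u)\subset\st(z)$, this $a$ lies in $\st(z)$, i.e.\ $a$ is adjacent or equal to every vertex of $\Delta$ while itself lying in $\G-\st(v)$. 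Hence $a$ and all of $\Delta-\st(v)$ lie in one connected component of $\G-\st(v)$, and as $C$ already meets $\Delta$ this component must be $C$, giving $\Delta-\st(v)\subset C$ and so \eqref{eq:star}. Thus $\pi^v_C$ always preserves $A_{\geq u}$ up to conjugacy and $\sigma=\mathrm{id}$ works. Combining the four cases with the compositionality observation and the reduction of the second clause completes the argument.
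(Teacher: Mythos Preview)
Your proof is correct and follows essentially the same route as the paper: reduce to Laurence's generators via closure under composition, take $\sigma=\mathrm{id}$ for inversions, transvections, and partial conjugations, and for the partial conjugation case use the same ``pick $a\in\lk(u)\setminus\st(v)$ and observe it is adjacent to everything in $\Delta$'' argument (the paper's $u$, $v$, $x$ are your $a$, $u$, $v$). The one notable difference is your handling of the $\ngenby{A_{>v}}$ clause: the paper verifies it generator-by-generator alongside the $A_{\geq v}$ clause, whereas you derive it once and for all from the identity $A_{>v}=\genby{A_{\geq w}:v<w}$ together with the fact that graph automorphisms preserve strict domination---a clean reduction that saves repeated checking. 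One small omission: you argue closure under composition but not under inverses, and Laurence's generators generate $\aut(A_\G)$ as a group; this is harmless here since each inverse is either of the same type (graph symmetries) or covered by the same argument, but it is worth a sentence.
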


\begin{proof}
If the statement holds for automorphisms $\phi$ and $\psi$ with respective graph automorphisms $\sigma$ and $\tau$, then there exist elements $g,h \in A_\G$ such that $\phi(A_{\geq v}) = g A_{\geq \sigma(v)} g^{-1}$ and $\psi(A_{\geq \sigma(v)})= h A_{\geq\tau\sigma(v)} h^{-1}$. Then
\begin{equation}\label{eq:hom}
\psi\phi(A_{\geq v}) = \psi(g)h \cdot A_{\geq \tau\sigma(v)} \cdot (\psi(g)h)^{-1},
\end{equation}
so that the statement holds for the product $\phi\psi$ with graph automorphism $\tau\sigma$. We use $A_{\geq v}$ in the above example, but the same reasoning holds for the normal subgroup generated by $A_{> v}$. Similarly, if the statement holds for $\phi$ with graph automorphism $\sigma$, it holds for $\phi^{-1}$ with graph automorphism $\sigma^{-1}$. Hence we only need to prove the result for generators of $\aut(A_\G)$.

If $\phi$ is a graph symmetry determined by $\sigma \in \aut(\G)$, then $\phi(A_{\geq v})= A_{\geq \sigma(v)}$ and $\phi(A_{> v})= A_{> \sigma(v)}$for each vertex $v$. If $\phi$ is an inversion or transvection then one can check that $\phi(A_{\geq v})=A_{\geq v}$ and $\phi(A_{> v})=A_{> v}$ for all vertices $v$. Finally, suppose that $\phi=\pi^x_K$ is an extended partial conjugation. If $x \in A_{\geq v}$ then $\pi^x_K$ preserves $A_{\geq v}$. Suppose $x \not \in A_{\geq v}$. Then $v \not \leq x$, so there exists $u \in \lk(v)$ that is not contained in $\st(x)$. As every vertex in $A_{\geq v}$ is adjacent to $u$, every vertex in $A_{\geq v}$ is contained in $\st(x)$ or the connected component $C$ of $\G - \st(x)$ containing $u$. As $K$ is a union of connected components of $\G - \st(x)$, either $\pi^x_K$ acts trivially on $A_{\geq v}$ or conjugates the whole group by $x$. Partial conjugations also preserve the normal subgroup generated by any special subgroup, so take the normal subgroup $\ngenby{ A_{> v} }$ to itself.  \end{proof}

Let $\overline{\G}=\G /\!\!\sim$ be the simple quotient of $\G$ obtained by identifying equivalent vertices and removing any duplicate edges or loops. 
In other words, $\overline{\G}$ has a vertex for each equivalence class $[v]$ and an edge between distinct equivalence classes $[v]$ and $[w]$ if $v$ and $w$ are connected by an edge in $\G$. There is a coloring $c$ of the vertices, where we assign a vertex the pair $c(v)=(\abs{[v]},0)$ if the group $A_{[v]}$ is abelian and $c(v)=(\abs{[v]},1)$ if the group $A_{[v]}$ is non-abelian. 
We call the colored graph $\overline{\G}$ the \emph{vertex class graph} of $\G$.
Let $\aut_c(\overline{\G})$ be the group of color-preserving automorphisms of the graph $\overline{\G}$. If $\sigma$ is an automorphism of $\G$, then $\sigma$ induces a color-preserving automorphism $\overline{\sigma}$ of the quotient graph $\overline{\Gamma}$.

\begin{proposition}\label{p:ghom}
There is a split-surjective homomorphism \[ \rho \colon \aut(A_\G) \to \aut_c(\overline{\G}), \] given by \[\rho(\phi)([v])=[w]\] where $w\in\G$ with $\phi(A_{\geq v})$ conjugate to $A_{\geq w}$.
\end{proposition}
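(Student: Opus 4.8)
The plan is to break the proof into four tasks: well-definedness of $\rho$, the claim that $\rho(\phi)$ lands in $\aut_c(\overline{\G})$, the homomorphism property, and the construction of a splitting. For the first three, Proposition~\ref{p:gauto} does most of the work: it supplies, for each $\phi$, a graph automorphism $\sigma$ with $\phi(A_{\geq v})$ conjugate to $A_{\geq\sigma(v)}$ for every $v$, so I would simply set $\rho(\phi)=\overline\sigma$, i.e. $\rho(\phi)([v])=[\sigma(v)]$.

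The only genuine content in well-definedness is that $[\sigma(v)]$ does not depend on the implicit choices, i.e. that if $A_{\geq w}$ and $A_{\geq w'}$ are conjugate then $[w]=[w']$. I would deduce this from the stronger fact that two special subgroups $A_\Delta,A_{\Delta'}$ that are conjugate in $A_\G$ must in fact be equal. The argument is exactly the one already used inside the proof of Lemma~\ref{le:genexercise}: if $gA_\Delta g^{-1}=A_{\Delta'}$ then each vertex $u$ of $\Delta'$ is a cyclically reduced element of $gA_\Delta g^{-1}$ and hence lies in $A_\Delta$; being a single generator, $u$ is a vertex of $\Delta$, so $\Delta'\subseteq\Delta$, and symmetrically $\Delta\subseteq\Delta'$. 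Combined with the already-noted fact that $A_{\geq w}=A_{\geq w'}$ if and only if $w\sim w'$, this gives $[w]=[w']$. Since $\rho(\phi)=\overline\sigma$ for a genuine $\sigma\in\aut(\G)$, the earlier observation that such $\sigma$ induce color-preserving automorphisms of $\overline\G$ shows $\rho(\phi)\in\aut_c(\overline\G)$. The homomorphism property I would read straight off equation~\eqref{eq:hom}: if $\sigma,\tau$ are the graph automorphisms attached to $\phi,\psi$, then $\psi\phi(A_{\geq v})$ is conjugate to $A_{\geq\tau\sigma(v)}$, whence $\rho(\psi\phi)([v])=[\tau\sigma(v)]=\rho(\psi)(\rho(\phi)([v]))$.

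The substantive part is split-surjectivity, which I would establish by constructing an explicit homomorphic section $s\colon\aut_c(\overline\G)\to\aut(A_\G)$ landing in the graph symmetries. Fix once and for all, for each equivalence class $C$, a labelling bijection $C\to\{1,\dots,\abs C\}$. Given $\overline\tau\in\aut_c(\overline\G)$, color-preservation gives $\abs C=\abs{\overline\tau(C)}$, so I can lift $\overline\tau$ to the permutation $\tau$ of $V(\G)$ sending the $i$-th vertex of $C$ to the $i$-th vertex of $\overline\tau(C)$. The key point is that $\tau$ is a graph automorphism of $\G$. For two vertices in the same class, adjacency is governed only by whether $A_C$ is abelian, which is exactly the color data that $\overline\tau$ preserves. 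For two vertices $a,a'\in C$ and $b\in C'$ with $C\neq C'$ (so $b\neq a'$), I would record the ``all or nothing'' principle for adjacency between distinct classes: since $a\sim a'$ we have $a\leq a'$, so if $a$ is adjacent to $b$ then $b\in\lk(a)\subset\st(a')$ forces $a'$ adjacent to $b$, and symmetrically; hence either every vertex of $C$ is adjacent to every vertex of $C'$ or none is, and this pattern is precisely recorded by the edges of $\overline\G$. As $\overline\tau$ preserves those edges, $\tau$ respects adjacency and is a graph automorphism. Because the lift is label-preserving it is compatible with composition, so $s(\overline\sigma\,\overline\tau)=s(\overline\sigma)s(\overline\tau)$, and since $s(\overline\tau)$ is the graph symmetry of $\tau$ the graph-symmetry case of Proposition~\ref{p:gauto} gives $\rho(s(\overline\tau))=\overline\tau$.

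I expect the main obstacle to be this last step: checking that the label-preserving lift is genuinely a graph automorphism, which rests on the ``all or nothing'' adjacency lemma for distinct equivalence classes, and arranging the lifts coherently enough that $s$ is an honest homomorphism and not merely a set-theoretic section. The well-definedness lemma (conjugate special subgroups are equal) is the other place where a small amount of care is needed, but it reduces to machinery already present in the excerpt.
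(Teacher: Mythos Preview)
Your proposal is correct and follows essentially the same approach as the paper: define $\rho(\phi)=\overline\sigma$ via Proposition~\ref{p:gauto}, read the homomorphism property off equation~\eqref{eq:hom}, and split by lifting each $\overline\tau\in\aut_c(\overline\G)$ to a label-preserving graph symmetry. You supply more detail than the paper does in two places---proving that conjugate special subgroups are equal (for well-definedness) and verifying the ``all or nothing'' adjacency principle (for the lift being a graph automorphism)---both of which the paper leaves as implicit or ``one can check.''
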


\begin{proof}
Let $\phi \in \aut(A_\G)$, and let $\sigma$ be an automorphism of $\G$ given by Proposition~\ref{p:gauto}. Let $\overline{\sigma}$ be the induced action of $\sigma$ on the quotient $\overline{\G}$. We may then define $\rho(\phi)=\overline{\sigma}$. Although $\sigma$ is not uniquely determined by $\phi$, the quotient automorphism $\overline{\sigma}$ is well-defined, as the conjugacy class of each $A_{\geq v}$ is determined by the equivalence class $[v]$. The fact that $\rho$ is a homomorphism follows from equation (1) in the opening discussion in the proof of Proposition~\ref{p:gauto}. As $A_{\geq \sigma(v)}/\ngenby{A_{>\sigma(v)}} \cong A_{\geq v}/\ngenby{A_{>v}} \cong A_{[v]}$, the coloring of $\overline{\Gamma}$ is preserved by $\overline{\sigma}$. To see that $\rho$ is split-surjective, pick an ordering $v_1< v_2 < v_3 <\cdots <v_k$ on each equivalence class $[v]=\{v_1,\ldots,v_k\}$ and let $G$ be the finite group of order-preserving graph symmetries in $\aut(A_\G)$. One can check that the restriction of $\rho$ to $G$ is an isomorphism. 
\end{proof}

Each inner automorphism maps to the identity under $\rho$, therefore there is an induced map \[ \overline{\rho} \colon \out(A_\G) \to \aut_c(\overline{\G}). \]

The next proposition characterizes $\out^0(A_\G)$ as both the kernel of this map and as a relative automorphism group.

\begin{proposition}\label{pr:out0classification}
Let $\calg_{\geq}$ be the set of subgroups of the form $A_{\geq v}$ in $A_\G$, and let \[\rho\colon \aut(A_\G) \to \aut_c(\overline{\G}) \] be the homomorphism given by Proposition~\ref{p:ghom}. Then \[ \aut^0(A_\G) = \aut(A_\G;\calg_{\geq}) = \ker \rho , \] or equivalently \[ \out^0(A_\G) = \out(A_\G;\calg_{\geq}) = \ker \overline{\rho} . \]
\end{proposition}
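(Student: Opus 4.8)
The three displayed equalities reduce to a single statement at the level of $\aut$. First, every inner automorphism lies in $\aut^0(A_\G)$, since conjugation by a vertex $v$ is the extended partial conjugation $\pi^v_{\G-\st(v)}$; thus $\mathrm{Inn}(A_\G)\subseteq\aut^0(A_\G)$ and $\out^0(A_\G)=\aut^0(A_\G)/\mathrm{Inn}(A_\G)$. As $\mathrm{Inn}(A_\G)$ also lies in $\aut(A_\G;\calg_{\geq})$ and in $\ker\rho$, the three $\out$-level equalities follow from the three $\aut$-level equalities by passing to the quotient by $\mathrm{Inn}(A_\G)$. Next, $\aut(A_\G;\calg_{\geq})=\ker\rho$ is essentially the definition of $\rho$: by Proposition~\ref{p:ghom} the value $\rho(\phi)([v])$ is the class $[w]$ with $\phi(A_{\geq v})$ conjugate to $A_{\geq w}$, and since distinct special subgroups are never conjugate, $A_{\geq w}$ is conjugate to $A_{\geq v}$ exactly when $v\sim w$; hence $\phi\in\ker\rho$ iff $\phi$ preserves each $A_{\geq v}$ up to conjugacy, which is what it means for $\phi$ to lie in $\aut(A_\G;\calg_{\geq})$. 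So everything comes down to proving $\aut^0(A_\G)=\ker\rho$.

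The inclusion $\aut^0(A_\G)\subseteq\ker\rho$ is already contained in the proof of Proposition~\ref{p:gauto}: there one checks that each inversion and transvection fixes every $A_{\geq v}$, and that each extended partial conjugation either fixes $A_{\geq v}$ or conjugates it, so every generator of $\aut^0(A_\G)$ maps to the identity of $\aut_c(\overline\G)$. The reverse inclusion is the heart of the matter, and the plan is to establish
\[ \aut(A_\G)=\aut^0(A_\G)\cdot\cals, \]
where $\cals$ is the finite group of order-preserving graph symmetries on which $\rho$ restricts to an isomorphism onto $\aut_c(\overline\G)$ (Proposition~\ref{p:ghom}). Granting this, any $\phi\in\ker\rho$ factors as $\phi=\psi\sigma$ with $\psi\in\aut^0(A_\G)$, $\sigma\in\cals$; applying $\rho$ and using the easy inclusion gives $\rho(\sigma)=\mathrm{id}$, so $\sigma=\mathrm{id}$ by injectivity of $\rho|_{\cals}$, whence $\phi=\psi\in\aut^0(A_\G)$.

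To prove $\aut(A_\G)=\aut^0(A_\G)\cdot\cals$, I would first note that $\aut^0(A_\G)$ is normal in $\aut(A_\G)$: conjugating an inversion, transvection, or partial conjugation by a graph symmetry yields a generator of the same type. Hence $\aut^0(A_\G)\cdot\cals$ is a subgroup, and by Laurence's theorem~\cite{Laurence} it suffices to show every graph symmetry $\sigma$ lies in it. Given $\sigma$, pick the $g\in\cals$ with $\rho(g)=\rho(\sigma)$; then $\tau:=\sigma g^{-1}$ is a graph symmetry inducing the identity on the vertex class graph $\overline\G$, i.e.\ $\tau$ permutes the vertices inside each equivalence class. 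This reduces the whole proposition to the key claim: \emph{a graph symmetry preserving each equivalence class setwise lies in $\aut^0(A_\G)$.} Since such a symmetry is a product of transpositions $(v\ w)$ with $v\sim w$, it is enough to realize each such transposition by inversions, transvections, and partial conjugations.

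This transposition computation is the step I expect to be the main obstacle, and it is a Nielsen-style argument. For $v\sim w$ both transvections $\rho^v_w,\rho^w_v$ are available (each of $v,w$ dominates the other), and forming the product $\theta=(\rho^v_w)^{-1}\,\rho^w_v\,(\rho^v_w)^{-1}$ (composing right to left, the free-group lift of $\left(\begin{smallmatrix}0&-1\\1&0\end{smallmatrix}\right)$) one computes $\theta(v)=vwv^{-1}$, $\theta(w)=v^{-1}$, with all other generators fixed. The delicate point is correcting this to the \emph{literal} transposition rather than merely up to an inner automorphism: post-composing with $\iota_v$ gives $v\mapsto v^{-1}wv,\ w\mapsto v$, and I would then clean up the image of $v$ by a partial conjugation rather than a global inner automorphism. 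Here the domination $w\leq v$ is crucial, since $\lk(w)\subseteq\st(v)$ forces $w$ to be isolated in $\G-\st(v)$, so $\pi^v_{\{w\}}$ is a genuine $\aut^0$-generator and one checks $\pi^v_{\{w\}}\,\iota_v\,\theta=(v\ w)$ in the non-adjacent case, while $vwv^{-1}=w$ makes $\iota_v\theta=(v\ w)$ already in the adjacent (abelian) case. In either case $(v\ w)\in\aut^0(A_\G)$, which finishes the key claim and the proposition.
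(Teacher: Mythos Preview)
Your argument is correct and follows essentially the same route as the paper: both reduce to showing that a graph symmetry preserving each equivalence class lies in $\aut^0(A_\G)$, and both do this by realizing the transposition $(v\ w)$ for $v\sim w$ as a word in the standard generators. The only cosmetic differences are that the paper proves a cyclic chain of inclusions rather than handling $\aut(A_\G;\calg_{\geq})=\ker\rho$ separately, and it gives a single uniform formula $\iota_v(\rho^w_v)^{-1}\iota_v\iota_w\rho^v_w(\rho^w_v)^{-1}$ for the transposition using only inversions and transvections, whereas you split into the adjacent and non-adjacent cases and invoke the partial conjugation $\pi^v_{\{w\}}$ in the latter.
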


\begin{proof}
It is enough to show that \[ \aut^0(A_\G) \subset \aut(A_\G;\calg_{\geq}) \subset \ker \rho \subset \aut^0(A_\G). \] For the first inclusion, suppose that $\phi \in \aut^0(A_\G)$. In the proof of Proposition~\ref{p:gauto} we showed that every generator of $\aut^0(A_\G)$ preserves each $A_{\geq v}$ up to conjugacy, therefore the same is true for $\phi$. Hence $\phi \in \aut(A_\G;\calg_{\geq})$.  For the second inclusion, suppose that $\phi \in  \aut(A_\G;\calg_{\geq})$. Then for each vertex $v$, the group $\phi(A_{\geq v})$ is conjugate to $A_{\geq v}$, so $\rho(\phi)([v])=[v]$. Hence $\rho(\phi)$ is the identity automorphism of $\overline{\Gamma}$. Finally suppose that $\phi \in \ker \rho$. Let $\iota_v$, $\rho^v_w$, and $\pi^v_K$ be respectively an inversion, transvection, and extended partial conjugation from the generating set of $\aut^0(A_\G)$. If we abuse notation slightly and use $\sigma$ to denote both an automorphism of the graph $\G$ and the induced automorphism of the group $A_\G$, we have \begin{align*} \sigma \cdot \iota_v&=\iota_{\sigma(v)} \cdot\sigma \\ \sigma \cdot\rho^v_w&=\rho^{\sigma(v)}_{\sigma(w)} \cdot\sigma \\ \sigma\cdot\pi^v_K&= \pi^{\sigma(v)}_{\sigma(K)}\cdot \sigma.\end{align*}
Hence by shuffling the generators of $\aut(A_\G)$, we can write $\phi$ as a product $\phi=\sigma\phi'$, where $\sigma$ is a graph automorphism and $\phi'$ is contained in $\aut^0(A_\G)$. Let $\overline{\sigma}$ be the automorphism of $\overline{\Gamma}$ induced by $\sigma$.  As $\phi' \in\aut^0(A_\G)$ we have $\rho(\phi')=1$ from the work above, therefore $\rho(\phi)=\overline\sigma$. As $\overline{\sigma}$ is the identity on $\overline{\G}$, this implies that $\sigma$ is a graphical automorphism that preserves each equivalence class (so $\sigma(v) \sim v$ for each vertex $v$). 
Hence $\sigma$, viewed as a permutation of the vertices, is a product of transpositions between elements in the same equivalence class. 
If $v$ and $w$ are equivalent, then one can verify that the product \[\iota_v(\rho_v^w)^{-1}\iota_v\iota_w\rho_w^v(\rho_v^w)^{-1} \] is the graphical automorphism swapping $v$ and $w$. 
It follows that each such transposition lies in $\aut^0(A_\G)$.
Therefore $\sigma \in \aut^0(A_\G)$, and so $\phi \in \aut^0(A_\G)$. This completes the chain of inclusions. \end{proof}

When $\phi$ is given to us in terms of images of generators, it is desirable to have a more explicit description of $\rho(\phi)$. We do this below.

\begin{proposition}\label{p:sigmaimage}
Let $\phi \in \aut(A_\G)$ and let $v \in V(\G)$. In the cyclic reduction support $\crsupp(\phi(v))$, there exists a vertex $w$ such that $w \leq u$ for all $u \in \crsupp(\phi(v))$. For any such vertex $w$ we have $\rho(\phi)([v])=[w]$.
\end{proposition}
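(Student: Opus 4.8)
The plan is to connect $\crsupp(\phi(v))$ to the graph automorphism $\sigma$ supplied by Proposition~\ref{p:gauto}, for which $\phi(A_{\geq v})=gA_{\geq\sigma(v)}g^{-1}$ for some $g\in A_\G$ and $\phi(\ngenby{A_{>v}})=\ngenby{A_{>\sigma(v)}}$. By the definition of $\rho$ in Proposition~\ref{p:ghom} we have $\rho(\phi)([v])=[\sigma(v)]$, so it suffices to prove that at least one minimal vertex of $\crsupp(\phi(v))$ exists and that every such vertex is equivalent to $\sigma(v)$.

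First I would pin down an upper envelope for the support. Since $v\in A_{\geq v}$, we have $\phi(v)\in gA_{\geq\sigma(v)}g^{-1}$, so the cyclic reduction of $\phi(v)$ is a cyclically reduced element conjugate into the special subgroup $A_{\geq\sigma(v)}$. By the fact used in the proof of Lemma~\ref{le:genexercise} (a cyclically reduced conjugate of an element of a special subgroup lies in that subgroup) it lies in $A_{\geq\sigma(v)}$. Hence $\sigma(v)\leq u$ for every $u\in\crsupp(\phi(v))$.

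Next I would produce a minimal vertex equivalent to $\sigma(v)$ by passing to the quotient $q\colon A_\G\to A_\G/\ngenby{A_{>\sigma(v)}}\cong A_{\G-(>\sigma(v))}$, the retraction that deletes the letters strictly dominating $\sigma(v)$. The element $q(\phi(v))$ is nontrivial: otherwise $\phi(v)\in\ngenby{A_{>\sigma(v)}}=\phi(\ngenby{A_{>v}})$ would force $v\in\ngenby{A_{>v}}$, which is false because $v$ survives as a generator of $A_{\G-(>v)}$. Applying $q$ to a cyclically reduced representative of $\phi(v)$ --- all of whose letters already dominate $\sigma(v)$ by the previous step --- erases exactly the strictly dominating letters and leaves a nonempty word in vertices equivalent to $\sigma(v)$. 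Therefore $\crsupp(\phi(v))$ meets the class $[\sigma(v)]$; any $w_0$ in this intersection satisfies $w_0\sim\sigma(v)\leq u$ for all $u\in\crsupp(\phi(v))$ and is thus a minimal vertex.

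Finally, for an arbitrary minimal vertex $w$, minimality gives $w\leq w_0$, while the upper-envelope step gives $\sigma(v)\leq w$; combined with $w_0\sim\sigma(v)$ this yields $w\sim w_0\sim\sigma(v)$, so $[w]=[\sigma(v)]=\rho(\phi)([v])$. I expect the main obstacle to be the bookkeeping around the quotient $q$: one must verify that $q$ genuinely erases only the strictly dominating letters (so that no support is lost elsewhere) and that nontriviality of $q(\phi(v))$ is preserved, which is precisely where the retraction structure of $A_\G\to A_{\G-(>\sigma(v))}$ and the control of $\ngenby{A_{>v}}$ from Proposition~\ref{p:gauto} do the real work.
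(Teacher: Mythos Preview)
Your argument is correct and follows essentially the same route as the paper's proof. The paper conjugates $\phi$ by an inner automorphism to arrange $\phi(A_{\geq v})=A_{\geq w}$ (where $[w]=\rho(\phi)([v])$), then uses the quotient $A_{\geq w}/\ngenby{A_{>w}}\cong A_{[w]}$ to force a vertex of $[w]$ into $\crsupp(\phi(v))$; you instead keep track of the conjugating element and use the global retraction $A_\G\to A_{\G-(>\sigma(v))}$, but the mechanism is identical. Your final paragraph making explicit why \emph{every} minimal vertex lies in $[\sigma(v)]$ is a nice touch that the paper leaves implicit.
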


\begin{proof}
Suppose that $\rho(\phi)([v])=[w]$. After modifying $\phi$ by an appropriate inner automorphism, we may assume that $\phi(A_{\geq v})=A_{\geq w}$. This does not change the cyclic reduction support of $\phi(v)$. By Proposition~\ref{p:gauto}, the normal subgroup generated by $A_{> v}$ is sent to $\ngenby{ A_{> w} }$. As $v$ is nontrivial in the quotient $A_{\geq v}/ \ngenby{A_{> v}}$, it follows that $\phi(v)$ is nontrivial in the quotient $A_{\geq w}/ \ngenby{ A_{> w} } \cong A_{[w]}$. Hence $\phi(v)$ must contain some vertex $w'$ equivalent to $w$ in its cyclic reduction support. As $\crsupp(\phi(v))$ is contained in $A_{\geq w}=A_{\geq w'}$, for every other vertex $u \in \crsupp(\phi(v))$ we have $w' \leq u$. As $w'$ is equivalent to $w$, we have $\rho(\phi)([v])=[w']$.
\end{proof}

\subsection{Generating $\out^0(A_\G;\mathcal{G})$}

Let $\calg$ be a collection of proper special subgroups of $A_\G$. 
It can be intuitively useful to think of groups in $\calg$ as representing the links in $A_\G$ of vertices that have been deleted from $\G$.
We build an extension of $\G$ to reflect this intuition.
We define a graph $\widehat \G$, called the \emph{relative cone graph} of $\G$ with respect to $\calg$.
This is the extension of $\G$ where we cone off each subgraph in $\calg$, and cone off $\G$ twice.
Specifically, let $\widehat \G$ be the extension of $\G$ by adding a new vertex $v_\Delta$ for each $A_\Delta\in\calg\cup\{A_\G\}$, and an additional vertex $v_*$.
We keep the edges of $\G$ in $\widehat\G$ exactly the same, add an edge between each vertex of $\Delta$ and $v_\Delta$, and an edge between every element of $\G$ and $v_*$.  Then $\lk_{\widehat\G}(v_\Delta)=\Delta$ for each $\Delta$, and $\lk_{\widehat\G}(v_*)=\G$.
Notice that $\G$ is a full subgraph of $\widehat \G$.
See the example in Figure~\ref{fig:lambda}.

\begin{figure}[ht]
\includegraphics{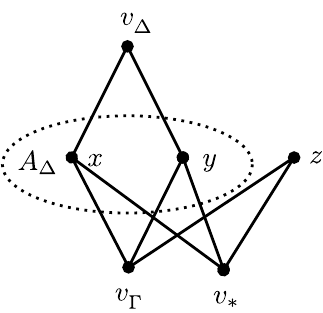} 
\caption{The relative cone graph $\widehat \G$ in the case that $A_\Gamma$ is the free group on $x,y,z$ and $\calg$ consists of a single peripheral subgroup $A_\Delta$ generated by $x$ and $y$.}
\label{fig:lambda}
\end{figure} 

We can now prove a special case of Theorem~\ref{th:generators}.

\begin{proposition}\label{pr:generating1}
The group $\out^0(A_\G;\calg)$ is generated by inversions, transvections and extended partial conjugations.
\end{proposition}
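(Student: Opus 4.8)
The plan is to identify $\out^0(A_\G;\calg)$ with the image of the restriction homomorphism from $\out^0(A_{\widehat\G})$ to $\out^0(A_\G)$, and then to read off its generators by restricting the defining generators of $\out^0(A_{\widehat\G})$.

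First I would show that $A_\G$, and every $A_\Delta\in\calg$, is invariant under $\out^0(A_{\widehat\G})$, using the criterion of Proposition~\ref{pr:invspecial}: one checks that each such $\Delta$ is upwards closed in $\widehat\G$ and is not star-separated by an outside vertex. Both properties are engineered into the cone construction. The cone vertex $v_\Delta$ lies in $\lk_{\widehat\G}(u)$ for every $u\in\Delta$ but for no vertex outside $\Delta$, so any vertex dominating some $u\in\Delta$ must have $v_\Delta$ in its star and hence lie in $\Delta$; and since $\Delta=\lk_{\widehat\G}(v_\Delta)$, every vertex of $\Delta$ surviving the deletion of the star of an outside vertex remains in the component of $v_\Delta$. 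The extra cone vertex $v_*$, adjacent to all of $\G$ but to no other cone vertex, plays the same role for $\G$ itself and is exactly what stops a vertex of $\G$ from being dominated by some $v_\Delta$.

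Invariance of $A_\G$ yields a restriction homomorphism $R_\G\colon\out^0(A_{\widehat\G})\to\out^0(A_\G)$, and invariance of each $A_\Delta$ shows $\im R_\G\subseteq\out^0(A_\G;\calg)$; to pass from a conjugator in $A_{\widehat\G}$ to one in $A_\G$ I would apply the retraction $A_{\widehat\G}\to A_\G$ that kills the cone vertices. Since restriction sends each inversion, transvection, and extended partial conjugation of $\widehat\G$ to one of the same type on $A_\G$ or to the identity, $\im R_\G$ is generated by the inversions, transvections, and extended partial conjugations it contains. The proposition thus reduces to the surjectivity of $R_\G$ onto $\out^0(A_\G;\calg)$.

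Surjectivity is the crux. Given $\Phi\in\out^0(A_\G;\calg)$ with representative $\phi$, write $\phi(A_\Delta)=g_\Delta A_\Delta g_\Delta^{-1}$ and define an extension $\widehat\phi$ by $\widehat\phi(v_*)=v_*$ and $\widehat\phi(v_\Delta)=g_\Delta v_\Delta g_\Delta^{-1}$; because $v_\Delta$ commutes with $A_\Delta$ and $v_*$ with $A_\G$, this respects the defining relations of $A_{\widehat\G}$ and is invertible, so $\widehat\phi\in\aut(A_{\widehat\G})$ restricts to $\phi$. The main obstacle is to verify that its outer class $\widehat\Phi$ lands in $\out^0(A_{\widehat\G})$ rather than merely in $\out(A_{\widehat\G})$. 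For this I would use Proposition~\ref{pr:out0classification}, which identifies $\out^0(A_{\widehat\G})$ with $\ker\overline\rho$, together with the formula of Proposition~\ref{p:sigmaimage} computing $\rho(\widehat\phi)$ from minimal vertices of cyclic reduction supports. On the cone vertices $\rho(\widehat\phi)$ is visibly trivial, since $\widehat\phi(v_\Delta)$ and $\widehat\phi(v_*)$ are conjugate to $v_\Delta$ and $v_*$. On a vertex $v$ of $\G$ the key point is that the domination order of $\widehat\G$ refines that of $\G$ exactly by recording membership in the subgraphs of $\calg$, so a $\leq_{\widehat\G}$-minimal vertex of $\crsupp(\phi(v))$ is still $\leq_\G$-equivalent to $v$ (because $\Phi\in\out^0(A_\G)$), while $\rho(\widehat\phi)$, being a graph automorphism fixing each class $[v_\Delta]$, must preserve adjacency to $[v_\Delta]$ and hence preserve $\calg$-membership. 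Combining these two facts forces $\rho(\widehat\phi)$ to be the identity, giving $\widehat\Phi\in\out^0(A_{\widehat\G})$ and $R_\G(\widehat\Phi)=\Phi$. I expect the bookkeeping for this refined domination order, and the care needed in moving invariance of $A_\Delta$ between conjugacy in $A_{\widehat\G}$ and in $A_\G$, to be the most delicate points.
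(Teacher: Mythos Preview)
Your proposal is correct and follows essentially the same approach as the paper: build the relative cone graph $\widehat\G$, show that $A_\G$ and each $A_\Delta\in\calg$ are invariant in $\out^0(A_{\widehat\G})$, identify $\out^0(A_\G;\calg)$ with the image of $R_\G$, and lift each $\phi\in\aut^0(A_\G;\calg)$ to $\widehat\phi\in\aut^0(A_{\widehat\G})$ by conjugating the cone vertices appropriately. The only notable variation is in the final step, verifying $\rho(\widehat\phi)=1$: where the paper checks the biconditional $v\in\Delta\Leftrightarrow w\in\Delta$ directly (using invariance of $A_\Delta$ for one direction and a quotient argument $A_\G\to A_\G/\ngenby{A_\Delta}$ for the other), you instead exploit that $\rho(\widehat\phi)\in\aut_c(\overline{\widehat\G})$ fixes each cone-vertex class $[v_\Delta]$ and therefore preserves adjacency to it. This is a clean repackaging of the same content and works; the retraction argument you sketch for passing conjugators from $A_{\widehat\G}$ back to $A_\G$ is also fine, and is a point the paper passes over quickly.
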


\begin{proof}
We use the relative cone graph $\widehat \G$ that we just defined.
We view $A_\G$ and each peripheral subgroup $A_\Delta$ as special subgroups of $A_{\widehat \G}$.

\begin{claim*}
If $A_\Delta \in \calg\cup\{A_\G\}$ then $A_\Delta$ is invariant under $\out^0(A_{\widehat \G})$.
\end{claim*}

Let $A_\Delta$ be a special subgroup in $\calg\cup\{A_\G\}$. 
According to Proposition~\ref{pr:invspecial}, we need to check that $\Delta$ is upwards-closed under the partial ordering $\leq_{\widehat\G}$ and not star-separated by any outside vertex. Suppose $v \in \Delta$ and $v \leq_{\widehat\G} w$ for some vertex $w \in \widehat\G$. As both $v_*$ and $v_\Delta$ are in $\lk_{\widehat\G}(v)$, it follows that $w$ lies in the intersection of $\st_{\widehat \G}(v_\Delta)$ and $\st_{\widehat \G}(v_*)$, which is $\Delta$. Hence $\Delta$ is upwards-closed under the partial order.

Now suppose $x \not \in \Delta$. 
If $x \neq v_\Delta$ then $\Delta - \st_{\widehat\G}(x)$ is contained in the connected component of ${\widehat\G} - \st_{\widehat\G}(x)$ containing $v_\Delta$. 
Otherwise, $x=v_\Delta$ and $\Delta$ is contained in $\st_{\widehat\G}(x)$. 
In either case, $\Delta$ intersects at most one component of ${\widehat\G} - \st_{\widehat\G}(x)$, so is not star-separated by $x$. 
Hence $A_\Delta$ is invariant under $\out^0(A_{\widehat\G})$. 

\begin{claim*}
The restriction map \[ R_\G\colon \out^0(A_{\widehat\G}) \to \out(A_\G) \] has image equal to $\out^0(A_\G; \calg)$.
\end{claim*}

Each inversion, transvection, and extended partial conjugation in $\out(A_{\widehat\G})$ is mapped either to the identity element of $\out(A_\G)$ or to an automorphism of the same type in $\out(A_\G)$. Hence the image is contained in $\out^0(A_\G)$. Furthermore, as each $A_\Delta \in \calg$ is preserved by $\out^0(A_{\widehat\G})$, the restriction of an automorphism to $A_\Gamma$ also preserves such subgroups. Hence the image is contained in $\out^0(A_\G;\calg)$. 
In order to show surjectivity, we will take $\phi \in \aut^0(A_\G;\calg)$ and extend $\phi$ to an automorphism $\tilde \phi$ of $A_{\widehat\G}$. 
For each $A_\Delta \in \calg$ we first pick $g_\Delta \in A_\Gamma$ such that $\phi(A_\Delta)=g_\Delta A_\Delta g_\Delta^{-1}$. We define $\tilde \phi$ on vertices by:
\[
\tilde \phi (v) =
\left\{
\begin{array}{cl}
\phi(v) &\text{if $v\in \Gamma$}\\
g_\Delta v g_\Delta^{-1}&\text{if $v=v_\Delta$ and $\Delta \neq \Gamma$}\\
v &\text{if $v$ is $v_\G$ or $v_*$.}
\end{array}
\right.
\]

To check that $\tilde \phi$ determines a well-defined homomorphism, note that if $v$ and $w$ are connected by an edge in ${\widehat\G}$ then either $v$ and $w$ are both in $\Gamma$ (in which case $\phi(w)$ and $\phi(v)$ commute), or $v\in\{v_\G,v_*\}$ and $w\in\G$ (in which case $v$ commutes with $\phi(w)$), or $v$ is some $v_\Delta$ and $w \in \Delta$. In the last case $\tilde \phi(v)=g_\Delta v_\Delta g_\Delta^{-1}$ and $\tilde\phi(w)= \phi(w) = g_\Delta h g_\Delta^{-1}$ for some $h \in A_\Delta$. As $v_\Delta$ commutes with every element of $A_\Delta$, it follows that $\tilde \phi(v)$ and $\tilde \phi(w)$ also commute.
We may also lift $\phi^{-1}$ to a homomorphism $\tilde\psi$ on $A_{\widehat \G}$ by defining 
\[
\tilde \psi (v) =
\left\{
\begin{array}{cl}
\phi^{-1}(v) &\text{if $v\in \Gamma$}\\
\phi^{-1}(g_\Delta^{-1}) v \phi^{-1}(g_\Delta)&\text{if $v=v_\Delta$ and $\Delta \neq \Gamma$}\\
v &\text{if $v$ is $v_\G$ or $v_*$.}
\end{array}
\right.
\]

One can verify that $\tilde \phi \tilde \psi (v) = \tilde \psi\tilde{\phi}(v) =v$ for every vertex of ${\widehat \G}$, so these lifts are inverses of each other, and $\tilde \phi$ is an automorphism.

Unfortunately we still need to show that $\tilde \phi \in \aut^0(A_{\widehat \G})$ (or in the language of the previous section, $\rho(\tilde\phi)=1$). 
Let $v \in {\widehat \G}$. By Proposition~\ref{p:sigmaimage}, there exists $w$ in $\crsupp(\tilde\phi(v))$ such that $w \leq_{\widehat \G} u$ for all $u \in \crsupp(\tilde\phi(v))$. 
Also by Proposition~\ref{p:sigmaimage}, to show that $\rho(\tilde\phi)=1$ it is enough to show that this vertex $w$ is equivalent to $v$ with respect to $\leq_{\widehat \G}$. 
If $v$ is some $v_\Delta$ or $v_\G$ or $v_*$, then $v$ is the only vertex in $\crsupp(\tilde\phi(v))$, so the case we need to consider is when $v \in \Gamma$. 
Then $\tilde\phi(v)=\phi(v)$, so any such $w$ in $\crsupp(\tilde\phi(v))$ is contained in $\G$. 
Furthermore, $w \leq_{\widehat \G} u$ for all $u \in \crsupp(\tilde\phi(v))$ implies that $w \leq_\Gamma u$ for all $u \in \crsupp(\phi(v))$. 
As $\phi \in \aut^0(A_\G)$, Proposition~\ref{p:sigmaimage} implies that $v \sim_{\G} w$. 
We need to improve this to show that $v \sim_{\widehat \G} w$. 
This is equivalent to showing that for each $A_\Delta \in \calg$, we have $v \in A_\Delta$ if and only if $w \in A_\Delta$ (this implies that $v_\Delta \in \st_{\widehat \G}(v)$ if and only if $v_\Delta \in \st_{\widehat \G}(w)$). For the first direction, suppose that $v \in A_\Delta$. 
As $A_\Delta$ is preserved by $\phi$, we have $\crsupp(\phi(v))\subset A_\Delta$. 
Hence $w \in A_\Delta$. Now suppose that $w \in A_\Delta$. We argue by contradiction and assume that $v \not\in A_\Delta$. As $A_\Delta$ is preserved by $\phi$, there is an induced automorphism 
\[ \bar\phi\colon A_\Gamma / \ngenby{A_\Delta} \to A_\Gamma / \ngenby{A_\Delta} .\] 
As $v$ is nontrivial in $A_\Gamma / \ngenby{A_\Delta}$, so is $\phi(v)$, so there exists a vertex $u$ in $\crsupp(\phi(v))$ that is not contained in $A_\Delta$. 
Then $\lk_{\widehat \G}(w)$ is not contained in $\st_{\widehat \G}(u)$ as $\lk_{\widehat \G}(w)$ contains $v_\Delta$ and $\st_{\widehat \G}(u)$ does not. 
This contradicts $w$ being minimal with respect to $\leq_{\widehat \G}$ in $\crsupp(\phi(v))$. 
Hence $w \sim_{\widehat \G} v$ and $\tilde \phi \in \aut^0(A_{\widehat \G})$.

Hence for each $\phi \in \aut^0(A_\Gamma,\calg)$ we have built an element $\tilde\phi \in \aut^0(A_{\widehat \G})$ which restricts to  $\phi$ on $A_\Delta$. As $\out^0(A_{\widehat \G})$ is generated by inversions, transvections, and extended partial conjugations and these map to either the identity or elements of the same type in $\out^0(A_\Gamma;\calg)$, it follows that $\out^0(A_\Gamma;\calg)$ is also generated by such elements.
 \end{proof}

\subsection{Relative connectivity and generators in $\out^0(A_\Gamma;\calg,\calh^t)$} 

Before proving Theorem A in full generality, we first need to describe exactly which extended Laurence generators lie in $\outgh$. 

\subsubsection{Relative connectivity}

Let $\calg$ be a collection of proper special subgroups of $A_\Gamma$ (so that $A_\G \not \in \calg$). Two vertices $v,w\in V(\G)$ are \emph{$\calg$-adjacent} if $v$ is adjacent to $w$ or if there is some $A_\Delta\in\calg$ with $v,w\in \Delta$. A finite sequence of vertices is a \emph{$\calg$-path} if each vertex in the sequence is $\calg$-adjacent to the next. A full subgraph $\Delta$ of $\G$ is \emph{$\calg$-connected} if, for any two $v,w\in V(\Delta)$, there is a $\calg$-path in $\Delta$ from $v$ to $w$. Each maximal $\calg$-connected subgraph is a union of connected components of $\G$, which in the same vein as above, we call a \emph{$\calg$-component}.
Notice that $\varnothing$-adjacency, $\varnothing$-paths, $\varnothing$-connectedness, and $\varnothing$-components are the same as adjacency, paths, connectedness, and connected components, respectively.

\subsubsection{Ordering and star-separation in the relative setting}

\begin{definition}[The $\calg$--ordering, $\calg^v$-components, and $P(\mathcal{H})$]
Let $\calg$ be a set of special subgroups of $A_\G$.
\begin{itemize}
\item We define the partial preorder $\leq_\calg$ on $V(\G)$ by saying that $v\leq_\calg w$ if and only if $v\leq_\G w$ and, for all $A_\Delta\in \calg$, if $v\in \Delta$, then $w\in\Delta$. We call this the \emph{$\calg$--ordering}. 
As usual, we have an equivalence relation $\sim_\calg$ defined by
$v\sim_\calg w$ if and only if $v\leq_\calg w$ and $w\leq_\calg v$.

\item For $v\in V(\G)$, let $\calg^{v}=\{A_\Delta\in \calg | v\not\in \Delta\}$ be the subset of $\mathcal{G}$ consisting of the peripheral subgroups that \emph{do not} contain $v$. Each $\calg^{v}$-component of $\G-\st(v)$ is a union of connected components of $\G - \st(v)$.

\item If $\mathcal{H}$ is a set of special subgroups, we define $P(\mathcal{H})$, the \emph{power set} of $\mathcal{H}$, to be the set of special subgroups $A_\Delta$ such that $A_\Delta$ is contained in some element of $\mathcal{H}$.
\end{itemize}
\end{definition}

Each element $A_\Delta \in \calg^v$ is contained in $A_{\lk(v)}$ or intersects exactly one $\calg^v$-component of $\G -\st(v)$. In fact, this is a defining feature of such components:

\begin{lemma}\label{le:calgcomponents}
If $K$ is a subgraph of $\G - st(v)$ then $K$ is a union of $\calg^{v}$-components if and only if: \begin{itemize}
                                                                                                    \item $K$ is a union of connected components of $\G - \st(v)$ and,
                                                                                                    \item for each $A_\Delta \in \calg^{v}$, either $\Delta \cap K = \emptyset$ or $\Delta - \st(v) \subset K$.
                                                                                                   \end{itemize}

\end{lemma}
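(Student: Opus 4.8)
The plan is to prove the two implications separately, using throughout the observation that within $\G - \st(v)$ the only two ways vertices can be $\calg^{v}$-adjacent are either by a genuine edge of $\G$ (which keeps them in the same connected component of $\G - \st(v)$) or by lying together in some $\Delta$ with $A_\Delta \in \calg^{v}$. Consequently a $\calg^{v}$-component is nothing more than an equivalence class of connected components of $\G - \st(v)$ under the relation generated by ``two components are joined whenever some $A_\Delta \in \calg^{v}$ has vertices in both.'' I will also record the elementary bookkeeping fact that, since $K \subset \G - \st(v)$, we have $\Delta \cap K = (\Delta - \st(v)) \cap K$; this lets me pass freely between $\Delta$ and its trace $\Delta - \st(v)$ on $\G - \st(v)$.

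For the forward direction, suppose $K$ is a union of $\calg^{v}$-components. Condition (1) is immediate, since each $\calg^{v}$-component is itself a union of connected components of $\G - \st(v)$. For condition (2), fix $A_\Delta \in \calg^{v}$ and note that any two vertices of $\Delta - \st(v)$ lie together in $\Delta$ and are therefore $\calg^{v}$-adjacent; hence all of $\Delta - \st(v)$ sits inside a single $\calg^{v}$-component. If $\Delta \cap K \neq \emptyset$ then this component meets $K$, and since $K$ is a union of $\calg^{v}$-components the whole component --- in particular all of $\Delta - \st(v)$ --- lies in $K$. Thus either $\Delta \cap K = \emptyset$ or $\Delta - \st(v) \subset K$, as required. (The degenerate case $\Delta \subset \st(v)$, where $\Delta - \st(v) = \emptyset$, makes both alternatives hold vacuously.)

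For the backward direction, assume (1) and (2); it suffices to show $K$ is closed under $\calg^{v}$-adjacency, since then, being a union of connected components, it is automatically a union of $\calg^{v}$-components. I would argue by contradiction: if some $\calg^{v}$-path left $K$, pick the first step $x_i \to x_{i+1}$ with $x_i \in K$ and $x_{i+1} \notin K$, and split into the two types of $\calg^{v}$-adjacency. If $x_i$ and $x_{i+1}$ are joined by an edge of $\G$ they lie in the same connected component of $\G - \st(v)$, contradicting (1). If instead they lie together in some $A_\Delta \in \calg^{v}$, then $x_i \in \Delta \cap K$ forces $\Delta \cap K \neq \emptyset$, so (2) gives $\Delta - \st(v) \subset K$, whence $x_{i+1} \in K$, a contradiction.

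There is no deep obstacle here; the statement unwinds directly from the definitions once the right equivalence-relation picture is in place. The only point requiring care is the consistent handling of the vertices of $\Delta$ that happen to lie in $\st(v)$: these contribute nothing to the trace on $\G - \st(v)$, and the identity $\Delta \cap K = (\Delta - \st(v)) \cap K$ is exactly what aligns the two clauses of condition (2) with the two types of $\calg^{v}$-adjacency. The main conceptual step --- which I would state first --- is recognizing that condition (2) is precisely the assertion that no single $A_\Delta \in \calg^{v}$ straddles the boundary between $K$ and its complement in $\G - \st(v)$, which is the only possible obstruction to $K$ being a union of $\calg^{v}$-components.
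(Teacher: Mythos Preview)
Your proof is correct and takes essentially the same approach as the paper: both arguments rest on the observation that the two bulleted conditions together are exactly the statement that $K$ is closed under $\calg^{v}$-adjacency (the first condition handles edge-adjacency, the second handles adjacency via some $A_\Delta \in \calg^{v}$), which is in turn equivalent to $K$ being a union of $\calg^{v}$-components. The paper compresses this into a single sentence, whereas you spell out both implications and the case analysis explicitly, but the content is the same.
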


\begin{proof}
The two conditions are equivalent to saying that if $w \in K$, then every vertex $\calg^{v}$-adjacent to $w$ is also in $K$. This is exactly the requirement for $K$ to be a union of $\calg^{v}$-components of $K$. 
\end{proof}

When working in $\outgh$ the set $\mathcal{G}$ can always be extended to include $P(\calh)$. This is because if $A_\Delta \in \calh$ and $\Lambda$ is a subgraph of $\Delta$, then any automorphism that restricts to the identity on $A_\Delta$ also restricts to the identity automorphism on $A_\Lambda$. We record this observation as a lemma below:

\begin{lemma}\label{le:calhsubsets}
Suppose $\calg$ and $\calh$ are sets of special subgroups and $A_\Lambda \in P(\calh)$. Then every element of $\out^0(A_\G;\calg,\calh^t)$ preserves $A_\Lambda$. In particular, \[ \out^0(A_\G;\calg\cup P(\calh),\calh^t)=\out^0(A_\G;\calg,\calh^t). \]
\end{lemma}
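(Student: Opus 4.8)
The plan is to reduce both statements to the simple observation that acting trivially on a subgroup is inherited by its subgroups. I would first prove the main assertion, that every $\Phi \in \out^0(A_\G;\calg,\calh^t)$ preserves $A_\Lambda$. Since $A_\Lambda \in P(\calh)$, by definition there is some $A_\Delta \in \calh$ with $A_\Lambda \subseteq A_\Delta$; under the identification of special subgroups with $\genby{\Lambda}$ and $\genby{\Delta}$ from the background section, this corresponds to an inclusion of full subgraphs $\Lambda \subseteq \Delta$. Because $\Phi$ acts trivially on $A_\Delta$, I can choose a representative automorphism $\phi \in \Phi$ and an element $g \in A_\G$ with $\phi(h)=ghg^{-1}$ for all $h \in A_\Delta$. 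Restricting this identity to the subgroup $A_\Lambda \subseteq A_\Delta$ immediately gives $\phi(h)=ghg^{-1}$ for all $h \in A_\Lambda$, so that $\phi(A_\Lambda)=gA_\Lambda g^{-1}$. Hence $\Phi$ in fact acts trivially on $A_\Lambda$, and in particular preserves it.

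For the displayed equality I would argue by two inclusions. The inclusion $\out^0(A_\G;\calg\cup P(\calh),\calh^t) \subseteq \out^0(A_\G;\calg,\calh^t)$ is immediate: enlarging the collection of subgroups that must be preserved only shrinks the group, and $\calg \subseteq \calg \cup P(\calh)$ while the trivially-acted-upon family $\calh$ is unchanged. For the reverse inclusion, take $\Phi \in \out^0(A_\G;\calg,\calh^t)$. By hypothesis $\Phi$ preserves each element of $\calg$ and acts trivially on each element of $\calh$, and by the claim just established it also preserves every element of $P(\calh)$. Therefore $\Phi$ preserves each element of $\calg \cup P(\calh)$ while still acting trivially on each element of $\calh$, so $\Phi \in \out^0(A_\G;\calg\cup P(\calh),\calh^t)$.

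There is essentially no obstacle in this argument: all of the content lies in the passage from a trivial action on $A_\Delta$ to a trivial action on the smaller $A_\Lambda$, and this works precisely because a single representative $\phi$ realizing the trivial action on $A_\Delta$ automatically realizes it on $A_\Lambda$ by restriction. The only point requiring minor care is to confirm that $A_\Lambda \subseteq A_\Delta$ as subgroups of $A_\G$ genuinely yields the inclusion $\Lambda \subseteq \Delta$ of defining subgraphs, which follows from the injectivity of the inclusion-induced homomorphism $A_\Lambda \to A_\G$ recorded in the background section.
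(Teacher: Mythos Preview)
Your proposal is correct and follows exactly the same idea as the paper: the paper records this lemma as a direct observation, noting that if an automorphism restricts to an inner automorphism on $A_\Delta$ then the same representative restricts to an inner automorphism on any $A_\Lambda\subseteq A_\Delta$. Your two-inclusion argument for the displayed equality is a routine unpacking of this, and the extra remark about recovering $\Lambda\subseteq\Delta$ from $A_\Lambda\subseteq A_\Delta$ is harmless (and in fact unnecessary, since the subgroup containment alone suffices).
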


Recall that in $\aut(A_\G)$, inversion automorphisms $\iota_v$ are well-defined for any vertex $v$, and the transvection $\rho_v^w$ determines a well-defined automorphism if and only if $v \leq w$ with respect to the standard order. Furthermore the extended partial conjugation $\pi^v_K$ exists if and only if $K$ is a union of connected components of $\G - st(v)$. There are similar conditions in the relative setting:

\begin{proposition}[Generators in relative automorphism groups]\label{pr:gensrelgh}
Suppose that $\calg$ and $\calh$ are sets of proper special subgroups of $\aut(A_\G)$ and $\calg$ contains $P(\calh)$.
Then:
\begin{itemize}
\item Let $v\in V(\G)$. 
The inversion $\iota_v$ is in $\aut^0(A_\G;\calg,\calh^t)$ if and only if there is no subgroup $A_\Delta\in \calh$ with $v\in\Delta$.
\item Let $v,w\in V(\G)$.
The transvection $\rho^w_v$ is in $\aut^0(A_\G;\calg,\calh^t)$ if and only if $v\leq_\calg w$.
\item Suppose $v\in V(\G)$ and $K$ is a union of components of $\G-\st(v)$.
Then the extended partial conjugation $\pi^v_K$ is in $\aut^0(A_\G;\calg,\calh^t)$ if and only if $K$ is a union of $\calg^v$-components of $\G - \st(v)$.
\end{itemize}
\end{proposition}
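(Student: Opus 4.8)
The membership of a single generator in $\autgh$ unwinds into three independent requirements: the generator must be a legitimate element of $\aut^0(A_\G)$, it must preserve every $A_\Delta\in\calg$, and it must act trivially on every $A_\Delta\in\calh$. Since Lemma~\ref{le:genexercise} records exactly when each of these holds for a fixed special subgroup, the plan is simply to intersect its conclusions over all of $\calg$ and $\calh$ and then simplify the resulting conjunction using the hypothesis $P(\calh)\subseteq\calg$ (together with Lemma~\ref{le:calgcomponents} in the last case). Each of the three bullet points is handled separately.

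For an inversion $\iota_v$, Lemma~\ref{le:genexercise}(1) says $\iota_v$ preserves every special subgroup and acts trivially on $A_\Delta$ exactly when $v\notin\Delta$; intersecting over $\calh$ gives membership if and only if no $A_\Delta\in\calh$ contains $v$, as claimed. For a transvection $\rho^w_v$ (moved letter $v$, acting letter $w$, with $v\neq w$, so that existence in $\aut^0(A_\G)$ requires $v\leq_\G w$), Lemma~\ref{le:genexercise}(2) shows that preserving $A_\Delta\in\calg$ is equivalent to the implication ``$v\in\Delta\Rightarrow w\in\Delta$''. Combined with $v\leq_\G w$, the preservation conditions over all of $\calg$ are exactly the defining conditions of $v\leq_\calg w$, which handles the forward direction. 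For the converse (existence and $\calg$-preservation being immediate from the definition of $\leq_\calg$) I will use that $P(\calh)\subseteq\calg$ supplies the singleton subgroup $A_{\{v\}}$ whenever $v$ lies in some $\Delta\in\calh$: if $v\leq_\calg w$, then applying the domination condition to $A_{\{v\}}\in\calg$ forces $w\in\{v\}$, i.e.\ $w=v$, contradicting $v\neq w$. Hence $v\leq_\calg w$ already guarantees $v\notin\Delta$ for every $\Delta\in\calh$, recovering the $\calh$-triviality condition of Lemma~\ref{le:genexercise}(2) for free.

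The extended partial conjugation case carries the real content. Using Lemma~\ref{le:genexercise}(3), preserving $A_\Delta\in\calg$ is automatic when $v\in\Delta$ and otherwise requires $\Delta$ to satisfy \eqref{eq:star}, so the $\calg$-preservation conditions reduce to ``\eqref{eq:star} holds for every $A_\Delta\in\calg^v$''; by Lemma~\ref{le:calgcomponents} this is precisely the statement that $K$ is a union of $\calg^v$-components. It therefore remains to absorb the $\calh$-triviality conditions, namely that \eqref{eq:star} holds for every $A_\Delta\in\calh$. The key observation is that \eqref{eq:star} depends on $\Delta$ only through $\Delta-\st(v)$, since $K\subseteq\G-\st(v)$ makes $K\cap\Delta=K\cap(\Delta-\st(v))$; thus \eqref{eq:star} for $\Delta$ is equivalent to \eqref{eq:star} for $\Delta-\st(v)$. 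For $A_\Delta\in\calh$ the subgroup $A_{\Delta-\st(v)}$ lies in $P(\calh)\subseteq\calg$ and excludes $v$, so $A_{\Delta-\st(v)}\in\calg^v$; the condition ``$K$ a union of $\calg^v$-components'' then already forces \eqref{eq:star} for $\Delta-\st(v)$, hence for $\Delta$. This shows the $\calh$-conditions are redundant and completes the equivalence.

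I expect the main obstacle to be the partial conjugation case, and specifically this last step: recognizing that \eqref{eq:star} is insensitive to the star of the acting vertex, so that the relevant $\calh$-member can be replaced by its intersection with $\G-\st(v)$, which by $P(\calh)\subseteq\calg$ lands in $\calg^v$. The inversion and transvection cases are essentially direct bookkeeping with Lemma~\ref{le:genexercise}, the only subtlety being the singleton trick in the transvection converse. Throughout, the hypothesis $P(\calh)\subseteq\calg$ (the conceptual content of Lemma~\ref{le:calhsubsets}) is exactly what makes the stated conditions, phrased purely in terms of $\calg$ and the $\calg$-ordering, equivalent to the a priori stronger conjunction involving both $\calg$ and $\calh$.
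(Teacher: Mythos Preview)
Your proof is correct and follows essentially the same approach as the paper: both reduce each bullet to Lemma~\ref{le:genexercise}, use the singleton trick $A_{\{v\}}\in P(\calh)\subseteq\calg$ for transvections, and invoke Lemma~\ref{le:calgcomponents} for the $\calg$-preservation half of the partial conjugation case. The only minor difference is in absorbing the $\calh$-triviality condition for $\pi^v_K$: the paper removes just the acting vertex, setting $\Theta=\Delta-\{v\}\in\calg^v$, and argues that $\pi^v_K$ inner on $A_\Theta$ extends to inner on $A_\Delta=\langle A_\Theta,v\rangle$; you instead remove all of $\st(v)$ and observe directly that \eqref{eq:star} is unchanged under this deletion. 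Both arguments are valid (indeed \eqref{eq:star} for $\Delta$, $\Delta-\{v\}$, and $\Delta-\st(v)$ are all equivalent), though note that the paper's version has the slight advantage of needing only the subgroups $A_{\Delta-\{v\}}$ rather than all of $P(\calh)$, as pointed out in Remark~\ref{re:notPh}.
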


\begin{proof}
The statement about inversions is equivalent to part (1) of Lemma~\ref{le:genexercise}. We fix $\rho^w_v$ to be a well-defined transvection (so that $v$ and $w$ are distinct and $v \leq_\Gamma w$). We use part (2) of Lemma~\ref{le:genexercise}. Firstly suppose that $v \leq_\calg w$. If $A_\Delta \in \calg$ then $\rho^w_v$ preserves $A_\Delta$ as either $v \not \in \Delta$ or both $v$ and $w$ are contained in $A_\Delta$ from the definition of $\leq_\calg$. We claim that $v$ is not contained in any element of $\calh$. Indeed, if this was the case, then $A_{\{v\}}$ is contained in $\calg$ as $\calg$ contains $P(\calh)$. This is an element of $\calg$ containing $v$ and not $w$, which contradicts the assumption that $v \leq_\calg w$. As $v$ is not contained in any element of $\calh$ the transvection acts trivially on these subgroups, so $\rho^w_v$ is contained in $\autgh$. Conversely, if $v \not \leq_\calg w$ then there exists $A_\Delta \in \calg$ containing $v$ and not $w$. In this case $A_\Delta$ is not preserved by $\rho^w_v$ and the transvection is not an element of $\autgh$. 

Now assume that $\pi^v_K$ is a extended partial conjugation, so that $K$ is a union of connected components of $\G - \st(v)$. By Lemma~\ref{le:genexercise}, the automorphism  $\pi^v_K$ acts trivially on $A_\Delta$ if and only if:
\begin{equation*} \tag{$\ast$}  K \cap \Delta = \emptyset \text{ or } \Delta - \st(v) \subset K.\end{equation*} Furthermore, $\pi^v_K$ preserves $A_\Delta$ if and only if $v \in \Delta$ or \eqref{eq:star} holds.

Suppose that $K$ is a union of $\calg^{v}$-components of $\G - \st(v)$. If $A_\Delta \in \calg$, then either $v \in A_\Delta$ or $A_\Delta \in \calg^v$ and $\Delta$ satisfies \eqref{eq:star} by Lemma~\ref{le:calgcomponents}. In either case, $A_\Delta$ is preserved by $\pi^v_K$. If $A_\Delta \in \calh$, then $\Theta=\Delta- \{v\}$ lies in $\calg$ as $\calg$ contains all elements of $P(\calh)$. As $v \not \in \Theta$, the group $A_\Theta \in \calg^v$  and satisfies \eqref{eq:star}. Hence $\pi^v_K$ restricts to an inner automorphism on $A_\Theta$, so also restricts to an inner automorphism on $A_\Delta = \genby{A_\Theta,v}$. Putting the above together gives $\pi^v_K \in \autgh$. Conversely, if $K$ is not a union of $\calg^{v}$-components, there exists $A_\Delta \in \calg^{v}$ which does not satisfy \eqref{eq:star}, which implies $A_\Delta$ is not preserved by $\pi^v_K$ and $\pi^v_K$ is not an element of $\autgh$.
\end{proof}

\begin{remark}\label{re:notPh}
In the proof of Proposition~\ref{pr:gensrelgh}, we use the hypothesis that $\calg$ contains $P(\calh)$.
We can weaken that hypothesis to this: $\calg$ should contain $\calh$, and should include every special subgroup of the form $A_{\Delta-\{v\}}$, for $A_\Delta\in\calh$ and $v\in\Delta$.
This gives us the correct transvections, since if $v,w\in\G$ with $v\in \Delta$ for some $A_\Delta\in\calh$, then $A_{\Delta-\{w\}}\in\calg$, and therefore $v\not\leq_\calg w$.
It also gives us the correct extended partial conjugations, since the only groups from $P(\calh)$ that we use in that part of the proof are those of the form $A_{\Delta-\{v\}}$.
\end{remark}

\subsection{A generating set for $\outgh$}

We can now complete our proof of Theorem~\ref{th:generators}, which states that $\outgh$ is generated by the inversions, transvections, and extended partial conjugations it contains.

\begin{proof}[Proof of Theorem~\ref{th:generators}]
By Lemma~\ref{le:calhsubsets}, we may assume that $\calg$ contains $P(\calh)$. 
By Proposition~\ref{pr:gensrelgh}, when $\calg$ contains $P(\calh)$ the only extended Laurence generators in $\out^0(A_\G;\calg)$ that are not contained in $\out^0(A_\Gamma;\calg,\calh^t)$ are inversions $\iota_x$ with $x$ contained in some element of $\calh$ (the conditions for transvections and partial conjugations use $\calg$ alone). 
Let $\phi \in \out(A_\G;\calg,\calh^t)$. As $\phi \in \out(A_\G;\calg)$, by Proposition~\ref{pr:generating1}, we may write $\phi$ as a product of inversions, transvections, and extended partial conjugations in $\out^0(A_\Gamma;\calg)$. 
By reshuffling this product using the identities 
\begin{align*} 
\iota_x\iota_y &=\iota_y\iota_x &\\
\iota_x\rho^w_v &= \begin{cases}(\rho^w_v)^{-1}\iota_x &\text{if $x=w$} \\
\rho^w_v\iota_x &\text{if $x \neq w$},\end{cases} \\
\iota_x\pi^v_K &=\begin{cases} (\pi^v_K)^{-1}\iota_x &\text{if $x=v$} \\
\pi^v_K\iota_x &\text{if $x \neq v$},\end{cases}
\end{align*}
 and using the fact that inversions commute and have order 2, we have \[ \phi=\iota_{x_1}^{\epsilon_1}\cdot \iota_{x_2}^{\epsilon_2} \cdots \iota_{x_k}^{\epsilon_k} \cdot \phi', \] where $x_1,\ldots,x_k$ are the vertices of $\Gamma$ that are contained in some element of $\calh$, each $\epsilon_i$ is either equal to $0$ or $1$, and $\phi'$ is a product of extended Laurence generators that are contained in $\outgh$. It remains to show that each $\epsilon_i=0$. As $\phi'$ acts trivially on the elements of $\calh$, for each $i$ we have $\phi'(x_i)=g_i x_i g_i^{-1}$ for some $g_i \in A_\Gamma$. If $\psi=\iota_{x_1}^{\epsilon_1}\cdot \iota_{x_2}^{\epsilon_2} \cdots \iota_{x_k}^{\epsilon_k}$, then \[ \phi(x_i)=\psi(g_i) x_i^{(-1)^{\epsilon_i}} \psi(g_i^{-1}).\] As $\phi$ restricts to an inner automorphism on each $\genby{x_i}$, it follows that $\epsilon_i=0$, and $\phi$ is a product of inversions, transvections, and extended partial conjugations in $\outgh$. 
\end{proof}

The above proof shows that the group $\out^0(A_\Gamma;\calg,\calh^t)$ is finite index in the group $\out^0(A_\G;\calg \cup P(\calh))$, so for many purposes one can happily consider relative automorphism groups without the extra set of special subgroups admitting a trivial action. In the sequel it is convenient for us to remain in the more general setting.

\subsection{Invariant special subgroups under $\outgh$}

We may now classify when an arbitrary special subgroup $A_\Delta$ is invariant under $\out^0(A_\G;\calg,\calh^t)$.

\begin{proposition}[Invariant special subgroups]\label{pr:chariss}
Suppose that $\calg$ and $\calh$ are sets of proper special subgroups of $A_\G$ and $\calg$ contains $P(\calh)$. Let $A_\Delta$ be any special subgroup of $A_\G$.
Then $A_\Delta$ is invariant under $\out^0(A_\G; \calg,\calh^t)$ if and only if the following two conditions hold:
\begin{itemize}
\item for all $v,w\in V(\G)$, if $v\in \Delta$ and $v\leq_\calg w$, then $w\in\Delta$, and
\item for all $v\in V(\G)$, if $v \not \in\Delta$, then $\Delta$ intersects at most one $\calg^v$-component of $\G - \st(v)$.
\end{itemize}
\end{proposition}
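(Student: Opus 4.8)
The plan is to mirror the proof of the absolute statement, Proposition~\ref{pr:invspecial}, by reducing invariance under the whole group to invariance under its standard generators. The set of outer automorphisms preserving a fixed special subgroup $A_\Delta$ is a subgroup of $\out^0(A_\G)$: if $\phi_1,\phi_2$ each admit representatives fixing $A_\Delta$ setwise, then so does the composite, and likewise for inverses. Hence by Theorem~\ref{th:generators}, $A_\Delta$ is invariant under $\out^0(A_\G;\calg,\calh^t)$ if and only if it is preserved by every inversion, transvection, and extended partial conjugation lying in $\out^0(A_\G;\calg,\calh^t)$. The engine of the argument is then the pairing of two earlier results: Proposition~\ref{pr:gensrelgh}, which identifies exactly which generators lie in the relative group (in terms of $\leq_\calg$ and $\calg^v$-components), and Lemma~\ref{le:genexercise}, which records exactly when each generator preserves $A_\Delta$ (in terms of membership and condition~\eqref{eq:star}).

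For the forward direction I would assume both bullet points and verify preservation generator-by-generator. Inversions preserve every special subgroup by Lemma~\ref{le:genexercise}(1). For a transvection $\rho^w_v$ in the relative group, Proposition~\ref{pr:gensrelgh} gives $v\leq_\calg w$, while Lemma~\ref{le:genexercise}(2) says $\rho^w_v$ preserves $A_\Delta$ unless $v\in\Delta$ and $w\notin\Delta$; the first bullet rules this out. For an extended partial conjugation $\pi^v_K$ in the relative group, Proposition~\ref{pr:gensrelgh} says $K$ is a union of $\calg^v$-components; if $v\in\Delta$ it preserves $A_\Delta$ automatically, and if $v\notin\Delta$ I would use the second bullet together with Lemma~\ref{le:calgcomponents} to establish \eqref{eq:star}.

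For the reverse direction I would argue by contraposition, exhibiting a bad generator whenever a bullet fails. If the first bullet fails there are $v,w$ with $v\in\Delta$, $v\leq_\calg w$, and $w\notin\Delta$; then $\rho^w_v$ is a well-defined transvection (since $v\leq_\calg w$ forces $v\leq_\G w$, and $v\neq w$), it lies in the relative group by Proposition~\ref{pr:gensrelgh}, yet it fails to preserve $A_\Delta$ by Lemma~\ref{le:genexercise}(2). If the second bullet fails there is some $v\notin\Delta$ and two distinct $\calg^v$-components $C_1,C_2$ of $\G-\st(v)$ each meeting $\Delta$; I would take $\pi^v_{C_1}$, which lies in the relative group by Proposition~\ref{pr:gensrelgh}, and note that $C_1\cap\Delta\neq\emptyset$ while the vertex of $\Delta$ inside $C_2$ witnesses $\Delta-\st(v)\not\subset C_1$, so \eqref{eq:star} fails and $A_\Delta$ is not preserved.

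The main obstacle is the extended-partial-conjugation case of the forward direction: translating ``$\Delta$ meets at most one $\calg^v$-component'' into condition~\eqref{eq:star} for a set $K$ that is an \emph{arbitrary} union of $\calg^v$-components. The key point is that $\Delta-\st(v)$, which equals $\Delta\cap(\G-\st(v))$, lies inside a single $\calg^v$-component $C_j$ (or is empty); since $K$ is a union of $\calg^v$-components and these are pairwise disjoint, either $C_j\subseteq K$, giving $\Delta-\st(v)\subseteq K$, or $C_j\cap K=\emptyset$, giving $K\cap\Delta=\emptyset$. Either way \eqref{eq:star} holds, and it is precisely here that Lemma~\ref{le:calgcomponents} and the disjointness of distinct $\calg^v$-components do the real work.
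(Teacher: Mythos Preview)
Your proposal is correct and follows essentially the same approach as the paper's own proof: both reduce the question to invariance under generators via Theorem~\ref{th:generators}, and then pair Proposition~\ref{pr:gensrelgh} with Lemma~\ref{le:genexercise} to handle each type of generator. Your write-up simply fills in more of the details (particularly the forward-direction argument for extended partial conjugations), whereas the paper's proof is terser.
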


\begin{proof}
If $\Delta$ fails the first condition, pick vertices $v,w$ such that $v \leq_\calg w$ with $v \in \Delta$ and $w \not \in \Delta$. Then the transvection $\rho^w_v$ is contained in $\outgh$ (Proposition~\ref{pr:gensrelgh}) but $A_\Delta$ is not invariant under $\rho^w_v$ (Lemma~\ref{le:genexercise}). Similarly if $\Delta$ intersects more than one $\calg^{v}$-component of $\G - \st(v)$, if we label one of those components as $K$, the automorphism $\pi^v_K$ lies in $\autgh$ and does not preserve $A_\Delta$. Conversely, if $\Delta$ satisfies the above conditions then every extended Laurence generator in $\autgh$ leaves $A_\Delta$ invariant (again combining Proposition~\ref{pr:gensrelgh} and Lemma~\ref{le:genexercise}), so Theorem~\ref{th:generators} implies that $A_\Delta$ is preserved by the whole group.
\end{proof}

We say that \emph{$v$ is a vertex that $\calg$--star-separates $\Delta$} if the subgraph $\Delta$ intersects more than one $\calg^v$-component of $\G-\st(v)$. To put the conditions of Proposition~\ref{pr:chariss} into words, a subgroup $A_\Delta$ is invariant under $\outgh$ if and only if $\Delta$ is \emph{upwards closed under the order $\leq_\calg$}  and is \emph{not $\calg$--star-separated by any outside vertex}.

\subsection{Consequences of Proposition~\ref{pr:chariss}}

The next fact is essentially the same as Hensel--Kielak~\cite{HK}, Lemma~4.2, part (i).

\begin{corollary}\label{co:preserveintersections}
Suppose $\out^0(A_\G;\calg)$ preserves both
 $A_{\Delta_1}$ and $A_{\Delta_2}$.
Then it also preserves $A_{\Delta_1\cap\Delta_2}$.
\end{corollary}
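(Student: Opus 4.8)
The plan is to use the characterization of invariant special subgroups given by Proposition~\ref{pr:chariss}, applied to the relative group $\out^0(A_\G;\calg)$ (that is, with $\calh=\emptyset$, so the $\calg$-ordering $\leq_\calg$ and $\calg^v$-components govern everything). Since $A_{\Delta_1}$ and $A_{\Delta_2}$ are preserved by $\out^0(A_\G;\calg)$, each of $\Delta_1$ and $\Delta_2$ is upwards closed under $\leq_\calg$ and not $\calg$-star-separated by any outside vertex. The goal is to verify that $\Delta_1\cap\Delta_2$ inherits both properties, after which Proposition~\ref{pr:chariss} immediately gives invariance.

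First I would check upward closure. Suppose $v\in\Delta_1\cap\Delta_2$ and $v\leq_\calg w$. Since $v\in\Delta_1$ and $\Delta_1$ is upwards closed, we get $w\in\Delta_1$; likewise $w\in\Delta_2$. Hence $w\in\Delta_1\cap\Delta_2$, so $\Delta_1\cap\Delta_2$ is upwards closed under $\leq_\calg$. This step is entirely formal and poses no difficulty.

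The second condition, that $\Delta_1\cap\Delta_2$ is not $\calg$-star-separated by any outside vertex, is the main obstacle, since it does not simply transfer coordinatewise. Fix $v\notin\Delta_1\cap\Delta_2$ (so $v\notin\Delta_i$ for at least one $i$) and suppose for contradiction that $\Delta_1\cap\Delta_2$ meets two distinct $\calg^v$-components $K$ and $K'$ of $\G-\st(v)$. Pick $a\in(\Delta_1\cap\Delta_2)\cap K$ and $b\in(\Delta_1\cap\Delta_2)\cap K'$. Then both $a$ and $b$ lie in $\Delta_1$, which meets $K$ and $K'$; if $v\notin\Delta_1$ this already contradicts $\Delta_1$ being not $\calg$-star-separated by $v$. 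Symmetrically for $\Delta_2$. The issue is the case where $v$ lies in one of the $\Delta_i$ but not the other — say $v\in\Delta_1$, $v\notin\Delta_2$. Then the contradiction must come from $\Delta_2$: since $a,b\in\Delta_2$ and $v\notin\Delta_2$, the fact that $\Delta_2$ meets the two distinct $\calg^v$-components $K,K'$ contradicts that $\Delta_2$ is not $\calg$-star-separated by $v$. In every case we reach a contradiction, so no such $v$ exists.

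The key observation making this work is that the notion of $\calg^v$-component depends only on $v$ and $\calg$, not on $\Delta_1$ or $\Delta_2$, so the same decomposition of $\G-\st(v)$ serves for all three subgraphs simultaneously; and that for any outside vertex $v\notin\Delta_1\cap\Delta_2$, at least one of $\Delta_1,\Delta_2$ also excludes $v$ and can supply the needed contradiction. Having verified both conditions, I would conclude by invoking Proposition~\ref{pr:chariss} to deduce that $A_{\Delta_1\cap\Delta_2}$ is preserved by $\out^0(A_\G;\calg)$, completing the proof.
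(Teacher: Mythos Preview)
Your proposal is correct and follows essentially the same approach as the paper: both verify the two conditions of Proposition~\ref{pr:chariss}, with upward closure being immediate and the star-separation condition handled by observing that any vertex $\calg$-star-separating $\Delta_1\cap\Delta_2$ would have to lie in both $\Delta_1$ and $\Delta_2$. The paper phrases the second step contrapositively (if $w$ star-separates then $w\in\Delta_1\cap\Delta_2$), while you argue by contradiction from an outside vertex, but the content is the same.
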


\begin{proof}
Let $v, w\in V(\G)$ with $v\in \Delta_1\cap\Delta_2$ and $v\leq_\calg w$.
Then $v\in \Delta_i$, and since $A_{\Delta_i}$ is preserved, we have $w\in \Delta_i$, by Proposition~\ref{pr:chariss}, for $i=1,2$.
Then $w\in \Delta_1\cap\Delta_2$ and the intersection is upwards closed under $\leq_\calg$.

Now suppose that $w$ is a vertex that $\calg$--star-separates $\Delta_1 \cap \Delta_2$, so that there exist $u,v\in \Delta_1\cap\Delta_2$ in different $\calg^w$-components of $\G -\st(w)$.
As each $\Delta_i$ is not $\calg$--star-separated by an outside vertex (Proposition~\ref{pr:chariss}), it follows that $w \in \Delta_1$ and $w \in \Delta_2$.
Then $w\in \Delta_1\cap\Delta_2$. Hence if $w \not \in \Delta_1 \cap \Delta_2$ then $\Delta_1 \cap \Delta_2$ intersects at most one $\calg^w$-component of $\G - \st(w)$.

By Proposition~\ref{pr:chariss} the group  $\out^0(A_\G;\calg)$ preserves $A_{\Delta_1\cap\Delta_2}$.
\end{proof}

Proposition~\ref{pr:chariss} allows one to prove a host of other invariance results in the same way:

\begin{lemma}\label{le:fullrelsetprops}
Let $\calg$ be a set of proper special subgroups of $A_\G$.
\begin{enumerate}
\item 
If $A_\Delta$ is a $\calg$-component of $\G$ containing at least two vertices, then $A_\Delta$ is invariant under $\out^0(A_\Gamma;\calg)$.
\item 
If $A_\Delta$ and $A_\Lambda$ are preserved by $\out^0(A_\G;\calg)$ and there exists $v\in \Delta\cap \Lambda$ with $\st(v)\subset \Delta\cup\Lambda$, then $A_{\Delta\cup\Lambda}$ is is preserved by $\out^0(A_\G;\calg)$.
\item 
If $A_\Delta$ and $A_\Lambda$ are preserved by $\out^0(A_\G;\calg)$, the intersection $\Delta\cap\Lambda\neq \varnothing$ is nonempty, and $\Lambda$ is a union of connected components of $\Gamma$, then $A_{\Delta\cup\Lambda}$ is preserved by $\out^0(A_\G;\calg)$.
\end{enumerate}
\end{lemma}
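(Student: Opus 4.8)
The plan is to derive all three parts as applications of the invariance criterion of Proposition~\ref{pr:chariss}, taking $\calh=\varnothing$ (so that $P(\calh)=\varnothing\subseteq\calg$ and the hypothesis of that proposition is satisfied). For each part I would verify the two conditions on the relevant subgraph: upward closure under $\leq_\calg$, and the absence of $\calg$-star-separation by any outside vertex.

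The upward-closure conditions are the easy half. For parts (2) and (3) the subgraph is a union $\Delta\cup\Lambda$ of two sets that are already upward closed under $\leq_\calg$ (being preserved), and a union of upward-closed sets is upward closed, so nothing is needed. For part~(1), given $v\in\Delta$ with $v\leq_\calg w$, I would use that $\Delta$ is $\calg$-connected with at least two vertices to pick a vertex $u\neq v$ of $\Delta$ that is $\calg$-adjacent to $v$: either $u\in\lk(v)\subset\st(w)$, in which case $w$ equals or is adjacent to $u$, or $u$ and $v$ share some $A_\Theta\in\calg$, in which case $w\in\Theta$ by definition of $\leq_\calg$. In all cases $w$ is $\calg$-adjacent to $u\in\Delta$, so maximality of the $\calg$-component $\Delta$ forces $w\in\Delta$.

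The substance is the star-separation condition, and the recurring idea is to pin everything the subgraph meets in $\G-\st(x)$ inside a single $\calg^x$-component. For part~(1), since $\Delta$ is a union of connected components of $\G$ and $x\notin\Delta$, no vertex of $\Delta$ is adjacent to $x$, so $\Delta\subseteq\G-\st(x)$; moreover any $A_\Theta\in\calg$ that meets $\Delta$ cannot contain $x$, for otherwise $x$ would be $\calg$-adjacent to a vertex of $\Delta$ and hence lie in $\Delta$. Consequently every $\calg$-path inside $\Delta$ is a $\calg^x$-path, so $\Delta$ lies in one $\calg^x$-component. For parts (2) and (3) I would instead locate a vertex of $\Delta\cap\Lambda$ lying in $\G-\st(x)$ and note that its $\calg^x$-component is, by Proposition~\ref{pr:chariss} applied to the preserved subgroups $A_\Delta$ and $A_\Lambda$, simultaneously the unique $\calg^x$-component met by $\Delta$ and the unique one met by $\Lambda$; these therefore coincide, so $\Delta\cup\Lambda$ meets only one $\calg^x$-component. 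In part~(2) the hypothesis $\st(v)\subset\Delta\cup\Lambda$ gives $x\notin\st(v)$, hence $v\in\G-\st(x)$; in part~(3) the hypothesis that $\Lambda$ is a union of connected components gives $\Lambda\subseteq\G-\st(x)$, so any vertex of the nonempty set $\Delta\cap\Lambda$ works.

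The main obstacle is the star-separation step of part~(1): one must convert $\calg$-connectivity of $\Delta$ inside $\G$ into $\calg^x$-connectivity inside $\G-\st(x)$, and the crucial point that makes this go through is that maximality of $\Delta$ as a $\calg$-component rules out any $A_\Theta\in\calg$ meeting $\Delta$ from containing the outside vertex $x$. With that observation the three invocations of Proposition~\ref{pr:chariss} are routine.
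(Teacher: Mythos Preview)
Your proof is correct and follows essentially the same approach as the paper's: both arguments invoke Proposition~\ref{pr:chariss} and verify upward closure under $\leq_\calg$ together with the absence of $\calg$-star-separation by outside vertices, with the key observations (that any $A_\Theta\in\calg$ meeting a $\calg$-component $\Delta$ must avoid an outside vertex $x$, and that the common vertex $v$ anchors both $\Delta$ and $\Lambda$ in a single $\calg^x$-component) matching. The only cosmetic difference is that the paper deduces part~(3) from part~(2) by noting that $\st(v)\subset\Lambda$ whenever $v$ lies in a union of connected components $\Lambda$, whereas you argue part~(3) directly; the content is the same.
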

\begin{proof}
For each point, one shows upwards closure under $\leq_\calg$ and that no outside vertex $\calg$--star-separates. 

For part (1), if $v \in \Delta$ and $v \leq_\calg w$, then as $\Delta$ contains at least two vertices, there is some $u \in \Delta$ that is $\calg$-adjacent to $v$. This implies that $w$ is $\calg$-adjacent to $u$ (as $v \leq_\calg w$), so $w \in \Delta$ as $\Delta$ is closed under $\calg$--adjacency. If $x$ lies outside of $\Delta$, then $\Delta$ is $\calg^x$-connected (any element of $\calg$ contained in $A_\Delta$ is an element of $\calg^x$), so that $\Delta$ is itself a $\calg^x$-component of $\G - \st(x)$.

For part (2), as both $\Delta$ and $\Lambda$ are upwards-closed under $\leq_\calg$, so is the union $\Delta \cup \Lambda$. Let $v$ be a vertex with $v\in \Delta\cap \Lambda$ and $\st(v)\subset \Delta\cup\Lambda$. If $x \not \in \Lambda \cup \Delta$, then $x$ is not adjacent to $v$. Let $K$ be the $\calg^x$-component of $\G - \st(x)$ containing $v$. As both $A_\Delta$ and $A_\Lambda$ are preserved by $\out^0(A_\G;\calg)$ and contain $v$, they intersect $\G - \st(x)$ in a subset of $K$. Hence $\Delta \cup \Lambda$ intersects this unique $\calg^x$-component of $\G -\st(x)$, also.

The last point follows from the preceding point, since if $\Lambda$ is a union of connected components of $\G$ and $v\in \Lambda$, then $\st(v)\subset \Lambda$ also.
\end{proof}

If $\calg$ is a set of special subgroups of $A_\G$ and $\Theta$ is a subgraph of $\G$, we let $N_\calg(\Theta)$ be the union of $\Theta$ and its $\calg$-adjacent vertices.

\begin{proposition}\label{pr:peripheralinvariant}
Suppose $\calg$ is a set of proper special subgroups of $A_\G$. Let $A_\Delta \in \calg$ and let $\Theta$ be a $\calg$-connected subgraph of $\G - \Delta$. Let \[ \Lambda=N_\calg(\Theta) \cap \Delta \] be the subgraph of $\Delta$ spanned by vertices that are $\calg$-adjacent to some element of $\Theta$. Then $\out^0(A_\G;\calg)$ preserves $A_\Lambda$.
\end{proposition}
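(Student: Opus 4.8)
The plan is to verify the invariance criterion of Proposition~\ref{pr:chariss}, applied with $\calh=\varnothing$ (so its hypothesis that $\calg$ contain $P(\calh)$ holds vacuously). Thus it suffices to check that $\Lambda$ is upwards closed under $\leq_\calg$ and is not $\calg$--star-separated by any outside vertex. Throughout I would use the explicit description of $\Lambda$: a vertex $u\in\Delta$ lies in $\Lambda$ if and only if $u$ is $\calg$-adjacent to some $\theta\in\Theta$, meaning either $u$ is adjacent to $\theta$ or there is some $A_\Xi\in\calg$ with $u,\theta\in\Xi$.

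For upwards closure, I would take $v\in\Lambda$ and $w$ with $v\leq_\calg w$, and show $w\in\Lambda$. Since $v\in\Delta$, $A_\Delta\in\calg$, and $v\leq_\calg w$, the definition of $\leq_\calg$ immediately gives $w\in\Delta$. Pick $\theta\in\Theta$ witnessing $v\in\Lambda$. If $v$ is adjacent to $\theta$, then $\theta\in\lk(v)\subset\st(w)$; since $\theta\notin\Delta$ while $w\in\Delta$ we have $\theta\neq w$, so $\theta$ is adjacent to $w$ and $w\in\Lambda$. If instead $v,\theta\in\Xi$ for some $A_\Xi\in\calg$, then $v\leq_\calg w$ forces $w\in\Xi$, so $w$ is $\calg$-adjacent to $\theta$ and again $w\in\Lambda$.

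The substantive part is showing no outside vertex $x\notin\Lambda$ can $\calg$--star-separate $\Lambda$. I would split on whether $x\in\Delta$. If $x\notin\Delta$, then $A_\Delta\in\calg^x$, and Lemma~\ref{le:calgcomponents} shows that $\Delta-\st(x)$ is contained in a single $\calg^x$-component of $\G-\st(x)$; since $\Lambda\subset\Delta$, the subgraph $\Lambda$ meets at most one $\calg^x$-component. The harder case is $x\in\Delta$ (so $x\in\Delta-\Lambda$), which I would argue by contradiction: suppose $u_1,u_2\in\Lambda$ lie in distinct $\calg^x$-components, and build a $\calg^x$-path in $\G-\st(x)$ joining them. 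Choosing $\theta_i\in\Theta$ with $u_i$ $\calg$-adjacent to $\theta_i$ and a $\calg$-path $\theta_1,\dots,\theta_2$ inside $\Theta$ (using that $\Theta$ is $\calg$-connected), I would concatenate to get a $\calg$-path $u_1,\theta_1,\dots,\theta_2,u_2$ and upgrade it to a $\calg^x$-path.

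The key trick in that upgrade---and the step I expect to be the main obstacle---is to rule out $x$ appearing in any relevant data. First, no vertex of $\Theta$ lies in $\st(x)$: if some $\theta\in\Theta$ were equal or adjacent to $x$, then $x$ would be $\calg$-adjacent to $\theta$, and $x\in\Delta$ would give $x\in N_\calg(\Theta)\cap\Delta=\Lambda$, contradicting $x\notin\Lambda$; hence the whole path lies in $\G-\st(x)$. Second, every consecutive pair on the path has at least one endpoint in $\Theta$, so whenever such a pair is $\calg$-adjacent through a group $A_\Xi\in\calg$, that $\Xi$ cannot contain $x$: otherwise $x,\theta\in\Xi$ for some $\theta\in\Theta$ would again put $x\in\Lambda$. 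Thus every such $A_\Xi$ lies in $\calg^x$ and each edge of the path is a genuine $\calg^x$-adjacency. This exhibits $u_1$ and $u_2$ in the same $\calg^x$-component, the desired contradiction. With both conditions of Proposition~\ref{pr:chariss} verified, $A_\Lambda$ is preserved by $\out^0(A_\G;\calg)$.
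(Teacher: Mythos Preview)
Your proof is correct and follows essentially the same approach as the paper's own proof: verify the two conditions of Proposition~\ref{pr:chariss}, handle upwards closure under $\leq_\calg$ using the definition, split the star-separation check on whether $x\in\Delta$, and in the hard case build a $\calg$-path through $\Theta$ and upgrade it to a $\calg^x$-path using that $x\notin\Lambda$ forces $x$ to be not $\calg$-adjacent to any vertex of $\Theta$. The only cosmetic difference is that you frame the hard case as a contradiction while the paper argues directly, and you cite Lemma~\ref{le:calgcomponents} where the paper just observes that $A_\Delta\in\calg^x$ forces $\Delta$ into a single $\calg^x$-component.
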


\begin{proof}
We will show that $\Lambda$ is upwards closed under $\leq_\calg$ and it not $\calg$--star-separated by an outside vertex (Proposition~\ref{pr:chariss}). First suppose that $v,w\in V(\G)$ with $v\in \Lambda$ and $v\leq_\calg w$. Then $w \in \Delta$ as $A_\Delta \in \calg$. As $v \in \Lambda$ there is a vertex $u\in \Theta$ such that $v$ is $\calg$-adjacent to $u$.
If $v$ is connected to $u$ by an edge, then so is $w$, since $v\leq_\G w$. Otherwise there is $A_\Xi\in \calg$ with $u,v\in \Xi$. Then $w\in \Xi$ also (since $v\leq_\calg w$).
In either case, $w$ is in $\Delta$ and is $\calg$-adjacent to the vertex $u \in \Theta$, and therefore $w\in \Lambda$.

It remains to show that if $v \not \in \Lambda$, the graph $\Lambda$ intersects at most one $\calg^{v}$-component of $\G - \st(v)$. If $v \not \in \Delta$, then as $A_\Delta \in \calg$, the graph $\Delta$ intersects at most one $\calg^{v}$-component of $\G - \st(v)$, so the same is true for $\Lambda$. We may therefore assume that $v \in \Delta$. Let $u$ and $w$ be any two vertices in $\Lambda - \st(v)$. By the definition of $\Lambda$, the vertices $u$ and $w$ are $\calg$-adjacent to vertices in $\Theta$.
Since $\Theta$ is $\calg$-connected, there is a $\calg$-path $p$ from $u$ to $w$ through $\Theta$ (every vertex of $p$ is in $\Theta$ except for the endpoints, which are $u$ and $w$). Since $v \in \Delta$ but $v \not \in \Lambda$, the vertex $v$ is not $\calg$-adjacent to any vertex in the interior of $p$. This implies that this path is in $\G - \st(v)$, and furthermore any $\calg$-adjacent vertices on the path are also $\calg^{v}$-adjacent, and $p$ is a $\calg^{v}$-path. Hence $u$ and $w$ are in the same $\calg^{v}$-component of $\G - \st(v)$. As $u$ and $w$ were arbitrary, $\Lambda$ intersects at most one $\calg^{v}$-component of $\G-\st(v)$.
\end{proof}

 \begin{corollary}\label{co:peripheralinvariant}
 Let $\calg$ be a set of proper special subgroups of $A_\G$, let $A_\Delta \in \calg$ and let $\calh \subset \calg$ be a subset of $\calg$. Let $\Theta$ be a $\calh$-connected subset of $\G - \Delta$. If \[ \Lambda=N_\calh(\Theta) \cap \Delta ,\] then $A_\Lambda$ is invariant under $\out^0(A_\G;\calg)$.
 \end{corollary}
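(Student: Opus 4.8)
The plan is to deduce the corollary from Proposition~\ref{pr:peripheralinvariant} by a change of peripheral set, rather than by rerunning the argument. I would set $\calg'=\calh\cup\{A_\Delta\}$. Since $\calh\subseteq\calg$ and $A_\Delta\in\calg$, we have $\calg'\subseteq\calg$, and $\calg'$ is again a set of proper special subgroups with $A_\Delta\in\calg'$. The goal is then to apply Proposition~\ref{pr:peripheralinvariant} with $\calg'$ in place of $\calg$, and afterwards upgrade the conclusion from invariance under $\out^0(A_\G;\calg')$ to invariance under $\out^0(A_\G;\calg)$.

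First I would check the hypotheses of the proposition for $\calg'$. Because $\calh\subseteq\calg'$, every $\calh$-path is a $\calg'$-path, so the $\calh$-connected set $\Theta$ is in particular $\calg'$-connected; and $\Theta$ still lies in $\G-\Delta$. The key observation is that $N_{\calg'}(\Theta)=N_\calh(\Theta)$. Indeed, the only subgroup in $\calg'$ that is not already in $\calh$ is $A_\Delta$, and since every vertex of $\Theta$ lies in $\G-\Delta$, no vertex of $\Theta$ belongs to $\Delta$; hence $A_\Delta$ creates no new $\calg'$-adjacencies to vertices of $\Theta$ beyond the edges of $\G$, which already count as $\calh$-adjacencies. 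Therefore $N_{\calg'}(\Theta)\cap\Delta=N_\calh(\Theta)\cap\Delta=\Lambda$, and Proposition~\ref{pr:peripheralinvariant} applied to $\calg'$ shows that $A_\Lambda$ is invariant under $\out^0(A_\G;\calg')$.

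Finally I would invoke monotonicity of relative automorphism groups in the peripheral set: since $\calg'\subseteq\calg$, preserving every element of $\calg$ is a stronger condition than preserving every element of $\calg'$, so $\out^0(A_\G;\calg)\subseteq\out^0(A_\G;\calg')$. As $A_\Lambda$ is invariant under the larger group $\out^0(A_\G;\calg')$, it is a fortiori invariant under $\out^0(A_\G;\calg)$, which is exactly the assertion.

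I expect essentially no obstacle beyond the bookkeeping identity $N_{\calg'}(\Theta)=N_\calh(\Theta)$, whose only input is the containment $\Theta\subseteq\G-\Delta$ preventing $A_\Delta$ from contributing adjacencies near $\Theta$. Should the change-of-set framing feel unnatural, an alternative would be to repeat the proof of Proposition~\ref{pr:peripheralinvariant} verbatim, replacing $\calg$-connectivity and $N_\calg$ by their $\calh$-versions while still verifying that $\Lambda$ is upwards closed under $\leq_\calg$ and not $\calg$--star-separated by an outside vertex; the containment $\calh\subseteq\calg$ makes each step go through, since an $\calh$-path is a $\calg$-path and, for a vertex $v\in\Delta$ with $v\notin\Lambda$, no vertex of $\Theta$ is $\calh$-adjacent to $v$, which forces every $A_\Xi\in\calh$ realizing a step of the path through $\Theta$ to avoid $v$ and hence to lie in $\calg^v$.
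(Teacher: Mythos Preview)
Your proposal is correct and follows exactly the paper's approach: set $\calg'=\calh\cup\{A_\Delta\}$, apply Proposition~\ref{pr:peripheralinvariant} to $\calg'$ using the identity $N_{\calg'}(\Theta)\cap\Delta=N_\calh(\Theta)\cap\Delta$, and then invoke the containment $\out^0(A_\G;\calg)\subseteq\out^0(A_\G;\calg')$. The paper's proof is essentially a one-line version of what you wrote.
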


 \begin{proof}
 Apply the above proposition with $\calg'=\calh \cup \{A_\Delta\}$. The result follows as $N_{\calg'}(\Theta)\cap\Delta=N_\calh(\Theta)\cap \Delta$ and a subgroup invariant under $\out^0(A_\G,\calg')$ is invariant under $\out^0(A_\G;\calg)$, also.
 \end{proof}

\begin{corollary}\label{co:peripheralinvariant2}
Let $A_\Delta \in \calg$ and let $x$ be a vertex with $x \not \in \Delta$. Then
$ A_{\lk(x) \cap \Delta }$
is preserved by $\out^0(A_\G;\calg)$.
\end{corollary}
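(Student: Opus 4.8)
The plan is to realize $\lk(x)\cap\Delta$ as a set of the shape $N_\calh(\Theta)\cap\Delta$ and then invoke Corollary~\ref{co:peripheralinvariant} directly, rather than re-running the invariance criteria from scratch. The cleanest choice of data is to take $\calh=\varnothing$ (the empty subset of $\calg$) together with $\Theta=\{x\}$, a single vertex. The whole point is that the recorded identification ``$\varnothing$-adjacency $=$ adjacency'' makes the auxiliary relative notions collapse to their classical analogues, and in that regime $N_\varnothing$ computes the star.

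First I would check that $\Theta=\{x\}$ is a legitimate input for Corollary~\ref{co:peripheralinvariant}. Since $x\not\in\Delta$ we have $\{x\}\subseteq \G-\Delta$, and a single vertex is trivially $\varnothing$-connected (there is nothing to join), so $\Theta$ is a $\varnothing$-connected subgraph of $\G-\Delta$ as required. Next I would compute the relevant neighborhood: because $\varnothing$-adjacency coincides with ordinary adjacency, the vertices $\varnothing$-adjacent to $x$ are exactly those of $\lk(x)$, whence $N_\varnothing(\{x\})=\{x\}\cup\lk(x)=\st(x)$. Intersecting with $\Delta$ and using $x\not\in\Delta$ gives $N_\varnothing(\{x\})\cap\Delta=\st(x)\cap\Delta=\lk(x)\cap\Delta$.

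With these two checks in hand, Corollary~\ref{co:peripheralinvariant}, applied to $A_\Delta\in\calg$ with $\calh=\varnothing$ and $\Theta=\{x\}$, immediately yields that $A_{\lk(x)\cap\Delta}$ is invariant under $\out^0(A_\G;\calg)$, which is exactly the claim.

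I do not expect any genuine obstacle here: all of the substantive work lives in Proposition~\ref{pr:peripheralinvariant} and its corollary, and what remains is only the bookkeeping of specializing the auxiliary family to the empty set and recognizing $\st(x)\cap\Delta$ as $\lk(x)\cap\Delta$. Should a self-contained argument be preferred, the fallback would be to verify the two conditions of Proposition~\ref{pr:chariss} for $\Lambda=\lk(x)\cap\Delta$ by hand (upward closure under $\leq_\calg$, and the absence of an outside vertex that $\calg$--star-separates $\Lambda$); but the reduction to Corollary~\ref{co:peripheralinvariant} renders this direct verification unnecessary.
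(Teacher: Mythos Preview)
Your proof is correct and follows exactly the same approach as the paper: apply Corollary~\ref{co:peripheralinvariant} with $\calh=\varnothing$ and $\Theta=\{x\}$. You have simply spelled out the verifications that the paper's one-line proof leaves implicit.
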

\begin{proof}
Apply Corollary~\ref{co:peripheralinvariant} with $\calh=\emptyset$ and $\Theta=\{x\}$.
\end{proof}
 
\section{Restriction homomorphisms} \label{s:restriction}
In this section we describe the image and kernel of a restriction homomorphism on $\outgh$. For our application to finiteness properties, we also show restriction homomorphisms behave similarly when one passes to a principal congruence subgroup.

\subsection{The induced peripheral structure}

If $\calg$ is a collection of special subgroups of $A_\G$ and $\Delta$ is a subgraph of $\G$, then we define the \emph{induced peripheral structure} to be the set \[ \calg_\Delta = \{ A_{\Delta \cap \Lambda} \colon A_\Lambda \in \calg \text{ and } \Delta\cap \Lambda \neq\Delta \} \] We say that $\calg$ is \emph{saturated} if $\calg$ contains every proper special subgroup that is invariant under $\out^0(A_\G;\calg)$. We say that $\calg$ is \emph{saturated with respect to $(\calg,\calh)$} if $\calg$ contains every proper special subgroup of $A_\G$ that is invariant under $\outgh$ (this is stronger than just being saturated). If $\calg$ is saturated and $A_\Delta \in \calg$, then as the intersection of two preserved subgroups is also preserved (Corollary~\ref{co:preserveintersections}) we have $\calg_\Delta \subset \calg$. 

The induced peripheral structure determines a partial ordering $\leqgd$ of the vertices of $\Delta$, such that $u\leqgd v$  if and only if $\lk_\Delta(u) \subset \st_\Delta(v)$, and any element of $\calg_\Delta$ containing $u$ also contains $v$. We take time in this section to prove some technical results about the behavior of the induced peripheral structure. For motivation one may want to skip ahead to the proof of Theorem~\ref{th:restriction}, in which these results are essential.

\begin{proposition}\label{pr:inducedorder}
Suppose that $A_\Delta \in \calg$, and that $\calg$ is saturated. If $u \in \Delta$, then $u \leq_\calg v$ if and only if $v \in \Delta$ and $u \leqgd v$.
\end{proposition}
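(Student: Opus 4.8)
The plan is to prove each direction of the biconditional separately, relating the relative ordering $\leq_\calg$ on all of $\G$ to the induced ordering $\leqgd$ on the subgraph $\Delta$. Throughout I would keep firmly in mind the definitions: $u\leq_\calg v$ means $u\leq_\G v$ together with the implication that every $A_\Theta\in\calg$ containing $u$ also contains $v$; whereas $u\leqgd v$ means $\lk_\Delta(u)\subset\st_\Delta(v)$ together with the implication that every $A_\Theta\in\calg_\Delta$ containing $u$ also contains $v$. The saturation hypothesis and the fact that $A_\Delta\in\calg$ (which forces $\calg_\Delta\subset\calg$ by Corollary~\ref{co:preserveintersections}) are the two structural inputs I expect to do the real work.

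For the forward direction, suppose $u\leq_\calg v$ with $u\in\Delta$. First I would argue $v\in\Delta$: since $A_\Delta\in\calg$ contains $u$, the definition of $\leq_\calg$ immediately forces $v\in\Delta$. It then remains to check $u\leqgd v$. The containment condition on elements of $\calg_\Delta$ should follow because each $A_{\Delta\cap\Lambda}\in\calg_\Delta$ arises from some $A_\Lambda\in\calg$, and if $u\in\Delta\cap\Lambda$ then $u\in\Lambda$, so $v\in\Lambda$ by $u\leq_\calg v$, and also $v\in\Delta$, giving $v\in\Delta\cap\Lambda$. The link condition $\lk_\Delta(u)\subset\st_\Delta(v)$ should follow directly from $\lk_\G(u)\subset\st_\G(v)$ by intersecting with $\Delta$, since $u,v\in\Delta$ and $\Delta$ is a full subgraph.

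The reverse direction is where I expect the main obstacle. Suppose $v\in\Delta$ and $u\leqgd v$; I must recover $u\leq_\calg v$, i.e. both $u\leq_\G v$ and the $\calg$-containment condition over \emph{all} of $\calg$, not merely over $\calg_\Delta$. The containment condition over $\calg$ is the subtle part: given $A_\Theta\in\calg$ with $u\in\Theta$, I need $v\in\Theta$, but $A_\Theta$ need not lie in $\calg_\Delta$ and its intersection with $\Delta$ might even equal $\Delta$. I would split into cases according to whether $\Delta\cap\Theta=\Delta$ (i.e. $\Delta\subset\Theta$), in which case $v\in\Delta\subset\Theta$ is automatic, or $\Delta\cap\Theta\subsetneq\Delta$, in which case $A_{\Delta\cap\Theta}\in\calg_\Delta$ contains $u$, so $u\leqgd v$ gives $v\in\Delta\cap\Theta\subset\Theta$. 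Recovering the global domination $u\leq_\G v$ from the $\Delta$-local link condition $\lk_\Delta(u)\subset\st_\Delta(v)$ is the other potential difficulty, and here I anticipate needing saturation: the subgroup $A_{\geq_\calg u}$ (or the appropriate invariant subgraph generated by vertices $\calg$-dominating $u$) is invariant under $\out^0(A_\G;\calg)$ and hence lies in $\calg$ by saturation, which should let me transfer local link information in $\Delta$ back to global domination in $\G$. The cleanest route is likely to invoke Proposition~\ref{pr:chariss}: the vertices $w$ with $u\leq_\calg w$ span an invariant (upwards-closed, not $\calg$-star-separated) subgraph, so by saturation the corresponding special subgroup is in $\calg$, and comparing this invariant structure against $\leqgd$ pins down exactly when $u\leq_\G v$ holds. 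I would aim to reduce everything to the definitions plus saturation, so that the link condition $\lk_\Delta(u)\subset\st_\Delta(v)$ combined with membership control from $\calg_\Delta$ upgrades to the full statement $u\leq_\calg v$.
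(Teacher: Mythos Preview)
Your forward direction and your argument for the $\calg$-containment half of the reverse direction are exactly what the paper does. The interesting divergence is in how you recover $\lk_\G(u)\subset\st_\G(v)$.

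The paper does not use the subgroup $A_{\geq_\calg u}$. Instead, for each $x\in\lk_\G(u)$ with $x\notin\Delta$, it invokes Corollary~\ref{co:peripheralinvariant2}: the subgroup $A_{\lk(x)\cap\Delta}$ is invariant under $\out^0(A_\G;\calg)$, hence lies in $\calg$ by saturation, hence in $\calg_\Delta$. Since $u\in\lk(x)\cap\Delta$ and $u\leqgd v$, one gets $v\in\lk(x)\cap\Delta$, so $x\in\st_\G(v)$. The case $x\in\Delta$ is immediate from $\lk_\Delta(u)\subset\st_\Delta(v)$. This route is short because the invariance of $A_{\lk(x)\cap\Delta}$ is already on the shelf.

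Your route via $\Lambda=\{w:u\leq_\calg w\}$ also works, and in fact yields $u\leq_\calg v$ in one stroke (making your separate containment argument redundant): one checks $\Lambda\subset\Delta$, so once $A_\Lambda\in\calg$ either $\Lambda=\Delta$ or $A_\Lambda\in\calg_\Delta$, and in either case $u\leqgd v$ forces $v\in\Lambda$. However, you assert without justification that $\Lambda$ is not $\calg$--star-separated by an outside vertex, and this is the genuine content of your approach. It does hold, but it needs a two-case argument depending on whether $u\not\leq_\G x$ (use a vertex of $\lk_\G(u)\setminus\st_\G(x)$ to anchor $\Lambda$ in one component) or $u\leq_\G x$ but some $A_\Theta\in\calg^x$ contains $u$ (then $\Lambda\subset\Theta$ sits in one $\calg^x$-component). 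You should supply this check; without it the proposal has a gap at precisely the step where saturation is supposed to do the work.
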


\begin{proof}
The forward direction of this proposition ($u\leq_\calg v$ implies that $v \in A_\Delta$ and $u \leqgd{} v$) follows from the definitions of $\leq_\calg$, $\leqgd$, and $\calg_\Delta$. 
For the converse, if $u \leqgd v$ then any special subgroup $A_\Theta \in \calg$ containing $u$ must also contain $v$ (as either $\Theta \cap \Delta = \Delta$ or $A_{\Theta \cap \Delta} \in \calg_\Delta$). 
Although $\lk_\Delta(u) \subset \st_\Delta(v)$, to finish the proof we must take care to show that $\lk_\G(u) \subset \st_\G(v)$. Let $x \in \lk_\G(u)$. If $x \in \Delta$, then $x \in \st_\G(v)$ as $\lk_\Delta(u) \subset \st_\Delta(v)$. This leaves the case when $x \not \in \Delta$. By Corollary~\ref{co:peripheralinvariant2}, the group $A_{\lk(x)\cap\Delta}$ is invariant under $\out^0(A_\G;\calg)$. Hence $A_{\lk(x) \cap \Delta} \in \calg$ as $\calg$ is saturated (therefore $A_{\lk(x) \cap \Delta} \in \calg_\Delta$, also). As $u \leqgd v$, and $u \in \lk(x) \cap \Delta$, this implies that $v \in \lk(x) \cap \Delta$, so that $x \in \st_\G(v)$.
\end{proof}

\begin{proposition}\label{pr:anchoredinvariance}
Let $A_\Delta \in \calg$ and suppose that $\calg$ is saturated. Let $v$ be a vertex of $\Delta$ and let \[ A^\Delta_{\geq v} = \genby{\{ w \in \Delta \colon v\leq_\Delta w\}}. \]
Then $A^\Delta_{\geq v} \in \calg$.
\end{proposition}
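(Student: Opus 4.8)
The plan is to reduce everything to saturation. Since $\calg$ is saturated, it contains every \emph{proper} special subgroup invariant under $\out^0(A_\G;\calg)$, so it suffices to show that $A^\Delta_{\geq v}$ is proper and invariant. Write $\Theta=\{w\in\Delta\colon v\leq_\Delta w\}$, so that $A^\Delta_{\geq v}=A_\Theta$. Properness is immediate: $V(\Theta)\subseteq V(\Delta)\subsetneq V(\G)$ because $A_\Delta\in\calg$ is proper. To prove invariance I would apply Proposition~\ref{pr:chariss} with $\calh=\emptyset$, reducing the problem to two checks: that $\Theta$ is upwards closed under $\leq_\calg$, and that no outside vertex $\calg$--star-separates $\Theta$. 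Throughout I would use that $A_\Delta$, being an element of $\calg$, is itself invariant under $\out^0(A_\G;\calg)$.

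For upward closure, suppose $x\in\Theta$ and $x\leq_\calg y$. By definition of $\leq_\calg$ we have $x\leq_\G y$, and since $A_\Delta\in\calg$ and $x\in\Delta$ the second clause of $\leq_\calg$ forces $y\in\Delta$. As $x,y$ both lie in $\Delta$, the relation $x\leq_\G y$ descends to $x\leq_\Delta y$ (as recorded in Section~\ref{se:standardordering}). Combining $v\leq_\Delta x$ with $x\leq_\Delta y$ and transitivity of the domination preorder gives $v\leq_\Delta y$, so $y\in\Theta$. This step is routine and does not use saturation.

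The substantive part, and the step I expect to be the main obstacle, is the star-separation condition, which I would treat by cases on whether the outside vertex lies in $\Delta$. Fix $x\notin\Theta$. If $x\notin\Delta$, then since $A_\Delta$ is invariant Proposition~\ref{pr:chariss} tells us $\Delta$ meets at most one $\calg^x$-component of $\G-\st(x)$, and $\Theta\subseteq\Delta$ inherits this. The harder case is $x\in\Delta$, where $x\notin\Theta$ means $v\not\leq_\Delta x$, i.e. $\lk_\Delta(v)\not\subseteq\st_\Delta(x)$; I would fix a witness $u\in\lk_\Delta(v)\smallsetminus\st_\Delta(x)$, noting $u\in\G-\st(x)$. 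We may assume some vertex $w_0\in\Theta$ lies outside $\st(x)$ (otherwise $\Theta$ meets no $\calg^x$-component and we are done). First one checks $v\notin\st(x)$: indeed $x\neq v$ since $v\in\Theta$ while $x\notin\Theta$, and were $x$ adjacent to $v$ we would get $x\in\lk_\Delta(v)\subseteq\st_\Delta(w_0)$, contradicting $w_0\notin\st(x)$. Hence $v$ and $u$ are adjacent and both avoid $\st(x)$, so they lie in a common connected component, and thus a common $\calg^x$-component, of $\G-\st(x)$.

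Finally, for any $w\in\Theta$ with $w\notin\st(x)$, the relation $v\leq_\Delta w$ gives $u\in\lk_\Delta(v)\subseteq\st_\Delta(w)$, so $u$ equals or is adjacent to $w$; in either case $u$ and $w$ lie in the same connected component of $\G-\st(x)$, hence in the $\calg^x$-component containing $v$. Therefore every vertex of $\Theta$ outside $\st(x)$ lands in a single $\calg^x$-component, so $\Theta$ meets at most one such component. Both conditions of Proposition~\ref{pr:chariss} now hold, so $A_\Theta$ is invariant under $\out^0(A_\G;\calg)$, and saturation yields $A^\Delta_{\geq v}=A_\Theta\in\calg$. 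The one place where care is genuinely needed is the connectivity argument in the case $x\in\Delta$: the witness $u$ provided by the failure of $v\leq_\Delta x$ is exactly what glues every vertex of $\Theta$ outside $\st(x)$ to $v$.
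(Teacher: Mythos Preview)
Your proposal is correct and follows essentially the same approach as the paper: reduce to Proposition~\ref{pr:chariss}, verify upward closure via transitivity of $\leq_\Delta$, and handle star-separation by cases on whether the outside vertex lies in $\Delta$, using the witness $u\in\lk_\Delta(v)\setminus\st_\Delta(x)$ in the hard case. Two small remarks: your upward closure argument is actually slightly more direct than the paper's (the paper routes through Proposition~\ref{pr:inducedorder}, whereas you just use that $\leq_\G$ restricts to $\leq_\Delta$), and your detour through establishing $v\notin\st(x)$ is unnecessary, since the same adjacency-to-$u$ argument you give for a general $w\in\Theta$ already covers $w=v$.
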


\begin{proof}
By Proposition~\ref{pr:chariss}, it is enough to show that $A^\Delta_{\geq v}$ is upwards-closed under $\leq_\calg$ and not $\calg$--star-separated by any outside vertex. Suppose that $u \in A^\Delta_{\geq v}$ and $u \leq_\calg w$ for some vertex $w$. Then $w \in \Delta$ and ${u \leqgd w}$ by Proposition~\ref{pr:inducedorder}. 
Hence $v \leq_\Delta u \leq_\Delta w$ so $w$ is also in $A^\Delta_{\geq v}$.  This shows that the vertices in $A^\Delta_{\geq v}$ are upwards-closed with respect to $\leq_\calg$. Now suppose that $x \not \in A^\Delta_{\geq v}$. If $x \not \in \Delta$ then $A_\Delta \in \calg^x$ and the graph $\Delta$ intersects at most one $\calg^{x}$-component of $\G -\st(x)$. Therefore the same is true for $A^\Delta_{\geq v}$. Otherwise $x \in \Delta$ but $v \not\leq_\Delta x$. Then there exists $u \in \lk_\Delta (v)$ that is not contained in $\st_\Delta (x)$. In this case, every vertex in $A^\Delta_{\geq v}$ is adjacent to $u$ so $A^\Delta_{\geq v}$ is contained in the union of $\st(x)$ and the connected component of $\G - \st(x)$ containing $u$. Therefore $A^\Delta_{\geq v}$ is not $\calg$--star-separated by any outside vertex.
\end{proof}

\begin{proposition}\label{pr:inducedcomponents}
Let $A_\Delta \in \calg$ and suppose that $\calg$ is saturated. Let $u$, $v$ and $x$ be vertices in $\Delta$. Then $u$ and $v$ are in the same $\calg^{x}$-component of $\G - \st(x)$ if and only if they are in the same $\calg_\Delta^{x}$-component of $\Delta - \st(x)$. 
\end{proposition}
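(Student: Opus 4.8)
The plan is to prove the two implications separately, observing that the hypothesis $x\in\Delta$ is exactly what makes the $\calg^x$-adjacency relation on $\G-\st(x)$ and the $\calg_\Delta^x$-adjacency relation on $\Delta-\st(x)$ compatible. Throughout I would use that $\Delta$ is full, so that $\Delta-\st_\G(x)=\Delta\cap(\G-\st_\G(x))$; in particular any vertex of a path in $\G-\st(x)$ that happens to lie in $\Delta$ automatically lies in $\Delta-\st(x)$.

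First I would dispatch the easy implication: if $u$ and $v$ lie in a common $\calg_\Delta^x$-component of $\Delta-\st(x)$, I would take a $\calg_\Delta^x$-path between them and check that each $\calg_\Delta^x$-adjacency is a $\calg^x$-adjacency in $\G$. The only case needing an argument is when two consecutive vertices both lie in some $A_{\Delta\cap\Lambda}\in\calg_\Delta^x$ with $A_\Lambda\in\calg$; here $x\in\Delta$ together with $x\notin\Delta\cap\Lambda$ forces $x\notin\Lambda$, so $A_\Lambda\in\calg^x$ and the two vertices are $\calg^x$-adjacent. Thus the path survives into $\G-\st(x)$ and $u,v$ share a $\calg^x$-component. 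Note this direction uses neither saturation nor $A_\Delta\in\calg$.

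The real content is the reverse implication, and this is the step I expect to be the main obstacle. Given a $\calg^x$-path $u=p_0,p_1,\dots,p_n=v$ in $\G-\st(x)$, the difficulty is that the path may leave $\Delta$. I would list the indices $i$ with $p_i\in\Delta$ (these include $0$ and $n$) and show that any two consecutive such vertices $p_i,p_j$ are $\calg_\Delta^x$-adjacent in $\Delta-\st(x)$; chaining these produces the desired $\calg_\Delta^x$-path. If $j=i+1$ the two vertices are directly $\calg^x$-adjacent, and the same bookkeeping as in the easy direction (again using $x\in\Delta$) shows they are $\calg_\Delta^x$-adjacent. The interesting case is an \emph{excursion}, where $p_{i+1},\dots,p_{j-1}$ all lie in $\G-\Delta$. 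Here I would set $\Theta=\{p_{i+1},\dots,p_{j-1}\}$, a $\calg^x$-connected subgraph of $\G-\Delta$, and apply Corollary~\ref{co:peripheralinvariant} with $\calh=\calg^x$ to conclude that $\Lambda=N_{\calg^x}(\Theta)\cap\Delta$ is invariant under $\out^0(A_\G;\calg)$.

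To finish the excursion case I would feed this invariance into the saturation hypothesis. Since $\Theta\subseteq\G-\st(x)$ consists of non-neighbors of $x$, and no member of $\calg^x$ contains $x$, the vertex $x$ is not $\calg^x$-adjacent to $\Theta$, so $x\notin\Lambda$; combined with $x\in\Delta$ this shows $\Lambda\subsetneq\Delta\subsetneq\G$ is proper. Saturation then gives $A_\Lambda\in\calg$, hence $A_\Lambda=A_{\Delta\cap\Lambda}\in\calg_\Delta^x$. Because $p_i$ is $\calg^x$-adjacent to $p_{i+1}\in\Theta$ and $p_j$ to $p_{j-1}\in\Theta$, both $p_i$ and $p_j$ lie in $\Lambda$, so they are $\calg_\Delta^x$-adjacent, completing the excursion step. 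The point to be careful about is precisely this interaction between $x\in\Delta$, the absence of $x$ from members of $\calg^x$, and the properness of $\Lambda$: this is what lets saturation convert the invariance statement of Corollary~\ref{co:peripheralinvariant} into the combinatorial $\calg_\Delta^x$-adjacency that the argument requires.
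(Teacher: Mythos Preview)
Your proposal is correct and follows essentially the same route as the paper: decompose a $\calg^x$-path into pieces inside $\Delta$ and excursions outside $\Delta$, and for each excursion $\Theta$ apply Corollary~\ref{co:peripheralinvariant} with $\calh=\calg^x$ to produce an invariant $A_\Lambda$ with $\Lambda=N_{\calg^x}(\Theta)\cap\Delta$, then invoke saturation to place $A_\Lambda$ in $\calg_\Delta^x$ and conclude $\calg_\Delta^x$-adjacency of the endpoints. The only noteworthy difference is in the easy direction: the paper observes that saturation gives $\calg_\Delta\subset\calg$ (so a $\calg_\Delta^x$-path is automatically a $\calg^x$-path), whereas you avoid saturation by passing from $A_{\Delta\cap\Lambda}\in\calg_\Delta^x$ back to its ambient $A_\Lambda\in\calg$ and using $x\in\Delta$ to force $A_\Lambda\in\calg^x$; your version is slightly more elementary for that implication.
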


\begin{proof}
As $\calg$ is saturated $\calg_\Delta$ can be viewed as a subset of $\calg$. Hence if $u$ and $v$ are connected by a $\calg_\Delta^{x}$-path in $\Delta - \st(x)$, then they are connected by a $\calg^{x}$-path in $\G - \st(x)$. The tricky part is therefore the converse: let $p$ be a $\calg^{x}$-path in $\G - \st(x)$ from $u$ to $v$. We want to replace $p$ with a $\calg^x$-path lying entirely in $\Delta - \st(x)$. Let us first restrict to the special case where the interior of $p$ lies in $\Gamma - \Delta$. Let $\Theta$ be the subgraph spanned by these interior points, and let \[ \Lambda = N_{\calg^{x}}(\Theta) \cap \Delta. \] Then $A_\Lambda$ is a subgroup of $A_\Delta$ which is disjoint from $x$ and contains $u$ and $v$. Furthermore, $A_\Lambda$ is invariant under $\out^0(A_\G;\calg)$ by an application of Corollary~\ref{co:peripheralinvariant} with $\calh=\calg^x$. As $\calg$ is saturated, $A_\Lambda \in \calg^{x}$ and the vertices $u$ and $v$ are $\calg^{x}$-adjacent.

In general, the path $p$ can be written as a concatenation \[p_1 \cdot l_1 \cdot p_2 \cdot l_2 \cdots p_k \cdot l_k \cdot p_{k+1}, \] where each $p_i$ is contained in $\Delta$ and each $l_i$ in $\G -\Delta$. Each $p_i$ is a $\calg_\Delta^{x}$ path, and by the previous paragraph the terminal vertex of each path $p_i$ is $\calg_\Delta^{x}$-adjacent to the initial vertex of the path $p_{i+1}$  (so the $l_i$ pieces are unnecessary).
\end{proof}

This gives a correspondence between $\calg^x$-components of $\G - \st_\Gamma(x)$ and the $\calg^x_\Delta$-components of $\Delta - \st_\Delta(x)$:

\begin{corollary}\label{c:inducedcomponents}
Let $\calg$ be saturated and let $x \in \Delta$. Then $K$ is a $\calg_\Delta^{x}$-component of $\Delta - \st_\Delta(x)$ if and only if there exists a $\calg^{x}$-component $L$ of $\G - \st_\G(x)$ such that $K=L \cap \Delta$.
\end{corollary}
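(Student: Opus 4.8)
The plan is to deduce the statement directly from Proposition~\ref{pr:inducedcomponents} by translating its pointwise conclusion into a statement about entire components. The one preparatory observation we need is that, since $\Delta$ is a full subgraph and $x\in\Delta$, we have $\st_\Delta(x)=\st_\G(x)\cap\Delta$; consequently the vertex set of $\Delta-\st_\Delta(x)$ is exactly $V(\Delta)\cap V(\G-\st_\G(x))$. This lets us compare the two partitions on a common underlying set of vertices.

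Reading Proposition~\ref{pr:inducedcomponents} as a statement about equivalence relations, it says precisely that on the vertex set $\Delta-\st_\Delta(x)$ the relation ``$u,v$ lie in the same $\calg^x$-component of $\G-\st_\G(x)$'' coincides with the relation ``$u,v$ lie in the same $\calg_\Delta^x$-component of $\Delta-\st_\Delta(x)$''. Two relations that coincide have the same equivalence classes, so the partition of $V(\Delta-\st_\Delta(x))$ into $\calg_\Delta^x$-components is identical to the partition obtained by intersecting the $\calg^x$-components of $\G-\st_\G(x)$ with $\Delta$. The corollary is just this identification stated component-by-component.

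To make this explicit I would argue both directions. For the forward direction, let $K$ be a $\calg_\Delta^x$-component of $\Delta-\st_\Delta(x)$, pick a vertex $u\in K$, and let $L$ be the $\calg^x$-component of $\G-\st_\G(x)$ containing $u$. For any $v\in\Delta-\st_\Delta(x)$, Proposition~\ref{pr:inducedcomponents} gives $v\in K$ if and only if $v$ and $u$ lie in the same $\calg^x$-component of $\G-\st_\G(x)$, that is, if and only if $v\in L$; hence $K=L\cap\Delta$. For the reverse direction, suppose $K=L\cap\Delta$ for a $\calg^x$-component $L$ and $K$ is nonempty. Choosing $u\in K$ and letting $K'$ be the $\calg_\Delta^x$-component of $\Delta-\st_\Delta(x)$ containing $u$, the forward direction produces a $\calg^x$-component $L'$ with $K'=L'\cap\Delta$; since $L$ and $L'$ both contain $u$ they are equal, so $K'=K$ and $K$ is indeed a $\calg_\Delta^x$-component.

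The substantive content has already been absorbed into Proposition~\ref{pr:inducedcomponents}, so no genuine obstacle remains; the only points requiring care are the bookkeeping identity $\st_\Delta(x)=\st_\G(x)\cap\Delta$ and the implicit nonemptiness convention, namely that the correspondence matches $\calg_\Delta^x$-components with exactly those $\calg^x$-components of $\G-\st_\G(x)$ that meet $\Delta$.
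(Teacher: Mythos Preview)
Your argument is correct and follows exactly the approach the paper intends: the corollary is stated there without proof, as an immediate consequence of Proposition~\ref{pr:inducedcomponents}, and you have simply made that deduction explicit by reading the proposition as an identification of equivalence relations. The bookkeeping points you flag (the identity $\st_\Delta(x)=\st_\G(x)\cap\Delta$ and the implicit nonemptiness of $L\cap\Delta$) are the only things to check, and you handle them correctly.
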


\subsection{Proof of Theorem~\ref{th:restriction}}

We are now able to finish Theorem \ref{th:restriction}, which says that if $\calg$ is saturated with respect to the pair $(\calg,\calh)$, then the restriction map $R_\Delta$ applied to $\outgh$ fits in an exact sequence: \[
\begin{split}
1\to \out^0(A_\G;\calg,(\calh\cup\{A_{\Delta}\})^t) \to  &\out^0(A_\G;\calg,\calh^t) \\
& \stackrel{R_{\Delta}}{\longrightarrow} \out^0(A_{\Delta};\calg_{\Delta},\calh_{\Delta}^t)\to 1.
\end{split}
\]

\begin{proof}[Proof of Theorem~\ref{th:restriction}]
An automorphism lies in the kernel of $R_\Delta$ if and only if it restricts to an inner automorphism on $A_\Delta$, hence \[ \ker R_\Delta \cong \out^0(A_\G;\calg,(\calh\cup \{A_\Delta\})^t). \] We break up describing the image into 3 steps.\vspace{0.1cm}

\textbf{Step 1. $\im R_\Delta \subset \out(A_\Delta;\calg_\Delta,\calh_\Delta^t)$.}

\vspace{0.1cm}

Each $A_\Lambda \in \calg_\Delta$ is preserved by $\out^0(A_\G;\calg,\calh^t)$ as intersections of preserved subgroups are also preserved (Corollary~\ref{co:preserveintersections}). Similarly, if $A_\Lambda \in \calh_\Delta$ there exists a subgraph $\Lambda'$ containing $\Lambda$ such that $A_{\Lambda'} \in \calh$. Hence elements of $\out^0(A_\G;\calg,\calh^t)$ restrict to an inner automorphism on $A_{\Lambda}$, also.

\vspace{0.1cm}

\textbf{Step 2. $\im R_\Delta \subset \out^0(A_\Delta)$}

\vspace{0.1cm}

The image lies in $\out^0(A_\Delta)$ if and only if the image preserves each subgroup of the form $A^\Delta_{\geq v}$ (Proposition~\ref{pr:out0classification}). As $\calg$ is saturated, each $A^\Delta_{\geq v}$ is invariant under $\outgh$ (Proposition~\ref{pr:anchoredinvariance}), so is also invariant after restricting to $A_\Delta$.

\vspace{0.1cm}

\textbf{Step 3. $\out^0(A_{\Delta};\calg_{\Delta},\calh_{\Delta}^t) \subset \im R_\Delta$} 

\vspace{0.1cm}

We show that every inversion, transvection, and extended partial conjugation in $\out^0(A_{\Delta};\calg_{\Delta},\calh_{\Delta}^t)$ is the image of an automorphism of the same type under $R_\Delta$. As $\out^0(A_{\Delta};\calg_{\Delta},\calh_{\Delta}^t)$ is generated by such elements (Theorem~\ref{th:generators}), this is enough to finish the proof. 

As $\calg$ is saturated with respect to the pair $(\calg,\calh)$ we have $P(\calh) \subset \calg$ and $P(\calh_\Delta) \subset \calg_\Delta$ (Lemma~\ref{le:calhsubsets}). We may therefore make liberal use of Lemma~\ref{pr:gensrelgh}, which describes when such generators lie in $\outgh$ and $\out^0(A_{\Delta};\calg_{\Delta},\calh_{\Delta}^t)$. 

For inversions, if $[\iota_v] \in \out^0(A_\Delta;\calg_\Delta,\calh_\Delta^t)$ then $v \not \in A_\Lambda$ for any subgroup $A_\Lambda \in \calh_\Delta$. Hence $v \not \in A_{\Lambda'}$ for any subgroup $A_{\Lambda'} \in \calh$ and $[\iota_v]$ is the image of an inversion of the same name in $\out^0(A_\G;\calg,\calh^t)$. 

If $[\rho^w_v]$ is a transvection in $\out^0(A_\Delta;\calg_\Delta,\calh_\Delta^t)$ then $v \leqgd w$. This implies that $v \leq_\calg w$ by Proposition~\ref{pr:inducedorder}, and $[\rho^w_v]$ is the image of an element of the same name in $\out^0(A_\G;\calg,\calh^t)$. 

Finally, let $x \in \Delta$ and let $K$ be a $\calg^x_\Delta$-component of $\Delta - \st(x)$. By Corollary~\ref{c:inducedcomponents}, there exists a $\calg^{x}$-component $L$ of $\G -\st_\Gamma(v)$ such that $L \cap \Delta = K$. Then the extended partial conjugation $[\pi_{L}^x]$ maps to $[\pi_K^x]$ under $R_\Delta$. As any extended partial conjugation in $\out^0(A_{\Delta};\calg_{\Delta},\calh_{\Delta}^t)$ is a product of such elements, they are all in the image of $R_\Delta$.
\end{proof}

We take a moment to improve Theorem~\ref{th:restriction} for the purpose of computing examples.
Suppose we are given $A_\G$, $\calg$, and $\calh$, with $\calg$ not saturated.
Suppose $A_\Delta\in\calg$, and we want to compute the image of $R_\Delta$ on $\outgh$ by hand.
Certainly we could compute the saturation of $\calg$ with respect to $(\calg,\calh)$ by exhaustively checking subgraphs of $\G$.
However, this is unnecessary, since it turns out that our hypotheses in Theorem~\ref{th:restriction} are too strong.
We improve the hypotheses in the following proposition.
This is only used for computing examples (see Section~\ref{s:examples}), and not in the proofs of our main theorems.

\begin{proposition}\label{pr:easier_way_to_find_peripheral_structure}
Let $A_\G$, $\calg$, and $\calh$ be as above, with $\calg$ possibly not saturated.
If necessary, enlarge $\calg$ so that $\calh\subset\calg$, and so that for every $A_\Delta\in\calh$ and every $v\in\Delta$, we have $A_{\Delta-\{v\}}\in\calg$.
Let $A_\Delta\in\calg$, and 
define a collection $\calp_\Delta$ of special subgroups, where  $A_\Lambda \in \mathcal{P}_\Delta$ if and only if at least one of the following holds:
\begin{itemize}
 \item $\Lambda= {\lk(x)\cap \Delta}$ for some $x \not\in \Delta$.
 \item $\Lambda= N_{\calg^x}(\Theta)\cap \Delta$, where $\Theta$ is maximal among $\calg^x$-connected subgraphs of $\G-\st(x)$ that are disjoint from $\Delta$, for some $x\in\Delta$.
\end{itemize}
Then 
\[ R_\Delta \colon \outgh \to \out^0(A_\Delta;\calg_\Delta\cup\calp_\Delta,\calh_\Delta^t) \]
is surjective.
\end{proposition}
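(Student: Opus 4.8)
The plan is to reduce the image-containment to Theorem~\ref{th:restriction} and to prove surjectivity by imitating its Step~3, with the two families comprising $\calp_\Delta$ playing exactly the roles that saturation played in Propositions~\ref{pr:inducedorder} and \ref{pr:inducedcomponents}. First I would record that every $A_\Lambda\in\calp_\Delta$ is a proper special subgroup of $A_\Delta$ that is invariant under $\out^0(A_\G;\calg)$: those of the first kind by Corollary~\ref{co:peripheralinvariant2}, and those of the second kind by Corollary~\ref{co:peripheralinvariant} applied with $\calh=\calg^x$. Now let $\calg'$ be the saturation of $\calg$ with respect to $(\calg,\calh)$. Since the subgroups added to form $\calg'$ are by definition invariant under $\outgh$, we have $\out^0(A_\G;\calg',\calh^t)=\outgh$, and $\calg'$ is saturated with respect to $(\calg',\calh)$. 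Theorem~\ref{th:restriction} then gives $\im R_\Delta=\out^0(A_\Delta;\calg'_\Delta,\calh_\Delta^t)$, and because $\calp_\Delta$ consists of invariant subgroups we have $\calg_\Delta\cup\calp_\Delta\subseteq\calg'_\Delta$, whence $\out^0(A_\Delta;\calg'_\Delta,\calh_\Delta^t)\subseteq\out^0(A_\Delta;\calg_\Delta\cup\calp_\Delta,\calh_\Delta^t)$. Thus the image of $R_\Delta$ lands in the claimed codomain, and it remains only to prove surjectivity onto the larger group.

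For surjectivity I would lift each inversion, transvection, and extended partial conjugation of $\out^0(A_\Delta;\calg_\Delta\cup\calp_\Delta,\calh_\Delta^t)$ to $\outgh$; these generate by Theorem~\ref{th:generators}, and the enlargement of $\calg$ guarantees (via Remark~\ref{re:notPh}) that the hypotheses of Proposition~\ref{pr:gensrelgh} hold both in $A_\G$ with $(\calg,\calh)$ and in $A_\Delta$ with $(\calg_\Delta\cup\calp_\Delta,\calh_\Delta)$. Inversions are handled exactly as in Theorem~\ref{th:restriction}. For a transvection $\rho^w_v$ the downstairs condition is $v\leq_{\calg_\Delta\cup\calp_\Delta}w$, and I would rerun the converse half of Proposition~\ref{pr:inducedorder}: the sole appeal to saturation there was to place $A_{\lk(x)\cap\Delta}$ in $\calg_\Delta$ for $x\in\lk_\G(v)\ssm\Delta$, and exactly these subgroups are now provided by the first family of $\calp_\Delta$. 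This yields $\lk_\G(v)\subset\st_\G(w)$, and the condition on subgroups of $\calg$ follows by intersecting with $\Delta$, so $v\leq_\calg w$ and the same-named transvection lifts.

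The extended partial conjugations require the analogue of Corollary~\ref{c:inducedcomponents}, which is also where the main obstacle lies: I must show that the $(\calg_\Delta\cup\calp_\Delta)^x$-components of $\Delta-\st(x)$ are precisely the intersections with $\Delta$ of the $\calg^x$-components of $\G-\st(x)$. Granting this, a single component $K$ is contained in a unique $\calg^x$-component $L$ with $L\cap\Delta=K$, so $\pi^x_L\in\outgh$ restricts to $\pi^x_K$. One direction — that a $(\calg_\Delta\cup\calp_\Delta)^x$-path lifts to a $\calg^x$-path in $\G-\st(x)$ — is routine: $\calg_\Delta$-steps and first-family steps lift immediately, while a second-family step through $A_{N_{\calg^x}(\Theta)\cap\Delta}$ lifts since $\Theta$ is $\calg^x$-connected and lies in $\G-\st(x)$.

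The substantive direction is the non-saturated replacement for Proposition~\ref{pr:inducedcomponents}. I would take a $\calg^x$-path $p$ in $\G-\st(x)$ between vertices of $\Delta$ and, as in that proof, write $p=p_1 l_1 p_2\cdots l_{k}p_{k+1}$ with each $p_i\subset\Delta$ and each $l_i\subset\G-\Delta$; each $p_i$ is then a $\calg_\Delta^x$-path, and it remains to connect the vertices of $\Delta$ flanking each $l_i$. Here is the one genuinely new point, and the reason $\calp_\Delta$ records only \emph{maximal} $\calg^x$-connected subgraphs disjoint from $\Delta$: the interior $l_i$ is such a subgraph, hence is swallowed by a maximal one $\Theta_i$, and the two flanking vertices lie in $N_{\calg^x}(\Theta_i)\cap\Delta$, a member of the second family of $\calp_\Delta$ which (as $\Theta_i$ avoids $\st(x)$) does not contain $x$. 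Thus the flanking vertices are $(\calp_\Delta)^x$-adjacent, the path descends into $\Delta-\st(x)$, and the component correspondence holds. Since there are only finitely many maximal $\calg^x$-connected subgraphs disjoint from $\Delta$, these finitely many subgroups suffice to witness every adjacency across $\Delta$ that a path through $\G-\Delta$ can produce, which is exactly what makes $\calp_\Delta$ big enough while remaining effectively computable.
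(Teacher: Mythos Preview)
Your proof is correct and follows essentially the same approach as the paper: you verify invariance of the subgroups in $\calp_\Delta$ via Corollaries~\ref{co:peripheralinvariant} and~\ref{co:peripheralinvariant2}, invoke Remark~\ref{re:notPh} to apply Proposition~\ref{pr:gensrelgh}, and then rerun Step~3 of Theorem~\ref{th:restriction} with the two families of $\calp_\Delta$ standing in for saturation in Propositions~\ref{pr:inducedorder} and~\ref{pr:inducedcomponents}. Your treatment is in fact more detailed than the paper's (which simply asserts that those two propositions ``go through''), and your handling of image-containment via the saturation $\calg'$ and Theorem~\ref{th:restriction} is a clean way to see that $R_\Delta$ really lands in $\out^0(A_\Delta;\calg_\Delta\cup\calp_\Delta,\calh_\Delta^t)$.
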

\begin{proof}
The subgroups we are adding to $\calg$ are invariant by Lemma~\ref{le:calhsubsets}, so enlarging $\calg$ does not change $\outgh$.
The subgroups in $\calp_\Delta$ are invariant by Corollaries~\ref{co:peripheralinvariant} and~\ref{co:peripheralinvariant2}, so $\outgh$ and $\out^0(A_\G;\calg\cup\calp_\Delta,\calh^t)$ are equal.
Then $R_\Delta$ is well defined.
To prove surjectivity, we review the proof of step~3 of the proof of Theorem~\ref{th:restriction}.
This amounts to explaining why Laurence generators from $\out^0(A_\Delta;\calg_\Delta\cup\calp_\Delta,\calh_\Delta^t)$ lift to $\outgh$.
As explained in Remark~\ref{re:notPh}, Proposition~\ref{pr:gensrelgh} goes through for both these groups, even though $\calg$ and $\calg_\Delta\cup\calp_\Delta$ are not saturated.
So we may use the characterization of generators in Proposition~\ref{pr:gensrelgh}.

It is obvious that inversions lift.
Lifting transvections works because Proposition~\ref{pr:inducedorder} goes through, because we include subgroups of the form $A_{\lk(x)\cap\Delta}$ for $x\notin\Delta$.
Lifting partial conjugations works because Proposition~\ref{pr:inducedcomponents} goes through, because we include subgroups of the form $\genby{N_{\calg^x}(\Theta)\cap\Delta}$.
\end{proof}

\subsection{A version of Theorem~\ref{th:restriction} for principal congruence subgroups}

The action of $\aut(A_\G)$ on the abelianization of $A_\G$ induces a homomorphism $$\Psi\colon \aut(A_\G) \to \gln$$ where $n$ is the number of vertices in $\G$. We define $\ia$ to be the kernel of $\Psi$. The notation IA is short for `identity on the abelianization.' By taking entries in matrices modulo some natural number $l \geq 2$, there is a homomorphism $$\Psi^{[l]}\colon \aut(A_\G) \to \glnp$$
Let $\iap$ be the kernel of $\Psi^{[l]}$. The main objective of the next proposition is to show that $\iap$ is contained in $\auto$: the key point is that we can detect whether an automorphism lies in $\aut^0(A_\G)$ by its action on $H_1(A_\G)$, even if we take coefficients in $\mathbb{Z} / l\mathbb{Z}$. Following the terminology used in algebraic groups, we call $\iap$ the \emph{principal congruence subgroup of level $l$}.

\begin{proposition}\label{pr:iap}
The principal congruence subgroup $\iap$ is a finite index subgroup of $\aut^0(A_\G)$. If $l \geq 3$, then $\iap$ is torsion-free.
\end{proposition}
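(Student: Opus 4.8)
The plan is to prove the two assertions separately, treating the containment $\iap\subseteq\auto$ as the heart of the matter (this is exactly the point flagged before the statement). Since $\glnp$ is finite and $\iap=\ker\Psi^{[l]}$, the subgroup $\iap$ has finite index in $\aut(A_\G)$. As $\auto$ also has finite index in $\aut(A_\G)$ (its index is $|\aut_c(\overline\G)|$ by Propositions~\ref{p:ghom} and~\ref{pr:out0classification}), once I know $\iap\subseteq\auto$ it follows automatically that $\iap$ has finite index in $\auto$.

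For the containment, recall from Proposition~\ref{pr:out0classification} that $\auto=\ker\rho$, so I must show that every $\phi\in\iap$ has $\rho(\phi)=1$. Fix $\phi\in\iap$ and let $\sigma\in\aut(\G)$ be a graph automorphism as supplied by Proposition~\ref{p:gauto}, so that $\rho(\phi)=\overline\sigma$ and $\phi(A_{\geq v})$ is conjugate to $A_{\geq\sigma(v)}$ for every vertex $v$. For a vertex $v$ write $W_{\geq v}\subseteq H_1(A_\G;\Z)$ for the image of $H_1(A_{\geq v})$; since $A_{\geq v}$ is the special subgroup on the full subgraph $\{w:v\leq w\}$, the subgroup $W_{\geq v}$ is precisely the coordinate summand spanned by the basis vectors indexed by $\{w:v\leq w\}$. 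Because inner automorphisms act trivially on homology, the conjugacy $\phi(A_{\geq v})\sim A_{\geq\sigma(v)}$ gives $\phi_*(W_{\geq v})=W_{\geq\sigma(v)}$, and reducing mod $l$ yields $\phi_*^{[l]}(W_{\geq v}\otimes\Z/l\Z)=W_{\geq\sigma(v)}\otimes\Z/l\Z$. But $\phi\in\iap$ means $\phi_*^{[l]}$ is the identity, so $W_{\geq v}\otimes\Z/l\Z=W_{\geq\sigma(v)}\otimes\Z/l\Z$ inside $(\Z/l\Z)^n$. Each side is spanned by a subset of the standard basis, and two such coordinate subspaces coincide only when their index sets agree; hence $\{w:v\leq w\}=\{w:\sigma(v)\leq w\}$, i.e.\ $A_{\geq v}=A_{\geq\sigma(v)}$. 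As recorded just after the definition of $A_{\geq v}$, this forces $v\sim\sigma(v)$, so $\overline\sigma$ fixes $[v]$. Since $v$ was arbitrary, $\rho(\phi)=\overline\sigma=1$ and $\phi\in\auto$.

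For torsion-freeness when $l\geq 3$, I would combine Minkowski's theorem with the torsion-freeness of the genuine Torelli subgroup $\ia$. Write $\Psi^{[l]}=\pi_l\circ\Psi$, where $\pi_l\colon\gln\to\glnp$ is reduction mod $l$; then $\iap=\Psi^{-1}(\Gamma(l))$ for $\Gamma(l)=\ker\pi_l$. If $\phi\in\iap$ has finite order then $\Psi(\phi)\in\Gamma(l)$ is a finite-order matrix, and since $\Gamma(l)$ is torsion-free for $l\geq 3$ (Minkowski), we get $\Psi(\phi)=I$, i.e.\ $\phi\in\ia$. Thus every torsion element of $\iap$ lies in $\ia$, and it remains to see that $\ia$ is torsion-free. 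Here I would use that $A_\G$ is residually torsion-free nilpotent: a finite-order $\phi\in\ia$ acts trivially on the abelianization, hence trivially on each graded piece of the lower central series (these are generated in degree one), so $\phi$ induces a finite-order, unipotent automorphism of each torsion-free nilpotent quotient $A_\G/\gamma_k(A_\G)$, which must be the identity; as $\bigcap_k\gamma_k(A_\G)=1$, we conclude $\phi=1$.

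The main obstacle is the containment step, exactly as flagged: the content is that the permutation of vertex-classes recorded by $\rho$ is already visible in the mod-$l$ homology, which works because the homology images of the subgroups $A_{\geq v}$ are \emph{coordinate} subspaces and coordinate subspaces are rigid under reduction mod $l$. The torsion-free part is comparatively routine given Minkowski's theorem and the standard fact that RAAGs are residually torsion-free nilpotent; the only care needed there is to first trap torsion inside $\ia$ before invoking residual nilpotence.
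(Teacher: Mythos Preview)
Your proof is correct and follows essentially the same strategy as the paper: detect the vertex-class permutation $\rho(\phi)$ from the action on $H_1(A_\G;\Z/l\Z)$, and deduce torsion-freeness from Minkowski's theorem together with torsion-freeness of $\ia$. The packaging of the containment step differs slightly: the paper orders the vertices compatibly with $\leq_\G$, observes that $\Psi^{[l]}(\auto)$ lands in block lower-triangular matrices, and notes that a nontrivial class-permuting graph symmetry maps to a permutation matrix outside this block form; your argument via the coordinate subspaces $W_{\geq v}\otimes\Z/l\Z$ is a cleaner, coordinate-free repackaging of the same observation. For torsion-freeness the paper simply cites Toinet for the fact that $\ia$ is torsion-free, whereas you sketch the standard residually-torsion-free-nilpotent argument; both are fine.
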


\begin{proof}
Let $G$ be the image of $\auto$ in $\glnp$. If the vertices $v_1,\ldots,v_n$ are ordered so that $v_i \leq_\G v_j$ implies $i \leq j$ and all vertices in the same equivalence class are consecutive, then $G$ consists of block lower-triangular matrices of the form $$ M=\begin{pmatrix}M_1 & 0 & \dotsm & 0 \\
* & M_2 &  & 0 \\
\vdots & & \ddots & \vdots \\
* & * & \dots & M_{r} 
 \end{pmatrix},$$ where $r$ is the number of equivalence classes of vertices in $\G$. Each $M_i$ corresponds to the action of $\aut^0(A_\G)$ on the abelianization of the $i$th equivalence class. Each coset of $\aut^0(A_\G)$ in $\aut(A_\G)$ is represented by a graph symmetry $\alpha$ that permutes equivalence classes in $\G$ (see, for example, the proof of  Proposition~\ref{pr:out0classification}). If $\alpha$ is nontrivial, it follows that $\Psi^{[l]}(\alpha)$ is permutation matrix lying outside of $G$. Let $\phi$ be an automorphism of $A_\G$. If $\phi \not\in \auto$, then we can decompose $\phi$ as $\phi=\alpha \cdot\phi'$, where $\alpha$ is a permutation as above and $\phi' \in \auto$. As $\Psi^{[l]}(\phi') \in G$ and $\Psi^{[l]}(\alpha) \not \in G$, the image of $\phi$ is nontrivial in $\glnp$, so $\phi \not\in \iap$. When $l \geq 3$ the group $\iap$ is torsion-free as $\ia$ is torsion-free and the principal congruence subgroup of level $l$ in $\gln$ is torsion-free (see Toinet~\cite[Theorem 7.14]{MR3095717} or an alternative proof by Bregman in \cite{2016arXiv160903602B}).
\end{proof}

We define 
$\ion$ to be the image of $\ia$ in $\out(A_\G)$ and define 
$\out^{[l]}(A_\G)$ to be the image of $\iap$ in $\out(A_\G)$. 
Again by Toinet~\cite{MR3095717}, $\ion$ is torsion-free.
So we have the following corollary:

\begin{corollary}
The principal congruence subgroup $\out^{[l]}(A_\G)$ is a finite index subgroup of $\out^0(A_\G)$. If $l \geq 3$, then $\out^{[l]}(A_\G)$ is torsion-free.
\end{corollary}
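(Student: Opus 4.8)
The plan is to deduce the corollary from Proposition~\ref{pr:iap} by pushing everything forward along the quotient map $q\colon\aut(A_\G)\to\out(A_\G)$, whose kernel is $\mathrm{Inn}(A_\G)$, and then running the same extension argument used at the level of $\aut$ but one step down. Since inner automorphisms act trivially on $H_1(A_\G;\Z/l\Z)$, we have $\mathrm{Inn}(A_\G)\subseteq\iap$, and by definition $q(\iap)=\out^{[l]}(A_\G)$.

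For finite index, I would first note that the restriction of $q$ to $\aut^0(A_\G)$ surjects onto $\out^0(A_\G)$, and that $\iap\subseteq\aut^0(A_\G)$ is finite index by Proposition~\ref{pr:iap}. The image of a finite-index subgroup under a surjection is again finite index, with index bounded by the index upstairs, so $\out^{[l]}(A_\G)=q(\iap)$ is finite index in $\out^0(A_\G)$.

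For torsion-freeness when $l\geq 3$, I would realize $\out^{[l]}(A_\G)$ as an extension of two torsion-free groups. The action of $\out(A_\G)$ on $H_1(A_\G;\Z)$ (which is defined because inner automorphisms act trivially on the abelianization) gives a homomorphism $\bar\Psi\colon\out(A_\G)\to\gln$ with kernel $\ion$. Since $\ia\subseteq\iap$, we get $\ion\subseteq\out^{[l]}(A_\G)$, and hence $\ker\bigl(\bar\Psi|_{\out^{[l]}(A_\G)}\bigr)=\out^{[l]}(A_\G)\cap\ion=\ion$. Restricting $\bar\Psi$ therefore yields the short exact sequence
\[ 1\longrightarrow\ion\longrightarrow\out^{[l]}(A_\G)\stackrel{\bar\Psi}{\longrightarrow}\bar\Psi\bigl(\out^{[l]}(A_\G)\bigr)\longrightarrow 1. \]
Here $\ion$ is torsion-free by Toinet, and any $\Phi\in\out^{[l]}(A_\G)$ lifts to some $\phi\in\iap$ with $\Psi(\phi)\equiv I\pmod l$, so the image $\bar\Psi(\out^{[l]}(A_\G))$ lies inside the level-$l$ principal congruence subgroup $\ker(\gln\to\glnp)$, which is torsion-free for $l\geq 3$ by the classical Minkowski lemma (the same fact invoked in Proposition~\ref{pr:iap}). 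An extension of a torsion-free group by a torsion-free group is torsion-free: a finite-order element maps to a finite-order, hence trivial, element of the torsion-free quotient, so it lies in the torsion-free kernel and is itself trivial. Thus $\out^{[l]}(A_\G)$ is torsion-free.

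I do not expect a serious obstacle here; the argument is routine given Proposition~\ref{pr:iap} and the cited torsion-freeness results. The only points requiring a moment of care are verifying the inclusions $\mathrm{Inn}(A_\G)\subseteq\iap$ and $\ion\subseteq\out^{[l]}(A_\G)$ and confirming that $\ion$ is \emph{exactly} the kernel of $\bar\Psi$ restricted to $\out^{[l]}(A_\G)$, so that the displayed sequence is genuinely exact and the extension lemma applies.
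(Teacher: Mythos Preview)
Your proposal is correct and follows essentially the same approach the paper intends: the paper states the corollary without a written proof, but the sentence immediately preceding it (``Again by Toinet, $\ion$ is torsion-free'') signals exactly the extension argument you give, mirroring the last line of the proof of Proposition~\ref{pr:iap} with $\ion$ in place of $\ia$. Your write-up simply makes explicit what the paper leaves to the reader.
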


We use $\outghp$ to denote the intersection of $\out(A_\G;\calg,\calh^t)$ with the group $\out^{[l]}(A_\G)$. We require the following variation of Theorem~\ref{th:restriction} in the next section.

\begin{theorem}\label{th:pcongrestriction}
Let $\calg$ and $\calh$ be sets of proper special subgroups of $A_\G$ and suppose that $\calg$ is saturated with respect to the pair $(\calg,\calh)$. Let $A_\Delta \in \calg$. Then the restriction homomorphism $R_\Delta$ applied to $\outghp$ fits in the exact sequence:
\[
\begin{split}
1\to \out^{[l]}(A_\G;\calg,(\calh\cup\{A_{\Delta}\})^t) \to  &\out^{[l]}(A_\G;\calg,\calh^t) \\
& \stackrel{R_{\Delta}}{\longrightarrow} \out^{[l]}(A_{\Delta};\calg_{\Delta},\calh_{\Delta}^t)\to 1.
\end{split}
\]
\end{theorem}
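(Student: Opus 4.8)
The plan is to deduce this from Theorem~\ref{th:restriction} by intersecting the exact sequence there with the principal congruence subgroup, reducing everything to a statement about the finite linear images of these relative groups. Write $p\colon\outgh\to G_\G$ for the restriction of $\Psi^{[l]}$, where $G_\G\le\glnp$ is its image, and similarly $q\colon\out^0(A_\Delta;\calg_\Delta,\calh_\Delta^t)\to G_\Delta$ for the action on $H_1(A_\Delta;\Z/l\Z)$; by definition $\outghp=\ker p$ and $\out^{[l]}(A_\Delta;\calg_\Delta,\calh_\Delta^t)=\ker q$. Ordering the vertices of $\Delta$ first, every element of $\outgh$ preserves the summand $H_1(A_\Delta;\Z/l\Z)$ (conjugation acts trivially on homology, so any representative preserves this summand), hence its image is block upper-triangular $\left(\begin{smallmatrix}A&B\\0&D\end{smallmatrix}\right)$; sending such a matrix to its $\Delta$-block $A$ defines a homomorphism $r\colon G_\G\to G_\Delta$ with $q\circ R_\Delta=r\circ p$. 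Since $R_\Delta$ is surjective (Theorem~\ref{th:restriction}), so is $r$.

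The kernel statement is immediate: the kernel of $R_\Delta$ restricted to $\outghp$ is $\ker R_\Delta\cap\outghp=\out^0(A_\G;\calg,(\calh\cup\{A_\Delta\})^t)\cap\out^{[l]}(A_\G)=\out^{[l]}(A_\G;\calg,(\calh\cup\{A_\Delta\})^t)$, using that $\out^{[l]}(A_\G)\subseteq\out^0(A_\G)$. For the image, I first check $R_\Delta(\outghp)\subseteq\out^{[l]}(A_\Delta;\calg_\Delta,\calh_\Delta^t)$: a representative $\phi$ with $\phi(A_\Delta)=A_\Delta$ acts on $H_1(A_\Delta;\Z/l\Z)$ by the restriction of $\phi_*$ to the $\Delta$-summand, which is trivial whenever $\phi_*$ is. Surjectivity is the crux, and a short diagram chase shows it is equivalent to the claim that the image $p(K)$ of $K:=\ker R_\Delta$ in $G_\G$ equals $\ker r$: given $\Psi\in\ker q$, lift it (via surjectivity of $R_\Delta$) to $\Phi_0\in\outgh$ with $p(\Phi_0)\in\ker r$, and if $p(\Phi_0)=p(\kappa)$ for some $\kappa\in K$, then $\kappa^{-1}\Phi_0\in\ker p=\outghp$ restricts to $\Psi$.

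To prove $p(K)=\ker r$ I would exploit that $r$ \emph{splits}. The key observation is that in Step~3 of the proof of Theorem~\ref{th:restriction} each inversion and each $\Delta$-transvection of $\out^0(A_\Delta;\calg_\Delta,\calh_\Delta^t)$ lifts to the \emph{same-named} generator of $\outgh$ (using Proposition~\ref{pr:inducedorder} to match $\leqgd$ with $\leq_\calg$); such a generator acts on $H_1(A_\G;\Z/l\Z)$ by a matrix $\left(\begin{smallmatrix}A&0\\0&I\end{smallmatrix}\right)$, so these lifts generate a complement $s(G_\Delta)$ to $\ker r$ in $G_\G$, splitting $r$. Meanwhile (partial conjugations act trivially on homology, and) the remaining Laurence generators of $\outgh$ with image in $\ker r$ are exactly the inversions $\iota_v$ and transvections $\rho^w_v$ with moving vertex $v\notin\Delta$. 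Each such generator lies in $K$: by Proposition~\ref{pr:gensrelgh} its membership in $\outgh$ forces $v$ to lie in no element of $\calh$, hence in no element of $\calh\cup\{A_\Delta\}$, so it acts trivially on $A_\Delta$ as well (after enlarging $\calg$ to contain $P(\calh\cup\{A_\Delta\})$ via Lemma~\ref{le:calhsubsets}). Since $G_\G=\langle s(G_\Delta),\,\{\text{these }\ker r\text{-valued generators}\}\rangle$, and $\ker r$ is normal with $s(G_\Delta)\cap\ker r=1$, the normal closure in $G_\G$ of the $\ker r$-valued generators is all of $\ker r$. As $p(K)$ is normal in $G_\G$ and contains these generators, $p(K)\supseteq\ker r$; the reverse inclusion is clear because $K$ acts trivially on $A_\Delta$.

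The main obstacle is precisely this identification $p(K)=\ker r$, which is what forces exactness on the right at the congruence level. It is not formal: in general the normal closure of the $\ker r$-valued generators could be a proper subgroup of $\ker r$, since the relations of $G_\Delta$ need not lift into it. What rescues the argument is the splitting of $r$ produced by the same-named lifts of the $\Delta$-generators, which kills that obstruction. I would take particular care over the bookkeeping matching ``$v\notin\Delta$'' with ``acts trivially on $A_\Delta$'' for inversions, and over the enlargement of $\calg$ needed before invoking Proposition~\ref{pr:gensrelgh} for $K$.
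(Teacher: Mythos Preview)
Your argument is correct, but it is considerably more elaborate than the paper's. Both proofs rest on the same observation: the explicit lifts of generators built in Step~3 of Theorem~\ref{th:restriction} act as the identity on $H_1$ of every vertex outside $\Delta$ (inversions and transvections with moving letter in $\Delta$ fix such vertices, and partial conjugations act trivially on homology). The paper simply uses this directly: given $\Phi\in\out^{[l]}(A_\Delta;\calg_\Delta,\calh_\Delta^t)$, write $\Phi$ as a word in generators and lift each one; the resulting product has $H_1$-matrix of block form $\left(\begin{smallmatrix}\Phi_*&0\\0&I\end{smallmatrix}\right)$, which is the identity modulo~$l$, so the lift already lies in $\outghp$. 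You instead package this observation as a splitting $s\colon G_\Delta\to G_\G$, reduce surjectivity to the equality $p(K)=\ker r$, and then run a normal-closure argument in the finite group $G_\G$ to obtain it. That detour is sound (the finiteness of $G_\G$ is what makes the index-counting in your normal-closure step work), but it is not needed: the splitting you build is exactly the block-diagonal lift, and applying it to $\Phi$ itself \emph{is} the paper's one-line proof. Your appeal to Proposition~\ref{pr:gensrelgh} to place the $v\notin\Delta$ generators in $K$ is also unnecessary, since such generators visibly fix $A_\Delta$ pointwise.
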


\begin{proof}
Proposition~\ref{pr:iap} tells us that $\out^{[l]}(A_\G)$ is contained in $\out^0(A_\G)$, so we may restrict the exact sequence of Theorem~\ref{th:restriction} to $\outghp$. It is clear that the kernel is then $\out^{[l]}(A_\G;\calg,(\calh\cup\{A_\Delta\})^t)$ and that the image is contained in $\out^{[l]}(A_\Delta;\calg_\Delta,\calh_\Delta^t)$ (the action on $H_1(A_\Delta;\mathbb{Z})$ is given by deleting the appropriate rows and columns from the matrix giving the action on $H_1(A_\G;\mathbb{Z})$). It remains to justify that the image contains every element of $\out^{[l]}(A_\Delta;\calg_\Delta,\calh_\Delta^t)$. To see this, note that if $\Phi \in\out^{[l]}(A_\Delta;\calg_\Delta,\calh_\Delta^t)$, we can construct an element mapping to $\Phi$ under $R_\Delta$ by writing $\Phi$ as a product in the generating set and lifting each generator to $\outgh$. The lift of each generator constructed in the proof of Theorem~\ref{th:restriction} acts by conjugation on every vertex $v$ lying outside of $\Delta$ (so trivially on the image of $v$ in $H_1(A_\G;\mathbb{Z})$), hence this lift of $\Phi$ will lie in $\out^{[l]}(A_\G)$, and $\Phi$ is in the image of $\out^{[l]}(A_\G;\calg,\calh^t)$ under $R_\Delta$.
\end{proof}

The following is used implicitly in the next section. As we work in the finite index subgroup $\outghp$ of $\outgh$, it is important to know that $\outghp$ leaves exactly the same special subgroups invariant.  

\begin{proposition}
Let $A_\Delta$ be a special subgroup of $A_\Gamma$. Then $A_\Delta$ is invariant under $\outgh$ if and only if $A_\Delta$ is invariant under $\outghp$
\end{proposition}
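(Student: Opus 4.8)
The plan is to prove the nontrivial implication by contraposition, using the characterization of invariant special subgroups in Proposition~\ref{pr:chariss} together with the observation that the generators witnessing non-invariance can be arranged to lie in the principal congruence subgroup. Since $\outghp\le\outgh$, any special subgroup invariant under the larger group is automatically invariant under the smaller one, so only the reverse implication needs work. By Lemma~\ref{le:calhsubsets} I may first enlarge $\calg$ to contain $P(\calh)$ without changing either $\outgh$ or $\outghp$, so that Proposition~\ref{pr:chariss} and Proposition~\ref{pr:gensrelgh} apply. I will then assume $A_\Delta$ is \emph{not} invariant under $\outgh$ and produce an element of $\outghp$ that does not preserve $A_\Delta$, contradicting invariance under the congruence subgroup.

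By Proposition~\ref{pr:chariss} one of its two defining conditions fails, and in each case the proof of that proposition exhibits an explicit generator of $\outgh$ that does not preserve $A_\Delta$. If the upward-closure condition fails there are vertices $v\in\Delta$, $w\notin\Delta$ with $v\leq_\calg w$, and the transvection $\rho^w_v$ lies in $\outgh$ (Proposition~\ref{pr:gensrelgh}) but does not preserve $A_\Delta$ (Lemma~\ref{le:genexercise}). Here $\rho^w_v$ itself need not lie in $\out^{[l]}(A_\G)$, so instead I pass to $(\rho^w_v)^l$, which sends $v\mapsto vw^l$ and fixes all other generators. On $H_1(A_\G;\Z)$ the transvection acts by an elementary matrix $I+E$ with $E^2=0$, so $(\rho^w_v)^l$ acts by $I+lE\equiv I\pmod{l}$; hence $(\rho^w_v)^l\in\out^{[l]}(A_\G)$ and therefore lies in $\outghp$. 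If instead the star-separation condition fails, there is $x\notin\Delta$ and a $\calg^x$-component $K$ of $\G-\st(x)$ with $\Delta$ meeting $K$ and also meeting another such component; then $\pi^x_K\in\outgh$ does not preserve $A_\Delta$. Since any extended partial conjugation acts trivially on $H_1(A_\G;\Z)$ (conjugation becomes trivial after abelianizing), we have $\pi^x_K\in\out^{[l]}(A_\G)$ already, so $\pi^x_K\in\outghp$.

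The main point to verify is that the chosen power $(\rho^w_v)^l$ still fails to preserve $A_\Delta$. This follows from the cyclic-reduction argument used in Lemma~\ref{le:genexercise}: the element $vw^l$ is cyclically reduced with $\crsupp(vw^l)=\{v,w\}$, and since any cyclically reduced element of a conjugate $gA_\Delta g^{-1}$ already lies in $A_\Delta$ while $w\notin\Delta$, the image of $v$ cannot lie in any conjugate of $A_\Delta$; hence no representative of $[(\rho^w_v)^l]$ carries $A_\Delta$ into a conjugate of itself. In both cases we have produced an element of $\outghp$ not preserving $A_\Delta$, completing the contrapositive. The only genuine obstacle is this interplay between taking powers (to enter the level-$l$ congruence subgroup) and retaining non-invariance: for partial conjugations it is automatic because they are already in $\ia$, and for transvections it reduces to the observation that $vw^l$ still has $w$ in its support.
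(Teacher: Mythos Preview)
Your argument is correct and follows essentially the same route as the paper: argue by contraposition, exhibit a transvection or extended partial conjugation in $\outgh$ that fails to preserve $A_\Delta$ (via Proposition~\ref{pr:chariss}), and then pass to a power lying in the congruence subgroup while checking that non-invariance persists. The paper phrases this more tersely (``some power $\phi^m$ lies in $\outghp$ and one checks $\phi^m(A_\Delta)$ is still not conjugate to $A_\Delta$''), whereas you make the verification explicit---noting that partial conjugations already lie in $\ia$ so need no power, and for transvections computing that $(\rho^w_v)^l$ acts as $I+lE$ on homology and that $vw^l$ is cyclically reduced with $w$ in its support.
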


\begin{proof}
If $A_\Delta$ is invariant under $\outgh$ then $A_\Delta$ is invariant under the group $\outghp$. Conversely, if $A_\Delta$ is not invariant under $\outgh$, then there exists a partial conjugation or transvection $\phi$ such that $\phi(A_\Delta)$ is not conjugate to $A_\Delta$ (c.f.\ the proof of Proposition~\ref{pr:chariss}). In this case, one can check that $\phi^m(A_\Delta)$ is also not conjugate to $A_\Delta$ for all $m \geq 1$ (see for example, the proof of Lemma~\ref{le:genexercise}). As some power $\phi^m$ is in $\outghp$, it follows that $A_\Delta$ is also not invariant under $\outghp$.
\end{proof}

This is not true for subgroups admitting a trivial action, however this discrepancy can only come from a finite subgroup of inversions.

\begin{proposition}\label{pr:diff_by_inversions}
Suppose that $\outghp$ acts trivially on $A_\Delta$ but the group $\outgh$ does not. Then the image of $\outgh$ under $R_\Delta$ is a finite group generated by inversions.
\end{proposition}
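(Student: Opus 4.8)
The plan is to exploit that $\outghp$ is finite index in $\outgh$ and, by hypothesis, lies in the kernel of $R_\Delta$, so that $R_\Delta(\outgh)$ is a \emph{finite} group; and then to identify its generators via Theorem~\ref{th:generators}. First I would record the finiteness. By Proposition~\ref{pr:iap} (and the corollary following it), $\out^{[l]}(A_\G)$ has finite index in $\out^0(A_\G)$, so $\outghp=\outgh\cap\out^{[l]}(A_\G)$ has finite index in $\outgh$. The assumption that $\outghp$ acts trivially on $A_\Delta$ says precisely that $\outghp\subseteq\ker R_\Delta$, whence $[\outgh:\ker R_\Delta]\le[\outgh:\outghp]<\infty$ and $R_\Delta(\outgh)\cong\outgh/\ker R_\Delta$ is finite.

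Next I would pin down the generators. By Theorem~\ref{th:generators}, $\outgh$ is generated by the inversions, transvections, and extended partial conjugations it contains, and each such extended Laurence generator is carried by $R_\Delta$ either to the identity or to a generator of the \emph{same type} in $\out(A_\Delta)$. Thus $R_\Delta(\outgh)$ is generated by the images of these three families, and it will suffice to show that all transvections and all extended partial conjugations in $\outgh$ map to the identity. The surviving inversion images are the $\iota_v$ with $v\in\Delta$; these commute and have order two, so they generate a finite elementary abelian $2$-group, which is nontrivial exactly because $\outgh$ does not act trivially on $A_\Delta$. This would give the conclusion.

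To dispose of the transvections, I would use the abelianization. If a transvection in $\outgh$ had nontrivial image, that image would be a genuine transvection of $A_\Delta$ (both relevant vertices lying in $\Delta$, by Lemma~\ref{le:genexercise}), and its image under the homology map $\out(A_\Delta)\to\gl(m,\mathbb{Z})$ would be a nontrivial unipotent elementary matrix, which has infinite order. Since a finite group contains no element of infinite order, every transvection in $\outgh$ must restrict to the identity on $A_\Delta$.

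The hard part will be the extended partial conjugations, since these act trivially on $H_1(A_\Delta)$ and the abelianization argument is unavailable. Here I would argue directly that any nontrivial extended partial conjugation $\pi^x_K$ of $A_\Delta$ has infinite order in $\out(A_\Delta)$. Nontriviality supplies a component of $\Delta-\st(x)$ inside $K$, containing a vertex $w$, and a component disjoint from $K$, containing a vertex $u$; then $x$, $u$, $w$ are pairwise non-adjacent. Applying $(\pi^x_K)^n$ to the element $wu$ gives $x^nwx^{-n}u$, which is cyclically reduced of length $2n+2$, so its conjugacy class has cyclically reduced length growing without bound. As inner automorphisms preserve conjugacy classes (and hence cyclically reduced length), no nonzero power of $\pi^x_K$ is inner, so $\pi^x_K$ has infinite order. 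Finiteness of $R_\Delta(\outgh)$ then forces every extended partial conjugation in $\outgh$ to map to the identity under $R_\Delta$, and combined with the previous paragraph this shows $R_\Delta(\outgh)$ is generated by inversions, as claimed.
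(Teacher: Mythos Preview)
Your proof is correct and follows essentially the same approach as the paper's: both argue that nontrivial transvections and extended partial conjugations in the image of $R_\Delta$ have infinite order, which is incompatible with $\outghp$ (a finite-index subgroup) lying in the kernel. The paper compresses this into two sentences by simply asserting the infinite-order claim and taking powers to land in $\outghp$, whereas you first deduce finiteness of the image and then supply explicit infinite-order arguments (via the abelianization for transvections, and via growth of cyclically reduced length on a free subgroup $\langle x,u,w\rangle$ for extended partial conjugations); these added details are correct and make the argument self-contained.
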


\begin{proof}
If the image of $R_\Delta$ contains an extended partial conjugation or a transvection, then this element is infinite order and is the image of an element of the same type in $\outgh$. By taking an appropriate power, we attain a nontrivial element in the image of $\outghp$ under $R_\Delta$. Hence the image of $R_\Delta$ is generated by inversions.
\end{proof}

\begin{corollary}\label{co:outl_from_out0}
For every triple $(\G,\calg,\calh)$ where $\calg$ and $\calh$ are collections of special subgroups of $A_\G$ there exists a collection $\calh'$ of special subgroups such that \[ \outghp \cong \out^{[l]}(A_\G;\calg,(\calh')^t) \] and every invariant subgroup $A_\Delta$ has a trivial action under $\out^0(A_\G;\calg,(\calh')^t)$ if and only if $A_\Delta$ has a trivial action under $\out^{[l]}(A_\G;\calg,(\calh')^t)$.
\end{corollary}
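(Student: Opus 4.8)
The plan is to let $\calh'$ be the collection of all special subgroups of $A_\G$ on which $\outghp$ acts trivially. Since $\G$ has only finitely many full subgraphs this is a finite collection, and it contains $\calh$ because $\outghp$ acts trivially on each member of $\calh$ by definition.

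First I would prove the identity $\outghp = \out^{[l]}(A_\G;\calg,(\calh')^t)$ by a double inclusion. Because $\calh \subseteq \calh'$, enlarging the family on which we demand a trivial action can only shrink the group, giving $\out^{[l]}(A_\G;\calg,(\calh')^t) \subseteq \outghp$. For the reverse inclusion, take $\Phi \in \outghp$: it preserves each member of $\calg$, and by the very definition of $\calh'$ it acts trivially on each member of $\calh'$, so $\Phi$ lies in $\out^{[l]}(A_\G;\calg,(\calh')^t)$. Equality follows, and in particular the two groups are isomorphic.

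Next I would check the trivial-action equivalence for an arbitrary invariant special subgroup $A_\Delta$. The forward implication is formal: since $\out^{[l]}(A_\G;\calg,(\calh')^t) = \outghp$ is a subgroup of $\out^0(A_\G;\calg,(\calh')^t)$, a trivial action of the latter forces a trivial action of the former. For the converse, suppose $\out^{[l]}(A_\G;\calg,(\calh')^t) = \outghp$ acts trivially on $A_\Delta$; then $A_\Delta \in \calh'$ by construction, and every element of the relative group $\out^0(A_\G;\calg,(\calh')^t)$ acts trivially on each member of $\calh'$ by definition of the relative automorphism group. Hence $\out^0(A_\G;\calg,(\calh')^t)$ acts trivially on $A_\Delta$. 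This converse is the entire point of the construction: we place exactly those subgroups into $\calh'$ that are needed to make the implication hold.

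The substantive content, and the place where Proposition~\ref{pr:diff_by_inversions} clarifies the picture, lies in understanding the difference $\calh' \setminus \calh$. Its members are the special subgroups on which $\outghp$ acts trivially; those among them on which $\out^0(A_\G;\calg,\calh^t)$ fails to act trivially are, by Proposition~\ref{pr:diff_by_inversions}, precisely the ones whose image under the relevant restriction map $R_\Delta$ is a finite group generated by inversions. Thus passing from $\calh$ to $\calh'$ only forces the vanishing of a finite collection of inversions and leaves the principal congruence subgroup untouched --- which is exactly what the squeeze argument of the second paragraph records. I expect the main obstacle to be conceptual rather than computational: one must verify that this enlargement of $\calh$ does not disturb $\outghp$, and this is precisely guaranteed by the fact that $\outghp$ already acts trivially on everything placed into $\calh'$.
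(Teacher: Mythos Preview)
Your argument is correct, and in fact cleaner than the paper's. You take $\calh'$ to be the \emph{maximal} choice---all special subgroups on which $\outghp$ acts trivially---and then the double inclusion and the trivial-action equivalence become tautological, with no real appeal to Proposition~\ref{pr:diff_by_inversions} needed. The paper instead makes a \emph{minimal} extension: it adjoins to $\calh$ only the cyclic subgroups $A_{\{x\}}$ for those vertices $x$ whose inversions $\iota_x$ witness a discrepancy as in Proposition~\ref{pr:diff_by_inversions}. That approach is more explicit about exactly which constraints are being added (finitely many single-vertex conditions), but it requires one to check that $\outghp$ already acts trivially on each such $\langle x\rangle$, which follows because the relevant $x$ lies in some $A_\Delta$ on which $\outghp$ acts trivially. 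Your maximal choice sidesteps that verification entirely; the price is only that $\calh'$ is less concrete. Your final paragraph invoking Proposition~\ref{pr:diff_by_inversions} is commentary rather than proof---you can drop it without loss.
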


\begin{proof}
Extend $\calh$ to include the groups $A_{\{x\}}$ for each inversion $\iota_x$ that appears in Proposition~\ref{pr:diff_by_inversions}.
\end{proof}

\section{Decomposing $\out^0(A_\G)$} \label{s:decomposition}

We would now like to use the exact sequences above to break up $\out^0(A_\G)$ into understandable pieces. The missing piece of the puzzle is to study what happens when every restriction map of a relative automorphism group is trivial.

\subsection{Automorphism groups with trivial restriction maps}

Having trivial restriction homomorphisms is equivalent to $\out^0(A_\G;\calg,\calh^t)$ acting trivially on every invariant special subgroup $A_\Delta$. In this case, we can assume that $\calg=\calh$ and $\calg$ is saturated with respect to $(\calg,\calg)$, so that $\outgh=\out^0(A_\G;\calg^t)$. We use this alternative formulation for the lack of nontrivial restriction maps throughout this section.
There are five different cases:

\begin{itemize}
 \item $\G$ is disconnected and $\G$ is $\calg$-disconnected.
 \item $\G$ is disconnected and $\G$ is $\calg$-connected.
 \item $\G$ is connected and the center $Z(A_\G)$ of $A_\G$ is trivial.
 \item $\G$ is connected and $Z(A_\G)$ is a proper, nontrivial subgroup.
 \item $\G$ is complete and $A_\G=\mathbb{Z}^n$ for some $n$.
\end{itemize}

In the first case $\out^0(A_\G;\calg^t)$ is a Fouxe-Rabinovitch group. In the second and third cases, $\out^0(A_\G;\calg^t)$ is a free abelian group. In the fourth case there is another simplifying exact sequence (involving a projection homomorphism rather than a restriction homomorphism). In the final case, $\out^0(A_\G;\calg^t)$ is either $\gln$ or a nice subgroup of block upper triangular matrices. The remainder of this subsection is dedicated to proofs of the above claims.

\subsubsection{$\G$ is disconnected and $\calg$-disconnected}

Here we have a Fouxe-Rabinovitch group:

\begin{proposition}\label{co:disconnected_but_G_connected_type_f}
Suppose $\calg$ is saturated with respect to $(\calg,\calg)$. 
If $\G$ is disconnected and $\calg$-disconnected, then there exists a free factor decomposition \[ \G = A_{\Delta_1} \ast A_{\Delta_2} \ast \cdots \ast A_{\Delta_k} \ast \mathbb{F}_m, \] of $A_\G$ such that \[ \out^0(A_\G;\calg^t) = \out^0(A_\G;\{A_{\Delta_i}\}_i^t). \] 
\end{proposition}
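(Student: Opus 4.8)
The plan is to read the free factor decomposition off the $\calg$-components of $\G$ and then verify that the associated Fouxe-Rabinovitch group is exactly $\out^0(A_\G;\calg^t)$. First I would use that each $\calg$-component is a union of connected components of $\G$: since the connected components of a disconnected graph give a free product decomposition of a RAAG, the $\calg$-components $\Xi_1,\dots,\Xi_p$ of $\G$ yield $A_\G = A_{\Xi_1}\ast\dots\ast A_{\Xi_p}$. The hypothesis that $\G$ is $\calg$-disconnected gives $p\geq 2$, so every $A_{\Xi_j}$ is a proper special subgroup.

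Next I would sort the $\calg$-components. Any $\Xi_j$ with at least two vertices is invariant under $\out^0(A_\G;\calg)$ by Lemma~\ref{le:fullrelsetprops}(1), hence a fortiori under the subgroup $\out^0(A_\G;\calg^t)$; since $\calg$ is saturated with respect to $(\calg,\calg)$ and $A_{\Xi_j}$ is proper, this forces $A_{\Xi_j}\in\calg$, and because here $\calh=\calg$ the group $\out^0(A_\G;\calg^t)$ acts trivially on it. Every other $\calg$-component consists of a single vertex, which, being a union of connected components, is an isolated vertex of $\G$. For such a vertex $v$ the two possibilities $A_{\{v\}}\in\calg$ and $A_{\{v\}}\notin\calg$ are both consistent with saturation, so I would keep them separate: let $\Delta_1,\dots,\Delta_k$ be the $\calg$-components $\Xi_j$ with $A_{\Xi_j}\in\calg$ (these comprise all components with at least two vertices together with the isolated vertices lying in $\calg$), and let $\mathbb{F}_m$ be the free group generated by the remaining isolated vertices. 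Regrouping the free product above then produces the asserted decomposition $A_\G = A_{\Delta_1}\ast\dots\ast A_{\Delta_k}\ast\mathbb{F}_m$.

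It then remains to establish $\out^0(A_\G;\calg^t) = \out^0(A_\G;\{A_{\Delta_i}\}_i^t)$. The inclusion $\subseteq$ is immediate: each $A_{\Delta_i}$ lies in $\calg$ by construction, so anything acting trivially on every member of $\calg$ acts trivially on each $A_{\Delta_i}$. For $\supseteq$, suppose $\Phi$ acts trivially on every $A_{\Delta_i}$ and take an arbitrary $A_\Theta\in\calg$. The vertices of $\Theta$ all lie in the common element $A_\Theta$ of $\calg$, so they are pairwise $\calg$-adjacent; hence $\Theta$ is $\calg$-connected and is contained in a single $\calg$-component $\Xi_j$. If $\Xi_j$ has at least two vertices it is one of the $\Delta_i$ and $A_\Theta\leq A_{\Delta_i}$, so $\Phi$ acts trivially on $A_\Theta$; if $\Xi_j$ is a single vertex then $\Theta=\Xi_j$ and $A_\Theta\in\calg$ places $\Xi_j$ among the $\Delta_i$ by construction, so again $\Phi$ acts trivially on $A_\Theta$. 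As $A_\Theta\in\calg$ was arbitrary and acting trivially implies preserving, $\Phi\in\out^0(A_\G;\calg^t)$, which completes the equality and identifies this group with the Fouxe-Rabinovitch group of the displayed decomposition.

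I expect the main obstacle to be the bookkeeping of the single-vertex $\calg$-components. Saturation cleanly pins down the components with at least two vertices as peripheral factors, but an isolated vertex may or may not satisfy $A_{\{v\}}\in\calg$, and these two cases genuinely behave differently: the former is a ($\mathbb{Z}$) factor $\Delta_i$ on which the group is required to act trivially, while the latter belongs to the free part $\mathbb{F}_m$, where the Fouxe-Rabinovitch group permits transvections and inversions. Getting this split exactly right is what makes the two relative automorphism groups coincide; naively absorbing every isolated vertex into $\mathbb{F}_m$ would in general enlarge the Fouxe-Rabinovitch group strictly beyond $\out^0(A_\G;\calg^t)$.
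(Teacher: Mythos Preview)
Your proof is correct and follows essentially the same approach as the paper: partition $\G$ into its $\calg$-components, separate out the isolated vertices not meeting $\calg$ as the free part $\mathbb{F}_m$, and then check both containments using that every member of $\calg$ sits inside some $\Delta_i$. Your write-up is in fact a bit more explicit than the paper's, since you spell out how saturation forces the multi-vertex $\calg$-components into $\calg$ and you justify carefully why an arbitrary $A_\Theta\in\calg$ lands in a single $\Delta_i$.
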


\begin{proof}
Since $\G$ is $\calg$-disconnected, there is a free splitting \[ \G = A_{\Delta_1} \ast A_{\Delta_2} \ast \cdots \ast A_{\Delta_k} \ast \mathbb{F}_m, \] where $\mathbb{F}_m$ consists of the isolated vertices in $\G$ that are not contained in any element of $\calg$ and $\Delta_1,\ldots,\Delta_k$ are the remaining $\calg$-components. 
A free factor ${\Delta_i}$ may be an isolated vertex, in which case $A_{\Delta_i}$ is an element of $\calg$. 
Otherwise $\Delta_i$ is a $\calg$-component containing at least two vertices, and $A_{\Delta_i}$ is invariant under $\out^0(A_\G;\calg^t)$ by part (1) of Lemma~\ref{le:fullrelsetprops}. 
Hence $\out^0(A_\G;\calg^t)$ is contained in the group $\out^0(A_\G;\{A_{\Delta_i}\}_i^t)$.
 Conversely, as each element of $\calg$ is contained in some $A_{\Delta_i}$, the group $\out^0(A_\G;\{A_{\Delta_i}\}_i^t)$ is contained in $\out^0(A_\G;\calg^t)$. 
\end{proof}

%

\subsubsection{$\G$ is disconnected and $\calg$-connected}

We now look at the case where $\G$ is disconnected, but any two points in $\G$ are connected by a $\calg$-path. In this situation, the group $\out^0(A_\G;\calg^t)$ can contain nontrivial partial conjugations, but all such partial conjugations commute. 

\begin{figure}[ht]
\includegraphics{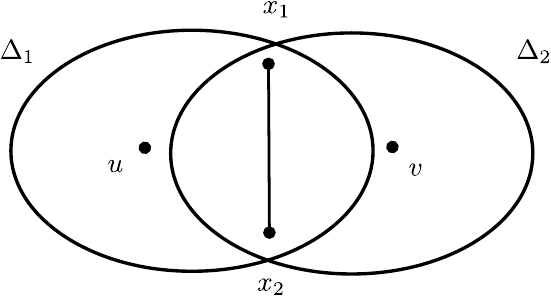} 
\caption{The group $\out^0(A_\G;\{A_{\Delta_1},A_{\Delta_2}\}^t)$ is a free abelian group with basis $\pi^{x_1}_v$ and $\pi^{x_2}_v$.}
\label{fig:pce}
\end{figure}

\begin{proposition} \label{co:disconnected_G_disconnected_type_F}
Suppose $\calg$ is saturated with respect to $(\calg,\calg)$ and that $\G$ is disconnected but $\calg$-connected. 
Then $\out^0(A_\G;\calg^t)$ is a finite rank free abelian group.
\end{proposition}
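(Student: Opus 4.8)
The plan is to show that $\out^0(A_\G;\calg^t)$ is generated by pairwise-commuting extended partial conjugations and that the resulting finitely generated abelian group is torsion-free. Throughout I use the standing reduction of this subsection, so that $\calg=\calh$ and $\calg$ is saturated with respect to $(\calg,\calg)$; in particular $P(\calg)\subseteq\calg$ by Lemma~\ref{le:calhsubsets}.

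\emph{Step 1: the group contains no inversions and no nontrivial transvections.} First I would show every vertex lies in some member of $\calg$. For a non-isolated vertex $v$, the subgroup $A_{\geq v}$ of vertices dominating $v$ is preserved by $\out^0(A_\G)$ (Proposition~\ref{pr:out0classification}), hence by the subgroup $\out^0(A_\G;\calg^t)$; since $\G$ is disconnected, a vertex in another component cannot dominate $v$ (this would force $\lk(v)=\varnothing$), so $A_{\geq v}$ is proper and therefore lies in $\calg$ by saturation. For an isolated vertex $v$, $\calg$-connectedness (with $\abs{V(\G)}\geq 2$) forces $v$ into a member of $\calg$. Thus $A_{\{v\}}\in P(\calg)\subseteq\calg$ for every $v$. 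By Proposition~\ref{pr:gensrelgh} this rules out inversions (each $v$ lies in a member of $\calg$) and nontrivial transvections (if $v\leq_\calg w$ then $w$ lies in the member $A_{\{v\}}$ of $\calg$, so $w=v$). Hence by Theorem~\ref{th:generators} the group is generated by the extended partial conjugations it contains.

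\emph{Step 2: these partial conjugations commute — the main obstacle.} The key preliminary observation is that $P(\calg)\subseteq\calg$ trivializes the difference between $\calg^x$-adjacency and $\calg$-adjacency: for $u,w\in\G-\st(x)$, any $A_\Delta\in\calg$ with $u,w\in\Delta$ yields $A_{\{u,w\}}\in P(\calg)\subseteq\calg$, and $x\notin\{u,w\}$, so $u,w$ are already $\calg^x$-adjacent. Consequently the $\calg^x$-components of $\G-\st(x)$ are exactly the connected components of the auxiliary graph $\G^+$ — the graph on $V(\G)$ with an edge for each $\calg$-adjacent pair — after deleting the vertex set $\st(x)$; and a nontrivial partial conjugation exists at $x$ precisely when $\st(x)$ separates the connected graph $\G^+$. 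With this reformulation, a direct computation shows $\pi^x_K$ and $\pi^y_L$ commute whenever $x\sim_\G y$, since then $x,y$ commute and neither acting letter lies in the other's moved set. When $x\not\sim_\G y$ the commutator is a conjugation supported on $\{x,y\}$ affecting only the vertices where the moved sets $K$ and $L$ "cross", and since $Z(A_\G)=1$ (as $\G$ is disconnected, by Proposition~\ref{pr:ZCN}) such a conjugation is visibly non-inner unless it is trivial. The crux is therefore to show that no crossing occurs: using connectivity of $\G^+$, I would argue that any two separating-star data force the pairs $(x,K)$ and $(y,L)$ to be nested or disjoint with compatibly placed acting letters — exactly the configurations in which partial conjugations commute. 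Ruling out the crossing configuration is where $\calg$-connectedness does the essential work, and I expect this combinatorial analysis of separating stars in $\G^+$ to be the hardest part of the argument.

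\emph{Step 3: conclusion.} The group is finitely generated by Theorem~\ref{th:generators} and abelian by Step~2. Each extended partial conjugation acts trivially on $H_1(A_\G;\Z)$, so the whole group lies in $\ion$, which is torsion-free by Toinet's theorem as recorded above. A finitely generated torsion-free abelian group is free abelian of finite rank, completing the proof.
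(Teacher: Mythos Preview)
Your Step~1 is correct and matches the paper's preliminary reductions (its Lemma~\ref{le:graphsetup}, parts (1)--(3)). Your Step~3 is a clean way to finish once commutativity is established.

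The genuine gap is Step~2. You explicitly say that ruling out the crossing configuration ``is where $\calg$-connectedness does the essential work, and I expect this combinatorial analysis of separating stars in $\G^+$ to be the hardest part of the argument'' --- but you do not carry it out. Everything hinges on this, and it is not obvious: one must show that whenever $\pi^x_K$ and $\pi^y_L$ are both in the group, their commutator is inner, and your sketch does not explain why the bad crossing cannot occur.

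The paper takes a structurally different route that avoids a general crossing analysis. It first proves (Lemma~\ref{le:graphsetup}(5)) that there is a connected component $\Delta$ of $\G$ whose complement $\Lambda=\G-\Delta$ is $\calg$-connected, and then splits into two cases. If both $\Delta$ and $\Lambda$ have at least two vertices, the paper exhibits two overlapping invariant subgroups $A_{\Theta_1},A_{\Theta_2}$ covering $\G$ whose intersection contains vertices with trivial common centralizer; any automorphism acting trivially on both must be the identity, so the group is \emph{trivial}. If $\Delta=\{v\}$ is a single vertex, the paper shows that every nontrivial extended partial conjugation has the form $\pi^x_v$, and that the set $\Theta$ of such acting letters $x$ is a clique (because $A_{\Theta\cup\{v\}}$ is invariant and each $\pi^x_v$ must restrict to an inner automorphism there). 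Commutativity then follows immediately, and the group has the explicit basis $\{\pi^x_v: x\in\Theta\}$.

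So the paper's argument both closes your gap and yields sharper information: the group is either trivial or has a concrete free-abelian basis indexed by a clique of vertices. If you want to salvage your direct approach, you would need to supply the missing crossing argument; the paper's decomposition suggests that any such argument will implicitly rediscover the single-vertex component $\Delta$.
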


The proof of Proposition~\ref{co:disconnected_G_disconnected_type_F} is a little technical, and on first reading one may benefit from a short study of the example in Figure~\ref{fig:pce} before skipping ahead to Section~\ref{ss:case3}. For the full proof, we first need the following lemma:

\begin{lemma} \label{le:graphsetup}
Suppose that $\G$ is disconnected but $\calg$-connected. Furthermore, suppose that $\calg$ is saturated with respect to $(\calg,\calg)$. 
 \begin{enumerate}
  \item If $A_\Delta \in \calg$ and $\Lambda \subset \Delta$ then $A_\Lambda \in \calg$.
  \item Every vertex in $\G$ is contained in some element of $\calg$.
  \item For all vertices $v \in \G$, we have $A_{\{v\}} \in \calg$.
  \item If $\Delta$ is a $\calg$-connected union of connected components of $\G$ then $A_\Delta \in \calg$.
  \item There exists a connected component $\Delta$ of $\G$ such that the complement $\Lambda= \Gamma - \Delta$ is $\calg$-connected.
  \item If $\Delta$ and $\Lambda$ are as in point (5), then both $A_\Delta$ and $A_\Lambda$ are in $\calg$.
  \end{enumerate}
\end{lemma}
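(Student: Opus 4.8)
The plan is to establish the six parts in the order given, since each relies on its predecessors. The engine throughout is the principle that a \emph{proper} special subgroup invariant under $\out^0(A_\G;\calg^t)$ must already belong to $\calg$, because $\calg$ is saturated with respect to $(\calg,\calg)$. To certify invariance I will repeatedly invoke Proposition~\ref{pr:chariss}, whose two conditions (upward closure under $\leq_\calg$ and the absence of $\calg$--star-separation by outside vertices) depend only on $\calg$. Note that Proposition~\ref{pr:chariss} requires $P(\calg)\subset\calg$, which is exactly part~(1), so part~(1) must be proved first and then used freely.

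For part~(1), if $\Lambda\subset\Delta$ with $A_\Delta\in\calg$, then $A_\Lambda\in P(\calg)$, so Lemma~\ref{le:calhsubsets} (with $\calh=\calg$) shows $A_\Lambda$ is preserved by $\out^0(A_\G;\calg^t)$; being proper, it lies in $\calg$ by saturation. For part~(2), fix a vertex $v$ and let $C$ be its connected component in $\G$. If $C$ has at least two vertices, I will check the hypotheses of Proposition~\ref{pr:chariss} for $A_C$: upward closure holds because any $u\in C$ has a neighbour $n\in C$, so $u\leq_\calg w$ forces $n\in\lk_\G(u)\subset\st_\G(w)$, whence $w\in\st_\G(n)\subset C$; and no outside $x$ can $\calg$--star-separate $C$, since $C$ is a whole component disjoint from $\st(x)$ and hence lies in a single component of $\G-\st(x)$. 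Thus $A_C$ is invariant and proper (as $\G$ is disconnected, $C\neq\G$), so $A_C\in\calg$ and $v$ is covered. If instead $v$ is isolated, it has no edges, so its only possible $\calg$--adjacencies come from shared members of $\calg$; since $\G$ is $\calg$-connected with other vertices present, $v$ must lie in some element of $\calg$. Part~(3) is then immediate: by~(2) some $A_\Delta\in\calg$ contains $v$, and $\{v\}\subset\Delta$ gives $A_{\{v\}}\in\calg$ by~(1).

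For part~(4), I will use the clean consequence of~(3) that the preorder $\leq_\calg$ is now \emph{trivial}: since $A_{\{v\}}\in\calg$ contains $v$, the defining condition of $v\leq_\calg w$ forces $w\in\{v\}$, i.e.\ $w=v$, so upward closure holds automatically for any subgraph. For the star-separation condition, let $\Delta\subsetneq\G$ be a $\calg$-connected union of connected components (the case $\Delta=\G$ is vacuous as $A_\G\notin\calg$) and let $x\notin\Delta$; then $\st(x)\cap\Delta=\varnothing$, so $\Delta\subset\G-\st(x)$, and I claim $\Delta$ is $\calg^x$-connected. Indeed, any $\calg$-adjacency inside $\Delta$ witnessed by some $A_\Theta\in\calg$ with $x\in\Theta$ can be re-witnessed by $A_{\Theta\cap\Delta}$, which lies in $\calg$ by~(1) and in $\calg^x$ because $x\notin\Delta$. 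Hence $\Delta$ meets exactly one $\calg^x$-component, $A_\Delta$ is invariant and proper, and $A_\Delta\in\calg$.

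Finally, for part~(5) I will introduce the auxiliary \emph{bridge graph} $B$ whose vertices are the connected components of $\G$, with $C_i$ joined to $C_j$ whenever some member of $\calg$ meets both. A short check shows $\G$ is $\calg$-connected if and only if $B$ is connected, so $B$ is a connected graph on at least two vertices and therefore has a non-cut vertex; taking $\Delta$ to be the corresponding component makes $\Lambda=\G-\Delta$ $\calg$-connected. Part~(6) follows at once: $\Lambda$ is a proper $\calg$-connected union of components, so $A_\Lambda\in\calg$ by~(4), and the single component $\Delta$ is likewise a proper $\calg$-connected union of components, so $A_\Delta\in\calg$ by~(4). I expect the main obstacle to be part~(2), and specifically the verification that a whole connected component $A_C$ is invariant, since this is where the interplay between the standard order and the component structure (the neighbour argument for upward closure) must be handled carefully; the non-cut-vertex step in~(5) is the other point requiring genuine care.
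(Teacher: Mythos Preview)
Your proof is correct and follows essentially the same route as the paper's: both use saturation together with Lemma~\ref{le:calhsubsets} for~(1), treat the isolated/non-isolated cases separately in~(2), deduce triviality of $\leq_\calg$ from~(3) to handle~(4), and build the component graph to find a non-cut vertex for~(5). The only cosmetic differences are that in~(2) you verify invariance of a connected component directly via Proposition~\ref{pr:chariss} rather than citing its invariance under all of $\out^0(A_\G)$, and in~(4) you re-witness a $\calg$-adjacency using $A_{\Theta\cap\Delta}$ where the paper uses $A_{\Theta-\{v\}}$; both work equally well.
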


\begin{proof}
Item (1) follows from the fact that $\out^0(A_\G;\calg^t)$ acts trivially on every invariant subgroup $A_\Delta$, so that every special subgroup $A_\Lambda \subset A_\Delta$ must also be invariant.

For item (2), if $v$ is an isolated vertex in $\G$, then $v$ is contained in some element of $\calg$ as $\G$ is $\calg$-connected.
Otherwise, $v$ is contained in some connected component $\Delta$ with at least two vertices.
As $A_\Delta$ is invariant under the whole of $\out^0(A_\G)$, the group $A_\Delta \in \calg$, also.
Part (3) follows immediately from (1) and (2).

Part (3) implies that the $\calg$--ordering on the vertices is trivial (so $\out^0(A_\G;\calg^t)$ contains no transvections), so to show that a subgroup $A_\Delta$ is preserved, it is enough to show that $A_\Delta$ is not $\calg$--star-separated by an outside vertex.
For (4), suppose that $\Delta$ is a $\calg$-connected union of connected components of $\G$, and let $v\in \G-\Delta$.
If $A_\Lambda\in\calg$, then $A_{\Lambda-\{v\}}$ is in $\calg$ by (1), so every $\calg$-path in $\G-\st(v)$ is also a $\calg^v$-path.
So $\Delta$ is $\calg^v$-connected, since we assumed it was $\calg$-connected.
So it intersects only one $\calg^v$-component, and therefore $A_\Delta$ is invariant.
This completes (4).

For part (5), let $X$ be the graph of connected components of $\G$, with an edge between components $C_1$ and $C_2$ if there exists $A_\Delta \in \calg$ intersecting both $C_1$ and $C_2$. 
The graph $X$ is connected as $\G$ is $\calg$-connected, and there exists a connected component $\Delta$ that is not a cut vertex of $X$ (every finite connected graph that is not a point contains at least two vertices that are not cut vertices). 
Then $\Lambda=\Gamma-\Delta$ is $\calg$-connected. The final point (6) follows from (4).
\end{proof}

\begin{proof}[Proof of Proposition~\ref{co:disconnected_G_disconnected_type_F}]
By Lemma~\ref{le:graphsetup}, for every vertex $v$, the cyclic subgroup $\genby{v}$ is an element of $\calg$. 
As each $\genby{v}$ is invariant under $\out^0(A_\G;\calg^t)$, this group contains no transvections, and as the restriction to each $\genby{v}$ is trivial, $\out^0(A_\G;\calg^t)$ contains no inversions. 
Therefore $\out^0(A_\G;\calg^t)$ is generated by extended partial conjugations. 
As in Part (5) of Lemma~\ref{le:graphsetup}, we pick a connected component $\Delta$ such that the complement $\Lambda$ is $\calg$-connected. 
If both $\Delta$ and $\Lambda$ are single vertices, then $A_\G \cong \mathbb{F}_2$ and all partial conjugations are inner automorphisms, so we may assume our graph has at least three vertices. 
By possibly exchanging the roles of $\Delta$ and $\Lambda$ we may assume that either both $\Lambda$ and $\Delta$ have more than two vertices, or that $\Delta$ consists of a single isolated vertex.

\textbf{Case 1: both $\Delta$ and $\Lambda$ contain at least two vertices.}

As $\G$ is $\calg$-connected, there exist vertices $v \in \Delta$ and $w \in \Lambda$ that are contained in a common element of $\calg$. 
By Part (1) of Lemma~\ref{le:graphsetup}, this implies that the special subgroup generated by $\{v,w\}$ is also in $\calg$. 
Let $\Theta_1=\Delta \cup \{w\}$ and let $\Theta_2 = \Lambda \cup \{v\}$. 
Then $A_{\Theta_1}$ is generated by two invariant subgroups $A_\Delta$ and $A_{\{v,w\}}$ which have nonempty intersection.
As $\Delta$ is a union of connected components, $A_{\Theta_1}$ is invariant under $\out^0(A_\G;\calg^t)$ by part (3) of Lemma~\ref{le:fullrelsetprops}. 
The same reasoning shows that $A_{\Theta_2} \in \calg$, also. 
Let $[\phi] \in \out^0(A_\G;\calg^t)$ and choose $\phi$ to be a representative of $[\phi]$ restricting to the identity on $A_{\Theta_1}$. 
Then $\phi$ acts by conjugation by an element $g \in A_\G$ on $A_{\Theta_2}$. 
As $v$ and $w$ are in the intersection of $\Theta_1$ and $\Theta_2$, it follows that 
\begin{align*} \phi(v)&=v=gvg^{-1} \\ \phi(w)&=w=gwg^{-1}, \end{align*} 
so $g$ centralizes both $v$ and $w$. 
However, as $v$ and $w$ are in distinct connected components of $\G$, their common centralizer is trivial and $g$ is the identity element. 
Hence $\phi$ is trivial (as $\G = \Theta_1 \cup \Theta_2$), and as $[\phi]$ was arbitrary, $\out^0(A_\G;\calg^t)$ is trivial also.

\textbf{Case 2: $\Delta$ is a single vertex $v$.}

Suppose $\Delta = \{ v\}$ and let $x$ be any vertex of $\G$ that $\calg$--star-separates $\G$ (in other words, $x$ is an acting letter of a nontrivial extended partial conjugation in $\out^0(A_\G;\calg^t)$). 
Note that as any extended partial conjugation is inner when restricted to both $\Delta$ and $\Lambda$, it follows that $\G - \st(x)$ has exactly two $\calg^{x}$-components, one consisting of the vertex $v$, and one consisting of $\Lambda - \st(x)$. 
Then any vertex that is $\calg$-adjacent to $v$ must be contained in $\st(x)$ (such vertices exist as $\G$ is $\calg$-connected). 
Let $\Theta$ be the subgraph of $\G$ spanned by such vertices $x$. 
Then $\Theta \subset \Lambda$ as $v$ does not $\calg$--star-separate. 

Firstly, we note that it is not possible that $\Theta = \Lambda$. 
Otherwise, pick $u \in \Lambda$ that is $\calg$-adjacent to $v$. Then by our above observation, $u \in \st(x)$ for each $x \in \Theta$, so as $\Theta=\Lambda$ the vertex $u$ is central in $A_\Lambda$. In this case $\G- \st(u)$ is the single vertex $v$, and $u$ does not star-separate, which is a contradiction.

If $\Theta = \emptyset$ then $\out^0(A_\G;\calg^t)$ is trivial. 

If $\Theta \neq \Lambda$, then $\Theta \cup \{v\}$ is a proper subgraph of $\G$ which is $\out^0(A_\G;\calg^t)$ invariant as $\out^0(A_\G;\calg^t)$  contains no transvections and all vertices which are acting letters in nontrivial extended partial conjugations are contained in $\Theta$. 
Then for each $x \in \Theta$, the partial conjugation $\pi_v^x \in \out^0(A_\G;\calg^t)$, and $\pi_v^x$ is inner restricted to the subgroup generated by $\Theta \cup \{v\}$. 
Hence $x$ commutes with every other vertex in $\Theta$. 
As $x$ was arbitrary, it follows that $\Theta$ is a clique and $\out^0(A_\G;\calg^t)$ is a free abelian group generated by the partial conjugations of the form $\pi^x_v$.   
\end{proof}


\subsubsection{$\G$ is connected and $Z(A_\G)$ is trivial}\label{ss:case3}

In this case we attain the same description as in the previous section using the work of Charney and Vogtmann \cite{CVSubQuot}.

\begin{proposition}  \label{co:connected_trivial_centre_type_f}
Suppose $\calg$ is saturated with respect to $(\calg,\calg)$.
Suppose that $\G$ is connected, and that the center of $A_\G$ is trivial. 
Then $\out^0(A_\G;\calg^t)$ is a finite rank free abelian group.
\end{proposition}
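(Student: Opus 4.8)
The plan is to show that $\out^0(A_\G;\calg^t)$ contains neither inversions nor transvections, so that by Theorem~\ref{th:generators} it is generated by the extended partial conjugations it contains, and then to show that these commute and generate a torsion-free group.

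The key preliminary observation is that, since $\G$ is connected with trivial center, no vertex is a global minimum for the standard order. Indeed, if some $v$ satisfied $v\leq w$ for every vertex $w$, then any $u\in\lk(v)$ would lie in $\st(w)$ for all $w$ and hence be adjacent to every other vertex, making $u$ central; as $Z(A_\G)=1$ and $\G$ has at least two vertices (a single vertex would give $A_\G\cong\Z$, whose center is nontrivial), and since $\lk(v)\neq\varnothing$ in a connected graph, this is impossible. Consequently $A_{\geq v}=\genby{\setcomp{w}{v\leq w}}$ is a \emph{proper} special subgroup for every $v$. By Proposition~\ref{pr:out0classification} each $A_{\geq v}$ is preserved by $\out^0(A_\G)$, hence invariant under $\out^0(A_\G;\calg^t)$, and as all restriction maps are trivial the latter group acts trivially on each $A_{\geq v}$. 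But a transvection $\rho^w_v$ or an inversion $\iota_v$ in the group would restrict to $A_{\geq v}$ (which contains both $v$ and $w$) as a transvection, respectively an inversion, acting nontrivially on $H_1(A_{\geq v};\Z)$ and so not as an inner automorphism. This contradiction shows there are no inversions and no transvections, and Theorem~\ref{th:generators} then gives that $\out^0(A_\G;\calg^t)$ is generated by extended partial conjugations.

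Torsion-freeness is then immediate: every extended partial conjugation acts trivially on $H_1(A_\G;\Z)$, so $\out^0(A_\G;\calg^t)$ is contained in $\ion$, which is torsion-free. Since the group is finitely generated (Theorem~\ref{th:generators}), it only remains to prove that it is abelian, as a finitely generated torsion-free abelian group is free abelian of finite rank.

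Commutativity is the main obstacle, and it is exactly where the hypotheses of connectedness and trivial center are needed: without them the extended partial conjugations generate the non-abelian outer McCool group. My plan is to invoke the analysis of Charney--Vogtmann~\cite{CVSubQuot} to reduce to the situation of Proposition~\ref{co:disconnected_G_disconnected_type_F}. Concretely, for each acting letter $x$ (a vertex with $\G-\st(x)$ having more than one $\calg^x$-component) I would study the partial conjugations supported at $x$ against the invariant special subgroups characterized by Proposition~\ref{pr:chariss}, using that the trivial-action requirement forces each moved $\calg^x$-component to be conjugated independently and forces distinct acting letters to commute, precisely as in Case~2 of Proposition~\ref{co:disconnected_G_disconnected_type_F}. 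Establishing that the generators pairwise commute, combined with the finite generation and torsion-freeness above, shows that $\out^0(A_\G;\calg^t)$ is free abelian of finite rank.
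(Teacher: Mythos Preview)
Your first five steps are correct and establish that $\out^0(A_\G;\calg^t)$ is torsion-free and generated by extended partial conjugations. The argument that no $A_{\geq v}$ is all of $A_\G$ (so that each lies in $\calg$ by saturation and is acted on trivially) is a nice way to exclude inversions and transvections.

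The gap is in your treatment of commutativity. What you write there is a plan, not a proof: you announce an intention to ``study the partial conjugations supported at $x$ against the invariant special subgroups'' and to imitate Case~2 of Proposition~\ref{co:disconnected_G_disconnected_type_F}, but you never carry this out. The reduction you allude to is not straightforward---Proposition~\ref{co:disconnected_G_disconnected_type_F} concerns a \emph{disconnected} $\G$, and its Case~2 argument relies on the very specific shape ($\Delta$ a single vertex, a distinguished complement $\Lambda$) that does not obviously arise here. Showing that two partial conjugations $\pi^x_K$ and $\pi^y_L$ in the group commute requires a concrete argument about the interaction of $\st(x)$, $\st(y)$, and the relevant $\calg$-components, and you have not supplied one.

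The paper's proof sidesteps this difficulty entirely. Rather than proving commutativity by hand, it quotes Theorem~4.1 of Charney--Vogtmann~\cite{CVSubQuot}: for connected $\G$, the kernel of the amalgamated restriction map
\[R\colon \out^0(A_\G) \to \prod_{[v]\in\G_0} \out(A_{\st[v]})\]
(over maximal equivalence classes $[v]$) is already known to be finitely generated free abelian. Since each $A_{\st[v]}$ is $\out^0(A_\G)$-invariant, saturation puts each proper one into $\calg$, so $\out^0(A_\G;\calg^t)$ acts trivially on it and hence lies inside $\ker R$. A subgroup of a finitely generated free abelian group is finitely generated free abelian, and you are done. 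The remaining case, where some $\st[v]=\G$, is dispatched separately: trivial center forces $A_{[v]}$ to be non-abelian, so $\G$ is a join $[v]*\Delta$ with both factors proper and invariant, and triviality of both restriction maps forces $\out^0(A_\G;\calg^t)$ to be trivial. This route makes your Steps~1--5 unnecessary and avoids the commutativity issue altogether.
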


\begin{proof}
Let $\G_0$ be the set of maximal equivalence classes of vertices. 
In \cite{CVSubQuot}, Charney and Vogtmann show that for each $[v] \in \G_0$, the subgroup $A_{\st[v]}$ is invariant under $\out^0(A_\G)$. 
They combine the restriction maps $R_{\st[v]}$ to give an  \emph{amalgamated restriction map} \[ R \colon \out^0(A_\G) \to \prod_{[v] \in \G_0} \out(A_{\st[v]}), \] and Theorem~4.1 of \cite{CVSubQuot} shows that when $\G$ is connected, $\ker R$ is a finitely generated free abelian group. If there is no maximal equivalence class $[v]$ with $\G=\st[v]$, then each restriction map has image in the automorphism group of a proper special subgroup. In this case, as each restriction map from $\out^0(A_\G;\calg^t)$ has trivial image, the group $\out^0(A_\G;\calg^t)$ is a subgroup of $\ker R$, so is also finitely generated and free abelian. Otherwise there exists $v$ such that $[v]$ is maximal and $\st[v]=\G$. If $[v]$ is abelian, then $A_{[v]}$ is central in $A_\G$, contradicting our hypothesis, therefore we may assume $A_{[v]}$ is non-abelian. In this case $\G$ is the join of $[v]$ and $\Delta=\G-[v]$, both $A_{[v]}$ and $A_\Delta$ are invariant under $\out^0(A_\G)$, and \[\out^0(A_\G) \cong \out^0(A_\Delta) \oplus \out^0(A_{[v]}), \] where this isomorphism is given by the product of the restriction maps $R_\Delta$ and $R_{[v]}$ on $\out^0(A_\G)$. By our hypothesis, both maps are trivial on $\out^0(A_\G;\calg^t)$, so this group is trivial.
\end{proof}

\subsubsection{$\G$ is connected and $Z(A_\G)$ is nontrivial}

When $A_\G$ has a nontrivial center the graph $\G$ can be decomposed as a join $\G=Z \ast \Delta$, where $Z$ consists of all vertices $v$ such that $\st(v)=\G$. We have the following description of $\out(A_\G)$:

\begin{proposition}[\cite{CVSubQuot}, Proposition~4.4]\label{pr:nontrivial_centre_decomposition}
If $Z(A_\G)=A_{[v]}$ is nontrivial then \[ \out(A_\G) \cong T \rtimes ((\out(A_Z)) \times \out(A_\Delta)) \] where $T$ is the free abelian group generated by transvections $\rho_w^{v'}$, where $v' \in [v]$ and $w \in \Delta$. 
The map to $\out(A_Z)$ is given by the restriction map $R_Z$ and the map to $\out(A_\Delta)$ is given by the projection map $P_\Delta$. 
The subgroup $T$ is the kernel of the product map $R_Z \times P_\Delta$.
\end{proposition}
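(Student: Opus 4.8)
The plan is to realize $\out(A_\G)$ as the middle term of a split short exact sequence with kernel $T$ and quotient $\out(A_Z)\times\out(A_\Delta)$, exploiting that the join decomposition $\G=Z\ast\Delta$ makes $A_\G$ an internal direct product. First I would record the structural facts. By Proposition~\ref{pr:ZCN} the center $Z(A_\G)=A_{Z(\G)}$ is generated by the vertices adjacent to all others, that is, the vertices $v$ with $\st(v)=\G$; these are exactly the vertices of $Z$, and any two of them are mutually adjacent, so they form a single equivalence class $[v]$, with $A_Z\cong\Z^{|Z|}$ abelian and $\out(A_Z)\cong\gl(|Z|,\Z)$. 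Since $\G=Z\ast\Delta$ is a join we get $A_\G=A_Z\times A_\Delta$ with $A_Z=Z(A_\G)$. Because the center is characteristic and inner automorphisms act trivially on it, restriction descends to a homomorphism $R_Z\colon\out(A_\G)\to\out(A_Z)=\aut(A_Z)$; and because $A_Z$ is normal with $A_\G/\ngenby{A_Z}=A_\G/A_Z\cong A_\Delta$, the quotient map induces $P_\Delta\colon\out(A_\G)\to\out(A_\Delta)$. I then package these as a single map $R_Z\times P_\Delta$.

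Next I would construct a splitting of $R_Z\times P_\Delta$ directly from the direct product structure. For $\sigma\in\aut(A_Z)$ set $\tilde\sigma=\sigma\times\mathrm{id}_{A_\Delta}$, and for $\tau\in\aut(A_\Delta)$ set $\tilde\tau=\mathrm{id}_{A_Z}\times\tau$; these extensions act on complementary factors, so they commute and assemble into a homomorphism $s\colon\out(A_Z)\times\out(A_\Delta)\to\out(A_\G)$, where the descent to $\out$ is legitimate because an inner automorphism of $A_\Delta$ lifts to conjugation by an element of $A_\Delta\le A_\G$. Reading off the action on each factor gives $(R_Z\times P_\Delta)\circ s=\mathrm{id}$, so $s$ is a section and in particular $R_Z\times P_\Delta$ is surjective.

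It then remains to identify the kernel with $T$, which I expect to be the only place requiring care. Given $\Phi$ in the kernel, every representative fixes $A_Z$ pointwise (since $R_Z(\Phi)=\mathrm{id}$ and inner automorphisms are trivial on the center); and since $P_\Delta(\Phi)=\mathrm{id}$, after composing with a suitable inner automorphism of $A_\G$ coming from a lift in $A_\Delta$ — which does not disturb the center — I may choose a representative $\phi$ inducing the identity on $A_\G/A_Z\cong A_\Delta$. Then for each $w\in\Delta$ we have $\phi(w)=z_w w$ with $z_w\in A_Z$ central, while $\phi$ fixes each vertex of $Z$; writing $z_w=\prod_{v'\in Z}(v')^{a_{w,v'}}$ exhibits $\phi=\prod_{w,v'}(\rho_w^{v'})^{a_{w,v'}}$ as a product of the transvections $\rho_w^{v'}$ with $v'\in Z=[v]$ and $w\in\Delta$, each well defined because $w\le v'$ (as $\st(v')=\G$) and each lying in the kernel. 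The exponent vector $(a_{w,v'})$ determines $\phi$, so these transvections freely generate a free abelian group and $\ker(R_Z\times P_\Delta)=T\cong\Z^{|Z|\,|\Delta|}$. Combining this kernel computation with the section $s$ yields the desired split exact sequence, hence $\out(A_\G)\cong T\rtimes(\out(A_Z)\times\out(A_\Delta))$, with the quotient acting on $T$ by conjugation through $s$. The main obstacle is the clean normalization of representatives in the kernel step; everything else is forced by the product structure $A_\G=A_Z\times A_\Delta$.
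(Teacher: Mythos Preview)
The paper does not give its own proof of this proposition: it is quoted directly from Charney--Vogtmann~\cite{CVSubQuot} (their Proposition~4.4) and used as a black box in the proof of Proposition~\ref{pr:es1}. So there is nothing in the paper to compare against, and your proposal is being judged on its own merits.

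Your argument is the natural one and is essentially correct. The section $s$ is well defined and splits $R_Z\times P_\Delta$, and your normalization of a representative in the kernel is exactly right: since $A_Z$ is central, any representative already fixes $A_Z$ pointwise, and conjugating by an element of $A_\Delta$ corrects the $A_\Delta$-factor without disturbing $A_Z$. One small point deserves a sentence more. You write that ``the exponent vector $(a_{w,v'})$ determines $\phi$'', which shows the transvections are independent in $\aut(A_\G)$; but you need independence in $\out(A_\G)$, i.e.\ that a nontrivial product of these transvections is never inner. This follows because $Z(A_\Delta)$ is trivial: if such a product were conjugation by $g\in A_\G$, then its image in $\out(A_\Delta)$ is trivial, so the image of $g$ in $A_\Delta$ lies in $Z(A_\Delta)=\{1\}$, hence $g\in A_Z$ is central and the product is the identity automorphism. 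With that observation your identification $\ker(R_Z\times P_\Delta)=T\cong\Z^{|Z|\cdot|\Delta|}$ is complete.
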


Note that we need to use the projection homomorphism $P_\Delta$ to use a reduction argument. 
The kernels of projection homomorphisms seem a lot harder to deal with than restriction homomorphisms, although in this case the description is very nice.  
If $\calg$ is a set of special subgroups and $\calg_\Delta$ is the induced peripheral structure on $A_\Delta$, then:

\begin{proposition} \label{pr:es1}
Suppose $\calg$ is saturated with respect to $(\calg,\calg)$. Suppose that $\G$ is connected, and $Z(A_\G)$ is a proper, nontrivial subgroup of $A_\G$. If $\Delta=\G-Z$, where $Z=Z(\G)$, then there is a surjective projection homomorphism
\[P_\Delta \colon \out^0(A_\G;\calg^t) \to \out^0(A_\Delta; \calg_\Delta^t),\] with kernel a finitely generated free-abelian group.
\end{proposition}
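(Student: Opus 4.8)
The plan is to exploit the join decomposition $\G = Z \ast \Delta$ together with Proposition~\ref{pr:nontrivial_centre_decomposition}. First I would record the structural consequences of the join. Since every vertex of $Z$ is adjacent to every other vertex, $A_Z = Z(A_\G)$ is characteristic, so $\ngenby{A_Z}=A_Z$ is preserved by all of $\out(A_\G)$ and the quotient $A_\G\to A_\G/A_Z\cong A_\Delta$ induces $P_\Delta$, which we restrict to $\out^0(A_\G;\calg^t)$. For $v\in\Delta$ one has $\st_\G(v)=\st_\Delta(v)\cup Z$ and $\lk_\G(v)=\lk_\Delta(v)\cup Z$, so $\G-\st_\G(v)=\Delta-\st_\Delta(v)$, and consequently $v\leq_\Delta w\iff v\leq_\G w$ for $v,w\in\Delta$. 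I would also note that $|\Delta|\geq 2$ (a single vertex of $\Delta$ would be adjacent to all of $Z=\G-\{v\}$, forcing it into $Z$), and that saturation of $\calg$ forces $\calg$ to be downward closed: subgroups of an element of $\calg$ still act trivially, hence are invariant, hence lie in $\calg$. The same argument then makes $\calg_\Delta$ downward closed, so Proposition~\ref{pr:gensrelgh} applies to both $(\calg,\calg)$ and $(\calg_\Delta,\calg_\Delta)$.

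For the containment $\im P_\Delta\subseteq\out^0(A_\Delta;\calg_\Delta^t)$, I would use that projection sends each extended Laurence generator to one of the same type or to the identity, so the image lies in $\out^0(A_\Delta)$; and since any $\Phi\in\out^0(A_\G;\calg^t)$ conjugates each $A_\Lambda\in\calg$, its image conjugates each $A_{\Lambda\cap\Delta}\in\calg_\Delta$, i.e.\ acts trivially on $\calg_\Delta$.

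For surjectivity I would lift each generator of $\out^0(A_\Delta;\calg_\Delta^t)$ (Theorem~\ref{th:generators}) by extending it to $A_\G$ fixing $Z$ pointwise; this is a homomorphism because $Z$ is central, and it sends $\iota_v,\rho^w_v,\pi^v_K$ to the generators of the same name, all of which are defined on $A_\G$ by the join identities above. Membership in $\out^0(A_\G;\calg^t)$ is then checked through Proposition~\ref{pr:gensrelgh}, and the translation of conditions is where the join identities do the work. If $\iota_v\in\out^0(A_\Delta;\calg_\Delta^t)$ then $v$ lies in no element of $\calg_\Delta$, and downward closure together with $|\Delta|\geq 2$ forces $v$ into no element of $\calg$. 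If $v\leqgd w$, then for each $A_\Theta\in\calg$ containing $v$ one has either $A_{\Theta\cap\Delta}\in\calg_\Delta$ (which then contains $w$) or $\Delta\subseteq\Theta$ (so $w\in\Theta$ trivially), giving $v\leq_\calg w$. Finally, for a $\calg^v_\Delta$-component $K$ of $\Delta-\st_\Delta(v)$ and any $A_\Theta\in\calg^v$, the computations $\Theta-\st_\G(v)=(\Theta\cap\Delta)-\st_\Delta(v)$ and $\Theta\cap K=(\Theta\cap\Delta)\cap K$ show, via Lemma~\ref{le:calgcomponents}, that $K$ is a $\calg^v$-component of $\G-\st_\G(v)$. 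Hence every generator lifts and $P_\Delta$ is onto $\out^0(A_\Delta;\calg_\Delta^t)$.

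For the kernel I would invoke Proposition~\ref{pr:nontrivial_centre_decomposition}: inside $\out(A_\G)$ we have $\ker P_\Delta = T\rtimes\out(A_Z)$, where the $\out(A_Z)$-coordinate is $R_Z$ and $T$ is the free abelian group of transvections $\rho^{v'}_w$ with $v'\in Z$, $w\in\Delta$. Since $A_Z=Z(A_\G)$ is an invariant special subgroup and we are in the case where all restriction maps are trivial, every element of $\out^0(A_\G;\calg^t)$ has trivial $R_Z$-coordinate; thus $\ker\bigl(P_\Delta|_{\out^0(A_\G;\calg^t)}\bigr)=\out^0(A_\G;\calg^t)\cap T$ is a subgroup of the finitely generated free abelian group $T$, hence itself finitely generated free abelian. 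The hard part will be the surjectivity step: one must check that the three relative conditions defining the generators transfer correctly between $A_\Delta$ and $A_\G$, and this hinges on the join identities together with downward closure of $\calg$, with the inversion case in particular needing $|\Delta|\geq 2$ to rule out a spurious failure.
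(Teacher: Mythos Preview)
Your proof is correct and follows essentially the same route as the paper: both use Proposition~\ref{pr:nontrivial_centre_decomposition} to identify the kernel as a subgroup of the free-abelian group $T$ (via triviality of $R_Z$), and both prove surjectivity by extending automorphisms of $A_\Delta$ to $A_\G$ by fixing $Z$ pointwise. The only difference is one of detail: the paper lifts an arbitrary $\phi\in\out^0(A_\Delta;\calg_\Delta^t)$ to $\tilde\phi$ in one step and asserts (citing the argument of Proposition~\ref{pr:generating1}) that $\tilde\phi\in\out^0(A_\G;\calg^t)$, whereas you lift each Laurence generator separately and verify membership explicitly through Proposition~\ref{pr:gensrelgh}, using downward closure of $\calg$ and the join identities. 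Your approach is more explicit where the paper's is a sketch, but the underlying idea is the same.
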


\begin{proof}
Apply Proposition~\ref{pr:nontrivial_centre_decomposition}. The restriction map $R_Z$ is trivial, so $\ker P_\Delta$ is a subgroup of $T$ and hence is a finitely generated free-abelian group (one sees it is freely generated by transvections $\rho_w^{v'}$, where $v' \in [v]$ and $w \in \Delta$ and $w\leq_\calg v'$). It remains to determine the image of $P_\Delta$. To see this, note that if $\Phi$ acts trivially on $A_\Theta$ then $P_\Delta(\Phi)$ acts trivially on $A_{\Theta-Z}\cong A_\Theta/A_{\Theta \cap Z}$. Hence the image of $P_\Delta$ is contained in $\out^0(A_\Delta; \calg_\Delta^t)$. To see that $P_\Delta$ surjects onto $\out^0(A_\Delta; \calg_\Delta^t)$, one can use an extension argument similar to the proof of Proposition~\ref{pr:generating1}; if $\phi$ is a representative of an element of $\out^0(A_\Delta; \calg_\Delta^t)$, one can extend $\phi$ to an automorphism $\tilde \phi$ of $\out^0(A_\G;\calg^t)$ by asking that $\tilde \phi(v) =v$ for all $v \in Z$ and deduce that $\tilde \phi$ represents an element of $\out^0(A_\G;\calg^t)$ mapping onto $[\phi]$ under $P_\Delta$ (this construction actually gives a splitting $\out^0(A_\Delta; \calg_\Delta^t) \to \out^0(A_\Gamma;\calg^t)$ following the proof of Proposition~\ref{pr:nontrivial_centre_decomposition}).
\end{proof}

As the action on the abelianization of $A_\Delta$ under $P_\Delta$ is obtained by deleting appropriate rows and columns from the matrix determining the action on the abelianization of $A_\G$, we have a corresponding result in the case of level $l$ congruence subgroups:

\begin{proposition} \label{pr:connected_nontrivial_centre_exact_sequence}
Suppose $\calg$ is saturated with respect to $(\calg,\calg)$. Suppose that $\G$ is connected, and $Z(A_\G)$ is a proper, nontrivial subgroup of $A_\G$. If $\Delta=\G-Z$, where $Z=Z(\G)$, then there is a surjective projection homomorphism
\[P_\Delta \colon \out^{[l]}(A_\G;\calg^t) \to \out^{[l]}(A_\Delta; \calg_\Delta^t),\] with kernel a finitely generated free-abelian group.
\end{proposition}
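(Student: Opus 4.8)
The plan is to deduce this exactly as Theorem~\ref{th:pcongrestriction} was deduced from Theorem~\ref{th:restriction}: take the projection homomorphism from the $\out^0$ version (Proposition~\ref{pr:es1}), restrict it to the congruence subgroups, and verify the three standard points, namely that the map lands in the correct target, that its kernel has the right form, and that it is surjective. Since $\out^{[l]}(A_\G)\subset\out^0(A_\G)$ by Proposition~\ref{pr:iap}, the projection $P_\Delta$ of Proposition~\ref{pr:es1} restricts to $\out^{[l]}(A_\G;\calg^t)$, so $P_\Delta$ is defined; the content is in identifying its image and kernel.

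First I would pin down the target. As recorded just before the statement, the action of $P_\Delta(\Phi)$ on $H_1(A_\Delta;\Z/l\Z)$ is obtained from the matrix of $\Phi$ on $H_1(A_\G;\Z/l\Z)$ by deleting the rows and columns indexed by the vertices of $Z$. Hence if $\Phi$ acts trivially modulo $l$, so does $P_\Delta(\Phi)$, and the image lies in $\out^{[l]}(A_\Delta;\calg_\Delta^t)$. For the kernel, I would intersect $\ker P_\Delta$ with $\out^{[l]}(A_\G;\calg^t)$: by Proposition~\ref{pr:es1} the kernel of $P_\Delta$ on $\out^0(A_\G;\calg^t)$ is a finitely generated free abelian group (a subgroup of the transvection group $T$ of Proposition~\ref{pr:nontrivial_centre_decomposition}), and any subgroup of a finitely generated free abelian group is again finitely generated free abelian. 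So $\ker(P_\Delta|_{\out^{[l]}})$ automatically has the required form.

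The one point requiring care, and the main (if mild) obstacle, is surjectivity. Here I would reuse the splitting $s\colon\out^0(A_\Delta;\calg_\Delta^t)\to\out^0(A_\G;\calg^t)$ constructed in the proof of Proposition~\ref{pr:es1}, which extends a representative $\phi$ of a class in $\out^0(A_\Delta;\calg_\Delta^t)$ to $\tilde\phi$ by setting $\tilde\phi(v)=v$ for all $v\in Z$. Because $\G=Z\ast\Delta$ is a join, we have $A_\G\cong A_Z\times A_\Delta$ and $\tilde\phi=\mathrm{id}_{A_Z}\times\phi$, so the action of $\tilde\phi$ on $H_1(A_\G;\Z/l\Z)$ is block diagonal, with an identity block on the $Z$-coordinates and the matrix of $\phi$ on the $\Delta$-coordinates. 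Consequently, if $\phi$ acts trivially modulo $l$ then so does $\tilde\phi$, and $s$ restricts to a map $\out^{[l]}(A_\Delta;\calg_\Delta^t)\to\out^{[l]}(A_\G;\calg^t)$ that splits $P_\Delta$.

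This last verification is the exact analogue of the surjectivity argument in Theorem~\ref{th:pcongrestriction}, where the chosen lifts acted by conjugation (hence trivially on $H_1$) on the vertices outside $\Delta$; the only difference is that here the lift fixes the central vertices $Z$ outright, which is precisely what forces the block-diagonal structure. Everything else is a formal restriction of the $\out^0$ statement, so I expect the proof to be short once this block-diagonal structure of the splitting is observed.
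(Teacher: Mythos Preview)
Your proposal is correct and follows the same approach as the paper, which gives only a one-sentence justification (that the action on $H_1(A_\Delta)$ is obtained from the action on $H_1(A_\G)$ by deleting the rows and columns indexed by $Z$) before stating the proposition without a formal proof. Your write-up simply unpacks that sentence into the three routine verifications, and your surjectivity argument via the block-diagonal splitting is exactly the elaboration the paper leaves implicit.
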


The situation where $A_\G=\mathbb{Z}^n$ and there are no restriction maps is easy to handle:

\begin{proposition}\label{pr:no_restriction_gl}
Suppose that $A_\G=\mathbb{Z}^n$ and $\calg$ is a set of special subgroups that is saturated with respect to $(\calg,\calg)$. If $\out^0(A_\G;\calg^t)$ is nontrivial, then there exists $1 \leq m\leq n$ such that $\out^0(A_\G;\calg^t)$ fits in the exact sequence \[ 1\to A \to \out^0(A_\G;\calg^t) \to \glm \to 1, \] where $A$ is a finitely generated free-abelian group.
\end{proposition}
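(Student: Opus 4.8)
The plan is to make the whole picture explicit, since everything linearizes. Because $A_\G=\Z^n$ the graph $\G$ is complete, all inner automorphisms are trivial, and $\G-\st(x)=\varnothing$ for every vertex $x$, so there are no nontrivial extended partial conjugations; using inversions to get the diagonal sign matrices and transvections to get the elementary matrices, one checks that $\out^0(A_\G)=\out(A_\G)=\gln$. Under this identification, the condition that an automorphism act trivially on a special subgroup $A_\Delta$ says exactly that it fixes the basis vector $e_v$ for every $v\in\Delta$ (as $ghg^{-1}=h$ in an abelian group). Hence, setting $W=\bigcup_{A_\Delta\in\calg}\Delta$ and $m=n-\abs{W}$, the group $\out^0(A_\G;\calg^t)$ is precisely the set of matrices in $\gln$ fixing $e_v$ for all $v\in W$. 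I will call the vertices outside $W$ \emph{free}.

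First I would record the shape of $\leq_\calg$ on a complete graph. Since $v\leq_\G w$ holds for every pair, $v\leq_\calg w$ if and only if every $A_\Delta\in\calg$ containing $v$ also contains $w$; in particular the free vertices are mutually $\sim_\calg$-equivalent and are $\leq_\calg$-minimal. Because we are in the no-restriction-map case, $\out^0(A_\G;\calg^t)$ acts trivially on each invariant subgroup, so every special subgroup of an element of $\calg$ is invariant and therefore lies in $\calg$ by saturation; in particular $P(\calg)\subset\calg$, and Proposition~\ref{pr:gensrelgh} applies. Reading it off: the only inversions in the group are $\iota_v$ with $v$ free; the only transvections are $\rho^w_v$ whose \emph{moved} letter $v$ is free (the acting letter $w$ arbitrary), since $v\leq_\calg w$ forces $v$ out of every element of $\calg$; and there are no nontrivial extended partial conjugations. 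By Theorem~\ref{th:generators} these generate the whole group.

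Next, ordering the basis so that the vertices of $W$ come first, each such generator is block upper-triangular of the shape
\[ \begin{pmatrix} I_k & B \\ 0 & C \end{pmatrix},\qquad k=\abs{W}, \]
so $M\mapsto C$ defines a homomorphism $\out^0(A_\G;\calg^t)\to\glm$. Surjectivity is immediate: the transvections $\rho^w_v$ with both $v,w$ free realize all elementary matrices, hence all of $\sl(m,\Z)$, and a single free inversion supplies a determinant $-1$ element, so the free-vertex generators already exhaust $\glm$. The kernel is exactly the set of matrices with $C=I_m$, i.e.\ the $B$-block; it is generated by the transvections $\rho^w_v$ with $w\in W$ and $v$ free. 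These pairwise commute, because the associated elementary matrices $E_{wv}$ multiply to zero (a free column index $v$ is never a $W$-row index $w$), and they are $\Z$-independent, giving $A\cong\Z^{km}=\Z^{(n-m)m}$, a finitely generated free abelian group. (In fact the free-vertex copy of $\glm$ splits the sequence, but this is not needed.)

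Finally I would pin down the range of $m$: plainly $m\le n$, and if $W=V(\G)$ there are no free vertices, hence no inversions or transvections and (as $\G$ is complete) no partial conjugations in the group, so $\out^0(A_\G;\calg^t)$ is trivial. Contrapositively, nontriviality forces a free vertex and thus $1\le m$. The one place needing care is the bookkeeping of the two middle steps — confirming that Proposition~\ref{pr:gensrelgh} selects exactly the free inversions and the free-moved-letter transvections, and that these split cleanly into a part surjecting onto $\glm$ and a commuting part forming the free abelian kernel; once the generators are correctly identified the rest is routine.
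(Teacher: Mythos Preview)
Your proof is correct and takes essentially the same approach as the paper: both identify the vertices not contained in any element of $\calg$ (your ``free'' vertices, the paper's minimal $\calg$-equivalence class), observe that the group consists of block-triangular matrices with identity block on the non-free coordinates, and read off the exact sequence with free-abelian kernel of rank $m(n-m)$ and quotient $\gl(m,\Z)$. Your version is slightly more direct, passing immediately to the matrix description rather than arguing via the invariance of $A_{\geq_\calg v}$ and triviality of restriction maps.
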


We allow for the possibility that $A$ is trivial and $m=n$.

\begin{proof}
For any vertex $v$, the subgroup $A_{\geq_\calg v}$ generated by the vertices $w$ such that $v \leq_\calg w$ is preserved by $\out^0(A_\G;\calg^t)$ (in this particular situation, star-separation is not an issue as there are no partial conjugations but one can show that this statement holds more generally). Unless $v$ is dominated by all other vertices, there is a restriction map to $A_{\geq_\calg v}$ and as restriction maps are trivial, all the automorphisms in $\out^0(A_\G;\calg^t)$ fix $v$. We may therefore assume there is an equivalence class $[v]_\calg$ containing $m$ elements where every element of this equivalence class is $\calg$-dominated by all other vertices. As $A_\G=\mathbb{Z}^n$, this is simply the elements of $\G$ not contained in any element of $\calg$. The group $\out^0(A_\G;\calg^t)$ therefore has a block lower-triangular decomposition where each element is of the form $$ M=\begin{pmatrix}I &  0 \\
 
B & C 
 \end{pmatrix},$$
 where the matrix $C$ corresponds to projecting to the action on $[v]_\calg$ and we have the identity in the top left by triviality of restriction maps. The kernel of the map projecting to the bottom right entry is a finitely generated free abelian group of rank $m(n-m)$. As all the elements of $[v]_\calg$ are disjoint from elements of $\calg$, the image of this projection map is $\glm$ (it contains all inversions and transvections between elements of $[v]_\calg$).
\end{proof}

\subsection{The decomposition theorem}

We can now prove the following result, which in particular implies Theorem~\ref{th:decomposition} from the introduction:

\begin{theorem}\label{pr:dismantleoutAGamma}
Let $A_\G$ be a RAAG and $\calg$, $\calh$ any sets of special subgroups.
Then $\out^0(A_\G;\calg,\calh^t)$ has a finite subnormal series
\[1=N_0\leq N_1 \leq \dotsm\leq N_k=\out^0(A_\G;\calg,\calh^t),\]
such that for every $i$, the group $N_{i+1}/N_i$ is isomorphic to
\begin{itemize}
\item a finitely generated free abelian group,
\item $\gl(m,\Z)$ for some $m\geq 1$, or
\item a Fouxe-Rabinovitch group $\out(A_\Delta;\calk^t)$, where $\Delta$ is a subgraph of $\G$, and $\calk$ consists of special subgroups giving a free factor decomposition of $A_\Delta$.
\end{itemize}
\end{theorem}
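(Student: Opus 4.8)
The plan is to induct on the lexicographically ordered complexity $(n,d)$, where $n=\abs{V(\G)}$ is the number of vertices of $\G$ and $d$ is the number of \emph{proper} special subgroups of $A_\G$ on which $\outgh$ acts nontrivially. This second quantity depends only on the subgroup $\outgh\leq\out(A_\G)$ and not on the chosen $\calg,\calh$, and it is finite because $\G$ has finitely many full subgraphs. Since adding an invariant special subgroup to $\calg$ does not change $\outgh$, I would begin by replacing $\calg$ by its saturation with respect to $(\calg,\calh)$; this alters neither the group nor the complexity, and it lets me apply Theorem~\ref{th:restriction}.

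First I would split into two cases according to whether some proper special subgroup $A_\Delta$ is invariant under $\outgh$ and acted on nontrivially. If such an $A_\Delta$ exists, then $A_\Delta\in\calg$ by saturation, and I would feed it into the exact sequence of Theorem~\ref{th:restriction},
\[
1\to \out^0(A_\G;\calg,(\calh\cup\{A_{\Delta}\})^t) \to \outgh \stackrel{R_{\Delta}}{\longrightarrow} \out^0(A_{\Delta};\calg_{\Delta},\calh_{\Delta}^t)\to 1.
\]
The image is a relative automorphism group of the RAAG $A_\Delta$ on strictly fewer vertices, so it has a series of the required type by the inductive hypothesis. The kernel is again a relative automorphism group of $A_\G$, so $n$ is unchanged; but the kernel acts trivially on $A_\Delta$ and on every subgroup on which $\outgh$ already acted trivially, so its value of $d$ drops by at least one (namely $A_\Delta$ is removed). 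Hence the kernel also has a good series, by induction on $d$. I would then splice the two series together using the standard fact that for $1\to K\to G\to Q\to 1$, the series of $K$ followed by the preimages in $G$ of the series of $Q$ is a subnormal series of $G$ whose consecutive quotients are exactly those of $K$ and of $Q$.

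The remaining case is when $\outgh$ acts trivially on every proper invariant special subgroup, i.e.\ all restriction maps are trivial. Here I would first note that, taking $\calg'$ to be the set of all proper invariant special subgroups, one has $\outgh=\out^0(A_\G;(\calg')^t)$ with $\calg'$ saturated with respect to $(\calg',\calg')$, reducing to the normalized situation of Section~\ref{s:decomposition}. I would then run through the five mutually exclusive and exhaustive cases recorded there: in the disconnected--$\calg$-disconnected case the group is a Fouxe-Rabinovitch group (Proposition~\ref{co:disconnected_but_G_connected_type_f}); in the disconnected--$\calg$-connected and the connected--trivial-centre cases it is finitely generated free abelian (Propositions~\ref{co:disconnected_G_disconnected_type_F} and~\ref{co:connected_trivial_centre_type_f}); in the connected--proper-nontrivial-centre case Proposition~\ref{pr:es1} provides a surjection onto a relative automorphism group of $A_{\G-Z}$, which has strictly fewer vertices and so is covered by induction, with finitely generated free abelian kernel; and in the $A_\G=\Z^n$ case Proposition~\ref{pr:no_restriction_gl} gives an extension of $\glm$ by a finitely generated free abelian group. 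In each subcase I either read off the series directly or combine a free-abelian piece with an inductively supplied series, exactly as in the first case.

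I expect the main obstacle to be arranging the complexity so that the induction actually terminates: the image of a restriction map lives on a smaller graph, but its kernel lives on the same graph, so $n$ alone does not decrease. The point that saves the scheme is that the kernel is forced to act trivially on one more special subgroup than the ambient group did, and since a subgroup acts trivially wherever the ambient group does, the secondary invariant $d$ is strictly monotone under passage to the kernel. Verifying this monotonicity carefully, and checking that the saturation step genuinely leaves both $\outgh$ and $d$ unchanged, are the two places needing attention; the rest is bookkeeping with the exact sequence and the five structural propositions.
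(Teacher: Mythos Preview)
Your proposal is correct and follows essentially the same route as the paper: induction on a lexicographic complexity whose first coordinate is $\abs{V(\G)}$ and whose second coordinate counts special subgroups with nontrivial action, using Theorem~\ref{th:restriction} to reduce when some restriction is nontrivial and the five structural propositions of Section~\ref{s:decomposition} otherwise. The paper phrases the secondary invariant as $m=2^{\abs{V(\G)}}-r$ with $r$ the number of trivially-acted-on special subgroups, which is equivalent to your $d$ up to an additive constant, and the splicing and base-case analysis proceed just as you describe.
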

Note that in this last case, the group can be $\out(\mathbb{F}_m)$ for some $m$ if $\Delta$ is edgeless and $\calk=\varnothing$.
In many examples, $\gl(1,\Z)\cong \mathbb{Z}/2\mathbb{Z}$ shows up as a factor many times.

\begin{proof}
Given a triple $(\G,\calg,\calh)$, where $\calg$ and $\calh$ are sets of special subgroups of $A_\G$, we let \[ n(\G,\calg,\calh)=|V(\G)|,\] and let \[ m(\G,\calg,\calh)=2^{|V(\G)|}-r \] where $r$ is the number of special subgroups $A_\Delta$ in $A_\Gamma$ such that $\outgh$ acts trivially on $A_\Delta$. 
We use the lexicographic ordering on such pairs $(n,m)$ with the minimal element $(0,0)$. 
We fix a triple $(\G,\calg,\calh)$ of complexity $(n,m)$ and by induction assume that the result holds for $\out^0(A_\G,\calg,\calh^t)$ for all triples $(\G',\calg',\calh')$ with strictly lower complexity.

Without changing the automorphism group (or complexity) we may assume that $\calg$ is saturated for the pair $(\calg,\calh)$.  We first look at the situation where there exists $A_\Delta \in \calg$ such that the associated restriction homomorphism $R_\Delta$ on $\outgh$ has nontrivial image. In this case, Theorem~\ref{th:restriction} provides us with an exact sequence:
\[ 1 \to \out^{0}(A_\G,\calg,(\calh \cup \{A_\Delta\})^t) \to \outgh \to \out^0(A_\Delta, \calh_\Delta, \calg_\Delta) \to 1. \]
The triple $(\G,\calg,\calh\cup\{A_\Delta\})$ has strictly lower complexity than $(\G,\calg,\calh)$ as there exist automorphisms in $\outgh$ with a nontrivial action on $A_\Delta$.  The triple $(A_\Delta,\calg_\Delta,\calh_\Delta)$ has strictly lower complexity as $\Delta$ has fewer vertices. As we can find subnormal series of the left and right terms in this exact sequence that satisfy the hypothesis of the theorem, they can be combined to give the result for $\outgh$.

In the case that $\outgh$ has trivial restriction maps, we look at the five subcases studied in the previous section. In the first three cases, $\outgh$ is isomorphic to a free-abelian group or a Fouxe-Rabinovitch group. In the fourth case where $\G$ is connected and $Z(\G)$ is a proper, nontrivial subgraph, we use the exact sequence \[1 \to A \to \out^0(A_\G;\calg^t) \to \out^0(A_\Delta; \calg_\Delta^t)\to1,\] with $A$ a finitely generated free abelian group given by Proposition~\ref{pr:es1}. As the image is of lower complexity, induction gives the result here. When $A_\G$ is a free abelian group and there are no nontrivial restriction maps, the result follows from Proposition~\ref{pr:no_restriction_gl}.\end{proof}

\subsection{Building up classifying spaces}\label{ss:buildingblocks}

The main tool we need to build classifying spaces is the following result from Geoghegan's book:

\begin{theorem}[\cite{MR2365352}, Theorem 7.3.4] \label{th:F_from_rigid_actions}
If a group $G$ acts cocompactly by rigid homeomorphisms on a contractible CW complex $X$ such that the stabilizer of each cell is type $F$, then $G$ is type $F$ also.
\end{theorem}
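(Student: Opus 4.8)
The plan is to build a classifying space for $G$ directly from the action, via the Borel construction together with an induction up the skeleta of $X$. Since the action is rigid and cellular, $X$ is a $G$-CW complex and the quotient $X/G$ is an honest CW complex whose cells are the $G$-orbits of cells of $X$; cocompactness makes $X/G$ finite, so in particular $X$ is finite-dimensional, say of dimension $N$. Fix a free contractible $G$-CW complex $EG$ and set $X\times_G EG := (X\times EG)/G$ with $G$ acting diagonally. Since $G$ acts freely on $EG$ it acts freely on $X\times EG$, and $X\times EG$ is contractible; hence $Y:=X\times_G EG$ is aspherical with $\pi_1(Y)\cong G$, i.e.\ a model for $BG$. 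It therefore suffices to show that $Y$ is homotopy equivalent to a finite CW complex.

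First I would filter $Y$ by the subspaces $Y_n = X^{(n)}\times_G EG$, where $X^{(n)}$ is the (necessarily $G$-invariant) $n$-skeleton of $X$, giving $Y_0\subset Y_1\subset\dots\subset Y_N=Y$. Writing the skeletal pushout for $X$ equivariantly and applying the functor $(-)\times_G EG$, which preserves pushouts and sends $G\times_{G_\sigma}Z$ to $Z\times_{G_\sigma}EG$, yields for each $n$ a pushout
\[
Y_n = Y_{n-1}\ \cup_{\coprod_\sigma\, S^{n-1}\times_{G_\sigma}EG}\ \coprod_\sigma\, D^n\times_{G_\sigma}EG,
\]
where $\sigma$ runs over the finitely many orbit representatives of $n$-cells and $G_\sigma$ denotes the stabilizer of $\sigma$. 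Here the rigidity hypothesis does the essential work: since each $g\in G_\sigma$ fixes $\sigma$ pointwise, $G_\sigma$ acts trivially on $D^n$ and on $\partial D^n=S^{n-1}$, so these balls factor out of the balanced product and
\[
D^n\times_{G_\sigma}EG = D^n\times (EG/G_\sigma)\simeq BG_\sigma,\qquad S^{n-1}\times_{G_\sigma}EG = S^{n-1}\times BG_\sigma.
\]

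The finiteness then propagates up the filtration. The base $Y_0$ is a finite disjoint union of copies of $BG_v$, one per orbit of vertices, and each $BG_v$ is homotopy equivalent to a finite complex by the type $F$ hypothesis on stabilizers; hence $Y_0$ is homotopy equivalent to a finite complex. For the inductive step, the displayed pushout is a homotopy pushout (the left-hand map is a cofibration), each attached piece $D^n\times_{G_\sigma}EG\simeq BG_\sigma$ and each attaching domain $S^{n-1}\times BG_\sigma$ is homotopy equivalent to a finite complex, and $Y_{n-1}$ is homotopy equivalent to a finite complex by induction. Since a homotopy pushout of finite CW complexes along a cellular map is again homotopy equivalent to a finite CW complex, $Y_n$ inherits the property. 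After $N$ steps we conclude that $Y=Y_N$, and hence $BG$, is homotopy equivalent to a finite CW complex, so $G$ is of type $F$.

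The main obstacle I anticipate is organizational rather than conceptual: one must replace the infinite models $EG/G_\sigma$ by chosen finite classifying spaces and verify that the attaching maps in the successive pushouts can be realized cellularly between these finite models, compatibly as $n$ increases, so that each stage is genuinely a pushout of finite complexes. This bookkeeping is exactly where rigidity is indispensable: without it a stabilizer could fix a cell only setwise, the quotient $X/G$ would fail to be a CW complex, and the balanced products $D^n\times_{G_\sigma}EG$ would be nontrivial bundles rather than products, breaking both the pushout description and the identification with $BG_\sigma$. An essentially equivalent route, which sidesteps the explicit Borel filtration, is to realize $BG$ as the homotopy colimit over the finite poset of cells of $X/G$ of the diagram of stabilizer classifying spaces $BG_\sigma$; contractibility of $X$ guarantees this homotopy colimit is a $K(G,1)$, and finiteness of the indexing poset together with finiteness of each $BG_\sigma$ produces a finite model at once.
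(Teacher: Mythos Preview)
The paper does not actually prove this theorem; it is quoted as Theorem~7.3.4 from Geoghegan's book and used as a black box. So there is no in-paper argument to compare against.

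That said, your outline is the standard proof and is correct in substance. The Borel construction $Y=X\times_G EG$ is a $K(G,1)$ because $G$ acts freely on the contractible space $X\times EG$; the skeletal filtration of $X$ induces a filtration of $Y$ by pushouts; rigidity makes each stabilizer act trivially on its cell so the balanced products split as $D^n\times BG_\sigma$; and cocompactness plus the type~$F$ hypothesis on stabilizers lets you inductively replace each stage by a finite complex. You have also correctly identified the only genuine technical wrinkle: one must choose finite models for the $BG_\sigma$ and arrange the attaching maps to land in these finite models at each stage, which requires a cellular-approximation/mapping-cylinder bookkeeping argument. This is exactly how Geoghegan organizes the proof, and your alternative phrasing via the homotopy colimit over the orbit poset of cells is an equally valid packaging of the same idea.
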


An action is \emph{rigid} if every cell that is fixed setwise by an element is also fixed pointwise. In particular, if $X$ is the universal cover of a CW complex $Y$ with fundamental group $\pi_1(Y)=G$, then $G$ acts rigidly on $X$. We note two corollaries:

\begin{corollary}\label{co:exact_sequences_preserve_F}
Suppose that $1 \to N \to G \to Q \to 1$ is a short exact sequence such that $Q$ and $N$ are of type F. Then $G$ has type F.
\end{corollary}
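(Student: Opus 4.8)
The plan is to deduce this corollary directly from Theorem~\ref{th:F_from_rigid_actions} (Geoghegan's criterion) by exhibiting a suitable cocompact rigid action of $G$ on a contractible CW complex with type F cell stabilizers. The natural source for such a complex is a classifying space for the quotient $Q$, whose universal cover carries a rigid $Q$-action, pulled back along $G\to Q$ to a $G$-action whose point stabilizers are copies of $N$.

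First I would invoke the hypothesis that $Q$ is of type F to fix a classifying space $Y$ for $Q$ that is a finite CW complex, and let $X=\widetilde{Y}$ be its universal cover. Since $X$ is contractible and the deck action of $Q=\pi_1(Y)$ on $X$ is free, cocompact, and rigid (as noted in the excerpt, the fundamental group always acts rigidly on the universal cover), $X$ is a contractible CW complex with a cocompact free rigid $Q$-action. Next I would pull this action back along the quotient homomorphism $q\colon G\to Q$, letting $G$ act on $X$ via $g\cdot x = q(g)\cdot x$. This action is still cocompact (the orbit space $G\backslash X = Q\backslash X = Y$ is compact) and still rigid (rigidity only depends on the setwise-versus-pointwise behaviour of individual elements, which is inherited since $g$ acts exactly as $q(g)$ does). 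The key structural point is that the stabilizer in $G$ of any cell of $X$ is precisely $q^{-1}(\mathrm{Stab}_Q(\text{cell}))$; because the $Q$-action is free, each cell stabilizer in $Q$ is trivial, so the stabilizer of every cell under the $G$-action is exactly $\ker q = N$.

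Having arranged this, I would observe that $N$ is of type F by hypothesis, so every cell stabilizer of the $G$-action on $X$ is of type F. Thus $G$ acts cocompactly by rigid homeomorphisms on the contractible CW complex $X$ with all cell stabilizers of type F, and Theorem~\ref{th:F_from_rigid_actions} applies directly to conclude that $G$ is of type F.

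I do not expect any serious obstacle here; this is a standard application of the fibering/covering-space perspective on group extensions, and the only points requiring a line of justification are that the pulled-back action remains rigid and that the cell stabilizers are exactly $N$. The mildest technical care is in checking that the $G$-action factors as literally the $Q$-action composed with $q$, so that rigidity and cocompactness transfer verbatim and the stabilizer computation reduces to the triviality of the free $Q$-stabilizers.
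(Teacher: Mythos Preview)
Your proof is correct and follows exactly the same approach as the paper: take the universal cover of a finite classifying space for $Q$, let $G$ act via the quotient map, and observe that every cell stabilizer is $N$. The paper's version is simply more terse, leaving the verifications of rigidity, cocompactness, and the stabilizer computation implicit.
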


\begin{proof}
Take $X$ to be the universal cover of a finite classifying space for $Q$, and use the action of $G$ on $X$ via the quotient map. The stabilizer of each cell is isomorphic to $N$.
\end{proof}

\begin{corollary}\label{co:typeFaction}
If a group $G$ acts simplicially and cocompactly on a contractible simplicial complex $X$ such that the stabilizer of each simplex is type $F$, then $G$ is type $F$ also.
\end{corollary}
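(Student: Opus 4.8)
The plan is to deduce the statement from Theorem~\ref{th:F_from_rigid_actions} by replacing $X$ with its barycentric subdivision $X'$, which turns the merely simplicial action into a rigid one while preserving contractibility and cocompactness. First I would record that $X'$ is homeomorphic to $X$, hence still contractible, and that $G$ still acts simplicially on $X'$ with finitely many orbits of simplices: a simplex of $X'$ is a flag $\sigma_0\subsetneq\cdots\subsetneq\sigma_k$ of simplices of $X$, and there are finitely many $G$-orbits of such flags because there are finitely many orbits of simplices in $X$ and each simplex has only finitely many faces. So the action on $X'$ is again cocompact.

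Next I would verify that the action on $X'$ is rigid. Suppose $g\in G$ fixes setwise the $X'$-simplex indexed by a flag $\sigma_0\subsetneq\cdots\subsetneq\sigma_k$. Its vertices are the barycenters $\hat\sigma_0,\dots,\hat\sigma_k$, and $g$ permutes them; since $g$ is simplicial on $X$, it sends $\hat\sigma_i$ to the barycenter of the equidimensional simplex $g\sigma_i$. Because the dimensions $\dim\sigma_0<\cdots<\dim\sigma_k$ are distinct, $g$ must fix each $\sigma_i$ setwise, hence fix each barycenter, hence fix the $X'$-simplex pointwise. This is exactly rigidity in the sense required by Theorem~\ref{th:F_from_rigid_actions}.

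The main point, and the step I expect to require the most care, is showing that the stabilizer of each simplex of $X'$ is of type F. By the previous paragraph, the stabilizer of the simplex indexed by $\sigma_0\subsetneq\cdots\subsetneq\sigma_k$ equals $\bigcap_{i=0}^{k}\stab_G(\sigma_i)$, an intersection of the given stabilizers. Since an arbitrary intersection of type F groups need not even be finitely generated, the idea is instead to exhibit this intersection as a \emph{finite-index} subgroup of the top stabilizer $\stab_G(\sigma_k)$. Indeed, $\stab_G(\sigma_k)$ permutes the finitely many vertices of $\sigma_k$, giving a homomorphism to a finite symmetric group; the extra condition of fixing each $\sigma_i$ setwise is precisely that the resulting permutation preserve the vertex set of each $\sigma_i$, which cuts out the preimage of a subgroup of that finite group. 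Hence $\bigcap_{i=0}^{k}\stab_G(\sigma_i)$ has finite index in $\stab_G(\sigma_k)$, which is type F by hypothesis, and a finite-index subgroup of a type F group is type F (pass to the corresponding finite cover of a finite classifying space).

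Finally, with $X'$ contractible, the $G$-action on $X'$ rigid and cocompact, and every simplex stabilizer of type F, Theorem~\ref{th:F_from_rigid_actions} applies directly and gives that $G$ is of type F, as desired.
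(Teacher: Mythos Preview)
Your proof is correct and follows exactly the same approach as the paper: pass to the barycentric subdivision $X'$ so that the action becomes rigid, and observe that each simplex stabilizer in $X'$ has finite index in a simplex stabilizer of $X$, hence is also of type~F. The paper's proof is a terse two sentences; you have simply filled in the details (contractibility and cocompactness of $X'$, the dimension argument for rigidity, and the finite-index argument for stabilizers) that the paper leaves to the reader.
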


\begin{proof}
The action of $G$ on the barycentric subdivision $X'$ of $X$ is rigid. Furthermore each stabilizer of a simplex in $X'$ is finite index in the stabilizer of a simplex in $X$ (so is type $F$ also).
\end{proof}

Our building blocks consist of classifying spaces for RAAGs (Salvetti complexes), symmetric spaces for arithmetic groups, and deformation spaces of trees. We will highlight the important results below. Every right-angled Artin group acts freely and cocompactly on a CAT(0) cube complex, hence:

\begin{theorem}[Charney--Davis \cite{MR1368655}]\label{th:CharneyDavis}
Every right-angled Artin group is type $F$.
\end{theorem}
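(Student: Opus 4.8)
The plan is to exhibit a finite classifying space for $A_\G$ directly, using the Salvetti complex. First I would construct the cube complex $S_\G$ having a single vertex, one oriented $1$-cell for each vertex of $\G$, and one $k$-dimensional torus (a $k$-cube with its faces identified in pairs) glued in for each $k$-clique of $\G$, where the gluing reflects that the corresponding generators commute. Since $\G$ is finite it has only finitely many cliques, so $S_\G$ has finitely many cells and is compact.

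Next I would identify the fundamental group. A van Kampen argument on $S_\G$ shows that the $1$-cells contribute one generator per vertex, the $2$-cells (tori attached along edges) impose exactly the relations $[v,w]=1$ for each edge $vw$ of $\G$, and the higher-dimensional cells impose no further relations; hence $\pi_1(S_\G)\cong A_\G$. The universal cover $\widetilde{S_\G}$ is then a cube complex on which $A_\G$ acts freely by deck transformations, and this action is cocompact because $S_\G$ is compact.

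The key input, and the main obstacle, is contractibility of $\widetilde{S_\G}$. This is exactly the Charney--Davis statement quoted above: the cube complex structure on $\widetilde{S_\G}$ satisfies Gromov's link condition (the link of each vertex is the flag complex on $\G$), so, being simply connected and nonpositively curved, $\widetilde{S_\G}$ is CAT(0), and CAT(0) spaces are contractible. I would cite this input rather than reprove the link condition, since verifying that the links are flag complexes is the only genuinely technical point and it is handled in the reference.

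Finally I would conclude. As $A_\G$ acts freely and cocompactly on the contractible complex $\widetilde{S_\G}$, the quotient $S_\G$ is a finite $K(A_\G,1)$. A group admitting a finite classifying space is of type F by definition, so $A_\G$ is of type F.
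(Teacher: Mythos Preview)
Your proposal is correct and follows essentially the same route as the paper: the paper simply states that every RAAG acts freely and cocompactly on a CAT(0) cube complex and invokes the Charney--Davis reference, while you spell out the Salvetti complex construction and the link-condition argument underlying that sentence. There is no substantive difference in approach.
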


Although the action of an arithmetic group $\G$ on its associated symmetric space $X$ is not always cocompact, Borel and Serre (equivariantly) extend $X$ to a space $\overline{X}$ in such a way that $\overline{X}/\G$ is a compact \emph{differentiable manifold with corners}. Triangulating this manifold gives:

\begin{theorem}[Borel--Serre, {\cite[Section~11]{MR0387495}}] 
If $\G$ is a torsion-free, arithmetic subgroup of $\gl(n,\mathbb{Q})$ then $\G$ is type $F$.
\end{theorem}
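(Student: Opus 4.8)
The plan is to realize $\G$ as the fundamental group of a finite aspherical CW complex, using the Borel--Serre bordification. Let $\mathbf{G}$ be the $\mathbb{Q}$-algebraic group with $\G$ an arithmetic subgroup of $\mathbf{G}(\mathbb{Q}) \subset \gl(n,\mathbb{Q})$, let $G = \mathbf{G}(\R)$, and let $K \leq G$ be a maximal compact subgroup. The associated symmetric space $X = G/K$ is diffeomorphic to a Euclidean space, hence contractible, and $\G$ acts on $X$ properly discontinuously. The difficulty is that this action is not cocompact: $X/\G$ has finite volume but is typically noncompact, with cusps going off to infinity.

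First I would invoke the central construction of \cite{MR0387495}: Borel and Serre equivariantly enlarge $X$ to a manifold with corners $\overline{X}$, the \emph{bordification}, by attaching a boundary face $e(P)$ for each rational parabolic subgroup $P$ of $\mathbf{G}$. The inclusion $X \hookrightarrow \overline{X}$ is a $\G$-equivariant homotopy equivalence, so $\overline{X}$ remains contractible, and the action of $\G$ extends to $\overline{X}$. The crucial gain is that this action is now \emph{cocompact}: the quotient $\overline{X}/\G$ is a compact manifold with corners.

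Next, since $\G$ is torsion-free, its proper action on $\overline{X}$ is free, so every cell stabilizer is trivial and in particular of type F. Triangulating the compact manifold with corners $\overline{X}/\G$ and lifting gives a $\G$-equivariant simplicial structure on $\overline{X}$ on which $\G$ acts simplicially, freely, and cocompactly. Applying Corollary~\ref{co:typeFaction} to this action---or directly observing that $\overline{X}/\G$ is a finite $K(\G,1)$, since $\overline{X}$ is contractible and the action is free and properly discontinuous---shows that $\G$ is of type F.

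The main obstacle is the construction and analysis of $\overline{X}$ in the second paragraph, which is the substantial content of Borel and Serre's paper rather than a routine step. It rests on reduction theory for arithmetic groups: one must describe precisely how the geometry of $X$ degenerates near each cusp, via Siegel sets and the Langlands decomposition associated to each rational parabolic subgroup, attach the corner faces $e(P)$ compatibly so that $\overline{X}$ is genuinely a manifold with corners, and then verify both that the added boundary does not change the homotopy type and that the $\G$-action on the enlarged space becomes cocompact. Granting this bordification, the passage to type F is formal.
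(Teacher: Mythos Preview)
Your proposal is correct and matches the paper's approach exactly: the paper does not give a proof at all, only a one-sentence sketch preceding the statement (``Borel and Serre (equivariantly) extend $X$ to a space $\overline{X}$ in such a way that $\overline{X}/\G$ is a compact differentiable manifold with corners. Triangulating this manifold gives:''), and your write-up is a faithful expansion of precisely that outline. Your honest flagging of the bordification construction as the substantial borrowed content is appropriate.
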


If $A_\G \cong \mathbb{Z}^n$ is free abelian and $\calg$ is a set of proper special subgroups, then $\out^0(A_\G;\calg^t)$ is an arithmetic group (fixing a basis element corresponds to setting certain matrix entries in $\gln$ to 0 or 1). For $l \geq 3$, the torsion-free subgroup $\out^{[l]}(A_\G;\calg^t)$ is finite index in $\out(A_\G;\calg^t)$, therefore:

\begin{proposition} \label{pr:borel-serre_type_f}
If $A_\G\cong \mathbb{Z}^n$ is a free abelian group, $\calg$ is a set of special subgroups of $A_\G$, and $l \geq 3$, then $\out^{[l]}(A_\G;\calg^t)$ is type F.
\end{proposition}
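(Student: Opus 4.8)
The plan is to identify $\out^{[l]}(A_\G;\calg^t)$ as a torsion-free arithmetic subgroup of a general linear group and then apply the Borel--Serre theorem stated above. First I would unwind the abelian case. Since $A_\G\cong\Z^n$ with $n=|V(\G)|$, the graph $\G$ is complete, every inner automorphism is trivial, and all vertices lie in a single equivalence class; hence $\out(A_\G)=\out^0(A_\G)=\gln$ by Proposition~\ref{pr:out0classification}. Because the group is abelian, each element of $\out(A_\G)$ has a unique representative, so acting trivially on a special subgroup $A_\Delta$ means exactly that the corresponding matrix fixes the basis vector $e_v$ for every $v\in\Delta$. Writing $S=\bigcup_{A_\Delta\in\calg}V(\Delta)$, $k=|S|$, and $m=n-k$, the group $\out^0(A_\G;\calg^t)=\out(A_\G;\calg^t)$ is precisely the subgroup of $\gln$ fixing $e_v$ for all $v\in S$; ordering the basis with $S$ first, these are the matrices of block form $\left(\begin{smallmatrix}I_k&B\\0&C\end{smallmatrix}\right)$ with $C\in\glm$ and $B$ an integral $k\times m$ matrix.

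Next I would exhibit the arithmetic structure. These matrices are exactly the integer points $\mathbf{G}(\Z)$ of the $\Z$-defined algebraic subgroup $\mathbf{G}\leq\gl(n)$ cut out by the polynomial conditions that the top-left $k\times k$ block is the identity and the bottom-left $m\times k$ block vanishes. Thus $\out^0(A_\G;\calg^t)$ is arithmetic, and $\out^{[l]}(A_\G;\calg^t)$ --- the matrices of $\mathbf{G}(\Z)$ congruent to the identity modulo $l$ --- is its level-$l$ congruence subgroup, hence of finite index and again arithmetic. For $l\geq 3$ it is torsion-free, since it is contained in the torsion-free group $\out^{[l]}(A_\G)$ (Proposition~\ref{pr:iap}). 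Taken at face value, the Borel--Serre theorem above then applies to this torsion-free arithmetic subgroup of $\gl(n,\mathbb{Q})$ and yields type F directly.

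The step requiring the most care is this application of Borel--Serre, because $\mathbf{G}$ is \emph{not} reductive: its unipotent radical $\mathbf{U}$, consisting of the matrices with $C=I_m$, contributes the $B$-block. To apply Borel--Serre only in its reductive form, I would split along the Levi decomposition $\mathbf{G}=\mathbf{U}\rtimes\gl(m)$, giving a short exact sequence
\[ 1\to \out^{[l]}(A_\G;\calg^t)\cap\mathbf{U}\to \out^{[l]}(A_\G;\calg^t)\to Q\to 1. \]
Here the kernel $\out^{[l]}(A_\G;\calg^t)\cap\mathbf{U}\cong\Z^{km}$ is free abelian, hence of type F, and the quotient $Q$ is a torsion-free congruence subgroup of $\glm$, which is of type F by Borel--Serre for the reductive group $\gl(m)$. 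Corollary~\ref{co:exact_sequences_preserve_F} then promotes this extension to type F. Either route works, and the genuine content lies entirely in the cited Borel--Serre theorem together with the torsion-freeness of level-$l$ congruence subgroups; the remainder is bookkeeping of the block structure of the fixing conditions.
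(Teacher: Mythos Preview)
Your proof is correct and follows essentially the same approach as the paper: identify $\out^0(A_\G;\calg^t)$ as an arithmetic group via its explicit block-matrix description in $\gln$, observe that the level-$l$ congruence subgroup is torsion-free and of finite index, and invoke Borel--Serre. The paper's own argument is only a sentence long and stops at exactly the point you label ``taken at face value''; your additional Levi-decomposition route (splitting off the unipotent radical and applying Borel--Serre only to the reductive quotient $\glm$) is a careful elaboration that the paper omits, but it is not a genuinely different strategy. One cosmetic remark: your block form $\left(\begin{smallmatrix}I_k&B\\0&C\end{smallmatrix}\right)$ is the transpose of the paper's convention in Proposition~\ref{pr:no_restriction_gl}, reflecting a column-versus-row convention for the action on the abelianization; this has no effect on the argument.
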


We now move to the free and non-abelian picture. A \emph{free factor system} for a group $G$ is a finite set of subgroups $\calg=\{G_1,\ldots, G_k\}$ of $G$ such that \[ G= G_1 \ast G_2 \ast \cdots G_k \ast \mathbb{F}_m \] for some free group $\mathbb{F}_m$ 
(this is not the same thing as a Grushko decomposition for $G$, since factors may be freely decomposable or infinite cyclic).
The group $\out(G;\calg)$ acts on a space $\spine$ defined in an analogous way to the spine of outer space. Each vertex of $\spine$ is given by a minimal, simplicial action of $G$ on a tree $T$ such that the vertex stabilizers of $T$ are exactly the conjugates of the elements of $\calg$ (this implies that the edge stabilizers are trivial). Two trees are equivalent if they are $G$--equivariantly homeomorphic. A simplex of $\spine$ is determined by a sequence $T_0,\ldots,T_k$ of such trees where $T_i$ is obtained $T_{i-1}$ by collapsing $G$-orbits of edges. The relative automorphism group $\out(G;\calg)$ admits a simplicial action on $\spine$ by precomposing the action on a tree with a representative of an outer automorphism. 

\begin{theorem}[Guirardel--Levitt, \cite{GL2}]\label{th:GL2}
Let $G$ be a group and let $\calg$ be a free factor system of $G$. Then the spine $\mathcal{X}(\calg)$ is contractible, and the group $\out(G;\calg^t)$ acts cocompactly on $\mathcal{X}(\calg)$.
\end{theorem}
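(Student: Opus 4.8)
The plan is to deduce both statements from the theory of \emph{deformation spaces of trees}, and the first move is to separate the two claims. I would realize $\spine$ as an equivariant deformation retract of the full relative Outer space $\calo(\calg)$: the space of minimal, simplicial, metric $G$-trees with trivial edge stabilizers and vertex stabilizers exactly the conjugates of the $G_i$, each carrying an equivariant marking. The space $\calo(\calg)$ is a union of open simplices, one for each combinatorial type of reduced tree (parametrizing the edge-lengths), glued along faces according to edge collapses, and $\spine$ is the geometric realization of the poset of these simplices. The inclusion of $\spine$ into $\calo(\calg)$ is a deformation retract by the standard ``radial'' retraction pushing each metric tree toward the barycenter of its simplex, so \emph{contractibility of $\spine$ reduces to contractibility of $\calo(\calg)$}. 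That the $\out(G;\calg^t)$-action is simplicial and the retraction equivariant is routine, since an outer automorphism acts by precomposing the marking, which preserves the simplicial structure and commutes with rescaling of edge-lengths.

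The heart of the argument is the contractibility of $\calo(\calg)$, for which I would use \emph{folding paths} in the style of Skora and of Guirardel--Levitt's general results on deformation spaces. Fixing a basepoint tree $T_0\in\calo(\calg)$, for any $T\in\calo(\calg)$ there is, after rescaling, a $G$-equivariant morphism $f_T\colon T_0\to T$ (linear on edges and folding), and the associated folding path $t\mapsto T_0^t$ interpolates canonically between $T_0$ and $T$ by performing the available folds at unit speed. I would then show these paths can be chosen to depend continuously on $T$ and to be compatible with edge collapses, so that they are defined consistently across the faces of $\calo(\calg)$; this assembles into a continuous contraction of $\calo(\calg)$ onto $T_0$. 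Equivalently, one invokes the main theorem of Guirardel--Levitt's deformation-space paper, where contractibility of any deformation space is established by showing that any two trees are joined by elementary expansions and collapses and that these moves organize into a contraction.

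For cocompactness I would argue that $\out(G;\calg^t)$ has only finitely many orbits of simplices in $\spine$. A simplex is a chain of reduced $G$-trees under collapse, so it suffices to bound the number of orbits of reduced trees. The quotient graph of groups of a reduced tree in $\calo(\calg)$ has vertex groups among the $G_i$ (together with trivial vertices), trivial edge groups, first Betti number $m$, and exactly $k$ non-trivial vertex groups; Bass--Serre theory together with a Grushko/Kurosh-rank count then bounds the number of edges. Hence there are only finitely many combinatorial types of reduced tree, and within each fixed type two reduced trees differ only by a change of marking, which (up to a finite ambiguity) is realized by an element of $\out(G;\calg^t)$. This yields finitely many orbits of cells, and hence cocompactness of the action.

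I expect the main obstacle to be the contractibility of $\calo(\calg)$. The folding-path construction must be made continuous in $T$ while remaining coherent under degenerations, when edge-lengths tend to zero and the combinatorial type of the tree jumps; moreover the presence of the non-trivial vertex groups $G_i$ forces one to track the peripheral structure along each fold, so that the vertex stabilizers remain exactly the conjugates of the $G_i$ at every stage of the path. By contrast, the deformation-retract reduction and the cocompactness count are essentially point-set bookkeeping and a finiteness estimate, and I would not expect either to present serious difficulty.
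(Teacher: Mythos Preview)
The paper does not give its own proof of this statement: Theorem~\ref{th:GL2} is quoted as a result of Guirardel--Levitt \cite{GL2}, with no argument supplied. So there is nothing in the paper to compare your proposal against directly.

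That said, your outline is broadly faithful to the strategy in the cited sources. Contractibility of relative Outer space is established in \cite{GL2} via Skora-type folding paths (building on the general deformation-space machinery of \cite{GLDeformation}), and the retraction onto the spine is the standard barycentric one. Your cocompactness argument---bounding the number of combinatorial types of reduced quotient graphs of groups via a rank count, then observing that marked trees of a fixed type form a single $\out(G;\calg^t)$-orbit up to finite ambiguity---is also the right idea. You have correctly identified that the delicate point is making the folding-path contraction continuous across the simplicial faces where edge lengths degenerate; this is exactly where the work lies in \cite{GL2}, and your sketch does not attempt to resolve it but rather flags it, which is appropriate for a proposal.
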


The group  $\out(G;\calg^t)$ is the Fouxe-Rabinovitch group of the free factor system $\calg$. If $k+m=2$ then the spine consists of a single vertex $T$ (in other words, the whole group $\out(G;\calg)$ fixes a tree). In general, there is a projection map: 
\[ P\colon \out(G;\calg) \to \out(\mathbb{F}_m) \]
induced by the quotient map $G \to \mathbb{F}_m$ which sends every every element of a group $G_i$ to the identity element. 

\begin{proposition}[Guirardel--Levitt, \cite{GL}, Proposition 3.7]\label{pr:GLtypef}
Suppose that $G= G_1 \ast G_2 \ast \cdots G_k \ast \mathbb{F}_m$ is a free factor decomposition of $G$, with $\calg=\{G_1,G_2,\dotsc, G_k\}$.
Let $\out^1(G;\calg^t)$ be a finite index subgroup of $\out(G;\calg^t)$. 
If \begin{enumerate}
\item each group $G_i$ and its center $Z(G_i)$ has a finite classifying space, and
\item the image of $\out^1(G;\calg^t)$ in $\out(\mathbb{F}_m)$ is torsion-free,
\end{enumerate}
then $\out^1(G;\calg^t)$ has a finite classifying space.    
\end{proposition}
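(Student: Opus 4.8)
The plan is to feed the action of $\out^1(G;\calg^t)$ on the spine $\spine$ into the machinery of Section~\ref{ss:buildingblocks}. By Theorem~\ref{th:GL2} the spine is contractible and $\out(G;\calg^t)$ acts on it cocompactly; since $\out^1(G;\calg^t)$ has finite index, its restricted action on the same contractible complex is still cocompact. So by Corollary~\ref{co:typeFaction} it suffices to prove that the stabilizer in $\out^1(G;\calg^t)$ of every simplex of $\spine$ is of type F. First I would reduce simplex stabilizers to tree stabilizers: a simplex is a chain $T_0 > T_1 > \dots > T_j$ of collapses of a finest tree $T_0$, an element stabilizing the simplex stabilizes each of its vertices and in particular $T_0$, while conversely $\stab(T_0)$ permutes the finitely many edge-orbits of $T_0$ and the subgroup respecting the flag of edge-orbit subsets defining the chain has finite index. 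Hence the simplex stabilizer has finite index in $\stab_{\out^1}(T_0)$, and as finite-index subgroups of type F groups are again type F, it is enough to handle a single tree stabilizer $\stab_{\out^1}(T)$.

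Next I would pin down the structure of $\stab(T)$. Levitt's description of the automorphisms of a graph of groups gives an exact sequence $1 \to \mathcal{T}(T) \to \stab_{\out(G;\calg^t)}(T) \to \mathcal{O} \to 1$, where $\mathcal{T}(T)$ is the group of twists and $\mathcal{O}$ records automorphisms of the underlying labelled graph. Because we work in $\out(G;\calg^t)$, the vertex-group part of $\mathcal{O}$ is trivial, so $\mathcal{O}$ is a finite group of graph symmetries and $\mathcal{T}(T)$ is finite index in $\stab_{\out(G;\calg^t)}(T)$. To control the free direction I would compose with the projection map $P\colon \out(G;\calg)\to\out(\mathbb{F}_m)$: the image of $\stab(T)$ under $P$ stabilizes the $\mathbb{F}_m$-tree obtained by collapsing the vertex groups, a point in the spine of Culler--Vogtmann outer space for $\mathbb{F}_m$, whose stabilizer is finite. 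By hypothesis~(2) the image of $\out^1(G;\calg^t)$ in $\out(\mathbb{F}_m)$ is torsion-free, so this finite image is trivial, $\stab_{\out^1}(T)$ lies in $\ker P$, and intersecting with the finite extension above shows $\stab_{\out^1}(T)$ is commensurable with the twist group $\mathcal{T}(T)$.

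It then remains to show $\mathcal{T}(T)$ is type F, and this is where hypothesis~(1) and the centers enter. The twist group is assembled one vertex at a time from copies of the vertex groups: at a vertex with group $G_i$ the available twists form a product of copies of $G_i$ indexed by the incident edge-orbits, and twisting all incident edges simultaneously by a single element is an inner automorphism, so the genuinely new twists are only defined modulo the diagonal $G_i$ --- this is exactly where $Z(G_i)$ appears in the relations. Organising these data into an iterated extension whose successive quotients are the $G_i$ and the $Z(G_i)$, and applying Corollary~\ref{co:exact_sequences_preserve_F} repeatedly, shows $\mathcal{T}(T)$ is type F once each $G_i$ and $Z(G_i)$ is. Since $\stab_{\out^1}(T)$ is torsion-free --- using hypothesis~(2) in the free direction and torsion-freeness of the $G_i$, which follows from their being type F --- and commensurable with the type F group $\mathcal{T}(T)$, it is itself type F, which completes all the reductions.

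The hard part will be the explicit structural analysis underlying the last two paragraphs: extracting Levitt's twist exact sequence in a form adapted to free splittings with trivial edge groups, checking that the finite quotient $\mathcal{O}$ together with the $\mathbb{F}_m$-image is genuinely absorbed by passing to finite index and invoking hypothesis~(2), and above all exhibiting $\mathcal{T}(T)$ as an iterated extension of the $G_i$ and $Z(G_i)$ so that Corollary~\ref{co:exact_sequences_preserve_F} applies. The delicate point is the role of the centers: it is precisely the failure of the twists at a vertex to be independent (a simultaneous twist being inner) that forces $Z(G_i)$, rather than $G_i$ alone, to appear among the building blocks, and getting this bookkeeping right is the crux of the argument.
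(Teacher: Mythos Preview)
Your proposal is correct and follows the same route the paper describes: the paper does not give a self-contained proof of this proposition but only a two-sentence sketch, noting that Guirardel--Levitt combine Corollary~\ref{co:typeFaction} with the cocompact action on the spine $\calx(\calg)$, and that conditions~(1) and~(2) guarantee that the stabilizer in $\out^1(G;\calg^t)$ of each tree $T$ is of type~F. Your outline fleshes out exactly this strategy --- reducing simplex stabilizers to tree stabilizers, invoking Levitt's description of tree stabilizers via the twist group, using hypothesis~(2) to kill the finite image in $\out(\mathbb{F}_m)$, and using hypothesis~(1) to see that the twist group (built from the $G_i$ and $Z(G_i)$) is type~F --- so there is nothing to add beyond noting that the detailed structural analysis you anticipate is precisely what is carried out in~\cite{GL}.
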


To prove the above, Guirardel and Levitt combine Corollary~\ref{co:typeFaction} with the cocompact action of $\out^1(G;\calg^t)$ on the spine $\calx(\calg)$. Conditions (1) and (2) ensure that the intersection of $\out^1(G;\calg^t)$ with the stabilizer of a tree $T \in \calx(\calg)$ is type $F$. 

We now specialize to the situation where 
\[ A_\G = A_{\Delta_1} \ast A_{\Delta_2} \ast \cdots \ast A_{\Delta_k} \ast \mathbb{F}_m \]
is a free factor system of a RAAG and $\calg=\{A_{\Delta_i}\}$ is a family of special subgroups. Each $\Delta_i$ is a union of connected components of $\G$, and there are $m$ isolated vertices that are not contained in any $\Delta_i$ (it may be that some $\Delta_i$ is disconnected, or is a single vertex).
If $l \geq 3$, then the image of the principal congruence subgroup $\out^{[l]}(A_\G;\calg^t)$ in $\out(\mathbb{F}_m)$ is the torsion-free group $\out^{[l]}(\mathbb{F}_m)$. 
Proposition~\ref{pr:GLtypef} and Theorem~\ref{th:CharneyDavis} imply:

\begin{proposition} \label{pr:FR_group_given_by_FF_system_is_F}
Let $A_\G = A_{\Delta_1} \ast A_{\Delta_2} \ast \cdots \ast A_{\Delta_k} \ast \mathbb{F}_m$ be a free factor decomposition of a RAAG $A_\G$, where each $A_{\Delta_i}$ is a special subgroup, and let $l\geq 3$. Then $\out^{[l]}(A_\G;(\{A_{\Delta_i}\}_{i=1}^k)^t)$ is type $F$.
\end{proposition}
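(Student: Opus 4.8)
The plan is to apply the Guirardel--Levitt finiteness criterion (Proposition~\ref{pr:GLtypef}) directly, taking $G=A_\G$, free factors $G_i=A_{\Delta_i}$, and $\out^1(G;\calg^t)=\out^{[l]}(A_\G;\calg^t)$, where $\calg=\{A_{\Delta_i}\}_{i=1}^k$. The relative outer automorphism group $\out(A_\G;\calg^t)$, which requires each $A_{\Delta_i}$ to be preserved and acted on trivially (that is, by conjugation), is by definition the Fouxe--Rabinovitch group of the given free factor decomposition, so Proposition~\ref{pr:GLtypef} is available. The entire argument then reduces to checking the finite-index hypothesis and the two numbered conditions of that proposition; none of these is hard, and the bulk of the work is bookkeeping to match our relative setup to the Guirardel--Levitt framework.

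First I would verify the finite-index hypothesis. By definition $\out^{[l]}(A_\G;\calg^t)=\out^{[l]}(A_\G)\cap\out(A_\G;\calg^t)$, and the corollary following Proposition~\ref{pr:iap} shows that $\out^{[l]}(A_\G)$ is finite index in $\out^0(A_\G)$, hence in $\out(A_\G)$; intersecting a finite-index subgroup with the subgroup $\out(A_\G;\calg^t)$ again yields finite index, so $\out^{[l]}(A_\G;\calg^t)$ is finite index in the Fouxe--Rabinovitch group. Condition~(1) of Proposition~\ref{pr:GLtypef} is then immediate from Charney--Davis: each factor $A_{\Delta_i}$ is itself a RAAG, hence of type $F$ by Theorem~\ref{th:CharneyDavis}, and its center is $Z(A_{\Delta_i})=A_{Z(\Delta_i)}$ by Proposition~\ref{pr:ZCN}, which is a (free abelian) RAAG and so again of type $F$ by the same theorem.

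The remaining and most delicate point is condition~(2): the image of $\out^{[l]}(A_\G;\calg^t)$ under the projection $P\colon\out(A_\G;\calg)\to\out(\mathbb{F}_m)$ must be torsion-free. Here $P$ is induced by the quotient $A_\G\to A_\G/\ngenby{A_{\Delta_1},\dots,A_{\Delta_k}}\cong\mathbb{F}_m$ that kills each free factor, and the surviving $m$ generators are exactly the isolated vertices of $\G$ lying in no $\Delta_i$. I would argue that $H_1(\mathbb{F}_m;\Z/l\Z)$ is the quotient of $H_1(A_\G;\Z/l\Z)$ by the span of the classes coming from the $\Delta_i$, and that this quotient action is compatible with $P$. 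An element of $\out^{[l]}(A_\G;\calg^t)$ acts trivially on $H_1(A_\G;\Z/l\Z)$, hence trivially on the quotient, so its image under $P$ lies in $\out^{[l]}(\mathbb{F}_m)$, the level-$l$ principal congruence subgroup of $\out(\mathbb{F}_m)$, which is torsion-free for $l\geq 3$ (the fact invoked for RAAGs via Toinet specializes to the free case). This establishes condition~(2).

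With the finite-index hypothesis and conditions~(1) and~(2) all verified, Proposition~\ref{pr:GLtypef} yields that $\out^{[l]}(A_\G;\calg^t)$ has a finite classifying space, that is, is of type $F$, completing the proof. The only genuine content beyond routine verification is the identification of the image in $\out(\mathbb{F}_m)$ with the torsion-free group $\out^{[l]}(\mathbb{F}_m)$; I expect this to be the main obstacle, though it is essentially a compatibility check between the abelianization of $A_\G$ and that of its free quotient.
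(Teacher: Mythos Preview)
Your proposal is correct and follows essentially the same approach as the paper: apply Proposition~\ref{pr:GLtypef} with $\out^1=\out^{[l]}(A_\G;\calg^t)$, verify condition~(1) via Theorem~\ref{th:CharneyDavis} (and Proposition~\ref{pr:ZCN} for the centers), and verify condition~(2) by observing that the projection to $\out(\mathbb{F}_m)$ lands in the torsion-free group $\out^{[l]}(\mathbb{F}_m)$. The paper's argument is terser but identical in substance; your additional bookkeeping (the finite-index check and the compatibility of abelianizations) is accurate and makes explicit what the paper leaves to the reader.
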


\subsection{Proof of Theorem~C} \label{ss:proofofF}

Combining all the above work allows use to prove the following theorem, which in particular implies Theorem~C from the introduction:

\begin{theorem}
Let $\calg$ and $\calh$ be sets of proper special subgroups of $A_\G$. Then the principal congruence subgroup $\outghp$ is type F.
\end{theorem}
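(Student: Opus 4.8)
The plan is to prove this by induction on the complexity of the triple $(\G,\calg,\calh)$, exactly mirroring the complexity ordering and the case analysis set up in the proof of Theorem~\ref{pr:dismantleoutAGamma}. The key tool is Corollary~\ref{co:exact_sequences_preserve_F}: type F is preserved under extensions by type F groups. So at each stage I would find a short exact sequence exhibiting $\outghp$ as an extension of a lower-complexity piece by another lower-complexity piece, and invoke the inductive hypothesis on both. The base cases are when all restriction maps are trivial, where I would appeal to the explicit building blocks assembled in Section~\ref{ss:buildingblocks}.

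First I would reduce, using Corollary~\ref{co:outl_from_out0} and the saturation discussion, to the situation where $\calg$ is saturated with respect to $(\calg,\calh)$ and where the invariant subgroups with trivial action agree between the level-$0$ and level-$l$ versions; this ensures the five-case analysis of Section~\ref{s:decomposition} applies unchanged at level $l$. Then I would split into the inductive step and the base cases. For the inductive step, suppose some $A_\Delta\in\calg$ has nontrivial restriction map $R_\Delta$ on $\outghp$. Theorem~\ref{th:pcongrestriction} gives the exact sequence
\[
1\to \out^{[l]}(A_\G;\calg,(\calh\cup\{A_{\Delta}\})^t) \to \outghp \stackrel{R_{\Delta}}{\longrightarrow} \out^{[l]}(A_{\Delta};\calg_{\Delta},\calh_{\Delta}^t)\to 1.
\]
Both the kernel (same graph, strictly larger trivial-action set, hence strictly lower complexity) and the image (graph $\Delta$ with fewer vertices) have strictly lower complexity, so by induction both are type F, and Corollary~\ref{co:exact_sequences_preserve_F} finishes this case.

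For the base cases, where every restriction map on $\outghp$ is trivial, I would run through the five subcases just as in Theorem~\ref{pr:dismantleoutAGamma}, but now feeding in the finiteness inputs. When $\G$ is $\calg$-disconnected, Proposition~\ref{co:disconnected_but_G_connected_type_f} identifies $\outghp$ as the level-$l$ principal congruence subgroup of a Fouxe--Rabinovitch group given by a free factor decomposition into special subgroups, which is type F by Proposition~\ref{pr:FR_group_given_by_FF_system_is_F}. When $\G$ is $\calg$-connected (disconnected or connected with trivial center), the group is finitely generated free abelian (Propositions~\ref{co:disconnected_G_disconnected_type_F} and~\ref{co:connected_trivial_centre_type_f}), which is obviously type F. When $\G$ is connected with proper nontrivial center, Proposition~\ref{pr:connected_nontrivial_centre_exact_sequence} gives a projection $P_\Delta$ onto a lower-complexity group with finitely generated free-abelian kernel, so induction plus Corollary~\ref{co:exact_sequences_preserve_F} applies. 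Finally, when $A_\G\cong\mathbb{Z}^n$, Proposition~\ref{pr:no_restriction_gl} together with Proposition~\ref{pr:borel-serre_type_f} shows $\outghp$ is the level-$l$ congruence subgroup of an arithmetic group, hence type F via the Borel--Serre bordification.

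The main obstacle, and the point requiring genuine care, is the free-abelian (GL) base case: one must confirm that the arithmetic input is being applied at the correct level. Specifically, Proposition~\ref{pr:no_restriction_gl} describes $\out^0(A_\G;\calg^t)$ as an extension with quotient $\glm$, and I would need the level-$l$ version of this extension, then combine the free-abelian kernel (type F trivially at level $l$) with the image $\out^{[l]}(\mathbb{Z}^m;\ldots)$, invoking Proposition~\ref{pr:borel-serre_type_f} for the latter and Corollary~\ref{co:exact_sequences_preserve_F} to assemble. A secondary subtlety is verifying throughout that passing from $\out^0$ to $\out^{[l]}$ preserves the structural facts used in each subcase; this is exactly what Proposition~\ref{pr:connected_nontrivial_centre_exact_sequence}, Theorem~\ref{th:pcongrestriction}, and the remarks following Corollary~\ref{co:outl_from_out0} are designed to guarantee, so the induction closes cleanly.
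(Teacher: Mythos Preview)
Your proposal is correct and follows essentially the same approach as the paper: induction on the complexity $(n,m)$, using Theorem~\ref{th:pcongrestriction} for the inductive step and the five-case analysis with Propositions~\ref{co:disconnected_but_G_connected_type_f}, \ref{co:disconnected_G_disconnected_type_F}, \ref{co:connected_trivial_centre_type_f}, \ref{pr:connected_nontrivial_centre_exact_sequence}, \ref{pr:FR_group_given_by_FF_system_is_F}, and \ref{pr:borel-serre_type_f} for the base cases. Two minor remarks: the paper defines the complexity $m$ using subgroups on which $\outghp$ (rather than $\outgh$) acts trivially, and only invokes Corollary~\ref{co:outl_from_out0} once inside the base case rather than upfront; and your ``main obstacle'' in the abelian case is not a real obstacle, since Proposition~\ref{pr:borel-serre_type_f} applies directly to $\out^{[l]}(A_\G;\calg^t)$ without any need to pass through the extension of Proposition~\ref{pr:no_restriction_gl}.
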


\begin{proof}
For each triple $(\G,\calg,\calh)$, we use a similar complexity $(n,m)$ as in the proof of Theorem~\ref{pr:dismantleoutAGamma},  with $n(\G,\calg,\calh)=|V(\G)|$ and $m(\G,\calg,\calh)=2^{|V(\G)|}-r$. The only difference in this proof is that we take $r$ to be the number of special subgroups $A_\Delta$ in $A_\Gamma$ such that $\outghp$ (rather than $\outgh$) acts trivially on $A_\Delta$. We proceed by induction using the lexigraphic order on complexities.

Fix a triple $(\G,\calg,\calh)$ of complexity $(n,m)$ and by induction assume that the group $\out^{[l]}(A_\G,\calg',(\calh')^t)$ has a finite classifying space for all triples $(\G',\calg',\calh')$ with strictly lower complexity. In the case that there is a nontrivial restriction map $R_\Delta$ on $\out^{[l]}(A_\G,\calg,\calh^t)$, the congruence subgroup version of the exact sequence given in Theorem~\ref{th:pcongrestriction} gives us:
\[ 1 \to \out^{[l]}(A_\G,\calg,(\calh \cup \{A_\Delta\})^t) \to \outghp \to \out^{[l]}(A_\Delta, \calh_\Delta, \calg_\Delta) \to 1. \]
The left and right sides are of lower complexity, so by induction they have finite classifying spaces. Hence $\outghp$ has a finite classifying space by Corollary~\ref{co:exact_sequences_preserve_F}. We may therefore assume that each restriction map $R_\Delta$ on the group $\outghp$ is trivial. By Corollary~\ref{co:outl_from_out0}, we can extend $\calh$ without changing $\outghp$ so that every restriction map is also trivial on $\outgh$. We go back to the cases looked at previously: 

\begin{itemize}
 \item $\G$ is disconnected and $\G$ is $\calg$-disconnected.
 \item $\G$ is disconnected and $\G$ is $\calg$-connected.
 \item $\G$ is connected and $Z(\G)$ is empty.
 \item $Z(\G)$ is nonempty, but $\G$ is not complete.
 \item $\G$ is complete.
\end{itemize}

In the first case, $\outgh$ is a Fouxe-Rabinovitch group (Proposition~\ref{co:disconnected_but_G_connected_type_f}) and the work of Guirardel--Levitt (Proposition~\ref{pr:FR_group_given_by_FF_system_is_F}) implies that $\outghp$ is type F.
In the second and third cases $\outgh$ is a finitely generated free abelian group by Proposition~\ref{co:disconnected_but_G_connected_type_f} and Proposition~\ref{co:connected_trivial_centre_type_f}, respectively.  Hence $\outghp$ is also finitely generated free abelian, and is of type F.
In the final case, since $A_\G \cong \mathbb{Z}^n$, we know $\outghp$ is an arithmetic group and type F follows from the work of Borel and Serre in Proposition~\ref{pr:borel-serre_type_f}. 

Only the fourth case remains. We can assume that $\calg$ is saturated with respect to $(\calg,\calg)$ and  $\outgh=\out^0(A_\G,\calg^t)$. Proposition~\ref{pr:connected_nontrivial_centre_exact_sequence} gives us the exact sequence
\[ 1 \to A \to \out^{[l]}(A_\G;\calg^t) \xrightarrow{P_{\Delta}}  \out^{[l]}(A_\Delta,\calg_\Delta^t) \to 1 ,\]
where $A$ is a finitely generated free abelian group. By our inductive hypothesis, the group $\out^{[l]}(A_\Delta;\calg_\Delta^t)$ is type F ($\Delta$ is a proper subgraph of $\G$), so the whole group $\out^{[l]}(A_\G;\calg^t)$ is type F also.
\end{proof}

\section{Computation and examples} \label{s:examples}

We start this section by listing a few special cases where relative (outer) automorphism groups of RAAGs have previously been considered.
After that, we give concrete examples of triples $(\G,\calg,\calh)$ that illustrate some of the phenomena in the paper.
In the final subsection, we give some new computations of virtual cohomological dimension of outer automorphism groups of RAAGs.

\subsection{Previously studied relative automorphism groups}\label{ss:previously}
We believe that the study of $\out(A_\G;\calg,\calh^t)$ in full generality is new in the present paper.
However, many special cases have been studied before.

\subsubsection{Relative linear groups}
Suppose $F$ is a subspace arrangement in $\mathbb{Q}^n$ where each subspace in $F$ is the span of a subset of the standard basis $e_1,\dotsc, e_n$.
Then the stabilizer $\gln{}_F$ is $\out(A_\G;\calg)$, where $\G$ is the complete graph on $n$ vertices, and $\calg$ is the collection of special subgroups corresponding to the subspaces in $F$.
After an appropriate reordering of the basis, every such $\gln{}_F$ is a group of block-triangular matrices.
Some familiar special cases include:
\begin{itemize}
\item The affine group, when $F=\{\genby{e_1,\dotsc,e_{n-1}}\}$.
\item The group of upper-triangular matrices in $\gln$, when 
\[F=\{\genby{e_1},\genby{e_1,e_2},\dotsc,\genby{e_1,\dotsc,e_{n-1}}\}.\]
When $n=3$, this group contains the 3-dimensional Heisenberg group as a subgroup of index $8$.
\item More generally, the subgroup of $\gln$ stabilizing a flag $F$.
\item The subgroup of $\gln$ stabilizing a direct product decomposition of $\Z^n$, when $F$ is the list of factors in the decomposition.
\end{itemize}
Of course, stabilizers of flags and subspace arrangements can be considered in much more general settings (working over other rings, without requiring special subgroups, in classical linear groups, etc.).

\subsubsection{Relative automorphism groups of free groups}
There are a few special cases that have previously been studied.
Here we specialize to $A_\G=\mathbb{F}_n=\genby{x_1,\dotsc,x_n}$, so $\G$ is the edgeless graph on $n$ vertices.

If $\calg=\{\genby{x_1},\genby{x_{2}},\dotsc,\genby{x_k}\}$ for some $k$, then $\aut(A_\G;\calg^t)$ is a \emph{partially symmetric automorphism group} of $\mathbb{F}_n$.
These groups have been studied by Bux--Charney--Vogtmann~\cite{BCV}, Day--Putman~\cite{DPBES}, Jensen--Wahl~\cite{JW}, Wade~\cite{WadeFolding} and others.
In the special case that $k=n$, this is called the \emph{pure symmetric automorphism group} or the \emph{basis-conjugating automorphism group}, and has been studied by McCool~\cite{McCool} and Collins~\cite{Collins}.

In another direction, if $\calg$ is a collection of disjoint special subgroups of $\mathbb{F}_n$, then $\calg$ is a \emph{free factor system} for $\mathbb{F}_n$, and $\out(A_\G;\calg)$ is the stabilizer of the free factor system $\calg$.
Free factor systems and their stabilizers have been considered by Bestvina--Feighn--Handel~\cite{BFH}, and Handel--Mosher~\cite{2013arXiv1302.2681H}, and others.
More generally, Handel--Mosher consider \emph{subgroup systems} and their stabilizers.
A subgroup system is a collection $\calg$ of finite-rank subgroups of $\mathbb{F}_n$, considered up to conjugacy, and the stabilizer of $\calg$ is simply $\out(A_\G;\calg)$ (in our notation).
Both free factor systems and subgroup systems are usually studied without any requirement that the subgroups be conjugate to special subgroups.

\subsubsection{Automorphisms of free products}

As noted above, Fouxe-Rabinovitch groups are examples of relative automorphism groups.
Let $G$ be a group with a finite Grushko decomposition $G=G_1*\dotsm*G_m*\mathbb{F}_k$, and let $\calg=\{G_1,\dotsc,G_m\}$.
Then $\aut(G;\calg)$ and $\aut(G;\calg^t)$ both play a role in the study of $\aut(G)$.
By giving a list of generators, Fouxe-Rabinovitch defined a group in~\cite{FR} that coincides with $\aut(G;\calg^t)$.
These groups have come up again in more recent work, including Carette~\cite{Carette}, Guirardel--Levitt~\cite{GL2,GL}, and McCullough--Miller~\cite{MM}.
Any RAAG with disconnected defining graph has a nontrivial Grushko decomposition, with special subgroups as free factors.
Therefore these Fouxe-Rabinovitch groups are relative automorphism groups of RAAGs.

\subsubsection{Other relative automorphism groups of RAAGs}
We recall that $\aut^0(A_\G)$ and $\out^0(A_\G)$ are relative (outer) automorphism groups, as shown in Proposition~\ref{pr:out0classification}.

For a general RAAG $A_\G$, set $\calg=\{\genby{x} | x\in V(\G)\}$.
Then $\aut(A_\G;\calg^t)$ is the \emph{pure symmetric automorphism group} and $\out(A_\G;\calg^t)$ is the \emph{pure symmetric outer automorphism group}.
These groups have been studied by Day--Wade~\cite{DW}, Koban--Piggott~\cite{KP}, and Toinet~\cite{ToinetPSA}.

In a slightly different direction, Duncan--Remeslennikov \cite{DuncanR3} show that the subgroup of $\aut(A_\G)$ generated by inversions and transvections is equal to the group of automorphisms preserving each $A_{\geq v}$ setwise (rather than up to conjugacy). They find a presentation for this group, which is described as $\rm{St}(\mathcal{K})$ in \cite{MR2999373,DuncanR3}.

Another example, perhaps more surprising, is that the untwisted outer automorphism group of a RAAG, as defined by Charney--Stambaugh--Vogtmann~\cite{MR3626599}, turns out to be virtually a relative outer automorphism group.
The \emph{untwisted} subgroup $U(A_\G)$ of $\out(A_\G)$ is defined to be the subgroup generated by
\begin{itemize}
\item graphic automorphisms,
\item inversions,
\item partial conjugations, and
\item non-adjacent transvections: automorphisms $\rho^v_w$ where $v\notin\lk(w)$.
\end{itemize}
So $U(A_\G)$ is generated by the Laurence generating set with the adjacent transvections taken out. 
Charney--Stambaugh--Vogtmann~\cite{MR3626599} have shown that $U(A_\G)$ has a proper action on a space, called an \emph{Outer space} for $A_\G$. 
Let $U^0(A_\G)$ be the intersection of the untwisted subgroup with $\out^0(A_\G)$. 
We show below that $U^0(A_\G)$ is a relative automorphism group. 
For any vertex $v$, let $A_{\geq v}^{{\text{NA}}}$ be the subgroup generated by $v$ and the vertices $w$ such that $v \leq w$ and $v$ is not adjacent to $w$. 
Let $\mathcal{G}_{\geq}^{{\text{NA}}}$ be the set of all proper special subgroups of the form $A_{\geq v}^{{\text{NA}}}$.

\begin{proposition}
The group $U^0(A_\G)$ is equal to $\out^0(A_\G;\mathcal{G}_{\geq}^{{\text{NA}}})$.
\end{proposition}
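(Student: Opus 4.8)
The plan is to follow the proof of Proposition~\ref{pr:out0classification} and establish the two inclusions
\[U^0(A_\G)\subseteq\out^0(A_\G;\mathcal{G}_{\geq}^{{\text{NA}}})\subseteq U^0(A_\G).\]
Both groups sit inside $\out^0(A_\G)$, so I would work throughout inside $\out^0(A_\G)$ and only compare which of the two conditions—being untwisted, versus preserving every $A_{\geq v}^{{\text{NA}}}$ up to conjugacy—an element satisfies.

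For the inclusion $\out^0(A_\G;\mathcal{G}_{\geq}^{{\text{NA}}})\subseteq U^0(A_\G)$ I would apply Theorem~\ref{th:generators} with $\calg=\mathcal{G}_{\geq}^{{\text{NA}}}$ and $\calh=\varnothing$: this group is generated by the inversions, transvections, and extended partial conjugations it contains. Inversions and extended partial conjugations are untwisted by definition, so the only point is that every transvection $\rho^w_v$ in the group is non-adjacent. By Proposition~\ref{pr:gensrelgh} such a transvection satisfies $v\leq_{\mathcal{G}_\geq^{{\text{NA}}}}w$; were $v$ adjacent to $w$, then since $v\leq w$ we would have $v\in A_{\geq v}^{{\text{NA}}}$ but $w\notin A_{\geq v}^{{\text{NA}}}$, and (assuming $A_{\geq v}^{{\text{NA}}}$ is proper, which holds whenever $v$ has a neighbour) the subgroup $A_{\geq v}^{{\text{NA}}}\in\mathcal{G}_\geq^{{\text{NA}}}$ would witness $v\not\leq_{\mathcal{G}_\geq^{{\text{NA}}}}w$. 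Hence $v\notin\lk(w)$, the transvection is untwisted, and all generators lie in $U(A_\G)$.

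For the reverse inclusion I would prove an untwisted analogue of Proposition~\ref{p:gauto}, namely that each untwisted generator preserves every $A_{\geq v}^{{\text{NA}}}$ up to conjugacy. Inversions preserve each $A_{\geq v}^{{\text{NA}}}$ setwise. For a non-adjacent transvection $\rho^w_u$ with $u$ a generator of $A_{\geq v}^{{\text{NA}}}$ one checks that $w$ is again such a generator: the only issue is adjacency of $v$ and $w$, and $w\in\lk(v)$ would give $w\in\st(u)$ and hence $w\in\lk(u)$, contradicting that $\rho^w_u$ is non-adjacent. For an extended partial conjugation $\pi^x_K$ I would adapt the argument of Proposition~\ref{p:gauto}: every generator $w\neq v$ of $A_{\geq v}^{{\text{NA}}}$ satisfies $\lk(v)\subseteq\st(w)$ and is non-adjacent to $v$, so all generators of $A_{\geq v}^{{\text{NA}}}$ share a common neighbour $u\in\lk(v)$; this forces $A_{\geq v}^{{\text{NA}}}$ into $\st(x)$ together with a single component of $\G-\st(x)$, so $\pi^x_K$ either fixes $A_{\geq v}^{{\text{NA}}}$ or conjugates it entirely by $x$.

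To pass from generators to an arbitrary $\Phi\in U^0(A_\G)$ I would, as in the final inclusion of Proposition~\ref{pr:out0classification}, shuffle a word for $\Phi$ in the untwisted generators into the form $\sigma\phi'$, where $\sigma$ is a graph symmetry and $\phi'$ is a product of inversions, non-adjacent transvections, and extended partial conjugations (shuffling a symmetry past a non-adjacent transvection again yields a non-adjacent transvection). Since $\Phi\in\out^0(A_\G)$ we have $\overline\rho(\Phi)=1$, so $\sigma$ permutes vertices within their $\leq_\G$--equivalence classes; I would then argue that such a $\sigma$ sends $A_{\geq v}^{{\text{NA}}}$ to $A_{\geq\sigma(v)}^{{\text{NA}}}$ and that these agree up to conjugacy. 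This last point is the main obstacle: for a free (non-adjacent) equivalence class the symmetry of the links forces $A_{\geq\sigma(v)}^{{\text{NA}}}=A_{\geq v}^{{\text{NA}}}$, but equivalence classes spanning a clique require separate, delicate treatment, and it is exactly here that the interaction between graph symmetries and the non-adjacent domination order must be handled with care.
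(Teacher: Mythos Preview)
Your inclusion $\out^0(A_\G;\mathcal{G}_{\geq}^{{\text{NA}}})\subseteq U^0(A_\G)$ is exactly the paper's argument. For the reverse inclusion your verification that inversions and non-adjacent transvections preserve each $A_{\geq v}^{{\text{NA}}}$ also agrees with the paper (with the roles of $u$ and $v$ relabeled). There is, however, a genuine gap in your treatment of partial conjugations: the existence of a common neighbour $u\in\lk(v)$ only pins $A_{\geq v}^{{\text{NA}}}$ to a single component of $\G-\st(x)$ when $u\notin\st(x)$, i.e.\ when $x$ does not dominate $v$. The paper splits into two cases here: if $v\not\leq x$ one chooses $u\in\lk(v)\setminus\st(x)$ and argues as you do; if $v\leq x$ then, since $x\notin A_{\geq v}^{{\text{NA}}}$, the domination must be adjacent, so $x\in\lk(v)$ and every generator of $A_{\geq v}^{{\text{NA}}}$ lies in $\st(x)$, making $\pi^x_K$ act trivially. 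You need both cases.

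The main structural difference is that the paper does \emph{not} carry out your shuffling step at all. After checking that every inversion, non-adjacent transvection, and extended partial conjugation lies in $\out^0(A_\G;\mathcal{G}_{\geq}^{{\text{NA}}})$, the paper simply declares the converse inclusion proved---in effect treating $U^0(A_\G)$ as the subgroup of $\out^0(A_\G)$ generated by those elements, without the graph symmetries. Your worry about a graph symmetry that swaps two vertices in an \emph{abelian} equivalence class is therefore something the paper's proof does not address; it is not that the paper has a slicker way around it, it is that the paper does not engage with the issue. So your extra step is not redundant caution: it is exactly where the paper's argument is thin, and the obstacle you flag (e.g.\ for $\G$ a single edge, where the swap lies in $U^0(A_\G)$ but not in $\out^0(A_\G;\{\langle v\rangle,\langle w\rangle\})$) is real.
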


\begin{proof}
If $\rho_{v}^w$ is an adjacent transvection, then as $v$ is contained in $A_{\geq v}^{{\text{NA}}}$ and $w$ is not, this transvection is not in $\out^0(A_\G;\mathcal{G}_{\geq}^{{\text{NA}}})$ by Proposition~\ref{pr:gensrelgh}. As $\out^0(A_\G;\mathcal{G}_{\geq}^{{\text{NA}}})$ is generated by the inversions, transvections, and extended partial conjugations it contains (Theorem~D), and all of these elements lie in $U^0(A_\G)$, it follows that $\out^0(A_\G;\mathcal{G}_{\geq}^{{\text{NA}}})$ is contained in $U^0(A_\G)$. 
For the converse, we want to show that $\out^0(A_\G;\mathcal{G}_{\geq}^{{\text{NA}}})$ contains every other inversion, transvection, and extended partial conjugation in $\out^0(A_\G)$. 
This is clear for inversions. Now suppose that $v \leq w$ and $v$ and $w$ are non-adjacent. 
Suppose $v$ is contained in $A_{\geq u}^{{\text{NA}}}$ for some vertex $u$. If $u=w$ then clearly $w \in A_{\geq u}^{{\text{NA}}}$, also. Suppose $u \neq w$. As $\lk(u) \subset \st(v)$ and $w$ is not adjacent to $v$, the vertex $w$ is also not adjacent to $u$. As $u \leq v \leq w$, it follows that $u \leq w$, also.  Hence $w$ dominates $u$ non-adjacently, and therefore $w \in A_{\geq u}^{{\text{NA}}}$. 
As every element of $\mathcal{G}_{\geq}^{{\text{NA}}}$ containing $v$ also contains $w$, and the transvection $\rho^w_v$ is contained in $\out^0(A_\G;\mathcal{G}_{\geq}^{{\text{NA}}})$. 

Now suppose that $x$ is a vertex and $A_{\geq v}^{{\text{NA}}}$ is an element of $\mathcal{G}_{\geq}^{{\text{NA}}}$ that does not contain $x$. 
We shall show that $A_{\geq v}^{{\text{NA}}}$ intersects at most one component of $\G - \st(x)$, so is preserved by every partial conjugation $\pi^x_K$. This implies that $\out^0(A_\G;\mathcal{G}_{\geq}^{{\text{NA}}})$ contains all partial conjugations and will complete the proof. 
There are two cases. 
If $x$ does not dominate $v$ then there exists $u \in \lk(v)$ which is in $\G -\st(x)$. As $u$ is adjacent to every element of $A_{\geq v}^{{\text{NA}}}$, this group intersects only the component of $\G -\st(x)$ containing $u$. 
Otherwise, $x$ dominates $v$ adjacently. As $x \in \lk(v)$, every vertex in $A_{\geq v}^{{\text{NA}}}$ is adjacent to $x$ and the group is contained in $A_{\st(x)}$. 
In this case, every partial conjugation $\pi^x_K$ acts trivially on $A_{\geq v}^{{\text{NA}}}$.
\end{proof}

\subsection{Illustrative examples}

\subsubsection{Excluded transvections and partial conjugations}
\begin{figure}
\begin{centering}
\includegraphics{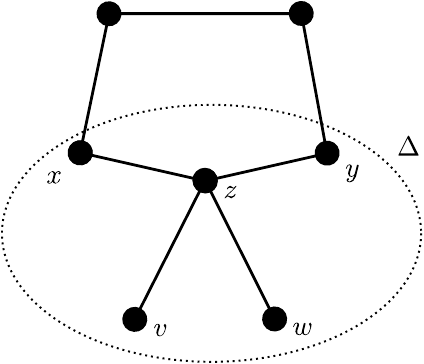}
\end{centering}\caption{An example graph for looking at the peripheral structure in the absolute case: here $A_\Delta$ is preserved by the whole of $\out^0(A_\G)$.}\label{fig:luckyjim}
\end{figure} 

\begin{example}\label{ex:luckyjim}
For the graph given in Figure~\ref{fig:luckyjim}, the subgraph $A_\Delta$ generated by $v,w,x,y,$ and $z$ is invariant under $\out^0(A_\G)$. 
This is because it is the star of a maximal equivalence class of vertices, and 
Charney--Vogtmann~\cite{CVSubQuot} showed these are invariant.
The peripheral subgroups $\mathcal{P}_\Delta$ given by Proposition~\ref{pr:easier_way_to_find_peripheral_structure} are $\{x\}$, $\{y\}$, and $\{x,y\}$. Hence the image of the restriction map $R_\Delta$ is \[ \out^0(A_\Delta;\mathcal{P}_\Delta) =\out^0(A_\Delta;\{\genby{x},\genby{y},\genby{x,y}\}).\] 
The group $A_\Delta$ is isomorphic to $\mathbb{F}_4 \oplus \mathbb{Z}$, and this relative automorphism group contains all the inversions in $\out^0(A_\Delta)$, but a proper subset of the extended partial conjugations and transvections. 

For example, the group $\out^0(A_\Delta)$ contains the partial conjugations $\pi^v_x$ and $\pi^v_y$. 
These are not in the relative automorphism group as they do not preserve $\genby{x,y}$. 
However the product $\pi^v_x\pi^v_y$ is an extended partial conjugation in $\out^0(A_\Delta;\mathcal{P}_\Delta)$.

Likewise, we can see that $\out^0(A_\Delta)$ contains the transvection $\rho^v_x$, but $\out^0(A_\G)$ and $\out^0(A_\Delta;  \calp_\Delta)$ do not.
Certainly $v\geq_\Delta x$, but $v\not\geq_\Gamma x$, and $v\not\geq_{\calp_\Delta} x$.
\end{example}

\subsubsection{Subgroups of groups in $\calh$ and amalgamated restriction maps}

There is a temptation to economize by amalgamating several restriction maps into a single map.
However, the images of amalgamated restriction maps can be difficult to describe.
This is related to the phenomenon of including groups $A_{\Delta-\{v\}}$ in $\calg$ when $A_\Delta\in\calh$, as in Proposition~\ref{pr:easier_way_to_find_peripheral_structure}.

\begin{figure}
\begin{centering}
\includegraphics{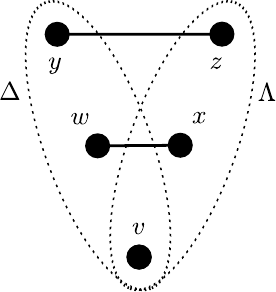}
\end{centering}\caption{An example illustrating the necessity of taking subgroups of groups in $\calh$.}\label{fig:taurus}
\end{figure}

\begin{example}\label{ex:taurus}
Let $\G$ be the graph in Figure~\ref{fig:taurus}, with $\Delta$ and $\Lambda$ the indicated subgraphs.
We set $\Xi=\Delta\cap\Lambda$, and set $\calg=\{A_\Delta,A_\Lambda\}$ and set $\calh=\{A_\Xi\}$.
The group $\outgh$ has an amalgamated restriction homomorphism
\[R\colon \outgh \to \out^0(A_\Delta;\calg_\Delta\cup \calp_\Delta,\calh_\Delta^t)\oplus \out^0(A_\Lambda;\calg_\Lambda\cup \calp_\Lambda,\calh_\Lambda^t).\]
The restriction maps to each factor are surjective, however, $R$ is not surjective.
For example, consider the partial conjugations $\pi^v_x$ and $\pi^v_w$.
We can see that $([\pi^v_w],[\pi^v_x])$ is in the image of $R$, since $R([\pi^v_K])=([\pi^v_w],[\pi^v_x])$, where $K$ is the subgraph on $\{w,x\}$.
In particular, $[\pi^v_x]$ is in $\out^0(A_\Lambda;\calg_\Lambda\cup \calp_\Lambda,\calh_\Lambda^t)$.
However, $(1,[\pi^v_x])$ is not in the image of $R$: if $\phi$ were an automorphism mapping to this pair, the requirements that $\phi(w)$ commute with $\phi(x)$, $\phi(y)$ commute with $\phi(z)$, $v\notin\crsupp(\phi(wy))$, and $v\in\crsupp(\phi(xz))$ are inconsistent.

To see what goes wrong, consider applying the restriction maps $R_\Delta$ and $R_\Lambda$ serially.
We have
\[R_\Delta\colon \outgh \to \out^0(A_\Delta;\calg_\Delta\cup \calp_\Delta,\calh_\Delta^t);\]
it is surjective and its kernel is $\out^0(A_\G; \calg, \{A_\Xi,A_\Delta\}^t)$.
Now, to correctly compute the image of $R_\Lambda$ on this kernel, we must add $A_{\Delta-\{v\}}$, $A_{\Delta-\{x\}}$, and $A_{\Delta-\{z\}}$ to $\calg$.
Let $\calg'$ denote this expanded version of $\calg$.
Then we have a surjective map
\[R_\Lambda\colon \out^0(A_\G; \calg, \{A_\Xi,A_\Delta\}^t) \to \out^0(A_\Lambda;\calg'_\Lambda\cup \calp'_\Lambda,\calh_\Lambda^t),\]
where $\calp'_\Lambda$ is computed using $\calg'$.
Then $\genby{x,z}$ is in $\calp'_\Lambda$, since this is $\genby{N_{(\calg')^v}(\{w,y\})\cap \Lambda}$, and $\{w,y\}$ is $
(\calg')^v$-connected since $A_{\Delta-\{v\}}\in\calg'$.
Then $[\pi^v_x]$ is not in the image of $R_\Lambda$, since it does not act trivially on $\genby{x,z}$.
Notice that $[\pi^v_x]$ being in the image of $R_\Lambda$ on the kernel of $R_\Delta$ is the same as $(1,[\pi^v_x])$ being in the image of $R$.
This explains why $(1,[\pi^v_x])$ is not in the image of $R$.
\end{example}

\subsubsection{Absolute automorphism groups leading to relative ones}
For any pair $(\G,\calg)$, we can find an extension $\Lambda$ of $\G$ where $\G$ is invariant in $\Lambda$ and
\[R_\G\colon \out^0(A_\Lambda) \to \out^0(A_\G;\calg)\]
is surjective.
In fact, we can take this $\Lambda$ to be the relative cone graph $\widehat \G$ of $\G$ with respect to $\calg$, as in the proof of Proposition~\ref{pr:generating1}.
By taking kernels under further restriction maps, we can get the group $\outgh$ for any $\calh\subset\calg$ we please.
This means that for every triple $(\G,\calg,\calh)$, there is an absolute automorphism group $\out^0(A_\Lambda)$, where $\outgh$ arises in the study of $\out^0(A_\Lambda)$ using restriction maps.
In particular, this means that all of the base cases in the proof of Theorem~\ref{th:vf} actually occur.

\subsection{Computations of virtual cohomological dimension}\label{ss:vcdeg}
Recall that the \emph{cohomological dimension} $\cd(G)$ of $G$ is the supremum of degrees in which $G$ has nontrivial group cohomology, over all coefficient modules.
The \emph{virtual cohomological dimension} of $G$, denoted $\vcd(G)$, is the cohomological dimension of any torsion-free finite index subgroup of $G$.

We review a few facts about these dimensions.
\begin{itemize}
\item 
If $G$ has a classifying space of dimension $n$, then $\cd(G)\leq n$.
\item
Suppose we have an exact sequence
\[1\to H \to G\to Q \to 1.\]
Then $\cd(G)\leq \cd(H)+\cd(Q)$.
\item If $H\leq G$, then $\cd(H)\leq\cd(G)$.
\item If $N$ is finitely-generated nilpotent, then $\vcd(N)$ is equal to the sum of the $\mathbb{Q}$-ranks of the subquotient factors in the lower central series of $N$.
\end{itemize}
These can be found in standard references, such as Brown~\cite{Brown}.
We also need the following theorem, which is closely related to Theorem~\ref{th:F_from_rigid_actions}.
\begin{theorem}[Geoghegan~\cite{MR2365352}, Theorem~7.3.3]\label{th:vcdbyaction}
Suppose a group $G$ acts cocompactly by rigid homeomorphisms on a contractible CW complex $X$ such the stabilizer of each $i$-cell has cohomological dimension less than or equal to $d_i$.
Then the cohomological dimension of $G$ is less than or equal to $\max_i(i+d_i)$.
\end{theorem}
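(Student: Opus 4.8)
The plan is to compute $\cd(G)$ through the cellular chain complex of $X$ together with the equivariant cohomology spectral sequence attached to the action. First I would record two consequences of the hypotheses: cocompactness means $X/G$ is a finite complex, so $X$ has only finitely many $G$-orbits of cells and in particular is finite-dimensional, say of dimension $N$; and contractibility means the augmented cellular chain complex
\[ 0 \to C_N(X) \to \dotsm \to C_1(X) \to C_0(X) \to \Z \to 0 \]
is exact, hence is a finite-length resolution of the trivial module $\Z$ by $\Z G$-modules.

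The next step is to identify these modules, and this is where rigidity does the essential work. If the setwise stabilizer $G_\sigma$ of a $p$-cell $\sigma$ fixes $\sigma$ pointwise, then it fixes a chosen orientation of $\sigma$, so it acts trivially on the generator $[\sigma]$ with no orientation sign. Choosing one cell in each orbit therefore gives an isomorphism of $\Z G$-modules $C_p(X) \cong \bigoplus_{\sigma} \Z[G/G_\sigma]$, a finite sum over orbit representatives of $p$-cells. I would then run the hypercohomology spectral sequence that computes $\mathrm{Ext}^*_{\Z G}(\Z, M) = H^*(G;M)$ from this resolution. Its $E_1$-page is
\[ E_1^{p,q} = \mathrm{Ext}^q_{\Z G}(C_p(X), M) \cong \prod_{\sigma} H^q(G_\sigma; M), \]
where the isomorphism combines the module description above with Shapiro's lemma $\mathrm{Ext}^q_{\Z G}(\Z[G/G_\sigma], M) \cong H^q(G_\sigma; M)$; the product is finite by cocompactness, and the spectral sequence is first-quadrant, so it converges to $H^{p+q}(G; M)$. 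Since $\cd(G_\sigma) \le d_p$ for every $p$-cell stabilizer, we get $E_1^{p,q} = 0$ whenever $q > d_p$, and therefore $E_1^{p,q} = 0$ whenever $p + q > \max_i(i + d_i)$. This vanishing passes to $E_\infty$, so $H^n(G; M) = 0$ for every coefficient module $M$ and every $n > \max_i(i + d_i)$, which is exactly $\cd(G) \le \max_i(i + d_i)$.

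The one genuinely load-bearing point, and the main thing to be careful about, is the passage from the setwise stabilizer hypothesis to the clean permutation-module decomposition of $C_p(X)$: this is precisely the role of rigidity. Without it one must account for cells whose stabilizer reverses orientation, in which case $C_p(X)$ acquires sign-twisted summands and the Shapiro identification breaks; the standard remedy is to replace $X$ by its barycentric subdivision to make the action rigid, exactly as in the proof of Corollary~\ref{co:typeFaction}. Everything else is routine bookkeeping with a bounded first-quadrant spectral sequence.
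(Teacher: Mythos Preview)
The paper does not prove this theorem; it is quoted as Theorem~7.3.3 from Geoghegan's book and used as a black box. Your proposal is the standard equivariant-cohomology spectral sequence argument (as in Brown, Chapter~VII, and indeed in Geoghegan's treatment): the augmented cellular chain complex of a contractible $G$--CW complex is a resolution of $\Z$ by permutation $\Z G$-modules, rigidity ensures the stabilizers act trivially on orientations so that Shapiro's lemma applies cleanly, and the resulting first-quadrant spectral sequence with $E_1^{p,q}\cong\prod_\sigma H^q(G_\sigma;M)$ collapses above the line $p+q=\max_i(i+d_i)$. This is correct and is essentially the proof in the cited source, so there is nothing to compare against in the present paper.
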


The exact sequence from Theorem~\ref{th:restriction} can be used to prove upper bounds on the $\vcd(\out(A_\G))$.
Technically, we are bounding cohomological dimension of $\out^{[l]}(A_\G)$, and using the exact sequence from Theorem~\ref{th:pcongrestriction}.

On the other hand, these exact sequences give us hints about where to look for generators for nilpotent subgroups of $\out(A_\G)$ that we can use to find lower bounds for $\vcd(\out(A_\G))$.
We give computations of vcd for two families of groups.
For the first, we verify a previously-known upper bound by different methods.
For the second, we find a new upper bound.
In both, we find lower bounds by building nilpotent subgroups.

We hope that these techniques can be generalized to give algorithms for computing vcd of $\out(A_\G)$ for general $\G$.

\subsubsection{The string of diamonds}
Let $A_d$ be the right-angled Artin group given by a \emph{string of $d$ diamonds}, studied in Section 5.3 of Charney--Stambaugh--Vogtmann~\cite{MR3626599} and depicted in Figure~\ref{fi:diamonds}. 
Then $\out(A_d)$ is an untwisted automorphism group. 
It is shown in~\cite{MR3626599} that the dimension of the spine of the associated Outer space has dimension $4d-1$, which gives an upper bound for $\vcd(\out(A_d))$. 

\begin{figure}
\begin{centering}
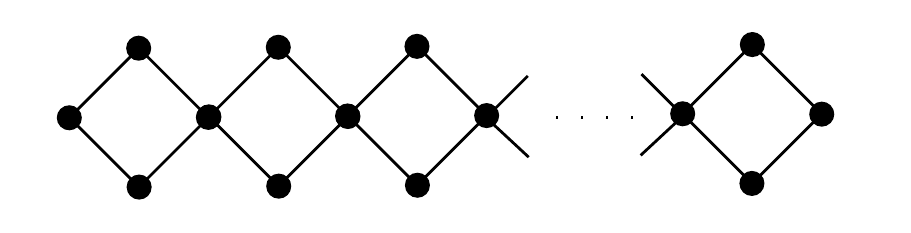
\end{centering}\caption{The string of $d$ diamonds, with vertex labels.}\label{fi:diamonds}
\end{figure} 

We give an alternative proof of the upper bound using our exact sequence, and find a free abelian group of rank equal to $4d-1$ in $\out(A_d)$, which shows that this bound is sharp. After completing this work, we learned that Millard also proved that $\vcd(\out(A_d))=4d-1$ by exhibiting a free abelian subgroup of the appropriate rank \cite{Millard}. 
For convenience, we use $\pi^x_{y}$ to denote the partial conjugation by $x$ on the component of $\G - \st(x)$ containing $y$.
\begin{proposition}
When $d=1$ we have 
\[ \vcd(\out(A_1))=\vcd(\out(A_1;\{\genby{c_1}\}))=\vcd(\out(A_1;\{\genby{c_1}\}^t))=2,\text{ and }\]
\[\vcd(\out(A_1;\{\genby{c_0}\},\{\genby{c_1}\}^t))=1.\]
and for $d \geq 2$ we have \begin{align*} \vcd(\out(A_d)) &=4d-1 \\ \vcd(\out(A_d;\{\genby{c_d}\}^t))&= 4d-2. \end{align*} 
\end{proposition}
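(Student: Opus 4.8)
The plan is to induct on $d$, proving simultaneously the two bounds $\vcd(\out(A_d))=4d-1$ and $\vcd(\out(A_d;\{\genby{c_d}\}^t))=4d-2$, and throughout to work with the torsion-free principal congruence subgroups and the exact sequence of Theorem~\ref{th:pcongrestriction}, so that each $\vcd$ is computed as the $\cd$ of a torsion-free finite-index subgroup and so that I may use subadditivity of $\cd$ under extensions. The base case $d=1$ is handled by hand: here $A_1$ is the RAAG on the $4$-cycle, so $A_1\cong F_2\times F_2$ with free factors $\genby{a_1,b_1}$ and $\genby{c_0,c_1}$, and $\out^0(A_1)\cong\gl(2,\Z)\times\gl(2,\Z)$ (every partial conjugation is inner, since each complement $\G-\st(v)$ is a single vertex). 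The relative versions cut down one factor: preserving $\genby{c_1}$ replaces the second factor by its infinite, $\vcd$-one subgroup stabilizing the conjugacy class of $c_1$ (leaving $1+1=2$), while additionally demanding a trivial action on $\genby{c_1}$ together with preservation of $\genby{c_0}$ collapses that factor to the finite group $\genby{\iota_{c_0}}$ (leaving $1+0=1$). This gives the four values $2,2,2,1$.

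For the inductive step let $\Delta$ be the full subgraph spanned by the first $d-1$ diamonds, so $A_\Delta\cong A_{d-1}$ with terminal spine vertex $c_{d-1}$. First I would check via Proposition~\ref{pr:invspecial} that $A_\Delta$ is invariant under $\out^0(A_d)$: $\Delta$ is upward closed (the only vertices dominating $c_{d-1}$ lie in $\Delta$, and $a_d\sim b_d$ dominate only each other) and it is not star-separated by any of $a_d,b_d,c_d$. After saturating $\calg$ I may apply Theorem~\ref{th:pcongrestriction} to $R_\Delta$. The image preserves $\genby{c_{d-1}}=A_{\lk(a_d)\cap\Delta}$, which is invariant by Corollary~\ref{co:peripheralinvariant2}, so $\im R_\Delta\subseteq\out(A_\Delta;\{\genby{c_{d-1}}\})$, a group of $\vcd=t_{d-1}:=4(d-1)-2$ by induction (it differs from the trivial-action version only by the finite group $\genby{\iota_{c_{d-1}}}$). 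The kernel is the relative group of automorphisms acting trivially on $A_\Delta$; enumerating the surviving Laurence generators, I would show it is generated by three mutually commuting blocks: the automorphisms supported on $\genby{a_d,b_d}$, forming an extension of $\gl(2,\Z)$ by the free group $\genby{\pi^{a_d}_{b_d},\pi^{b_d}_{a_d}}$ and having $\cd 2$; the group $\genby{\rho^{c_{d-1}}_{c_d},\pi^{c_{d-1}}_{c_d}}\cong\Z^2$ moving $c_d$ through $c_{d-1}$; and the rank-two free group $\genby{\pi^{a_{d-1}}_{D},\pi^{b_{d-1}}_{D}}$ conjugating the last diamond $D=\{a_d,b_d,c_d\}$. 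Hence $\cd(\ker R_\Delta)=2+2+1=5$, and subadditivity gives $\vcd(\out(A_d))\le 5+t_{d-1}=4d-1$. In the relative case the requirement of a trivial action on $c_d$ deletes $\rho^{c_{d-1}}_{c_d}$, shrinking the middle block from $\Z^2$ to $\Z$; the kernel then has $\cd 4$ and I get $\vcd(\out(A_d;\{\genby{c_d}\}^t))\le 4+t_{d-1}=4d-2$.

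For the matching lower bounds I would exhibit explicit free abelian subgroups. The building blocks are, for each diamond $i$, the commuting pair $\rho^{b_i}_{a_i},\pi^{b_i}_{a_i}$ (sending $a_i\mapsto b_i^{p}a_ib_i^{q}$ and fixing everything else), contributing a $\Z^{2}$ per diamond, together with a spine contribution from transvections and partial conjugations whose acting letters lie among the $c_j$ and the end diamonds. Inductively it suffices to find a copy of $\Z^{5}$ in $\ker R_\Delta$ (namely $\rho^{b_d}_{a_d},\pi^{b_d}_{a_d},\rho^{c_{d-1}}_{c_d},\pi^{c_{d-1}}_{c_d},\pi^{a_{d-1}}_{D}$, whose pairwise commutativity follows because each pair either has disjoint support or conjugates a whole diamond coherently) and to lift the inductively-produced $\Z^{4d-6}$ inside $\out(A_\Delta;\{\genby{c_{d-1}}\}^t)$ to automorphisms of $A_d$ commuting with this $\Z^5$; deleting $\rho^{c_{d-1}}_{c_d}$ yields the $\Z^{4d-2}$ for the relative statement.

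The hard part will be this lower bound: I must choose the lifts of the $A_{d-1}$-subgroup so that they genuinely commute with the last-diamond block. The danger is $\pi^{a_{d-1}}_{D}$, which uses $a_{d-1}$ as a conjugator, and $\rho^{c_{d-1}}_{c_d},\pi^{c_{d-1}}_{c_d}$, which use $c_{d-1}$: a lift that transvects or conjugates $a_{d-1}$ or $c_{d-1}$ incoherently will fail to commute. I expect to resolve this by strengthening the inductive hypothesis so that the $\Z^{4d-6}$ is realized by automorphisms that conjugate whole diamonds coherently (so any conjugation of $a_{d-1}$ is accompanied by the same conjugation of $b_{d-1}$, keeping the commutator with $\pi^{a_{d-1}}_{D}$ trivial) and that fix $c_{d-1}$; proving that such a coherent family of full rank $4d-6$ exists is the crux. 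A secondary point requiring care is that the induced peripheral structure on $\Delta$ is no larger than $\{\genby{c_{d-1}}\}$ up to already-invariant subgroups, so that $\im R_\Delta$ is exactly $\out(A_\Delta;\{\genby{c_{d-1}}\})$ and the upper and lower bounds meet.
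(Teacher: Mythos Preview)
Your overall strategy is close to the paper's: both argue by induction on $d$, both use $A_{d-1}$ as an invariant special subgroup, and both aim for an upper bound of $4d-1$ via subadditivity of $\cd$ under extensions.  The paper applies the two restriction maps in the opposite order, first restricting to the last diamond $B=\genby{c_{d-1},a_d,b_d,c_d}\cong A_1$ and then to $A_{d-1}$, obtaining the inner kernel $\out^0(A_d;\{B,A_{d-1}\}^t)\cong \mathbb{F}_2\oplus\mathbb{F}_2\oplus\Z$ of $\cd$ $3$, and the bound $4d-1=2+(3+(4d-6))$.

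There is, however, a genuine gap in your analysis of $\ker R_\Delta=\out^0(A_d;\{A_{d-1}\}^t)$.  Your three ``mutually commuting blocks'' do not generate this kernel: you have omitted the transvections $\rho^{a_d}_{c_d}$ and $\rho^{b_d}_{c_d}$ (and the inversion $\iota_{c_d}$), which send $c_d\mapsto c_d a_d$ and $c_d\mapsto c_d b_d$.  These lie in the kernel since $c_d\notin\Delta$, but they are not products of your listed generators (under your blocks the image of $c_d$ is always of the form $w\,c_d c_{d-1}^{\,k}\,w^{-1}$ for some word $w$ in $a_{d-1},b_{d-1},c_{d-1}$).  Consequently the direct-product description is false and the bound $\cd(\ker R_\Delta)\le 2+2+1$ is unjustified.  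The bound $\cd(\ker R_\Delta)\le 5$ is nonetheless correct: restrict the kernel further to $B$, giving image $\out^0(B;\{\genby{c_{d-1}}\}^t)$ of $\vcd$ $2$ and inner kernel $\out^0(A_d;\{A_{d-1},B\}^t)$ of $\cd$ $3$; this is exactly the paper's decomposition with the two restrictions interchanged.

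For the lower bound, the paper avoids your ``hard part'' altogether.  Rather than lift an inductively-produced abelian subgroup and verify commutation with the last-diamond block, the paper writes down a single explicit list of $4d-1$ generators in $\out(A_d)$ (namely $\rho^{a_i}_{b_i}$, $\pi^{a_i}_{b_i}$ for all $i$; $\pi^{a_i}_{c_d}$ for $2\le i\le d-1$; $\pi^{c_i}_{c_0}$ for $1\le i\le d-1$; and $\rho^{c_1}_{c_0}$, $\rho^{c_{d-1}}_{c_d}$), checks pairwise commutation using the trick $[\pi^x_K]=[\pi^x_{\G-K}]^{-1}$ to arrange disjoint supports, and verifies linear independence via the action on $H_1(A_d)$ together with the Johnson homomorphism on $\ia$.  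This global construction is substantially simpler than the inductive lifting you propose, precisely because it sidesteps the coherence conditions you correctly identify as delicate.
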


\begin{proof}
First we prove the upper bound inductively.
If $d=1$, $\out(A_1)$ is $\out(\mathbb{F}_2 \oplus \mathbb{F}_2)$, which has a finite index subgroup isomorphic to $\out(\mathbb{F}_2) \oplus \out(\mathbb{F}_2)$.
The groups $\out(A_1;\{\genby{c_1}\})$ and $\out(A_1;\{\genby{c_1}\}^t)$ have a finite index subgroup isomorphic to $\out(\mathbb{F}_2) \oplus \Z$, where the cyclic factor is generated by $\rho^{c_1}_{c_0}$.
The group $\out(A_1;\{\genby{c_0}\},\{\genby{c_1}\}^t)$ is virtually $\out(\mathbb{F}_2)$.
 As $\out(\mathbb{F}_2)$ is virtually free, this gives the correct upper bound for the vcd in all subcases.
Abelian subgroups giving the lower bounds are easy to find in the case $d=1$.

Now we consider general $d$.
We consider $A_{d-1}$ as a special subgroup of $A_d$ in the obvious way.
Let $B$ be the special subgroup $\genby{c_{d-1},a_d,b_d,c_d}$ of $A_d$; then $B\cong A_1$.
By Proposition~\ref{pr:invspecial}, both $A_{d-1}$ and $B$ are invariant special subgroups in $\out^0(A_d)$.
By Theorem~\ref{th:restriction} and Proposition~\ref{pr:easier_way_to_find_peripheral_structure}, we have exact sequences
\[1\to \out^0(A_d;\{B\}^t)\to\out^0(A_d)\to \out^0(B;\{\genby{c_{d-1}}\})\to 1, \text{ and }\]
\[1\to \out^0(A_d;\{B,A_{d-1}\}^t)\to \out^0(A_d;\{B\}^t)\to \out^0(A_{d-1};\{\genby{c_{d-1}}\}^t)\to 1.\]
The inner kernel is generated by the elements $\pi^{a_d}_{b_d}, \pi^{b_d}_{a_d}, \pi^{a_{d-1}}_{c_{d}}, \pi^{b_{d-1}}_{c_{d}}$, and $\pi^{c_{d-1}}_{c_0}$, using Theorem~\ref{th:generators}.
This is isomorphic to $\mathbb{F}_2 \oplus \mathbb{F}_2 \oplus \mathbb{Z}$, so is 3-dimensional.
This isomorphism is a little tricky to see, but it can be shown using the restriction map to $A_{\st(c_{d-1})}$ and the projection map to $A_{\lk(c_{d-1})}$. 
Inductively, $\vcd(\out^0(A_{d-1};\{\genby{c_{d-1}}\}^t))=4d-6$ and $\vcd(\out^0(B;\{\genby{c_{d-1}}\}))=2$.
The two exact sequences give us an upper bound of $4d-1=4d-6+3+2$ for $\vcd(\out(A_d))$.

The argument for $\out(A_d;\{\genby{c_d}\}^t)$ is quite similar.
We get the same exact sequences, except that now we demand that all groups act trivially on $\genby{c_d}$.
We have $\vcd(\out^0(B;\{\genby{c_{d-1}}\},\{\genby{c_d}\}^t))=1$, and our upper bound is $4d-2=4d-6+3+1$.

Having shown the upper bound, we use the groups in our exact sequences as places to look for generators for an abelian subgroup that will give use a lower bound for the dimension.
In the inner kernel above, we pick three generators giving us a copy of $\Z^3$: 
$\pi^{a_d}_{b_d}$, $\pi^{a_{d-1}}_{c_{d}}$, and $\pi^{c_{d-1}}_{c_0}$.
From $\out^0(B;\{\genby{c_{d-1}}\})$, we lift two generators to $\out^0(A_d)$: $\rho^{a_d}_{b_d}$ and $\rho^{c_{d-1}}_{c_d}$.
We recursively select similar generators from $\out^0(A_{d-1};\{\genby{c_{d-1}}\}^t)$ and lift them to $\out^0(A_d)$.
To summarize, we have a subgroup $G$ generated by:
\begin{itemize}
 \item $\rho^{a_i}_{b_i}$ for $1 \leq i \leq d$.
 \item $\pi^{a_i}_{b_i}$ for $1 \leq i \leq d$.
 \item $\pi^{a_i}_{c_d}$ for $1 \leq i \leq d$, $i \neq 1$ and $i \neq d$.
 \item $\pi^{c_i}_{c_0}$ for $1 \leq i \leq d-1$.
 \item $\rho^{c_1}_{c_0}$ and $\rho^{c_{d-1}}_{c_d}$.
\end{itemize}
One can show that all of these elements commute in $\out(A_d)$. 
This is because either the supports of the automorphisms are disjoint, or a partial conjugation is involved and one can use the trick that $\pi^x_K = (\pi^x_{\G -K})^{-1}$ in $\out(A_d)$ to reduce to the case where the supports are disjoint. 
Since $G$ is an abelian group, it makes sense to ask whether the generators are linearly independent.
In fact, these generators are linearly independent, as we now explain. 
One sees that the image of the abelian group $G$ in $\gln$ under the action on the abelianization is free abelian of rank $d +2$. 
Furthermore, in the kernel ${\rm{IA}}_\G$, the fact that the remaining $3d-3$ partial conjugations are linearly independent can be seen using the first Johnson homomorphism on ${\rm{IA}}_\G$ (see Section~4 of \cite{W1}).
So $G$ is isomorphic to $\Z^{4d-1}$, proving the lower bound.

For the group $\out(A_d;\{\genby{c_d}\}^t)$, we use the subgroup generated by the same generators as $G$, except that we leave out $\rho^{c_{d-1}}_{c_d}$.
This gives the correct lower bound.
\end{proof}

\subsubsection{Graphs whose vertex class graph is a $4$-path}
Let $p,q,r,s$ be positive integers and let $\G$ be the graph whose vertex class graph $\overline{\G}$ is the four-vertex path with coloring $(p,1)$, $(q,0)$, $(r,0)$, and $(s,1)$.
(If $p=1$ or $s=1$, replace the corresponding label with $(1,0)$.)
Concretely, $\G$ is a graph with four equivalence classes; call them $[w]$, $[x]$, $[y]$, and $[z]$.
We label these so that $\abs{[w]}=p$, $\abs{[x]}=q$, $\abs{[y]}=r$, and $\abs{[z]}=s$.
The classes $[w]$ and $[z]$ generate free groups, and the classes $[x]$ and $[y]$ are complete subgraphs.
All possible edges appear between $[w]$ and $[x]$, between $[x]$ and $[y]$, and between $[y]$ and $[z]$, and there are no other edges.

If $q$ or $r$ is greater than $1$, then $\G$ contains triangles.
This $A_\G$ does not decompose as a free or direct product.
This $\out(A_\G)$ contains both adjacent and nonadjacent transvections.
Taken together, this excludes most previous techniques for bounding the dimension of $\out(A_\G)$.

\begin{figure}
\begin{centering}
\includegraphics{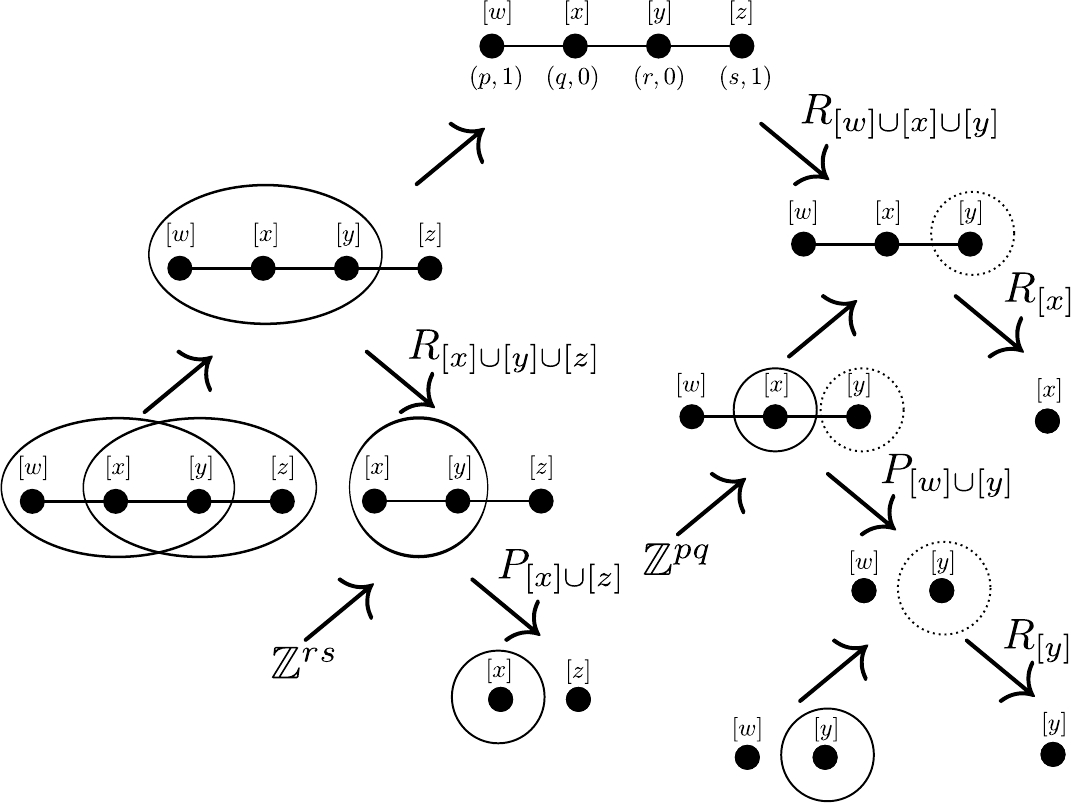}
\end{centering}
\caption{The four restriction maps and two projection maps in dismantling $\out^0(A_\G)$, when $\overline{\G}$ is the initial graph pictured. Dashed ovals represent groups in the peripheral structure that must be left invariant, and solid ovals represent those that are acted on trivially.}
\label{fig:vcgpath}
\end{figure}

\begin{proposition}
Let $\G$ be the graph on $p+q+r+s$ vertices described above.
Then 
\[\vcd(\out(A_\G))=\frac{q(q-1)}{2}+\frac{r(r-1)}{2}+rs+pq+q(2s-1)+r(2p-1).\]
\end{proposition}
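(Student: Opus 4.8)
The plan is to prove matching upper and lower bounds for $\vcd(\out(A_\G))$, working throughout in the torsion-free finite-index subgroup $\out^{[l]}(A_\G)$ so that the arithmetic leaves are torsion-free and Borel--Serre applies. First I would record the combinatorics of $\G$. Writing $[w],[x],[y],[z]$ for the four equivalence classes, a direct check of the domination relation shows that $[x]$ and $[y]$ are the maximal classes, that $[w]\leq[x],[y]$ and $[z]\leq[x],[y]$ (the dominations $[w]\leq[x]$ and $[z]\leq[y]$ being adjacent and the other two non-adjacent), and that $[w],[z]$ are minimal. Hence $A_{\st[x]}=A_{[w]\cup[x]\cup[y]}$ and $A_{\st[y]}=A_{[x]\cup[y]\cup[z]}$ are invariant under $\out^0(A_\G)$ by Proposition~\ref{pr:invspecial}, with centers $A_{[x]}\cong\Z^q$ and $A_{[y]}\cong\Z^r$. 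Using Proposition~\ref{pr:gensrelgh} I would list the surviving extended Laurence generators: the two $\gl$-type blocks from transvections internal to $[x]$ and to $[y]$; the transvections $\rho^{x_j}_{w_i}$, $\rho^{y_j}_{w_i}$, $\rho^{x_j}_{z_i}$, $\rho^{y_j}_{z_i}$; and the partial conjugations $\pi^{x_j}_{z_i}$, $\pi^{y_j}_{w_i}$ (for a fixed acting letter the product over all components of $\G-\st$ is inner, so only $s-1$, resp.\ $p-1$, are independent).

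For the upper bound I would run the inductive dismantling of Theorem~\ref{pr:dismantleoutAGamma}, using the congruence-subgroup exact sequence of Theorem~\ref{th:pcongrestriction} and the projection sequence of Proposition~\ref{pr:connected_nontrivial_centre_exact_sequence}, following the pattern of Figure~\ref{fig:vcgpath}: restrict to $A_{\st[x]}$ and $A_{\st[y]}$ and project away the two centers $A_{[x]}$, $A_{[y]}$. At each stage I would compute the induced peripheral structure $\calg_\Delta$, which controls which generators lift, and apply $\cd(G)\leq\cd(N)+\cd(Q)$. The leaves are: copies of $\gl(q,\Z)$ and $\gl(r,\Z)$ contributing $\binom{q}{2}$ and $\binom{r}{2}$ by Borel--Serre (Proposition~\ref{pr:borel-serre_type_f}); finitely generated free abelian kernels made of the transvections and admissible partial conjugations above; and the relative automorphism groups of the free classes $[w]$ and $[z]$, which under the induced peripheral structure are constrained Fouxe--Rabinovitch groups. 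Summing, the $\gl$-blocks give $\binom{q}{2}+\binom{r}{2}$, the transvections into $[x]$, resp.\ $[y]$, give $pq$, resp.\ $rs$, and the free-class blocks together with the remaining transvections and partial conjugations assemble into $q(2s-1)$ (the $qs$ transvections $\rho^{x_j}_{z_i}$ with the $q(s-1)$ conjugations $\pi^{x_j}_{z_i}$) and $r(2p-1)$ (symmetrically for $[w]$), yielding the claimed number as an upper bound.

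For the lower bound I would exhibit an explicit finitely generated nilpotent subgroup $G\leq\out(A_\G)$ whose Hirsch length equals the formula, generated by a maximal unipotent subgroup of each $\gl$-block ($\binom{q}{2}+\binom{r}{2}$ generators) together with all the transvections and independent partial conjugations listed above. As in the string-of-diamonds computation, I would check that these elements generate a nilpotent group whose iterated brackets stay inside the same list (for example $[\rho^{x_2}_{x_1},\rho^{x_1}_{w}]=\rho^{x_2}_{w}$), so that the Hirsch length equals the number of generators once they are shown independent. Independence of the transvection and $\gl$-unipotent directions can be read from the action on $H_1(A_\G;\Z)$, and independence of the partial-conjugation directions (which lie in $\ia$) from the first Johnson homomorphism, exactly as in Section~\ref{ss:vcdeg} and \cite{W1}. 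The needed commutation relations follow either from disjoint supports or from the identity $\pi^x_K=(\pi^x_{\G-K})^{-1}$ in $\out(A_\G)$.

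The hard part will be the upper-bound bookkeeping, for two reasons. First, the free classes $[w]$ and $[z]$ genuinely carry free-group automorphisms, so a careless decomposition (e.g.\ projecting $A_\G$ onto its free part) overshoots; the reason for using the restriction-to-star and center-projection maps of Figure~\ref{fig:vcgpath} is that the induced peripheral structure forces the free factors to retain the appropriate marked subgroups, so that the free-class blocks recombine with the transvections $\rho^{y}_{w}$, $\rho^{x}_{z}$ and the partial conjugations precisely into $r(2p-1)$ and $q(2s-1)$ rather than adding a separate $\vcd(\out(\F_p))$ or $\vcd(\out(\F_s))$. Verifying this demands computing the saturations and induced structures $\calg_\Delta$ honestly at each of the six maps, and computing the vcd of the resulting constrained Fouxe--Rabinovitch groups. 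Second, matching this upper bound to the nilpotent lower bound requires carrying out the linear-independence arguments for the entire generating set, especially separating the partial conjugations via the Johnson homomorphism.
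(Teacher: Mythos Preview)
Your overall architecture matches the paper exactly: the same six-step dismantling via restrictions to $A_{\st[x]}$ and $A_{\st[y]}$ followed by center projections (Figure~\ref{fig:vcgpath}), and the same nilpotent subgroup for the lower bound, with independence checked via the abelianization and the Johnson homomorphism. The identification of the $\gl$-blocks, the abelian projection kernels $\Z^{pq}$ and $\Z^{rs}$, and the trivial innermost kernel are all correct.

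The genuine gap is in the upper bound for the two Fouxe--Rabinovitch leaves. After projecting away the centers, the images are $\out^0(\genby{[x],[z]};\{\genby{[x]}\}^t)$ and $\out^0(\genby{[y],[w]};\{\genby{[y]}\}^t)$, i.e.\ Fouxe--Rabinovitch groups of $\Z^q*\mathbb{F}_s$ and $\Z^r*\mathbb{F}_p$. You write the bound $q(2s-1)$ as ``the $qs$ transvections $\rho^{x_j}_{z_i}$ with the $q(s-1)$ conjugations $\pi^{x_j}_{z_i}$'', but these generators do \emph{not} generate an abelian or nilpotent group (for instance $\rho^{x_1}_{z_1}$ and $\pi^{x_2}_{z_1}$ fail to commute), so generator-counting does not bound $\cd$. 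You acknowledge that ``computing the vcd of the resulting constrained Fouxe--Rabinovitch groups'' is part of the hard work, but no method is proposed.

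The paper's argument here is geometric and essential: it uses the action of $\out^0(G;\calg^t)$ on the spine $\spine$ of Guirardel--Levitt outer space together with Theorem~\ref{th:vcdbyaction}. The stabilizer of an $i$-simplex $T$ in $\spine$ is virtually $\genby{[x]}^{k-1}\cong\Z^{q(k-1)}$, where $k\leq 2s-i$ is the valence in $T/G$ at the vertex with nontrivial stabilizer (this is the description from Guirardel--Levitt, Proposition~3.7). Maximizing $i+d_i$ over all simplices gives $q(2s-1)$, attained at the rose. Without this spine argument, the upper bound is incomplete.
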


\begin{proof}
The upper bound comes from six applications of exact sequences.
We summarize this in Figure~\ref{fig:vcgpath}.
Each restriction map restricts to the star of a maximal vertex class or a maximal vertex class itself.
These are invariant by Proposition~\ref{pr:invspecial}, so these maps are well defined.
Once we have forced central vertices to act trivially, we project to automorphisms of the graph where these central vertices have been deleted.
We inspect leaves of the tree in the figure; these are the final kernels and images that we can take apart no further, and adding up bounds for the vcd of each leaf will give a bound for the group. The unlabeled arrows on the diagram are the kernels of the adjacent restriction or projection maps on the same level.

First we consider the projection maps.
The projection kernels are generated by transvections with central acting letters.
These are the $\Z^{pq}$ and $\Z^{rs}$ in the figure, and contribute $pq$ and $rs$ to the dimension.
Next we consider the groups $\out^0(\genby{[x]})$ and $\out^0(\genby{[y]})$, which appear as images of restriction maps.
These are simply $\gl(q,\Z)$ and $\gl(r,\Z)$.
By Borel--Serre~\cite{MR0387495} (or see Brown~\cite{Brown}, Example VIII.9.3), their vcds are $q(q-1)/2$ and $r(r-1)/2$.
Next we consider the innermost kernel.
This is $\out^0(A_\G;\{\genby{[w],[x],[y]},\genby{[x],[y],[z]}\}^t)$.
By Theorem~\ref{th:generators}, this is trivial.

Finally, we have the free products.
We focus on $\out^0(\genby{[x],[z]};\{\genby{[x]}\}^t)$; this is a Fouxe-Rabinovitch group.
Let $G=\genby{[x],[z]}$ and let $\calg=\{\genby{[x]}\}$.
Let $\spine$ be the spine of the Guirardel--Levitt outer space for $G$ with respect to $\calg$.
Each simplex in this spine represents a minimal, simplicial action of $G$ on a tree, with vertex stabilizers trivial or conjugate to $\genby{[x]}$.
Let $T$ be a tree representing a simplex in $\spine$.
The stabilizer $\out(G;\calg^t)_T$ of $T$ has a finite index subgroup that acts trivially on $T/G$.
As explained in the proof of Proposition~3.7 in Guirardel--Levitt~\cite{GL}, this finite index subgroup is isomorphic to $\genby{[x]}^{k-1}$, where $k$ is the degree of the vertex in $T/G$ with local group $\genby{[x]}$.
If $T/G$ is a rose (a graph with exactly one vertex), then $k=2s$ and $T$ is a $0$-simplex of $\spine$.
This gives $q(2s-1)$ as $i+d_i$, where $T$ is an $i$-simplex and $d_i=\vcd(\out(G;\calg^t)_T)$. 
If $T$ is an $i$-simplex of $\spine$, then adding $i$ edges to a rose means the valence of the vertex with stabilizer $\genby{[x]}$ is at most $k\leq 2s-i$, and $\vcd(\out(G;\calg^t)_T)\leq q(2s-i-1)$.
This gives $q(2s-i-1)+i$ as an upper bound for $i+d_i$.
In any case, $q(2s-1)$ is an upper bound for $i+d_i$ over all simplices of $\spine$, and by Theorem~\ref{th:vcdbyaction}, we have $\vcd(\out^0(\genby{[x],[z]};\{\genby{[x]}\}^t))\leq q(2s-1)$.
Similarly, $r(2p-1)$ is an upper bound for $\vcd(\out^0(\genby{[y],[w]};\{\genby{[y]}\}^t))$.
Summing these bounds over all seven images and kernels gives our upper bound.

For the lower bound, we totally order the vertices of $\G$ as $v_1,\dotsc,v_n$, and form a group $G$ generated by the following:
\begin{itemize}
\item $\rho^{v_i}_{v_j}$ where $v_i, v_j\in [x]$ and $i<j$ ($q(q-1)/2$ generators),
\item $\rho^{v_i}_{v_j}$ where $v_i, v_j\in [y]$ and $i<j$ ($r(r-1)/2$ generators),
\item $\rho^{v}_{u}$ where $v \in [x]$ and $u\in [w]$ ($pq$ generators),
\item $\rho^{v}_{u}$ where $v \in [y]$ and $u\in [z]$ ($rs$ generators),
\item $\rho^{v}_{u}$ where $v \in [x]$ and $u\in [z]$ ($qs$ generators),
\item $\pi^{v}_{u}$ where $v \in [x]$ and $u\in [z]$, leaving out the last $u$ for each $v$ ($q(s-1)$ generators),
\item $\rho^{v}_{u}$ where $v \in [y]$ and $u\in [w]$ ($rp$ generators), and
\item $\pi^{v}_{u}$ where $v \in [y]$ and $u\in [w]$, leaving out the last $u$ for each $v$ ($r(p-1)$ generators).
\end{itemize}
It is straightforward to check that for any two generators in this list, either they commute or their commutator is another generator in this list.
Therefore $G$ is nilpotent.
Further, at each level of the lower central series of $G$, the generators contained there are linearly independent in their corresponding subquotient.
This can be verified by inspecting the action of $G$ on the abelianization of $A_\G$, and the image of $\ia\cap G$ under the Johnson homomorphism.
Therefore the total count of generators here is $\vcd(G)$.
This gives the required lower bound on $\vcd(\out(A_\G))$.
\end{proof}

\bibliography{restrictionbib}
\bibliographystyle{abbrv}

\vspace{0.2cm}
\small
\sc \noindent Matthew B. Day, Department of Mathematical Sciences, 309 SCEN, University of Arkansas, Fayetteville, AR 72701, U.S.A. \\
\noindent \tt e-mail: matthewd@uark.edu
\\
\\
\sc \noindent Richard D. Wade, Mathematical Institute, Andrew Wiles Building, University of Oxford,  Woodstock Road, Oxford,
OX2 6GG, U.K. \\
\noindent \tt e-mail: wade@maths.ox.ac.uk

\end{document}